\documentclass[11pt]{article}
\usepackage[utf8]{inputenc}
\usepackage[T1]{fontenc}
\usepackage{amsmath,amssymb,amsthm,amsfonts}
\usepackage{mathtools}
\usepackage{mathrsfs}
\usepackage{geometry}
\usepackage{enumitem}
\usepackage{algorithm}
\usepackage{algpseudocode}
\usepackage{caption}
\usepackage{xcolor}
\usepackage{listings}
\usepackage{booktabs}
\usepackage{array}
\usepackage{ytableau}
\usepackage{hyperref}
\hypersetup{colorlinks=true, linkcolor=blue, citecolor=blue, urlcolor=blue}
\definecolor{deepteal}{RGB}{0,100,100}
\definecolor{cobaltblue}{RGB}{0,71,171}
\definecolor{firebrickred}{RGB}{178,34,34}
\lstdefinestyle{pythonstyle}{%
  language=Python,%
  basicstyle=\ttfamily\small,%
  keywordstyle=\color{blue}\bfseries,%
  commentstyle=\color{gray}\itshape,%
  stringstyle=\color{teal},%
  showstringspaces=false,%
  breaklines=true,%
  frame=single,%
  numbers=left,%
  numberstyle=\tiny\color{gray},%
  tabsize=4%
}
\theoremstyle{plain}
\newtheorem{theorem}{Theorem}[section]
\newtheorem{lemma}[theorem]{Lemma}
\newtheorem{proposition}[theorem]{Proposition}
\newtheorem{corollary}[theorem]{Corollary}
\theoremstyle{definition}
\newtheorem{definition}[theorem]{Definition}
\newtheorem{example}[theorem]{Example}
\newtheorem{remark}[theorem]{Remark}
\newtheorem{conjecture}[theorem]{Conjecture}
\theoremstyle{remark}
\newtheorem*{notation}{Notation}

\newcommand{\di}{\mathrm{di}}
\newcommand{\dinv}{\mathrm{dinv}}
\newcommand{\area}{\mathrm{area}}
\newcommand{\defc}{\mathrm{defc}}
\newcommand{\DS}{\mathsf{DS}}
\newcommand{\DSstar}{\mathsf{DS}^{*}}
\newcommand{\rowsert}{\texttt{rowsert}}
\newcommand{\worsert}{\texttt{worsert}}
\title{Dyck Symmetric Functions and Applications to \(q,t\)-Catalan Polynomials}
\author{Graham Hawkes\thanks{\texttt{grhmspm@yahoo.com}}}
\date{\today}
\begin{document}
\maketitle

\begin{abstract}
This paper develops three related combinatorial results for Dyck-type
sequences.  First, it constructs a row-insertion algorithm for dual Dyck
sequences and extends it to Dyck tableaux.  This construction gives a
weight-preserving bijection between dual Dyck factorizations and pairs
consisting of a Dyck tableau and a semistandard Young tableau of the same
shape.  As a consequence, the associated dual Dyck symmetric functions are
Schur-positive, and the corresponding affine Dyck symmetric functions have the
conjugate-shape Schur expansion.

Second, it applies these Dyck symmetric functions to the
\(q,t\)-Catalan polynomial.  It gives a two-column tableau formula for
\(C_n(q,t)\), expressing it as a sum over Dyck \(m\)-skeletons and
at-most-two-column Dyck tableaux with summands involving two-variable Schur
functions.

Third, it develops a Dyck-skeleton formula for the deficit range
\(\defc\le 2n-8\).  Full and special Dyck skeletons, together with local
\(\mathrm{East}\), \(\mathrm{West}\), \(\mathrm{up}\), and
\(\mathrm{down}\) moves, organize the low-area half of each low-deficit slice
into skeleton-indexed strings.  The \(q,t\)-symmetry of \(C_n(q,t)\) supplies
the complementary high-area half in the resulting interval formula.
\end{abstract}

\section{Introduction}
\label{sec:introduction}

Dyck-type inequalities impose local restrictions on integer sequences, while
the generating functions built from factorizations of those sequences exhibit
global symmetry and positivity properties that are not immediate from the
definitions.  This paper studies these properties in two connected parts.

The ordinary \(q,t\)-Catalan polynomials arose from the diagonal-harmonics
program of Garsia and Haiman~\cite{GarsiaHaiman1996}.  The positivity and
Dyck-path formulas of Garsia--Haglund and Haglund--Haiman--Loehr--Remmel--
Ulyanov~\cite{GarsiaHaglund2002,HHLRU2005}, together with Haglund's
monograph~\cite{Haglund2008}, made these polynomials a central meeting point
between symmetric functions and Catalan combinatorics.  The full shuffle
theorem, proved by Carlsson and Mellit~\cite{CarlssonMellit2018}, supplies in
particular the symmetry in \(q\) and \(t\) used below.  Our results do not give
a new proof of this global symmetry; instead, they use it to turn a
low-deficit string decomposition into explicit formulas.

Section~\ref{sec:insertion-algorithm} concerns Dyck symmetric functions.  We
define affine and dual Dyck factorizations of a multiset, together with the
statistic \(\di\), and use them to define symmetric functions \(\DS(S,d)\) and
\(\DSstar(S,d)\).  One main result is an explicit tableau formula for these
functions.  The central construction is a row-insertion operation \(\rowsert\)
on dual Dyck sequences,
together with a reverse operation \(\worsert\).  After establishing validity,
reversibility, and \(\di\)-preservation for these operations, the row insertion
is iterated through tableau rows.  The resulting
tableau insertion algorithm gives a weight-preserving bijection between dual
Dyck factorizations and pairs \((P,Q)\), where \(P\) is a Dyck tableau and
\(Q\) is a semistandard Young tableau of the same shape.  This bijection,
Theorem~\ref{thm:tableau-factorization-bijection}, yields the Schur expansion
for \(\DSstar(S,d)\) in
Corollary~\ref{cor:dual-dyck-schur-positivity}; the affine Schur expansion for
\(\DS(S,d)\) is Corollary~\ref{cor:affine-dyck-schur-positivity}.

Sections~\ref{sec:dyck-decompositions} and~\ref{sec:full-skeletons} concern
Dyck sequences and \(q,t\)-Catalan formulas.  We first relate several families
of Dyck data by passing from Dyck sequences to Dyck triples, then to affine and
tableau data.  This yields an application of the Dyck symmetric functions:
Theorem~\ref{thm:section4-qt-catalan-formula} expresses \(C_n(q,t)\) in terms
of Dyck \(m\)-skeletons and Dyck tableaux with at most two columns.  This
two-column formula motivates the special Dyck skeleton formula in the final
section, linking the Section~5 skeletons back to the Dyck-symmetric-function
framework.  For a
fixed length and deficit range, local \(\mathrm{East}\) and \(\mathrm{West}\)
maps control finite windows, while the global \(\mathrm{up}\) and
\(\mathrm{down}\) maps organize Dyck sequences into strings starting from
special skeletons.

The main skeleton formula is proved in
Theorem~\ref{thm:qt-catalan-skeleton} of
Subsection~\ref{subsec:strings-and-formula}, for the deficit range
\(\defc\le 2n-8\).  The \(\mathrm{up}\)-string decomposition accounts
for the low-half terms of the relevant interval contributions; the high-half
terms in the displayed symmetric rational expression are supplied by the
\(q,t\)-symmetry of \(C_n(q,t)\).  In the smaller range
\(\defc\le n-3\), the skeleton formula specializes to a partition-indexed
description.  We also record a middle-coefficient pattern and a
conjectural extension.

Section~5 focuses on the range \(\defc\le 2n-8\), which is intermediate
between fixed-deficit questions and the full deficit range for length \(n\);
the earlier sections work with general ranges and do not depend on this
restriction.  Lee, Li, and Loehr gave a combinatorial proof of the required
ordinary \(q,t\)-symmetry in total-degree slices of fixed deficit
\(0\le k\le 9\)~\cite{LeeLiLoehr2018}.  The author later extended this
chain-decomposition viewpoint to rational \(q,t\)-Catalan polynomials: in the
subfamily \(r\equiv1\pmod s\), which includes the classical case \(r=s+1\), he
showed that, for each fixed bound \(d^*\), proving the conjectural chain
formula for all \(\mathcal C_{r/s}^d\) with \(d\le d^*\) is equivalent to a
finite counting problem together with choices of chains~\cite{Hawkes2024}.  In
the classical case, the author's degree-from-the-top statistic
\(\mathrm{degr}\) is the same deficit measured here by
\(\defc=\binom n2-\area-\dinv\).  This finite-reduction result motivates
looking beyond constant-bounded deficit in chain decompositions; the
Section~5 bound \(\defc\le 2n-8\) grows with \(n\), while still remaining below
the full range.

The organization of the paper follows these two threads.  Section~2 fixes the
global definitions and conventions.  Section~3 develops the insertion
algorithm, the tableau bijection, and the Schur-positivity consequences.
Section~4 gives the Dyck-sequence and Dyck-triple decompositions leading to
the two-column tableau formula.  Section~5 develops full and special skeletons,
the local \(\mathrm{East}/\mathrm{West}\) maps, the
\(\mathrm{up}/\mathrm{down}\) decomposition, and the resulting
\(q,t\)-Catalan formulas.  The appendices collect computational
constructions, example decompositions, and the local proofs for the
well-definedness lemmas used in Section~5.

\section{Definitions and conventions}
\label{sec:definitions}
This section fixes the conventions used throughout the paper.  Affine and dual
Dyck sequences are finite integer sequences, with nonnegativity imposed only by
ordinary Dyck or interval-bounded hypotheses.
\begin{notation}[Global sequence conventions]
Unless a local statement explicitly says otherwise, a finite sequence is
indexed from position \(0\).  Thus
\(x=(x_0,x_1,\ldots,x_{m-1})\), and inequalities involving consecutive entries
are imposed for \(0\le i<m-1\).  In algorithmic arguments we may also write
\(x[i]\) for \(x_i\).  The empty sequence is allowed.  Universal elementwise
conditions on an empty sequence are vacuous, while length, shape, and parameter
conditions are still imposed as stated.
When later theorem statements use \(\area\) or \(\defc\) for a translate of an
ordinary Dyck sequence, those statistics are computed after translating the
sequence so that its first entry is \(0\), provided the resulting sequence is
ordinary Dyck.  Statements that use another transform, such as an adjoint
sequence, say so explicitly.
\end{notation}
\begin{definition}[\(\di\) statistic]
\label{def:di}
For a finite integer sequence \(x=(x_0,x_1,\ldots,x_{m-1})\), define
\[
\di(x)=\#\bigl\{(i,j):0\le i<j<m,\ x_i=x_j+1\bigr\}.
\]
In particular, \(\di(\varnothing)=0\).
\end{definition}
\begin{definition}[Affine and ordinary Dyck sequences]
\label{def:affine-dyck}
A finite integer sequence \(x\) is an \emph{affine Dyck sequence} if, writing
\(x=(x_0,x_1,\ldots,x_{m-1})\), one has
\[
x_{i+1}\le x_i+1
\]
for every \(0\le i<m-1\).
An \emph{ordinary Dyck sequence} is a nonempty affine Dyck sequence with
\(x_0=0\) and \(\min_i x_i=0\).  Equivalently, it is a nonempty affine Dyck
sequence with first entry \(0\) and all entries nonnegative.
\end{definition}
\begin{definition}[Dual Dyck sequences]
\label{def:dual-dyck}
A finite integer sequence \(x=(x_0,x_1,\ldots,x_{m-1})\) is a
\emph{dual Dyck sequence} if
\[
x_{i+1}\ge x_i+2
\]
for every \(0\le i<m-1\).
\end{definition}
\begin{definition}[Affine and dual Dyck factorizations]
\label{def:dyck-factorizations}
Let \(S\) be a finite multiset of integers.  A \emph{factorization} of \(S\)
is an ordered sequence
\[
\mathcal F=(F_0,F_1,F_2,\ldots)
\]
of finite sequences, all but finitely many of which are empty, such that the
concatenation \(F_0F_1F_2\cdots\) is a permutation of the multiset \(S\).
The factorization is an \emph{affine Dyck factorization} if each factor is an
affine Dyck sequence, and it is a \emph{dual Dyck factorization} if each factor
is a dual Dyck sequence.  The statistic of a factorization is
\[
\di(\mathcal F)=\di(F_0F_1F_2\cdots).
\]
We use zero-indexed symmetric-function variables in this paper.  Thus the
weight monomial of \(\mathcal F\) is
\[
x^{\operatorname{wt}(\mathcal F)}=\prod_{r\ge 0} x_r^{|F_r|}.
\]
\end{definition}
\begin{definition}[Dyck and dual Dyck symmetric functions]
\label{def:dyck-symmetric-functions}
Let \(S\) be a finite multiset of integers.  The
\emph{Dyck symmetric function} with parameters \((S,d)\) is
\[
\DS(S,d;\mathbf{x})
 =
\sum_{\substack{\mathcal F\text{ affine Dyck factorization of }S\\
                 \di(\mathcal F)=d}}
 x^{\operatorname{wt}(\mathcal F)}.
\]
The \emph{dual Dyck symmetric function} with parameters \((S,d)\) is
\[
\DSstar(S,d;\mathbf{x})
 =
\sum_{\substack{\mathcal F\text{ dual Dyck factorization of }S\\
                 \di(\mathcal F)=d}}
 x^{\operatorname{wt}(\mathcal F)}.
\]
Thus the affine or dual Dyck condition is imposed on the factors in the
factorization, not on the unordered multiset \(S\) itself.
\end{definition}
\begin{definition}[Dyck tableaux]
\label{def:dyck-tableau}
A \emph{Dyck tableau} with parameters \((S,d)\) is a left-aligned tableau whose
entries are exactly the multiset \(S\), with rows indexed from top to bottom:
row \(0\) is the top row, row \(1\) is the row immediately below it, and so on.
If the rows are \(T_0,T_1,\ldots,T_{\ell-1}\), then their lengths satisfy
\[
|T_0|\ge |T_1|\ge \cdots \ge |T_{\ell-1}|.
\]
Each row, read from left to right, is required to be a dual Dyck sequence.
Columns are aligned on the left and are read from bottom to top for the affine
Dyck condition.  Equivalently, whenever adjacent rows \(T_j\) and \(T_{j+1}\)
both have an entry in column \(p\), with \(T_j\) above \(T_{j+1}\), one has
\[
T_j[p]\le T_{j+1}[p]+1.
\]
The row-reading word of \(P\) is
\[
\operatorname{RR}(P)=T_{\ell-1}T_{\ell-2}\cdots T_0,
\]
namely, rows are read left to right, starting at the bottom row and moving
upward.  The tableau has parameter \(d\) when
\[
\di(\operatorname{RR}(P))=d.
\]
The shape \(\lambda(P)\) is the partition given by the row lengths
\((|T_0|,|T_1|,\ldots,|T_{\ell-1}|)\).  Empty rows may be used when a construction
requires them; their row and column conditions are vacuous, but the displayed
shape and length constraints remain in force.
\end{definition}

\section{Insertion algorithm}
\label{sec:insertion-algorithm}
The insertion algorithm begins with a row operation on dual Dyck sequences.
The row operation is the local bumping step that will later be iterated through
Dyck tableaux.  Throughout this section, all affine and dual Dyck sequences are
finite integer sequences in the sense of Section~\ref{sec:definitions}.
\begin{notation}[Slices and mutable-state conventions]
For a sequence \(x=(x_0,\ldots,x_{m-1})\), we use half-open slice notation
\[
  x[a:b]=(x_a,x_{a+1},\ldots,x_{b-1})
\]
for \(0\le a\le b\le m\).  Thus \(x[0:b]\) is a prefix and \(x[a:m]\) is a
suffix.  We also write \(x[-1]\) for the final entry of a nonempty sequence,
and \(x[-r:]\) for the suffix \(x[m-r:m]\) of length \(r\).
The following algorithms are described as mutable processes.  In a segment
replacement step, all old segments mentioned in the rule are read before any
entry is deleted or replaced.  Thus, for example, in a step that replaces a row
segment by an input segment and appends the old row segment to an output
sequence, the appended block is the pre-replacement row segment.
\end{notation}
\begin{definition}[Maximal \(+2\)-chains]
\label{def:maximal-plus-two-chains}
Let \(x=(x_0,x_1,\ldots,x_{m-1})\) be a finite integer sequence.
If \(0\le p<m\), the \emph{maximal \(+2\)-chain starting at \(p\)} is the
longest contiguous block
\[
  x[p:p+\ell]=(x_p,x_{p+1},\ldots,x_{p+\ell-1})
\]
such that
\[
  x_{p+h+1}=x_{p+h}+2
  \qquad\text{for }0\le h<\ell-1.
\]
Equivalently, \(\ell\ge 1\), the displayed equalities hold internally, and the
block cannot be extended one step to the right while preserving consecutive
\(+2\) differences.
The \emph{maximal \(+2\)-chain ending at \(p\)} is the longest contiguous block
\[
  x[p-\ell+1:p+1]=(x_{p-\ell+1},\ldots,x_{p-1},x_p)
\]
such that
\[
  x_{p-h}=x_{p-h+1}-2
  \qquad\text{for }1\le h<\ell.
\]
Equivalently, it is the longest contiguous block ending at \(p\) whose adjacent
entries differ by exactly \(+2\).
\end{definition}
\begin{definition}[The \(\rowsert\) operation]
\label{def:rowsert}
Let \(R_0\) and \(F_0\) be dual Dyck sequences, possibly empty.  The operation
\[
  \rowsert(R_0,F_0)=(E,R)
\]
is defined by the following mutable process.  Start with
\[
  E=\varnothing,\qquad R=R_0,
  \qquad F=F_0.
\]
The sequence \(R\) is the mutable row, \(F\) is the remaining input, and \(E\)
is the evicted output.  While \(F\) is nonempty, let \(a=F[0]\), and search for
the smallest row index \(i\) satisfying
\[
  a\le R[i]+1.
\]
If no such index exists, apply Case~0 below.  Otherwise use the smallest such
\(i\) and apply the first applicable case.
\begin{description}[leftmargin=2.4cm,style=nextline]
\item[Case 0.] No row index \(i\) satisfies \(a\le R[i]+1\).  Remove \(a\) from
the front of \(F\) and append \(a\) to the end of \(R\).  The sequence \(E\) is
unchanged.
\item[Case 1.] The selected index \(i\) satisfies \(a\le R[i]\).  Remove \(a\)
from the front of \(F\), replace \(R[i]\) by \(a\), and append the old value of
\(R[i]\) to the end of \(E\).
\item[Equality branch.] The remaining possibility is
\[
  a=R[i]+1.
\]
Let \(j\) be the length of the maximal \(+2\)-chain in the current row \(R\)
starting at \(i\), and let \(k\) be the length of the maximal \(+2\)-chain in
the current input \(F\) starting at \(0\).
\item[Case 2.] If \(j\le k\), set
\[
  X=F[0:j],\qquad Y=R[i:i+j]
\]
using the pre-step values.  Remove the prefix \(X\) from \(F\), replace the row
segment \(R[i:i+j]\) by \(X\), and append the old row segment \(Y\) to the end
of \(E\).
\item[Case 3.] If \(j>k\), set \(X=F[0:k]\).  Remove the prefix \(X\) from
\(F\), append \(X\) to the end of \(E\), and leave \(R\) unchanged.
\end{description}
When \(F\) becomes empty, return the pair \((E,R)\).
\end{definition}
\begin{definition}[The \(\worsert\) operation]
\label{def:worsert}
Let \(E_0\) and \(R_0\) be dual Dyck sequences.  The operation
\[
  \worsert(E_0,R_0)=(R,F)
\]
is defined by the following mutable process.  Start with
\[
  E=E_0,
  \qquad R=R_0,
  \qquad F=\varnothing.
\]
The sequence \(E\) is processed from the right, \(R\) is the mutable row, and
\(F\) is built on the left.  While \(E\) is nonempty, let \(b=E[-1]\), and
search for the largest row index \(i\) satisfying
\[
  b\ge R[i]-1.
\]
If no such index exists, apply Case~0 below.  Otherwise use the largest such
\(i\) and apply the first applicable case.
\begin{description}[leftmargin=2.4cm,style=nextline]
\item[Case 0.] No row index \(i\) satisfies \(b\ge R[i]-1\).  Remove \(b\) from
the end of \(E\) and prepend \(b\) to the beginning of \(R\).  The sequence
\(F\) is unchanged.
\item[Case 1.] The selected index \(i\) satisfies \(b\ge R[i]\).  Remove \(b\)
from the end of \(E\), replace \(R[i]\) by \(b\), and prepend the old value of
\(R[i]\) to the beginning of \(F\).
\item[Equality branch.] The remaining possibility is
\[
  b=R[i]-1.
\]
Let \(j\) be the length of the maximal \(+2\)-chain in the current row \(R\)
ending at \(i\), and let \(k\) be the length of the maximal \(+2\)-chain in the
current sequence \(E\) ending at its last position.
\item[Case 2.] If \(j\le k\), set
\[
  X=E[-j:],\qquad Y=R[i-j+1:i+1]
\]
using the pre-step values.  Remove the suffix \(X\) from \(E\), replace the row
segment \(R[i-j+1:i+1]\) by \(X\), and prepend the old row segment \(Y\) to the
beginning of \(F\).
\item[Case 3.] If \(j>k\), set \(X=E[-k:]\).  Remove the suffix \(X\) from
\(E\), prepend \(X\) to the beginning of \(F\), and leave \(R\) unchanged.
\end{description}
When \(E\) becomes empty, return the pair \((R,F)\).
\end{definition}
\begin{notation}[Comparison and alpha indices]
The comparison index of a non-Case~0 \(\rowsert\) step is the smallest row
index \(i\) selected by the inequality \(F[0]\le R[i]+1\).  The comparison index
of a non-Case~0 \(\worsert\) step is the largest row index \(i\) selected by the
inequality \(E[-1]\ge R[i]-1\).
Later tableau arguments use alpha indices to record the row positions, or
half-positions, where individual processed elements interact with the current
row.  These alpha indices are computed relative to the current row at the
moment the relevant segment is processed.
For \(\rowsert\), if a processed input block has entries
\(x_0,\ldots,x_{\ell-1}\), then its alpha indices are as follows.  In Case~1,
the single element has alpha index \(i\).  In Case~2, where the block replaces a
row segment starting at \(i\), the element \(x_h\) has alpha index \(i+h\).  In
Case~3, where the block passes through a longer row \(+2\)-chain starting at
\(i\), the element \(x_h\) has alpha index \(i+h+\frac12\); equivalently, it
passes between the adjacent row entries \(x_h-1\) and \(x_h+1\).
For \(\worsert\), write a removed terminal block from \(E\) in its original
left-to-right order as \(x_0,\ldots,x_{\ell-1}\).  In Case~1, the single
element has alpha index \(i\).  In Case~2, where the block replaces a row
segment ending at \(i\), the element \(x_h\) has alpha index \(i-\ell+1+h\).  In
Case~3, where the block passes through a longer row \(+2\)-chain ending at
\(i\), the element \(x_h\) has alpha index \(i-\ell+h+\frac12\); equivalently,
it passes between adjacent row entries \(x_h-1\) and \(x_h+1\).
\end{notation}
\subsection{Basic validity of row insertion and reverse row insertion}
\begin{lemma}[Row insertion preserves dual Dyck sequences]
\label{lem:rowsert-dual-dyck}
Let \(R_0\) and \(F_0\) be dual Dyck sequences, and let
\[
  \rowsert(R_0,F_0)=(E,R).
\]
Then both \(R\) and \(E\) are dual Dyck sequences.
\end{lemma}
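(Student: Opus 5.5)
We argue by induction on the number of steps of the mutable process, with a combined invariant. At every stage, the current row \(R\), the remaining input \(F\) and the current output \(E\) are all dual Dyck sequences. Moreover, whenever \(E\) is nonempty, we track a compatibility condition between the last entry of \(E\) and what will be appended next. Concretely, we aim to show that successive evicted blocks increase by at least \(2\) across their junction. The invariant holds trivially at the start. \(F\) stays dual Dyck automatically, since we only ever remove prefixes.

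\textbf{The row.}
\begin{itemize}
\item In Case~0, \(a\) exceeds \(R[i]+1\) for every \(i\), in particular for \(R[-1]\), so appending \(a\) preserves \(x_{i+1}\ge x_i+2\).
\item In Case~1, let \(i\) be minimal with \(a\le R[i]+1\). Then \(R[i-1]+1<a\), so \(a\ge R[i-1]+2\). On the right, \(a\le R[i]\le R[i+1]-2\). So the replacement is valid.
\item In Case~2, the block \(X=F[0:j]\) is a \(+2\)-chain starting at \(a=R[i]+1\). Hence \(X_h=R[i+h]+1\) for \(h<j\), because \(Y\) is also a \(+2\)-chain of length \(j\). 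The left junction is handled by minimality of \(i\) as before. On the right, maximality of the row chain gives \(R[i+j]\ge R[i+j-1]+3\), which is \(\ge X_{j-1}+2\).
\item In Case~3, \(R\) is unchanged.
\end{itemize}

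\textbf{The output.} Internally, each appended block is dual Dyck: \(Y\) and \(X\) are subsegments of dual Dyck sequences, and a single entry is trivial. The real content, and the main obstacle, is the junction between consecutive appended blocks. For this we add a key monotonicity invariant: the first entry of each new appended block is at least \(2\) more than the last entry appended so far.

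To prove it, compare consecutive steps. Let the earlier step process input ending at some value \(f\), with the next input entry \(a'\ge f+2\). The earlier step appended a value \(e\) that is either
\begin{itemize}
\item an input entry \(\le f\) (Case~3), or
\item a displaced row entry. In Case~1 this is \(\ge a\) but satisfies \(R[i]\le\)? We need \(e\le a'-2\), or more precisely \(e+2\le\) the next appended entry.
\end{itemize}

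In Case~1, the next appended value is either the displaced \(R[i']\) or some input. We would show that the comparison index weakly increases after a replacement step. The replaced entry at \(i\) is now \(a\), and \(a'\ge a+2\) means \(a'>R_{\text{new}}[i]+1\), so \(i'>i\). Hence \(R[i']\ge R_{\text{old}}[i+1]\ge R_{\text{old}}[i]+2\) (for the old row, when \(i'\) was untouched). So the evicted entries increase by \(\ge2\). In Case~3 the next evicted value is \(\ge a'\ge f+2\), so it suffices to show that every evicted entry is \(\ge\) the input that triggered it. This holds: Case~1 gives \(R[i]\ge a\), Case~2 gives \(R[i]=a-1\), and Case~3 returns the input itself.

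These inequalities alone are slightly off by the \(-1\) in Case~2, so the subcase analysis must handle the pairs (Case~2 then Case~3), (Case~3 then Case~2) and so on explicitly. In each, we use the maximal-chain lengths \(j,k\) and the fact that after a Case~2 step the comparison index strictly advances past the replaced segment. We expect this pairwise junction analysis, tracking where the next comparison index lands, to be the bulk of the proof.
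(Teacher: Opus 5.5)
Your treatment of the row is correct and matches the paper's case check. Your plan for \(E\) is also the paper's plan: check only the junction \(t[0]\ge s[-1]+2\) between the blocks \(s,t\) appended in consecutive steps. The gap is that this junction analysis, which is the real content of the lemma, is left as an expectation. Moreover, the one general reduction you state (every evicted entry is at least the input that triggered it) is false, as you half-notice. In Case~2 the first evicted entry is \(y[0]-1\), where \(y[0]\) is the first input entry processed in that step. This breaks exactly when a Case~3 step appending the input block \(s\) is followed by a Case~2 step. Dual-Dyckness of the input then gives only \(y[0]\ge s[-1]+2\), hence \(t[0]\ge s[-1]+1\).

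The missing idea is that the Case~3 block is the \emph{entire} maximal front \(+2\)-chain of the current input. So \(y[0]\ne s[-1]+2\), and by integrality \(y[0]\ge s[-1]+3\). Combined with the bound \(t[0]\ge y[0]-1\), which holds in all three non-Case~0 cases, this settles every junction following a Case~3 step.

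After a replacement step (Case~1 or~2), you need the next comparison index \(i'\) to lie strictly to the right of the last replaced position \(l\), not merely to the right of \(i\). Let \(x\) be the input block just consumed. The current row entries at positions \(\le l\) are at most \(x[-1]\le y[0]-2\), so none of them is eligible. The entries to the right of \(l\) are untouched old-row entries, hence at least \(s[-1]+2\). Now \(t[0]\ge R[i']\) in every case: equality holds in Cases~1 and~2, and \(t[0]=R[i']+1\) in Case~3. This gives the junction uniformly, including a follow-up Case~3 step, where \(t[0]\) is an input entry rather than a displaced row entry.

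Finally, comparing consecutive steps presupposes that no Case~0 step separates two appending steps. You need the observation, proved explicitly in the paper, that Case~0 is absorbing. After \(a\) is appended, the row maximum is \(a\) and the next input is at least \(a+2\), so no row index can ever be eligible again.
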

\begin{proof}
We first prove that the mutable row remains dual Dyck.  Assume that the current
row \(R\) is dual Dyck before a step, and let \(a=F[0]\).
In Case~0, no row index satisfies \(a\le R[i]+1\).  If \(R\) is empty, then
appending \(a\) gives a one-element row.  If \(R\) is nonempty, its last entry
is its largest entry; writing it as \(R[m-1]\), the Case~0 condition gives
\(a>R[m-1]+1\).  Since the entries are integers, \(a\ge R[m-1]+2\), so the new
final gap is valid and all earlier gaps are unchanged.
In Case~1, \(a\le R[i]\), and only the adjacent gaps at position \(i\) can
change.  If \(i>0\), minimality of \(i\) gives \(a>R[i-1]+1\), hence
\(a\ge R[i-1]+2\).  If \(R[i+1]\) exists, then the old row inequality
\(R[i+1]\ge R[i]+2\), together with \(a\le R[i]\), gives
\(R[i+1]\ge a+2\).  Nonexistent boundary inequalities require no check.
In Case~2, \(a=R[i]+1\).  Let \(j\) be the maximal \(+2\)-chain length in the
row starting at \(i\), and let \(k\) be the maximal \(+2\)-chain length in the
input starting at \(0\).  Since Case~2 has \(j\le k\), the replacing input block
and the old row block are both \(+2\)-chains of length \(j\), and
\[
  F[h]=R[i+h]+1 \qquad (0\le h<j)
\]
for the pre-step row and input.  Thus the internal gaps of the new segment are
exactly \(2\).  If \(i>0\), then
\[
  F[0]=R[i]+1\ge R[i-1]+3,
\]
so the left boundary is valid.  If \(R[i+j]\) exists, then the old row chain
cannot continue through \(R[i+j]\).  Since the old row is dual Dyck and the
entries are integral,
\[
  R[i+j]\ge R[i+j-1]+3.
\]
Using \(F[j-1]=R[i+j-1]+1\), we get \(R[i+j]\ge F[j-1]+2\).  If a boundary does
not exist, there is nothing to check.  In Case~3, the row is unchanged.  Hence,
by induction over the processing steps, the final row \(R\) is dual Dyck.
It remains to prove that the evicted sequence \(E\) is dual Dyck.  Each
non-Case~0 step appends to \(E\) a nonempty block that is internally dual Dyck:
a singleton in Case~1, an old row \(+2\)-chain in Case~2, and a front input
\(+2\)-chain in Case~3.  Therefore it suffices to check the boundary between
consecutive appended blocks.
First note that once Case~0 occurs, every later step is also Case~0.  Indeed,
Case~0 appends the current input element \(a\) to the end of the row, and the
row remains dual Dyck by the paragraph above.  If \(b\) is the next input entry,
then the current input is a suffix of a dual Dyck sequence, so \(b\ge a+2\).
Every row entry is at most \(a\), hence no row index can satisfy
\(b\le R[i]+1\).  Thus Case~0 does not create any later block of \(E\), and all
blocks appended to \(E\) occur in consecutive steps before the first Case~0.
Consider two consecutive non-Case~0 steps \(p\) and \(p+1\).  Let \(x\) and
\(y\) be the blocks removed from the front of the input in these steps, and let
\(s\) and \(t\) be the corresponding nonempty blocks appended to \(E\).  We must
show
\[
  t[0]\ge s[-1]+2.
\]
If step \(p+1\) is Case~1 or Case~2, then the first entry appended at that step
satisfies \(t[0]\ge y[0]-1\): this is immediate in Case~1 from \(y[0]\le t[0]\),
and in Case~2 from \(y[0]=t[0]+1\).  If step \(p+1\) is Case~3, then
\(t[0]=y[0]\).
Suppose first that step \(p\) is Case~1 or Case~2, and step \(p+1\) is also
Case~1 or Case~2.  Let \(l\) be the last row index occupied by the old row block
\(s\) before step \(p\) replaces it by \(x\).  Since the input is dual Dyck,
\(y[0]\ge x[-1]+2\).  The minimality of the chosen row index at step \(p\), and
the fact that the new entries inside the replaced block are bounded above by
\(x[-1]\), imply that the next Case~1 or Case~2 index cannot lie inside or to
the left of the replaced block.  Thus the next chosen index is strictly to the
right of \(l\).  At every index strictly to the right of \(l\), the row entry is
unchanged from before step \(p\).  Since the old row was dual Dyck, those
unchanged entries are at least \(s[-1]+2\).  Hence the first entry \(t[0]\)
appended at step \(p+1\) is at least \(s[-1]+2\).
If both steps are Case~3, then \(s\) is the maximal front \(+2\)-chain of the
current input.  Therefore the next input entry \(y[0]\) is not \(s[-1]+2\), and
since the input is dual Dyck, integrality gives \(y[0]\ge s[-1]+3\).  Moreover,
step \(p\) being Case~3 means that the row contains \(s[-1]+1\), and the row is
unchanged in that step.  Since the row is dual Dyck, it cannot also contain
\(s[-1]+2\).  Step \(p+1\) being Case~3 would require the row to contain
\(y[0]-1\), so \(y[0]\ne s[-1]+3\).  Hence \(y[0]\ge s[-1]+4\), and
\(t[0]=y[0]\ge s[-1]+4\).
Next suppose that step \(p\) is Case~1 or Case~2, while step \(p+1\) is Case~3.
With \(l\) as above, the same minimality and input-growth argument shows that
the row entry equal to \(y[0]-1\) in step \(p+1\) lies strictly to the right of
\(l\).  Since the row after step \(p\) is dual Dyck, \(y[0]-1\ge s[-1]+2\), and
therefore \(t[0]=y[0]\ge s[-1]+3\).
Finally suppose that step \(p\) is Case~3, while step \(p+1\) is Case~1 or
Case~2.  The maximality of the input \(+2\)-chain appended as \(s\) gives
\(y[0]\ne s[-1]+2\), and the input being dual Dyck gives \(y[0]\ge s[-1]+2\).
Thus \(y[0]\ge s[-1]+3\).  Since \(t[0]\ge y[0]-1\) in Cases~1 and~2, we obtain
\(t[0]\ge s[-1]+2\).
All boundaries between consecutive appended blocks are valid.  Therefore \(E\)
is dual Dyck, completing the proof.
\end{proof}
\begin{definition}[Adjoint sequence]
\label{def:adjoint-sequence}
If \(x=(x_0,x_1,\ldots,x_{m-1})\), define
\[
  x^\dagger=(-x_{m-1},-x_{m-2},\ldots,-x_0).
\]
For a pair of sequences, define
\[
  (E,R)^\dagger=(R^\dagger,E^\dagger).
\]
This is the negated-reversal adjoint used in the row-insertion arguments.
\end{definition}
\begin{lemma}[Adjoints preserve dual Dyck sequences]
\label{lem:adjoint-dual-dyck}
A finite integer sequence \(x\) is dual Dyck if and only if \(x^\dagger\) is
dual Dyck.
\end{lemma}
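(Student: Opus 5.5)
The plan is a direct index computation followed by an involution argument.

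Write \(x=(x_0,\ldots,x_{m-1})\) and set \(y=x^\dagger\), so that
\[
  y_i=-x_{m-1-i}\qquad(0\le i<m).
\]
For \(0\le i<m-1\) the consecutive difference of \(y\) is
\[
  y_{i+1}-y_i=-x_{m-2-i}+x_{m-1-i}=x_{m-1-i}-x_{m-2-i}.
\]
As \(i\) runs over \(0\le i<m-1\), the index \(r=m-2-i\) also runs over \(0\le r<m-1\). Hence the consecutive differences of \(y\) are exactly those of \(x\), listed in reverse order.

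It follows that \(y_{i+1}\ge y_i+2\) holds for all \(i\) if and only if \(x_{r+1}\ge x_r+2\) holds for all \(r\). Both directions come at once. Alternatively, one can prove only the forward implication and then use
\[
  (x^\dagger)^\dagger=x,
\]
since reversing twice and negating twice is the identity.

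The degenerate cases fit this pattern. If \(m\le 1\), including the empty sequence, then both \(x\) and \(x^\dagger\) have no consecutive pairs, so both are vacuously dual Dyck. No step here is a genuine obstacle. The only point needing care is the reindexing \(r=m-2-i\), which must be checked to be a bijection of the index range \(\{0,\ldots,m-2\}\).
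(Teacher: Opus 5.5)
Your proposal is correct and follows essentially the same route as the paper: both rewrite the adjacent inequality for \(x^\dagger\) at position \(i\) as \(x_{m-1-i}\ge x_{m-2-i}+2\), which is the dual-Dyck inequality for \(x\) at the reversed index. Your explicit check that \(r=m-2-i\) is a bijection of \(\{0,\ldots,m-2\}\), and your remark on the degenerate cases \(m\le 1\), are harmless additions to the same one-line argument.
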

\begin{proof}
Write \(x=(x_0,\ldots,x_{m-1})\).  The adjacent inequality for \(x^\dagger\) at
position \(h\) is
\[
  -x_{m-2-h}\ge -x_{m-1-h}+2,
\]
which is equivalent to
\[
  x_{m-1-h}\ge x_{m-2-h}+2.
\]
These are exactly the adjacent dual-Dyck inequalities for \(x\), read in reverse
order.
\end{proof}
\begin{proposition}[Reverse row insertion preserves dual Dyck sequences]
\label{prop:worsert-dual-dyck}
Let \(E_0\) and \(R_0\) be dual Dyck sequences, and let
\[
  \worsert(E_0,R_0)=(R,F)
\]
with \(\worsert\) as in Definition~\ref{def:worsert}.  Then
both \(R\) and \(F\) are dual Dyck sequences.
\end{proposition}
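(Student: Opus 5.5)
The plan is to reduce the statement to Lemma~\ref{lem:rowsert-dual-dyck} via the adjoint of Definition~\ref{def:adjoint-sequence}. The goal is to show that \(\worsert\) is conjugate to \(\rowsert\): for dual Dyck \(E_0,R_0\),
\[
  \worsert(E_0,R_0)=(R,F)
  \quad\Longleftrightarrow\quad
  \rowsert(R_0^\dagger,E_0^\dagger)=(F^\dagger,R^\dagger).
\]
By Lemma~\ref{lem:adjoint-dual-dyck}, both \(R_0^\dagger\) and \(E_0^\dagger\) are dual Dyck, so the right-hand call is legitimate. Lemma~\ref{lem:rowsert-dual-dyck} then makes \(F^\dagger\) and \(R^\dagger\) dual Dyck, and a second use of Lemma~\ref{lem:adjoint-dual-dyck} gives that \(R\) and \(F\) are dual Dyck.

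To prove the conjugacy, I would run the two mutable processes in parallel, maintaining the invariant that the \(\worsert\) state \((E,R,F)\) corresponds to the \(\rowsert\) state with row \(R^\dagger\), input \(E^\dagger\), and output \(F^\dagger\). Initially both output sequences are empty, so the invariant holds. Taking the last entry \(b=E[-1]\) becomes taking the first input entry \(a=E^\dagger[0]=-b\). A row index \(i\) of \(R\) corresponds to the index \(m-1-i\) of \(R^\dagger\), where \(m=|R|\). The condition \(b\ge R[i]-1\) is equivalent to \(-b\le -R[i]+1\), which is exactly the \(\rowsert\) selection inequality at the mirrored index. Taking the largest such \(i\) therefore corresponds to taking the smallest mirrored index.

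I would then check the cases one at a time.
\begin{itemize}
\item Case~0: prepending to \(R\) is the same as appending to \(R^\dagger\).
\item Case~1: the condition \(b\ge R[i]\) is equivalent to \(-b\le -R[i]\). Prepending the old \(R[i]\) to \(F\) is the same as appending \(-R[i]\) to \(F^\dagger\).
\item Equality branch: \(b=R[i]-1\) becomes \(a=R^\dagger[m-1-i]+1\). Under negated reversal, a maximal \(+2\)-chain of \(R\) \emph{ending} at \(i\) becomes a maximal \(+2\)-chain of \(R^\dagger\) \emph{starting} at \(m-1-i\), because negation and reversal each flip the chain direction and together preserve the \(+2\) differences. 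Likewise, the terminal chain of \(E\) becomes the front chain of \(E^\dagger\). So the lengths \(j,k\) agree, and the split between Case~2 and Case~3 agrees.
\item Case~2: the suffix \(X=E[-j:]\) has adjoint equal to the prefix \(E^\dagger[0:j]\), and \(Y^\dagger\) is the mirrored row block. Replacing blocks and prepending \(Y\) to \(F\) correspond to replacing blocks and appending \(Y^\dagger\) to \(F^\dagger\). This uses the identity \((UV)^\dagger=V^\dagger U^\dagger\).
\item Case~3: the same identity shows that prepending \(X\) to \(F\) corresponds to appending \(X^\dagger\) to \(F^\dagger\), with the row unchanged on both sides.
\end{itemize}
Both processes terminate at the same step, when \(E\) (equivalently \(E^\dagger\)) is empty, which yields the displayed equivalence.

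The main obstacle is bookkeeping rather than ideas. The key points are that the index mirroring is applied to the \emph{current} row length, which changes after Case~0, and that the block endpoints in Case~2 map correctly: \(R[i-j+1:i+1]\) must map to \(R^\dagger[m-1-i:m-1-i+j]\). I would verify this slice identity explicitly once, and then the rest is mechanical.
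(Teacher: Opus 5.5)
Your proposal is correct and follows essentially the same route as the paper's proof. The paper likewise identifies the \worsert{} state $(E,R,F)$ with the \rowsert{} state $(F^\dagger,R^\dagger,E^\dagger)$ and matches the selection rule and Cases~0--3 under negated reversal; it then applies Lemma~\ref{lem:adjoint-dual-dyck}, Lemma~\ref{lem:rowsert-dual-dyck}, and Lemma~\ref{lem:adjoint-dual-dyck} again, while you additionally spell out the index-mirroring and slice bookkeeping that the paper leaves implicit.
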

\begin{proof}
Run \(\worsert\) on \((E_0,R_0)\) with mutable state
\((E,R,F)\).  Under the pair adjoint convention, the state
\[
  (E,R,F)
\]
corresponds to the \(\rowsert\) state
\[
  (F^\dagger,R^\dagger,E^\dagger).
\]
The search for the largest index \(i\) satisfying \(E[-1]\ge R[i]-1\) is the
adjoint of the search for the smallest index in the row \(R^\dagger\) satisfying
the \(\rowsert\) comparison.  Case~0 for \(\worsert\), which prepends the final
element of \(E\) to \(R\), is exactly the adjoint of row-insertion Case~0, which
appends the first element of the input to the row.  Cases~1--3 likewise match
row-insertion Cases~1--3 under negation and reversal, with maximal \(+2\)-chains
starting at an index becoming maximal \(+2\)-chains ending at the reversed
index.
Thus the \(\worsert\) run on \((E_0,R_0)\) is the adjoint of a \(\rowsert\) run
on \((R_0^\dagger,E_0^\dagger)\).  By
Lemma~\ref{lem:adjoint-dual-dyck}, the adjoint inputs are dual Dyck.  By
Lemma~\ref{lem:rowsert-dual-dyck}, the two output sequences of the adjoint
\(\rowsert\) run are dual Dyck.  Applying Lemma~\ref{lem:adjoint-dual-dyck}
again gives that \(R\) and \(F\) are dual Dyck.
\end{proof}
\subsection{\texorpdfstring{Reversibility and \(\di\) preservation}{Reversibility and di preservation}}
\begin{proposition}[One-step reversibility]
\label{prop:rowsert-one-step-reversibility}
Consider an actual \(\rowsert\) run with mutable state \((E,R,F)\).  Suppose a
step is not Case~0, and let the state immediately before the step be
\[
  (E_{\mathrm{old}},R_{\mathrm{old}},F_{\mathrm{old}})
\]
and the state immediately after the step be
\[
  (E_{\mathrm{new}},R_{\mathrm{new}},F_{\mathrm{new}}).
\]
Then one step of the \(\worsert\) process applied to
\((E_{\mathrm{new}},R_{\mathrm{new}},F_{\mathrm{new}})\) removes exactly the
block appended to \(E\) in that \(\rowsert\) step and restores the old state
\((E_{\mathrm{old}},R_{\mathrm{old}},F_{\mathrm{old}})\).
\end{proposition}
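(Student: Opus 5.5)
We argue case by case according to which case the \(\rowsert\) step used, with \(a=F_{\mathrm{old}}[0]\), comparison index \(i\), and appended block \(s\), so that \(E_{\mathrm{new}}=E_{\mathrm{old}}s\). In each case we show three things. First, the \(\worsert\) search with \(b=E_{\mathrm{new}}[-1]=s[-1]\) in the row \(R_{\mathrm{new}}\) selects a specific index. Second, the first applicable \(\worsert\) case is the mirror of the \(\rowsert\) case. Third, the chain lengths \(j',k'\) computed by \(\worsert\) produce a removed block of length exactly \(|s|\), whose reinsertion restores \(R_{\mathrm{old}}\) and \(F_{\mathrm{old}}\). Throughout we use that \(R_{\mathrm{old}}\), \(R_{\mathrm{new}}\), \(F_{\mathrm{old}}\) are dual Dyck (Lemma~\ref{lem:rowsert-dual-dyck}). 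We also use that \(E_{\mathrm{new}}\) is dual Dyck: this holds because the proof of that lemma shows every block boundary in \(E\) is valid, including those of intermediate states.

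\emph{Case 1.} Here \(s=(R_{\mathrm{old}}[i])\) and \(R_{\mathrm{new}}[i]=a\le s[-1]\), so index \(i\) satisfies \(b\ge R_{\mathrm{new}}[i]\). For maximality, note that \(R_{\mathrm{new}}[i+1]=R_{\mathrm{old}}[i+1]\ge s[-1]+2\), so \(b< R_{\mathrm{new}}[i']-1\) for all \(i'>i\). The largest index is therefore \(i\), and since \(b\ge R_{\mathrm{new}}[i]\) this is \(\worsert\) Case~1. Swapping back restores \(R_{\mathrm{old}}[i]\) and prepends \(a\) to \(F\).

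\emph{Case 2.} Here \(s=R_{\mathrm{old}}[i:i+j]\), and \(R_{\mathrm{new}}[i:i+j]=s+1\) entrywise. At index \(i+j-1\) we get \(b=R_{\mathrm{new}}[i+j-1]-1\). Later indices fail because \(R_{\mathrm{old}}[i+j]\ge s[-1]+3\), a consequence of chain maximality. So the selected index is \(i+j-1\), and the equality branch applies. We then need \(j'=j\le k'\). The row chain ending at \(i+j-1\) contains the new block of length \(j\). To see that it does not extend further left, use \(F[0]=a\ge R_{\mathrm{old}}[i-1]+3\), shown in the lemma's proof. In \(E_{\mathrm{new}}\), the suffix \(s\) is a \(+2\)-chain of length \(j\), so \(k'\ge j\). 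This is \(\worsert\) Case~2 with \(j'=j\): it swaps the blocks back and prepends the chain to \(F\).

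\emph{Case 3.} Here \(s=F_{\mathrm{old}}[0:k]\), the row is unchanged, and the row contains the chain \(R[i:i+j]\) with \(j>k\) and \(R[i+h]=s[h]-1\). Then \(b=s[k-1]=R[i+k-1]+1\). The largest \(i'\) with \(b\ge R[i']-1\) is \(i+k\), since \(R[i+k]=b+1\) and \(R[i+k+1]\ge b+3\). This gives the equality branch at index \(i+k\). The row chain ending at \(i+k\) has length \(j'\ge k+1\). We then need \(k'=k\), i.e.\ the \(+2\)-chain ending at the end of \(E_{\mathrm{new}}\) is exactly \(s\). If \(E_{\mathrm{old}}\) is empty this is immediate. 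Otherwise it amounts to \(s[0]\ne E_{\mathrm{old}}[-1]+2\), which is the strict boundary bound \(t[0]\ge s[-1]+3\) obtained in the lemma's case analysis (Case~3 after Cases~1/2, or after Case~3 with \(\ge4\)). The Case~3 after Case~3 situation gives this directly. For the situation where step \(p\) is Case~1/2, I would redo the argument: the previous evicted block ended at a row entry to the left of the \(+2\)-chain now containing \(s[0]-1\), which forces a gap of at least \(3\). So \(j'>k'=k\), which is \(\worsert\) Case~3: it moves \(s\) back to the front of \(F\) and leaves \(R\) unchanged.

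The main obstacle is this last point, establishing \(k'=|s|\) in Cases~2 and~3. This is where the reverse step could grab too long a suffix of \(E\). Case~2 only needs \(k'\ge j\), but Case~3 needs exactness. I will resolve it by extracting from the proof of Lemma~\ref{lem:rowsert-dual-dyck} the sharpened boundary inequality \(E_{\mathrm{new}}[|E_{\mathrm{old}}|]\ge E_{\mathrm{old}}[-1]+3\) whenever the new step is Case~3.
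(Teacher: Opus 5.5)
Your proposal is correct and follows essentially the same route as the paper: a case-by-case check that the reverse search selects index \(i\), \(i+j-1\), or \(i+k\) respectively. In Case~3 you take the exactness of the terminal \(+2\)-chain of \(E_{\mathrm{new}}\) from the gap-at-least-\(3\) boundary estimate already established in the proof of Lemma~\ref{lem:rowsert-dual-dyck}, just as the paper does, so you do not need to redo that argument.
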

\begin{proof}
There is no Case~0 to check.  We verify Cases~1--3.
In Case~1, let the processed input be \(a=F_{\mathrm{old}}[0]\), and let the
chosen row index be \(i\).  The hypotheses of Case~1 are \(a\le R_{\mathrm{old}}[i]\)
and, by minimality of \(i\), no earlier row index \(h<i\) satisfies
\(a\le R_{\mathrm{old}}[h]+1\).  The \(\rowsert\) step replaces the old row value
\(b=R_{\mathrm{old}}[i]\) by \(a\), appends \(b\) to \(E\), and removes \(a\) from
the front of \(F\).  In the new state, the last element of \(E\) is \(b\), and
the row entry at \(i\) is \(a\).  Since \(b\ge a\), the \(\worsert\)
search selects \(i\) or a later eligible index.  No later index can be eligible:
all row entries to the right of \(i\) are at least \(b+2\), so they exceed \(b+1\).
Thus the selected index is \(i\), and \(\worsert\) Case~1 replaces \(a\) by \(b\)
and prepends \(a\) to \(F\).  The previous state is recovered.
In Case~2, the processed input block \(X=F_{\mathrm{old}}[0:j]\) replaces the
old row block \(Y=R_{\mathrm{old}}[i:i+j]\), where \(X_h=Y_h+1\), and \(Y\) is
appended to \(E\).  In the new state, the terminal block of \(E\) is \(Y\), and
the corresponding row segment is \(X\).  The \(\worsert\) search from
the last element of \(Y\) selects the right end of this segment, since
\(Y_{j-1}=X_{j-1}-1\), and maximality/minimality in the forward step prevents a
later row index from satisfying the reverse search inequality.  The maximal
\(+2\)-chain in the row ending there has length exactly \(j\): it contains the
new block \(X\), and if it extended one position to the left then
\(X_0=R_{\mathrm{new}}[i-1]+2\), so
\[
  Y_0=X_0-1=R_{\mathrm{old}}[i-1]+1,
\]
contradicting the dual-Dyck inequality
\(Y_0=R_{\mathrm{old}}[i]\ge R_{\mathrm{old}}[i-1]+2\) in the old row.  The
terminal chain in \(E\) has length at least \(j\), because the appended block
\(Y\) is a \(+2\)-chain.  Thus \(\worsert\) applies Case~2, removes the terminal
block \(Y\), replaces \(X\) by \(Y\), and prepends \(X\) to \(F\), recovering the
old state.
In Case~3, the processed input block \(X=F_{\mathrm{old}}[0:k]\) passes through
a strictly longer row \(+2\)-chain and is appended to \(E\), while \(R\) is
unchanged.  In the new state, the terminal block of \(E\) is \(X\), and the row
still contains the longer \(+2\)-chain through which \(X\) passed.  By the
Case~3-to-next-block boundary estimate proved in
Lemma~\ref{lem:rowsert-dual-dyck}, if \(E_{\mathrm{old}}\) is nonempty, the
first entry of the newly appended block \(X\) is at least \(3\) larger than the
previous final entry of \(E_{\mathrm{old}}\).
Therefore the terminal \(+2\)-chain of \(E_{\mathrm{old}}X\) is exactly \(X\); it
does not extend left into \(E_{\mathrm{old}}\).  If \(X\) has length \(k\) and
the forward selected row index was \(i\), then the reverse search selects the
row entry at index \(i+k\), immediately after the \(k\) row-chain entries
through which \(X\) passed.  The maximal \(+2\)-chain ending there contains
\(R_{\mathrm{old}}[i:i+k+1]\), so its length is at least \(k+1>|X|\).
Hence \(\worsert\) applies Case~3, removes \(X\) from the end of \(E\), prepends
\(X\) to \(F\), and leaves \(R\) unchanged.  The result is exactly the old state.
\end{proof}
\begin{theorem}[Global reversibility without Case~0]
\label{thm:rowsert-global-reversibility}
Let \(R_0\) and \(F_0\) be dual Dyck sequences.  Suppose
\[
  \rowsert(R_0,F_0)=(E,R)
\]
and that no step of this \(\rowsert\) run applies Case~0.  Then
\[
  \worsert(E,R)=(R_0,F_0).
\]
\end{theorem}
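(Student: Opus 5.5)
The plan is to obtain global reversibility by iterating Proposition~\ref{prop:rowsert-one-step-reversibility} backwards along the run, using induction on the number of steps. Let the \(\rowsert\) run on \((R_0,F_0)\) pass through states
\[
  (E^{(0)},R^{(0)},F^{(0)})=(\varnothing,R_0,F_0),\ (E^{(1)},R^{(1)},F^{(1)}),\ \ldots,\ (E^{(N)},R^{(N)},F^{(N)})=(E,R,\varnothing).
\]
By hypothesis every step is Case~1, 2 or~3. Each such step removes a nonempty block from \(F\), so \(N\) is finite, and each appends a nonempty block to \(E\).

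The key claim is that the \(\worsert\) run started from \((E,R,\varnothing)\) visits exactly the states \((E^{(s)},R^{(s)},F^{(s)})\) for \(s=N,N-1,\ldots,0\), in that order. We prove it by downward induction on \(s\).

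At a state \((E^{(s)},R^{(s)},F^{(s)})\) with \(s\ge1\), the forward step \(s-1\to s\) was non-Case~0. Proposition~\ref{prop:rowsert-one-step-reversibility} then says that one \(\worsert\) step from this state restores \((E^{(s-1)},R^{(s-1)},F^{(s-1)})\). The \(\worsert\) process only reads \(E\) and \(R\) when deciding its step, so the current content of \(F\) does not affect that decision. The proposition was stated for the state of an actual \(\rowsert\) run, which is exactly what \((E^{(s)},R^{(s)},F^{(s)})\) is. In particular \(E^{(s)}\) is nonempty for \(s\ge1\), so the \(\worsert\) loop does not halt early. When \(s=0\) we have \(E^{(0)}=\varnothing\), the loop terminates, and the output is \((R^{(0)},F^{(0)})=(R_0,F_0)\).

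One consistency point needs checking: \(\worsert\) as defined starts with \(F=\varnothing\) and builds \(F\) by prepending. The induction shows that after \(N-s\) reverse steps the accumulated \(F\) equals \(F^{(s)}\). This holds because each reverse step prepends precisely the block the forward step removed from the front of \(F\), as the proposition asserts. The starting state also matches: \(F^{(N)}=\varnothing\) is exactly the \(\worsert\) initial value.

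The main obstacle is making sure the one-step proposition genuinely applies at every intermediate state. Its proof of the Case~3 reversal uses the boundary estimate from Lemma~\ref{lem:rowsert-dual-dyck}, namely that a Case~3 block begins at least \(3\) above the previous final entry of \(E\). That estimate was derived for consecutive non-Case~0 steps of a run, and here every step is non-Case~0, so it is available throughout. This is precisely why the hypothesis excluding Case~0 is needed. A Case~0 step appends to \(R\) without recording anything in \(E\), so \(\worsert\) could not detect it, and the reverse run would stop before undoing it.
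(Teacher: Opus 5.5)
Your proposal is correct and is the paper's own argument---record the states of the run and apply Proposition~\ref{prop:rowsert-one-step-reversibility} backwards \(N\) times---with your extra checks (that \(\worsert\) ignores \(F\) when choosing a step, that \(F^{(N)}=\varnothing\) matches its initial value, and that \(E^{(s)}\neq\varnothing\) for \(s\ge 1\)) making explicit what the paper leaves implicit. One small correction of emphasis: the Case~3 boundary estimate from Lemma~\ref{lem:rowsert-dual-dyck} holds for any two consecutive non-Case~0 steps of any run (once Case~0 occurs, every later step is Case~0), so it does not depend on the no-Case~0 hypothesis; that hypothesis is needed only for the reason in your last sentence, namely that a Case~0 step leaves no trace in \(E\) and cannot be undone by a \(\worsert\) step.
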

\begin{proof}
Record the mutable states of the \(\rowsert\) run:
\[
  (E_0,R_0,F_0),\ (E_1,R_1,F_1),\ldots,\ (E_N,R_N,F_N),
\]
where \(E_0=\varnothing\), \(R_N=R\), \(F_N=\varnothing\), and \(E_N=E\).  Since
Case~0 never occurs, every step is covered by
Proposition~\ref{prop:rowsert-one-step-reversibility}.  Starting from
\((E_N,R_N,F_N)\), apply the \(\worsert\) steps in reverse.  The first
reverse step recovers \((E_{N-1},R_{N-1},F_{N-1})\), the next recovers
\((E_{N-2},R_{N-2},F_{N-2})\), and so on.  After \(N\) reverse steps the state
is \((\varnothing,R_0,F_0)\), so the returned pair is \((R_0,F_0)\).
\end{proof}
We next record the statistic preserved by row insertion.
\begin{lemma}[\(\di\)-preservation for row insertion]
\label{lem:rowsert-di-preservation}
Let \(R_0\) and \(F_0\) be dual Dyck sequences, and suppose
\[
  \rowsert(R_0,F_0)=(E,R).
\]
Then
\[
  \di(R_0F_0)=\di(ER).
\]
\end{lemma}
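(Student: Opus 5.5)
The plan is to prove a stronger invariant. At every moment of the mutable \(\rowsert\) process with state \((E,R,F)\), the concatenated word \(ERF\) satisfies \(\di(ERF)=\di(R_0F_0)\). Initially \(E=\varnothing\), so the word is \(R_0F_0\); at termination \(F=\varnothing\), so the word is \(ER\). The lemma therefore follows by induction over the steps once we show that a single step does not change \(\di\) of the concatenated word.

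For a single step, the new word is obtained from the old one by moving contiguous blocks past other contiguous blocks. The relative order within each block is preserved, and so is the order of every pair not involving a moved block. Since \(\di\) counts ordered pairs \((u,v)\), with \(u\) earlier and \(u=v+1\), it suffices to compare the pairs formed between a moved block and the block it jumps over. I will go case by case.

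\emph{Case~0.} Moving \(a\) from the front of \(F\) to the end of \(R\) leaves the word unchanged.

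\emph{Case~1.} With \(b=R[i]\), the word \(E\,R[0:i]\,b\,R[i+1:]\,a\,F'\) becomes \(E\,b\,R[0:i]\,a\,R[i+1:]\,F'\).
\begin{itemize}
\item Here \(b\) jumps left over \(R[0:i]\). By minimality of \(i\), each \(R[h]\) with \(h<i\) satisfies \(R[h]\le a-2\le b-2\). So neither \(R[h]=b\pm1\) can occur, and no pairs change.
\item Also \(a\) jumps left over \(R[i+1:]\), whose entries are all \(\ge b+2\ge a+2\). Again no pairs change.
\item The relative order of \(a\) and \(b\) is unchanged.
\end{itemize}

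\emph{Case~2.} This case is identical in structure. The block \(Y=R[i:i+j]\) jumps over \(R[0:i]\), whose entries are \(\le R[i]-2\le y-2\) for every \(y\in Y\). The block \(X\) jumps over \(R[i+j:]\), whose entries are \(\ge R[i+j-1]+3=X[-1]+2\), as computed in the proof of Lemma~\ref{lem:rowsert-dual-dyck}. No value differences of exactly \(1\) arise, and \(Y\) stays before \(X\).

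The one genuinely non-vacuous case, which I expect to be the main point, is \emph{Case~3}. There the block \(X=F[0:k]\) jumps from the front of \(F\) leftward over the entire row \(R\) to the end of \(E\), so the pairs between \(X\) and \(R\) really do change orientation. Before the move we count \(\#\{(r,x): r=x+1\}\); after it we count \(\#\{(x,r): x=r+1\}\). I will show these are equal. The row chain starting at \(i\) has length \(j>k\), and \(x_h=R[i]+1+2h\), so
\[
R[i+h]=x_h-1 \quad\text{and}\quad R[i+h+1]=x_h+1
\]
both lie in the row for \(0\le h<k\). A dual Dyck row has distinct entries, so each \(x_h\) contributes exactly one pair on each side. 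Hence the two counts are both \(k\).

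Pairs among \(E\), \(F'\) and the rest are untouched in all cases. This completes the induction and gives \(\di(R_0F_0)=\di(ER)\).
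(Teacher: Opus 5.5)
Your proposal is correct and takes essentially the same approach as the paper. Both prove that $\di(E\cdot R\cdot F)$ is invariant at every mutable step: the crossings in Cases~0--2 create or destroy no $\di$-pairs, and in Case~3 each $x_h$ trades its pair with $x_h+1$ for a pair with $x_h-1$. Your version is slightly more explicit, using the minimality of $i$ in Case~1 and the distinctness of row entries in Case~3; like the paper, it tacitly uses that the current row stays dual Dyck throughout, which Lemma~\ref{lem:rowsert-dual-dyck} supplies.
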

\begin{proof}
It is enough to prove that the quantity \(\di(E\cdot R\cdot F)\) is invariant
at each mutable step of \(\rowsert\).  At the beginning this concatenation is
\(R_0F_0\), and at the end it is \(ER\).
In Case~0, the first element of \(F\) is appended to the end of \(R\), so the
concatenated word \(E\cdot R\cdot F\) is unchanged.  Hence \(\di\) is
unchanged.
In Case~1, the input element \(a\) replaces a row value \(b\ge a\), while \(b\)
is appended to \(E\).  The element \(a\) moves left past row entries strictly to
the right of the selected position, all of which are at least \(b+2\), and
therefore at least \(a+2\).  The element \(b\) moves left of the earlier row
entries, all of which are at most \(b-2\).  Thus neither move changes the number
of ordered pairs whose left value is exactly one more than the right value.
In Case~2, the input \(+2\)-chain \(X\) replaces the old row \(+2\)-chain \(Y\),
with \(X_h=Y_h+1\).  The block \(X\) moves left past the row entries to the
right of \(Y\); by maximality of the old row chain, the first such right entry,
if it exists, is at least two larger than the last entry of \(X\), and all later
entries are even larger.  The block \(Y\) moves left of the row entries before
the replaced segment; those earlier entries are at least two smaller than the
first entry of \(Y\).  No cross-block \(\di\)-pair is created or destroyed, and
the internal \(\di\)-contribution of a \(+2\)-chain is zero in both blocks.
In Case~3, the input \(+2\)-chain \(X\) passes through a longer row \(+2\)-chain
and is moved from the right of \(R\) to the left of \(R\).  For each element
\(x\) of \(X\), the row contains adjacent values \(x-1\) and \(x+1\) straddling
the pass-through position.  Before the move, the pair \((x+1,x)\) contributes
one \(\di\)-pair, and after the move the pair \((x,x-1)\) contributes one
\(\di\)-pair.  These changes cancel element by element, and all other crossed
row entries differ from \(x\) by at least \(2\).  Hence the total \(\di\) is
unchanged.
Every step preserves \(\di(E\cdot R\cdot F)\), so the initial and final values
are equal.
\end{proof}

\subsection{Tableau insertion and alpha monotonicity tools}
The tableau-insertion arguments below use the paper row order fixed in
Definition~\ref{def:dyck-tableau}.  Thus a block of rows
\[
  T_0,T_1,\ldots,T_s
\]
is listed from top to bottom, and the adjacent column condition is
\[
  T_j[p]\le T_{j+1}[p]+1
\]
whenever both displayed entries exist.  The row-reading word remains the
bottom-to-top word specified in Definition~\ref{def:dyck-tableau}.
\begin{definition}[The \texttt{tabsert} operation]
\label{def:tabsert}
Let \(T\) be a Dyck tableau and let \(F\) be a dual Dyck sequence.  If \(T\) has
no rows, then \(\texttt{tabsert}(T,F)\) is the empty tableau when
\(F=\varnothing\), and is the one-row tableau with row \(F\) when
\(F\ne\varnothing\).  Otherwise, list the rows of \(T\) for this definition in
paper top-to-bottom order as
\[
  T_0,T_1,\ldots,T_s.
\]
The tableau insertion
\[
  \texttt{tabsert}(T,F)
\]
is defined by iterating \(\rowsert\) through these rows from top to bottom.
Set \(E_0=F\).  For \(j=0,1,\ldots,s\), as long as \(E_j\) is nonempty, apply
\[
  (E_{j+1},T_j')=\rowsert(T_j,E_j)
\]
and replace row \(T_j\) by \(T_j'\).  If some \(E_{j+1}\) is empty, the
operation terminates and all lower rows not yet reached are left unchanged.  If
\(E_{s+1}\) is still nonempty after the bottom row has been processed, append
\(E_{s+1}\) as a new bottom row and terminate.  If \(F\) is empty, the tableau
is unchanged.
The lemmas below prove the tableau-validity assertions needed to use this
row-by-row procedure in the insertion bijection.
\end{definition}
\begin{example}[A four-factor tableau insertion]
\label{ex:four-factor-tableau-insertion}
Consider the dual Dyck factorization
\[
  F_0\mid F_1\mid F_2\mid F_3
  =
  (0,2,4)\mid(1,3)\mid(1,3,5)\mid(0,6).
\]
Inserting the factors in order produces the trace below.  The tableau
\(P^{(i)}\) is the insertion tableau
after \(F_i\) has been inserted, and \(Q^{(i)}\) is the recording tableau of the
same shape, with each new cell labeled by the factor that created it.
\[
\begin{array}{c@{\quad}c@{\quad}c@{\qquad}c}
\text{insertion} & & P^{(i)} & Q^{(i)} \\[0.4em]
\varnothing\ \xleftarrow{\ F_0=(0,2,4)\ }
&
&
\begin{ytableau}
0 & 2 & 4
\end{ytableau}
&
\begin{ytableau}
0 & 0 & 0
\end{ytableau}
\\[2.0em]
P^{(0)}\ \xleftarrow{\ F_1=(1,3)\ }
&
&
\begin{ytableau}
0 & 2 & 4 \\
1 & 3
\end{ytableau}
&
\begin{ytableau}
0 & 0 & 0 \\
1 & 1
\end{ytableau}
\\[2.0em]
P^{(1)}\ \xleftarrow{\ F_2=(1,3,5)\ }
&
&
\begin{ytableau}
1 & 3 & 5 \\
0 & 2 & 4 \\
1 & 3
\end{ytableau}
&
\begin{ytableau}
0 & 0 & 0 \\
1 & 1 & 2 \\
2 & 2
\end{ytableau}
\\[3.0em]
P^{(2)}\ \xleftarrow{\ F_3=(0,6)\ }
&
&
\begin{ytableau}
0 & 3 & 6 \\
0 & 2 & 5 \\
1 & 4 \\
1 & 3
\end{ytableau}
&
\begin{ytableau}
0 & 0 & 0 \\
1 & 1 & 2 \\
2 & 2 \\
3 & 3
\end{ytableau}
\end{array}
\]
The row-level cases in this example show why the factorization, rather than only the
underlying word, matters.  When \(F_1=(1,3)\) is inserted through the row
\((0,2,4)\), the row contains a longer \(+2\)-chain than the input, so
\rowsert\ uses Case~3 and passes \((1,3)\) through while leaving the row
\((0,2,4)\) unchanged.  When \(F_2=(1,3,5)\) is inserted, the same prefix
\((1,3)\) is connected to the following entry \(5\); now the input chain and the
row chain have the same length, so the insertion into \((0,2,4)\) uses Case~2
and displaces the whole row.  Continuing the insertion of \(F_2\), the displaced
row \((0,2,4)\) passes through \((1,3)\) by two Case~1 replacements followed by
a terminal Case~0 append, creating the third row.
The final insertion, \(F_3=(0,6)\), produces a longer cascade: through the top
row it uses Cases~1 and~2, through the next row it uses Cases~3 and~2, and
through the third row it uses Cases~1 and~2 before the final evicted sequence is
appended as the new bottom row.  The last cascade illustrates the role of the
adjacent-row preservation lemmas.  Before inserting \(F_3\), the top two rows are
\[
  a=(1,3,5)
  \qquad\text{over}\qquad
  b=(0,2,4).
\]
After the insertion they are
\[
  a'=(0,3,6)
  \qquad\text{over}\qquad
  b'=(0,2,5).
\]
If the top row changed to \(a'\) while the old second row \(b\) remained in
place, the third column would violate the column condition, since
\(6>4+1\).  The subsequent update of the second row repairs this: with
\(b'\), the third-column inequality is \(6\le 5+1\).
Thus the example exhibits all four \rowsert\ cases and shows that the insertion
tableau depends on the chosen factor boundaries.
\end{example}
The proofs of the next three tableau lemmas use the alpha indices introduced in
the row-insertion definitions.  The two monotonicity tools below make precise
the claim used below: when Case~0 is absent, successive input elements
interact with successive row positions in order.
\begin{lemma}[Row-insertion alpha monotonicity]
\label{lem:rowsert-alpha-monotonicity}
Let \(R\) and \(x=(x_0,x_1,\ldots,x_{m-1})\) be dual Dyck sequences.  Suppose
that \(\rowsert(R,x)\) runs without ever applying Case~0.  For each input
element \(x_h\), let \(\alpha_h\) be its alpha index, computed relative to the
current row at the moment the segment containing \(x_h\) is processed.  Then,
for \(0\le h<m-1\),
\[
  \alpha_{h+1}\ge \alpha_h+1.
\]
\end{lemma}
\begin{proof}
Because Case~0 never occurs, each input element belongs to a Case~1 singleton,
a Case~2 replacement segment, or a Case~3 pass-through segment.
First suppose that \(x_h\) and \(x_{h+1}\) lie in the same processed segment.  A
Case~1 segment has only one element.  In Case~2, consecutive input elements
replace consecutive row positions, so their alpha indices differ by exactly
\(1\).  In Case~3, consecutive pass-through elements pass through consecutive
half-indices: if \(x_h\) passes between row values \(x_h-1\) and \(x_h+1\), then
\(x_{h+1}=x_h+2\) passes between \(x_{h+1}-1=x_h+1\) and
\(x_{h+1}+1=x_h+3\).  Their alpha indices again differ by exactly \(1\).
It remains to consider a boundary between two consecutive processed segments.
Suppose first that the segment ending with \(x_h\) is Case~1 or Case~2.  After
that step, the current row contains \(x_h\) at position \(\alpha_h\).  Since
\(x\) is dual Dyck,
\[
  x_{h+1}\ge x_h+2.
\]
Thus \(x_{h+1}\) cannot replace the same row entry, and it cannot pass through
the half-position immediately to the right of that entry: replacement there
would require \(x_{h+1}\le x_h+1\), while pass-through there would require the
left row entry to equal \(x_{h+1}-1\).  All row positions to the left are even
smaller.  Therefore the next alpha index is at least one full position to the
right:
\[
  \alpha_{h+1}\ge \alpha_h+1.
\]
Finally suppose that the segment ending with \(x_h\) is a Case~3 pass-through
segment.  Write \(\alpha_h=j+\frac12\), so at the relevant moment the row has
adjacent entries
\[
  R[j]=x_h-1,
  \qquad
  R[j+1]=x_h+1.
\]
Because \(x_h\) is the final element of its maximal front \(+2\)-chain in the
input, the next input element is not \(x_h+2\).  Dual Dyckness and integrality
give
\[
  x_{h+1}\ge x_h+3=R[j+1]+2.
\]
Therefore \(x_{h+1}\) cannot replace row position \(j+1\), and it cannot pass
between positions \(j+1\) and \(j+2\).  Its alpha index is at least \(j+2\), so
\[
  \alpha_{h+1}\ge j+2\ge j+\frac12+1=\alpha_h+1.
\]
All cases give the asserted inequality.
\end{proof}
\begin{lemma}[Reverse row-insertion alpha monotonicity]
\label{lem:worsert-alpha-monotonicity}
Let \(E=(e_0,e_1,\ldots,e_{m-1})\) and \(R\) be dual Dyck sequences.  Suppose
that \(\worsert(E,R)\), as defined in Definition~\ref{def:worsert}, runs without
ever applying Case~0.  For each written-order input element \(e_h\), let
\(\alpha_h\) be its alpha index for the
\(\worsert\) run, computed relative to the current row at the moment the
terminal segment containing \(e_h\) is processed.  Then, for \(0\le h<m-1\),
\[
  \alpha_{h+1}\ge \alpha_h+1.
\]
Equivalently, since \(\worsert\) processes \(E\) from right to left, the actual
processing positions move strictly left.  In particular, any fixed row position
is replaced at most once in such a no-Case~0 \(\worsert\) run.
\end{lemma}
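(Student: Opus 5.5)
The plan is to deduce the statement from Lemma~\ref{lem:rowsert-alpha-monotonicity} by the adjoint correspondence already used in the proof of Proposition~\ref{prop:worsert-dual-dyck}. That proof shows that the \(\worsert\) run on \((E,R)\) is, step by step, the adjoint of a \(\rowsert\) run on \((R^\dagger,E^\dagger)\), with matching cases. In particular a no-Case~0 \(\worsert\) run corresponds to a no-Case~0 \(\rowsert\) run, and both inputs there are dual Dyck by Lemma~\ref{lem:adjoint-dual-dyck}. So the work lies in tracking how alpha indices transform under \(\dagger\).

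\textbf{Index dictionary.} Write \(n\) for the current row length, which is constant throughout a no-Case~0 run, since Cases~1--3 never change the row length. Under reversal, row position \(p\) of \(R\) corresponds to position \(n-1-p\) of \(R^\dagger\). I will check, case by case against the alpha notation, that the \(\worsert\) alpha index \(\alpha\) of an element equals \(n-1-\beta\), where \(\beta\) is the \(\rowsert\) alpha index of the corresponding negated element in the adjoint run.

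In Case~1 this is immediate: the comparison index \(i\) becomes \(n-1-i\). In Case~2 the block \(x_0,\dots,x_{\ell-1}\) replaces \(R[i-\ell+1:i+1]\), so \(x_h\) sits at position \(i-\ell+1+h\). In the adjoint, \(-x_h\) sits at position \(n-1-(i-\ell+1+h)\). In Case~3, \(x_h\) passes between the entries \(x_h-1\) and \(x_h+1\), at half-index \(i-\ell+h+\tfrac12\). Its negation \(-x_h\) passes between \(-x_h-1\) and \(-x_h+1\) in \(R^\dagger\), and the reversal sends half-index \(p+\tfrac12\) to \(n-2-p+\tfrac12=n-1-(p+\tfrac12)\). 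The consistency check for Case~3 is the only place requiring care; the description via the straddling entries \(x_h\pm1\) makes it unambiguous.

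\textbf{Ordering.} The written-order element \(e_h\) of \(E\) becomes element \(m-1-h\) of \(E^\dagger\). Lemma~\ref{lem:rowsert-alpha-monotonicity} applied to the adjoint run gives \(\beta_{m-1-h}\ge\beta_{m-2-h}+1\). Converting with \(\alpha=n-1-\beta\) gives \(\alpha_{h+1}\ge\alpha_h+1\).

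\textbf{Consequences.} Because \(\worsert\) consumes \(E\) from the right, the sequence of processed alpha indices, read in processing order, is strictly decreasing by at least \(1\) each step. For the replacement claim, note that a position \(p\) is replaced only by elements of Case~1 or Case~2 blocks, and each such element has integer alpha index \(p\). Strict monotonicity forces at most one element to have alpha index \(p\), so \(p\) is replaced at most once.

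I expect the main obstacle to be the subtle point that alpha indices are measured against the \emph{current} row, which changes between steps. I would handle this by noting that the adjoint correspondence is a correspondence of full mutable states, so at each step the current rows correspond via \(\dagger\) and the dictionary applies step by step. The fixed row length \(n\) makes the position map uniform over the whole run.
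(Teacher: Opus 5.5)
Your proposal is correct and follows essentially the same route as the paper: you pass to the adjoint no-Case~0 row insertion on \((R^\dagger,E^\dagger)\) from Proposition~\ref{prop:worsert-dual-dyck}, use the dictionary \(\alpha_h=n-1-\beta_{m-1-h}\) (valid for both integer and half-integer indices because the row length \(n\) stays constant), and invert the inequality of Lemma~\ref{lem:rowsert-alpha-monotonicity}. Your case-by-case check of the dictionary and your explicit derivation of the ``replaced at most once'' clause spell out details that the paper only asserts briefly.
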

\begin{proof}
Use the adjoint correspondence between \(\worsert\) and \(\rowsert\) from
Proposition~\ref{prop:worsert-dual-dyck}.  Since no Case~0 occurs, the row
length is constant throughout the run; write this length as \(n\).  The
\(\worsert\) run on \((E,R)\) is adjoint to the no-Case~0 \(\rowsert\) run on
\((R^\dagger,E^\dagger)\).  If \(\beta_{m-1-h}\) is the \(\rowsert\) alpha index
of the adjoint input element \(-e_h\), then reversal of row positions gives
\[
  \beta_{m-1-h}=n-1-\alpha_h,
\]
for both integer replacement positions and half-integer pass-through positions.
Apply Lemma~\ref{lem:rowsert-alpha-monotonicity} to the adjoint \(\rowsert\)
run.  For \(0\le h<m-1\), the consecutive adjoint input elements corresponding
to \(e_{h+1}\) and \(e_h\) give
\[
  \beta_{m-1-h}\ge \beta_{m-2-h}+1.
\]
Substituting \(\beta_{m-1-h}=n-1-\alpha_h\) and
\(\beta_{m-2-h}=n-1-\alpha_{h+1}\), this becomes
\[
  n-1-\alpha_h\ge n-1-\alpha_{h+1}+1,
\]
which is equivalent to \(\alpha_{h+1}\ge \alpha_h+1\).  The final processing
order statement is the same inequality read in the right-to-left processing
order of \(\worsert\).
\end{proof}
Lemma~\ref{lem:rowsert-alpha-monotonicity} and
Lemma~\ref{lem:worsert-alpha-monotonicity} are the replacement positional tools
for the tableau-insertion cluster.  Lemma~\ref{lem:two-row-tableau-insertion}
uses them to rule out a maximal bad column after two forward row insertions.
Lemma~\ref{lem:reverse-truncated-row-worsert} uses the same reverse monotonicity
to keep the truncated-row \(\worsert\) pass away from Case~0 and to control the
extracted sequence.  Lemma~\ref{lem:second-reverse-insertion-valid-two-row} then
uses the forward monotonicity in the reconstruction argument that restores a
valid two-row tableau.
For the next lemma, a valid two-row window in the paper top-to-bottom convention
means that \(T_0\) and \(T_1\) are dual Dyck rows, that
\[
  |T_0|\ge |T_1|,
\]
and that the adjacent column inequalities
\[
  T_0[p]\le T_1[p]+1
\]
hold for every \(0\le p<|T_1|\).
\begin{lemma}[Two-row tableau insertion]
\label{lem:two-row-tableau-insertion}
Let \((T_0,T_1)\) be a valid two-row window in the paper top-to-bottom
convention, and let \(F\) be a dual Dyck sequence.  Apply row insertion first to
\(T_0\) and then to \(T_1\):
\[
  (G,T_0')=\rowsert(T_0,F),
  \qquad
  (E,T_1')=\rowsert(T_1,G).
\]
Assume that Case~0 never occurs in the first row insertion \(\rowsert(T_0,F)\).
Then \((T_0',T_1')\) is again a valid two-row window in the same paper
convention.  Equivalently, \(T_0'\) and \(T_1'\) are dual Dyck sequences,
\[
  |T_0'|\ge |T_1'|,
\]
and
\[
  T_0'[p]\le T_1'[p]+1
\]
for every \(0\le p<|T_1'|\).
\end{lemma}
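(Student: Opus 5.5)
The dual Dyck property of \(T_0'\) and \(T_1'\) is immediate from Lemma~\ref{lem:rowsert-dual-dyck}: it gives that \(G\) and \(T_0'\) are dual Dyck, and applying it again to \(\rowsert(T_1,G)\) gives that \(T_1'\) and \(E\) are dual Dyck. The length condition is also quick. Since Case~0 never occurs in the first insertion, \(|T_0'|=|T_0|\). The first insertion is a bijective rearrangement of blocks, so \(|G|=|F|\). Only Case~0 in the second insertion can lengthen \(T_1\), and the claim \(|T_1'|\le|T_0|\) will come out of the column argument below, because an appended cell at column \(p\ge|T_0|\) has no constraint partner.

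I would first show that \(|T_1'|\le |T_0|\). By Lemma~\ref{lem:rowsert-alpha-monotonicity}, the elements of \(F\) occupy strictly increasing alpha indices in \(T_0\), all of which are less than \(|T_0|\). For the second insertion I would show that the entries of \(G\) are handled in \(T_1\) at positions no further right than the positions in \(T_0\) from which they came. In Case~1 or Case~2 the evicted value \(b=T_0[i]\) satisfies \(b\le T_1[i]+1\), so its comparison index in \(T_1\) is at most \(i\). For a Case~3 element \(x\) passing between \(x-1\) and \(x+1\) in \(T_0\), the entry \(x+1=T_0[j+1]\le T_1[j+1]+1\) gives \(x\le T_1[j+1]\). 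Combined with the forward monotonicity of alpha indices for \(G\) in \(T_1\), this shows that Case~0 in the second insertion can only occur once the alpha indices pass the end of \(T_1\). The number of such appended cells is bounded by the number of \(G\)-elements whose source index in \(T_0\) is at least \(|T_1|\), which keeps \(|T_1'|\le|T_0|\).

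The core of the proof is the column inequality \(T_0'[p]\le T_1'[p]+1\). I would argue by contradiction: take a maximal (rightmost) bad column \(p\), so \(T_0'[p]\ge T_1'[p]+2\). Since \(T_0[p]\le T_1[p]+1\) held before, at least one of the two entries changed.

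If \(T_0'[p]\) is unchanged, then \(T_1'[p]<T_1[p]\). This means some element \(g\) of \(G\) replaced position \(p\) in \(T_1\), with \(g\ge T_0[p]-1\). Tracing \(g\) back to its source position in \(T_0\) via the bound above, \(g\) came from a column \(q\ge p\). Strict monotonicity of the source positions, together with the dual Dyck gaps, would then force \(q=p\). But position \(p\) in \(T_0\) was changed, which is a contradiction.

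If \(T_0'[p]=a\) is a new entry from \(F\), then the evicted value \(b\ge a-1\) (in Case~2, \(b=a-1\)) enters \(G\). I would show that \(b\), or something at least as small, lands in \(T_1\) at a column \(\le p\). Monotonicity then forces it to land exactly at \(p\) unless \(T_1[p]\) already satisfies \(T_1[p]\ge a-1\). Case~3 passes in the second row need a half-index version of the same argument, using the fact that the straddling entries \(x\pm1\) are comparable to the column above. Lemma~\ref{lem:worsert-alpha-monotonicity} can be used symmetrically to rule out a later element overwriting column \(p\) with something too small.

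The main obstacle is exactly this source-to-target position comparison between the two insertions when Case~3 chains are involved. A pass-through block in \(T_0\) may become part of a Case~2 replacement, or a Case~1 replacement, in \(T_1\), and keeping the half-integer alpha bookkeeping aligned across the two rows is delicate. I would first try to prove a single auxiliary claim: for each element of \(G\), its alpha index in \(T_1\) is at most its alpha index of origin in \(T_0\) (rounded up). From that claim both the length bound and the bad-column contradiction should follow by the maximality argument above.
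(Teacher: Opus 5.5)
\textbf{There is a genuine gap.} Your forward strategy is a different route from the paper's: track each entry of \(G\) from its source column in \(T_0\) to where it acts in \(T_1\), then contradict a rightmost bad column. As written, though, it rests on an auxiliary claim that is false.

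\emph{Counterexample to the claim.} Take the paper's own Example~\ref{ex:four-factor-tableau-insertion}, with \(T_0=(1,3,5)\) over \(T_1=(0,2,4)\) and \(F=(0,6)\). The first insertion is Case~1 at index \(0\), then Case~2 at index \(2\), with no Case~0, so \(G=(1,5)\). In the second row, the entry \(1\) meets the row chain \((0,2,4)\) of length \(3\) against an input chain of length \(1\). It therefore passes through by Case~3 at half-index \(\tfrac12\), although it was evicted from column \(0\).

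\emph{What your inequalities actually give.} After an induction showing that earlier entries of \(G\) have not yet touched the relevant column of \(T_1\), you get a bound on the comparison index, \(c\le\lceil s\rceil\). The alpha index can still exceed \(\lceil s\rceil\) by \(\tfrac12\), and only in a Case~3 pass-through.

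\emph{Why the overshoot matters.} It produces exactly the dangerous configuration for the column inequality. Suppose \(b=T_0[p]\) is evicted by a Case~2 step, so \(T_0'[p]=b+1\), and \(b\) then passes through the second row at \(p+\tfrac12\). Then \(T_1[p]=b-1\) is never overwritten, and column \(p\) is bad. Your sketch does not exclude this; ``a half-index version of the same argument'' is exactly where the proof is missing.

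\emph{How it can be excluded.} A separate chain-length argument is needed. The second-row chain \(T_1[p+h]=b-1+2h\), together with the column inequalities and dual Dyckness of \(T_0\), forces \(T_0[p+h]=b+2h\) on the same columns. So the evicted Case~2 block puts \(b,b+2,\ldots\) into \(G\) as a chain at least as long as the row chain from column \(p\). The second-row step is then Case~2, not Case~3.

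\emph{Smaller issues.} In your unchanged-top subcase the inequality is reversed. Badness gives \(g=T_1'[p]\le T_0[p]-2\), not \(g\ge T_0[p]-1\). It is this upper bound that rules out a source column \(q>p\) and a source \(p-\tfrac12\). You also need to treat bad columns \(p\ge|T_1|\) occupied by Case~0 appended cells, where ``at least one of the two entries changed'' does not apply.

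\emph{How the paper avoids this.} The paper does no source-to-target bookkeeping at all. It takes a maximal bad index \(j\), allowing \(j\ge|T_0'|\) so that the length condition is absorbed. It reverses the first insertion as \(\worsert(G,T_0')=(T_0,F)\), which has no Case~0. It shows \(v-1\notin G\) for \(v=T_0'[j]\), so column \(j\) of the top row cannot decrease in the reverse pass. Finally it applies Lemma~\ref{lem:worsert-alpha-monotonicity} to the maximal run \(T_1'[i+1],\ldots,T_1'[j]\) of \(G\)-entries. This pushes \(T_1'[i+1]\) to a reverse alpha index at most \(i\) and yields \(T_0[i]\ge T_1[i]+2\), a contradiction.

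To keep your forward route, replace your claim by the comparison-index bound and add the chain-length exclusion above.
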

\begin{proof}
Lemma~\ref{lem:rowsert-dual-dyck} applied to the two row insertions shows that
\(T_0'\), \(G\), \(T_1'\), and \(E\) are dual Dyck sequences.  It remains to
prove the length and column conditions.
Assume, for contradiction, that the conclusion fails.  Choose \(j\) maximal
among indices with \(0\le j<|T_1'|\) such that either \(j\ge |T_0'|\), or
\(T_0'[j]\) exists and
\[
  T_0'[j]\ge T_1'[j]+2.
\]
We call such a \(j\) a bad index.
We first record the reverse reconstruction used below.  During the second row
insertion, a terminal part of the input \(G\) may have been appended to \(T_1\)
by Case~0.  Write
\[
  T_1'=T_1^{\prime -}\cdot T_1^{\prime +}
\]
where \(|T_1^{\prime -}|=|T_1|\) and \(T_1^{\prime +}\) is precisely that
appended suffix, possibly empty.  Reversing the non-Case~0 part of the second
row insertion gives
\[
  \worsert(E,T_1^{\prime -})=(T_1,E^*).
\]
Thus the input to the second row insertion was
\[
  G=E^*\cdot T_1^{\prime +}.
\]
Since the first row insertion had no Case~0, the row-level reversibility theorem
gives
\[
  \worsert(G,T_0')=(T_0,F),
\]
and this reverse insertion also has no Case~0.
Suppose for the moment that \(T_0'[j]\) exists, and set \(v=T_0'[j]\).  Since
\(j\) is bad, \(T_1'[j]\le v-2\).  Because \(T_1'\) is dual Dyck, any occurrence
of either \(v-1\) or \(v\) in \(T_1'\) would have to occur at a position
\(k>j\).  If \(k\ge |T_0'|\), then \(k\) is a bad index, contradicting the
maximality of \(j\).  If \(k<|T_0'|\), then the dual Dyck property of \(T_0'\)
gives
\[
  T_0'[k]\ge T_0'[j]+2=v+2.
\]
Thus \(T_0'[k]\ge T_1'[k]+2\) when \(T_1'[k]=v\), and even more strongly when
\(T_1'[k]=v-1\).  Again \(k\) would be bad, contrary to maximality.  Therefore
\(T_1'\) contains neither \(v\) nor \(v-1\).
We next show that the input \(G\) to the first reverse insertion contains no
copy of \(v-1\).  Consider how a hypothetical element \(v-1\) of \(G\) behaves
when \(G\) is row-inserted into \(T_1\).  If it is appended by Case~0 or placed
in the row by Case~1 or Case~2, then Lemma~\ref{lem:rowsert-alpha-monotonicity}
on the non-Case~0 part, together with the fact that Case~0 only appends the
remaining suffix, shows that this copy remains in the final row \(T_1'\).  This
is impossible.  If instead it passes through by Case~3, then it passes between
row values \(v-2\) and \(v\).  Lemma~\ref{lem:rowsert-alpha-monotonicity} again
implies that the row position containing the value \(v\) cannot be replaced by
a later input element, and so \(v\) would remain in \(T_1'\), also impossible.
Hence
\[
  v-1\notin G.
\]
It follows that position \(j\) of \(T_0'\) cannot decrease during
\(\worsert(G,T_0')\).  In a no-Case~0 \(\worsert\) run, Case~1 replaces a row
entry by an equal or larger value, Case~3 leaves the row unchanged, and the only
way to replace the entry \(v\) by \(v-1\) is Case~2 with input value \(v-1\),
which is absent from \(G\).  Lemma~\ref{lem:worsert-alpha-monotonicity} also
ensures that a fixed row position is replaced at most once.
We now split into two cases.
First suppose that either \(j\ge |T_1|\), or \(j<|T_1|\) and
\(T_1'[j]\ne T_1[j]\).  Then the entry \(T_1'[j]\) is one of the entries of the
input \(G\) to the second row insertion, and hence it is processed during the
reverse insertion \(\worsert(G,T_0')\).  At the moment when this entry is
processed through the current top row, position \(j\) is either absent, or it
exists and its value is still at least \(T_0'[j]\).  In the latter case the
badness of \(j\) gives
\[
  T_0'[j]\ge T_1'[j]+2.
\]
Thus the alpha index of \(T_1'[j]\) in \(\worsert(G,T_0')\) is at most
\(j-1\): position \(j\) and all positions to its right are too large for
replacement, and the half-position immediately to the left of position \(j\)
would require the row entry at position \(j\) to be \(T_1'[j]+1\).
Let \(i<j\) be maximal such that \(T_1'[i]=T_1[i]\).  If no such \(i\) exists,
put \(i=-1\).  By the choice of \(i\), all entries
\[
  T_1'[i+1],T_1'[i+2],\ldots,T_1'[j]
\]
belong to \(G\), and they occur in \(G\) in this left-to-right order.  The order
claim follows from Lemma~\ref{lem:rowsert-alpha-monotonicity} during the
non-Case~0 part of the second row insertion, while a Case~0 suffix is appended
without changing the remaining input order.
Applying Lemma~\ref{lem:worsert-alpha-monotonicity} to
\(\worsert(G,T_0')\), the leftmost entry \(T_1'[i+1]\) has alpha index at most
\[
  (j-1)-(j-i-1)=i.
\]
If \(i=-1\), this is impossible, since alpha indices in a no-Case~0 reverse
insertion are nonnegative.  Thus \(i\ge0\).
Let \(u=T_1'[i+1]\).  The alpha bound just obtained implies that, during
\(\worsert(G,T_0')\), the element \(u\) either replaces a row entry at an integer
position \(p\le i\), or passes through at a half-position \(p+\frac12\le i\).  In
the replacement case, the row then contains \(u\) at position \(p\); in the
pass-through case, the row contains \(u+1\) at position \(p+1\le i\).  Later
reverse-insertion steps occur strictly to the left by
Lemma~\ref{lem:worsert-alpha-monotonicity}.  Since the final row is dual Dyck,
in either case
\[
  T_0[i]\ge u=T_1'[i+1].
\]
But \(T_1'\) is dual Dyck and \(T_1'[i]=T_1[i]\), so
\[
  T_1'[i+1]\ge T_1'[i]+2=T_1[i]+2.
\]
Consequently \(T_0[i]\ge T_1[i]+2\), contradicting the original column
condition for the valid two-row window \((T_0,T_1)\).
It remains to treat the case \(j<|T_1|\) and \(T_1'[j]=T_1[j]\).  If
\(j\ge |T_0'|\), then the no-Case~0 reverse insertion \(\worsert(G,T_0')\) does
not change the length of the top row, so \(j\ge |T_0|\).  But
\(T_1'[j]=T_1[j]\) implies \(j<|T_1|\), contradicting \(|T_0|\ge |T_1|\).
Therefore \(T_0'[j]\) exists.  With \(v=T_0'[j]\), the exclusion above gives
\(v-1\notin G\), so position \(j\) cannot decrease during
\(\worsert(G,T_0')\).  Hence
\[
  T_0[j]\ge T_0'[j]\ge T_1'[j]+2=T_1[j]+2,
\]
again contradicting the original column condition.
Both cases contradict the validity of \((T_0,T_1)\).  Hence no bad index exists,
and the desired length and column inequalities hold.  Together with row
validity from Lemma~\ref{lem:rowsert-dual-dyck}, this proves the lemma.
\end{proof}

\begin{lemma}[Reverse pass through a truncated row]
\label{lem:reverse-truncated-row-worsert}
Let \((T_0,T_1,T_2)\) be a valid three-row window in top-to-bottom row order.
Thus the rows are dual Dyck sequences,
\[
  |T_0|\ge |T_1|\ge |T_2|,
\]
and the adjacent column inequalities
\[
  T_0[p]\le T_1[p]+1,
  \qquad
  T_1[p]\le T_2[p]+1
\]
hold whenever the displayed entries exist.  Let \(F^+\) be the terminal \(k\)
entries of \(T_1\), where
\[
  0\le k\le |T_1|-|T_2|,
\]
and write
\[
  T_1=T_1^-\cdot F^+.
\]
Set
\[
  (T_1',F^-)=\worsert(T_2,T_1^-).
\]
Then Case~0 of this \(\worsert\) run is never triggered.  Moreover
\(F^-\cdot F^+\) is a dual Dyck sequence, has length at most \(|T_1|\), and
\((T_0,F^-\cdot F^+)\) is a valid two-row window in top-to-bottom row order.
\end{lemma}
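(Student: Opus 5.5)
The plan has three parts, taken in order.

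\emph{Step 1: Case~0 never occurs.} Write \(m=|T_2|\le |T_1^-|=|T_1|-k\). The natural route is an induction over the reverse processing of \(T_2\) from right to left. The claim to carry along is: when the entry \(T_2[h]\) is processed, the current row has, at some index \(\ge h\), an entry \(r\) with \(T_2[h]\ge r-1\).

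Initially the column condition gives \(T_1^-[h]=T_1[h]\le T_2[h]+1\), so the original entry at position \(h\) is already eligible. Lemma~\ref{lem:worsert-alpha-monotonicity} says processing positions move strictly left and each position is replaced at most once. So when \(T_2[h]\) is processed, every position \(\le h\) has either not yet been touched, or holds a value that only increased (Case~1) or was set to a block value. In the Case~2 alternative, I would compare with \(T_2[h+1]\ge T_2[h]+2\) to keep the eligibility inequality at some index \(\ge h\).

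In practice I would run the monotonicity argument in the adjoint \(\rowsert\) picture. Case~0 there means the input exceeds every row entry by more than \(1\), and the column inequality \(T_1[p]\le T_2[p]+1\) pins the comparison index to at most the corresponding position. The key consequence is that the alpha index of \(T_2[h]\) is at most \(h\), hence \(\le |T_1^-|-1\).

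\emph{Step 2: \(F^-\cdot F^+\) is dual Dyck and \(|F^-F^+|\le |T_1|\).} Proposition~\ref{prop:worsert-dual-dyck} makes \(F^-\) and \(T_1'\) dual Dyck. Since no Case~0 occurs, the row length is preserved, so \(|F^-|=|T_2|\le |T_1^-|\) and the length bound follows.

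For the junction, note that \(F^-\) consists of entries ejected from the row \(T_1^-\) (Cases~1,~2) or passed-through entries of \(T_2\) (Case~3). In each case I would bound \(F^-[-1]\) by something like \(\max T_1^-+1\) or \(T_1^-[-1]\). The aim is to show \(F^-[-1]\le T_1^-[-1]\), or, if it equals \(T_1^-[-1]+1\) via a pass-through, to handle that separately. Then \(F^+[0]\ge T_1^-[-1]+2\) gives the boundary inequality. Using the alpha bound from Step 1, the last processed element has alpha index \(\le m-1\le |T_1^-|-1\), which is what controls \(F^-[-1]\).

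\emph{Step 3 (main obstacle): the column inequality \(T_0[p]\le (F^-F^+)[p]+1\).} For \(p\ge |F^-|\), this reduces to showing \(F^+\) sits at positions no further left than in \(T_1\). That holds because \(|F^-|\le |T_1^-|\) and \(T_1\) is dual Dyck, with shifted entries only getting smaller relative to their positions.

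For \(p<|F^-|\), I would argue by contradiction using a maximal bad index, as in Lemma~\ref{lem:two-row-tableau-insertion}. Take \(j\) maximal with \(T_0[j]\ge F^-[j]+2\). By Lemma~\ref{lem:worsert-alpha-monotonicity}, the entries of \(F^-\) appear in order and the entry \(F^-[j]\) left the row at an alpha position \(\ge j\). A left-moving entry comes from row position \(\ge j\) of the (partially modified) \(T_1^-\), so \(F^-[j]\ge T_1^-[q]-1\) for some \(q\ge j\) that was still untouched or only increased. Together with \(T_1^-[q]\ge T_1[j]\) and \(T_0[j]\le T_1[j]+1\), this should give \(F^-[j]\ge T_0[j]-2\). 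The remaining gap of one unit must then be excluded, either by showing equality forces a pass-through configuration contradicting maximality of the \(+2\)-chain, or by the dual Dyck spacing.

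I expect this counting of the ejected entry's origin to be the delicate part. I would first try to prove directly that \(F^-[j]\ge T_1^-[j]-1\) holds entrywise, via the adjoint of the alpha bound, and then close the gap with a case check on Cases~1--3.
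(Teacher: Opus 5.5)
Your outline has the right shape, but the step that carries the content is left open. What you need is the entrywise bound \(F^-[q]\ge T_1[q]\) for \(0\le q<|T_2|\). Write \(H=F^-\cdot F^+\), \(m=|T_2|\), \(n=|T_1^-|\). On the \(F^+\) part, \(F^+\) sits \emph{further} left in \(H\) than in \(T_1\), so \(H[m+s]=T_1[n+s]\ge T_1[m+s]\). Together these give the column condition directly, \(T_0[q]\le T_1[q]+1\le H[q]+1\), and no maximal-bad-index argument is needed.

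Your sketch only reaches \(F^-[j]\ge T_1[j]-1\), i.e.\ \(T_0[j]\le F^-[j]+2\), and that one-unit gap is the whole difficulty. It comes from exactly one configuration: a Case~3 step on a terminal chunk \(T_2[i:r]\) whose row chain ends precisely at position \(r-1\). Then \(T_2[q]=T_1^-[q]-1\) for \(i\le q<r\). Maximality of the chain alone does not exclude this; you must pair it with the column inequality one column to the left of the chunk. Because the row chain is strictly longer than the chunk, \(i>0\), and the still-untouched entry \(T_1^-[i-1]=T_2[i]-1\) lies on it. Maximality of the terminal input chain gives \(T_2[i-1]\le T_2[i]-3=T_1[i-1]-2\), contradicting \(T_1[i-1]\le T_2[i-1]+1\). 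In every other case the alpha index of \(T_2[q]\) is at least \(q\) (at least \(q+\tfrac12\) for a pass-through). Since the current row is dual Dyck and agrees with \(T_1^-\) through position \(r-1\), the output entry is then at least \(T_1^-[q]\).

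Several supporting claims also need repair.
\begin{itemize}
\item Your Step~1 ``key consequence'' has the inequality backwards. The usable bound is that the alpha index of \(T_2[q]\) is \emph{at least} \(q\). An upper bound \(\le q\), or \(\le m-1\) as in your Step~2, is false: \(T_1=(0,2,4)\), \(T_2=(3)\), \(k=0\) gives a single Case~3 step with alpha index \(\tfrac32\).
\item ``Untouched or only increased'' is also false, since Case~2 lowers row entries by one. The correct invariant is that, before the chunk \(T_2[i:r]\) is processed, the row coincides with \(T_1^-\) in positions \(0,\ldots,r-1\). It propagates because position \(r-1\) is eligible, so Cases~1 and~2 select an index \(p\ge r-1\) and modify only positions \(\ge i\); Case~3 modifies nothing.
\item This invariant rules out Case~0 directly. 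Lemma~\ref{lem:worsert-alpha-monotonicity} cannot be used for that purpose as stated, because its hypothesis is that the run has no Case~0.
\item The junction needs no alpha bound. \(F^-[-1]\) is produced by the first step, when the row is exactly \(T_1^-\), and it is either an old row entry or one less than one. Hence \(F^-[-1]\le T_1^-[-1]\le F^+[0]-2\) when both factors are nonempty, and a pass-through can never produce \(T_1^-[-1]+1\).
\end{itemize}
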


\begin{proof}
Write
\[
  n=|T_1^-|,
  \qquad
  m=|T_2|.
\]
The hypothesis on \(k\) gives \(n=|T_1|-k\ge m\), so every index of
\(T_2\) is also an index of \(T_1^-\).

The operation \(\worsert(T_2,T_1^-)\) processes the first input from right to
left.  Each step removes a terminal chunk of the current first input; because
the initial first input is \(T_2\), we may write the chunk processed at a given
step as
\[
  T_2[i:j]
\]
for some \(0\le i<j\le m\).  The chunks are processed in decreasing order of
these intervals.

We prove the following invariant, moving from right to left through the chunks.
Immediately before the step processing \(T_2[i:j]\), the current row agrees
with \(T_1^-\) in positions \(0,1,\ldots,j-1\).  If \(Y\) is the block prepended
to the output sequence \(F\) by this step, then after the step
\[
  Y[q-i]\ge T_1^-[q]
  \qquad (i\le q<j),
\]
and the current row agrees with \(T_1^-\) in positions \(0,1,\ldots,i-1\).  For
the first, rightmost, chunk the row is still exactly \(T_1^-\), so the initial
row-agreement clause holds.

Consider a step processing \(T_2[i:j]\), and assume the pre-step clause of the
invariant: the current row agrees with \(T_1^-\) in positions
\(0,1,\ldots,j-1\).  The last element under consideration is \(T_2[j-1]\).
Since
\((T_1,T_2)\) is a valid adjacent pair and \(j-1<n\),
\[
  T_2[j-1]\ge T_1[j-1]-1=T_1^-[j-1]-1.
\]
The current row still has entry \(T_1^-[j-1]\) at position \(j-1\), so an
eligible row index exists for the \(\worsert\) comparison.  Hence Case~0 cannot
occur at this step.

Let \(p\) be the largest eligible row index selected for \(T_2[j-1]\).  The
previous paragraph gives \(p\ge j-1\).  We next show that each element
\(T_2[q]\), \(i\le q<j\), has alpha index at least \(q\).  In Case~1 the chunk
has length one, and the sole alpha index is \(p\ge j-1=i\).  In Case~2 the chunk
replaces a row segment ending at \(p\), so its alpha indices are
\[
  p-(j-i)+1,
  p-(j-i)+2,
  \ldots,
  p.
\]
Since \(p\ge j-1\), these are at least \(i,i+1,\ldots,j-1\), respectively.

It remains to exclude a possible Case~3 exception.  In Case~3, the chunk
\(T_2[i:j]\) passes through a longer row \(+2\)-chain ending at \(p\), and its
alpha indices are consecutive half-indices ending at \(p-\frac12\).  If
\(p\ge j\), these alpha indices are all at least the corresponding original
indices.  Thus the only possible failure is the subcase \(p=j-1\).

Assume this subcase occurs.  Since the row chain ending at
\(j-1\) is strictly longer than \(T_2[i:j]\), it extends one position farther to
the left, so \(i>0\).  Because this is Case~3 and the current row agrees with
\(T_1^-\) through position \(j-1\),
\[
  T_2[q]=T_1^-[q]-1
  \qquad (i\le q<j),
\]
and the extra row-chain entry gives
\[
  T_1^-[i-1]=T_2[i]-1.
\]
The chunk \(T_2[i:j]\) is the full terminal maximal \(+2\)-chain in the
remaining first input.  Hence \(T_2[i]\ne T_2[i-1]+2\).  Since \(T_2\) is dual
Dyck and has integer entries, this forces
\[
  T_2[i-1]\le T_2[i]-3=T_1^-[i-1]-2.
\]
But \(T_1^-[i-1]=T_1[i-1]\), and the original column condition for
\((T_1,T_2)\) gives
\[
  T_2[i-1]\ge T_1[i-1]-1=T_1^-[i-1]-1,
\]
a contradiction.  Therefore the exceptional Case~3 subcase cannot occur, and
the claimed alpha-index lower bounds hold in every actual step.

We now prove the remaining clauses of the invariant for this step.  In
Cases~1 and~2, the output block \(Y\) consists
of old row entries at the integer alpha indices identified above.  If the element
corresponding to \(T_2[q]\) has alpha index \(a\), then \(a\ge q\).  The current
row is dual Dyck and agrees with \(T_1^-\) through position \(j-1\), so the old
row entry at \(a\) is at least \(T_1^-[q]\).  Thus the corresponding element of
\(Y\) is at least \(T_1^-[q]\).

In Case~3, the output block is the pass-through chunk itself.  If \(T_2[q]\)
passes through at half-index \(h+\frac12\), then the current row contains
adjacent entries
\[
  R[h]=T_2[q]-1,
  \qquad
  R[h+1]=T_2[q]+1.
\]
The alpha-index lower bound gives \(h+\frac12\ge q\), hence \(h+1\ge q+1\).
Using the dual Dyck property of the current row, together with the agreement
with \(T_1^-\) through position \(j-1\), we get
\[
  R[h+1]\ge T_1^-[q]+2.
\]
Therefore \(T_2[q]=R[h+1]-1\ge T_1^-[q]\).  This proves the elementwise lower
bound for \(Y\).  Also, Cases~1 and~2 change only positions with integer alpha
index at least \(i\), while Case~3 changes no row entries.  Thus positions
strictly before \(i\) remain equal to \(T_1^-\).  The invariant follows.

After all chunks have been processed, the output \(F^-\) is the concatenation of
the chunk outputs in the original left-to-right order.  The invariant gives
\[
  F^-[q]\ge T_1^-[q]=T_1[q]
  \qquad (0\le q<m).
\]
Since Case~0 never occurs, each element of \(T_2\) contributes exactly one
output element, so \(|F^-|=m\).

Let
\[
  H=F^-\cdot F^+.
\]
For the first \(m\) positions, the displayed inequality gives \(H[q]\ge T_1[q]\).
For an entry of \(F^+\), write it as \(T_1[n+s]\) with \(0\le s<k\).  In \(H\)
it lies at position \(m+s\).  Since \(m\le n\) and \(T_1\) is dual Dyck, it is
weakly increasing with the index, and therefore
\[
  H[m+s]=T_1[n+s]\ge T_1[m+s].
\]
Thus
\[
  H[q]\ge T_1[q]
\]
for every index \(q\) of \(H\).  The length bound is
\[
  |H|=|F^-|+|F^+|=m+k\le |T_1|.
\]

It remains to prove that \(H\) is dual Dyck.  By
Proposition~\ref{prop:worsert-dual-dyck}, \(F^-\) is dual Dyck, and \(F^+\) is
dual Dyck because it is a suffix of the dual Dyck row \(T_1\).  Only the boundary
between \(F^-\) and \(F^+\) needs checking, and only when both factors are
nonempty.  The last element of \(F^-\) comes from the first, rightmost,
\(\worsert\) step.  At that time the row is exactly \(T_1^-\).  If that step is
Case~1 or Case~2, this last output element is an old row entry of \(T_1^-\); if
it is Case~3, it is one less than an old row entry of \(T_1^-\).  In either
case,
\[
  F^-[-1]\le T_1^-[n-1].
\]
Since \(F^+\ne\varnothing\), its first entry is \(F^+[0]=T_1[n]\), and the dual
Dyck property of \(T_1\) gives
\[
  F^+[0]=T_1[n]\ge T_1[n-1]+2=T_1^-[n-1]+2\ge F^-[-1]+2.
\]
So the boundary also satisfies the dual Dyck gap condition, and \(H\) is dual
Dyck.

Finally, \((T_0,H)\) is a valid two-row window.  The top row \(T_0\) is dual
Dyck, and \(H\) is dual Dyck by the preceding paragraph.  The length condition
is
\[
  |T_0|\ge |T_1|\ge |H|.
\]
For every index \(q\) of \(H\), the original column condition for \((T_0,T_1)\)
and the inequality \(H[q]\ge T_1[q]\) give
\[
  T_0[q]\le T_1[q]+1\le H[q]+1.
\]
This is exactly the required top-to-bottom column condition.  The proof is
complete.
\end{proof}

\begin{lemma}[Second reverse insertion restores a two-row window]
\label{lem:second-reverse-insertion-valid-two-row}
Use the setup of Lemma~\ref{lem:reverse-truncated-row-worsert}.  Thus
\((T_0,T_1,T_2)\) is a valid three-row window in top-to-bottom row order,
\[
  T_1=T_1^-\cdot F^+,
  \qquad
  (T_1',F^-)=\worsert(T_2,T_1^-),
\]
where \(0\le k=|F^+|\le |T_1|-|T_2|\).  Put
\[
  H=F^-\cdot F^+,
\]
and apply the second reverse insertion
\[
  (T_0',F')=\worsert(H,T_0).
\]
Then \((T_0',T_1')\) is a valid two-row window in top-to-bottom row order.
\end{lemma}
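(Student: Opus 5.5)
The plan is to follow the pattern of Lemma~\ref{lem:two-row-tableau-insertion}, but in the reverse direction. I will use the conclusions of Lemma~\ref{lem:reverse-truncated-row-worsert} as the input data. Three things must be shown: row validity, the length condition, and the column condition. The first two are cheap; the column condition is where the work lies.

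\emph{Rows and lengths.} Proposition~\ref{prop:worsert-dual-dyck} gives that \(T_0'\) and \(T_1'\) are dual Dyck. Lemma~\ref{lem:reverse-truncated-row-worsert} says the run \(\worsert(T_2,T_1^-)\) never uses Case~0, so \(|T_1'|=|T_1^-|=|T_1|-k\). Next I show that \(\worsert(H,T_0)\) also avoids Case~0, by repeating the opening argument of Lemma~\ref{lem:reverse-truncated-row-worsert} with the pair \((T_0,H)\) in place of \((T_1,T_2)\). This is legitimate because that lemma proves \((T_0,H)\) is a valid two-row window, so \(|T_0|\ge|H|\). At each step the last remaining element \(H[j-1]\) satisfies \(H[j-1]\ge T_0[j-1]-1\), which makes an eligible index exist. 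Hence \(|T_0'|=|T_0|\ge|T_1|\ge|T_1'|\). The same argument, including the exclusion of the exceptional Case~3 subcase (which uses maximality of the terminal input chain together with the column condition for \((T_0,H)\)), also yields an alpha lower bound: each \(H[q]\) has alpha index at least \(q\) in \(\worsert(H,T_0)\).

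\emph{Column condition.} Suppose it fails, and pick the maximal bad index \(j<|T_1'|\), meaning \(T_0'[j]\ge T_1'[j]+2\). Put \(v=T_1'[j]\). As in Lemma~\ref{lem:two-row-tableau-insertion}, maximality of \(j\) and the dual Dyck property of both rows show that \(T_0'\) contains neither \(v+1\) nor \(v\) at any position. I then split according to whether position \(j\) of the top row was touched by \(\worsert(H,T_0)\). Lemma~\ref{lem:worsert-alpha-monotonicity} says each position is replaced at most once.
\begin{itemize}
\item \emph{Position \(j\) untouched.} Then \(T_0'[j]=T_0[j]\le T_1[j]+1\), so it suffices to show \(T_1'[j]\ge T_1[j]-1\), with equality impossible in the relevant configuration. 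A \(\worsert\) Case~1 step only raises a row entry. A Case~2 step lowers a block by exactly \(1\) and places the values \(T_2[q]\) there, and these satisfy \(T_2[q]\ge T_1[q]-1\). The case \(T_1'[j]=T_1[j]-1\) must therefore come from a Case~2 replacement. That replacement is a \(+2\)-chain of \(T_2\) that reaches \(T_0\)'s column, and I expect to rule it out using the absence of \(v,v+1\) in \(T_0'\) together with the column inequality \(T_0[p]\le T_1[p]+1\) along the chain.
\item \emph{Position \(j\) touched.} Then \(T_0'[j]\) is an element \(H[q]\) placed at alpha index \(j\), with \(q\le j\) by the alpha bound above. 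I compare \(H[q]\ge T_1[q]\), which comes from Lemma~\ref{lem:reverse-truncated-row-worsert}, against \(T_1'[j]\), walking leftward along the maximal stretch of positions on which \(T_1'\) differs from \(T_1^-\). This is the analogue of the index \(i\) in Lemma~\ref{lem:two-row-tableau-insertion}. Using forward monotonicity (Lemma~\ref{lem:rowsert-alpha-monotonicity}) on the adjoint \(\rowsert\) runs, the goal is a position \(i\) where \(T_0[i]\ge T_1[i]+2\), contradicting validity of \((T_0,T_1)\).
\end{itemize}

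\emph{Expected obstacle.} The hardest step will be the Case~2 lowering in \(\worsert(T_2,T_1^-)\). There \(T_1'\) can drop by one below \(T_1^-\), and this is exactly what threatens the column condition. My first attempt is to show that whenever \(T_1'[j]=T_1[j]-1\) arises this way, the \(+2\)-chain of \(T_1\) that was replaced forces, via the column condition for \((T_0,T_1)\), a matching \(+2\)-chain structure in \(T_0\). That structure would in turn force \(H\) to carry a Case~2 or Case~3 interaction at the same positions, which would put either \(v\) or \(v+1\) into \(T_0'\), contradicting the exclusion established above.
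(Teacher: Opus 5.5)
Your treatment of rows and lengths is sound. Rerunning the Case~0 and exceptional-Case~3 argument of Lemma~\ref{lem:reverse-truncated-row-worsert} on the valid window \((T_0,H)\) correctly shows that \(\worsert(H,T_0)\) has no Case~0 and that \(H[q]\) has alpha index at least \(q\), which is more than the paper records. The gap is in the column condition, which is the whole content of the lemma.

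First, your exclusion step fails as stated. Take \(v=T_1'[j]\) with \(T_0'[j]\ge v+2\). Dual Dyckness of \(T_0'\) forces any occurrence of \(v\) or \(v+1\) in \(T_0'\) to lie at some \(k<j\). Then \(T_1'[k]\le v-2\) makes \(k\) a bad index, which contradicts minimality of \(j\), not maximality. In mirroring Lemma~\ref{lem:two-row-tableau-insertion} you kept its orientation. There the excluded values lie in the lower row and are pushed to the right of the bad index; here they lie in the upper row and are pushed to the left. You need the minimal bad index, which is what the paper uses.

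Second, neither subcase is actually carried out, and the touched one points the wrong way.

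\emph{Untouched case.} The missing idea is to follow the evicted entry. The Case~2 step that lowers position \(j\) of \(T_1^-\) prepends the old value \(T_1[j]=v+1\) to \(F^-\), so \(v+1\in H\). In \(\worsert(H,T_0)\) this entry meets the never-replaced \(T_0[j]=v+2\), and its comparison index is forced to be \(j\) in the equality branch. Case~2 would replace position \(j\). Case~3 needs \(R[j-1]=v\), an entry that survives into \(T_0'\) and contradicts the exclusion (valid only for minimal \(j\)).

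\emph{Touched case.} Here \(T_0'[j]=w=H[q]\) must be a Case~1 raise. Badness forces the place where \(w\) sits or passes through in \(T_1\), under \(\rowsert(T_1',H)=(T_2,T_1)\), to lie strictly to the right of \(j\), so \(T_1[j+1]\le w\). Since \(j\) was the largest eligible index for \(w\), the top-row entry at \(j+1\) was then at least \(w+2\). So either \(T_0[j+1]\ge T_1[j+1]+2\) outright, or position \(j+1\) had already been raised by a later entry of \(H\).

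You therefore have to propagate rightward along the stretch of raised top-row positions, using forward alpha monotonicity in \(\rowsert(T_1',H)\), until a column violation of \((T_0,T_1)\) or \(|T_1|>|T_0|\) appears. This is the paper's argument: its minimal \(i\), the stretch \(i,\ldots,j\), and the contradiction at column \(j+1\).

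A leftward walk along the positions where \(T_1'\) differs from \(T_1^-\) only supplies upper bounds such as \(T_1[q]\le H[q]=w\) and \(T_0[q]\le T_0[j]\le w\) for \(q\le j\). These give no lower bound on \(T_0[i]-T_1[i]\) at any column \(i\le j\). The relevant stretch lives in the top row, and the violated original column lies to the right.
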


\begin{proof}
By Lemma~\ref{lem:reverse-truncated-row-worsert}, the sequence \(H\) is dual
Dyck, \(|H|\le |T_1|\), and \((T_0,H)\) is a valid two-row window.  Proposition
\ref{prop:worsert-dual-dyck} gives that \(T_0'\) is dual Dyck, and the same
proposition applied in Lemma~\ref{lem:reverse-truncated-row-worsert} gives that
\(T_1'\) is dual Dyck.

The length condition is immediate from the reverse-insertion construction.  The
mutable row in \(\worsert\) is never shortened: Cases~1 and~2 replace a row
segment by a segment of the same length, Case~3 leaves the row unchanged, and
Case~0 prepends an entry to the row.  Hence
\[
  |T_0'|\ge |T_0|.
\]
The first reverse pass through \(T_1^-\) has no Case~0 by
Lemma~\ref{lem:reverse-truncated-row-worsert}, so
\[
  |T_1'|=|T_1^-|\le |T_1|.
\]
Since the original three-row window has \(|T_0|\ge |T_1|\), we obtain
\[
  |T_0'|\ge |T_1'|.
\]
It remains only to prove the column inequalities.

Write \(x\gg y\) to mean \(x\ge y+2\).  Suppose, for contradiction, that the
column condition fails, and let \(i\) be the minimal index such that
\[
  T_0'[i]\gg T_1'[i].
\]
The stepwise reversibility of the two reverse insertions says that applying
\(\rowsert\) first to \((T_0',F')\) and then to \((T_1',H)\) recovers the
original adjacent rows:
\[
  \rowsert(T_0',F')=(H,T_0),
  \qquad
  \rowsert(T_1',H)=(T_2,T_1).
\]
The row-insertion monotonicity lemma is used below in the following
consequence: during the non-Case~0 part of such a row insertion, alpha indices
move strictly to the right, so a fixed old row position is affected at most
once; in particular an existing entry of the row can increase by at most one.
The terminal Case~0 step that appends the suffix \(F^+\), when it occurs in the
second displayed row insertion, does not change any earlier row position.

Let
\[
  v=T_0'[i].
\]
If \(i>0\), minimality of \(i\), dual Dyckness of \(T_1'\), and the choice of
\(i\) give
\[
  T_0'[i-1]\le T_1'[i-1]+1\le T_1'[i]-1\le T_0'[i]-3=v-3.
\]
If \(i=0\), there is no entry to the left of \(v\).  Thus the value \(v-1\) can
neither be evicted from \(T_0'\) nor pass through \(T_0'\) in the row insertion
\(\rowsert(T_0',F')\): eviction would require an old row entry equal to
\(v-1\), and pass-through would require adjacent row entries \(v-2\) and \(v\).
Therefore
\[
  v-1\notin H.
\]

Now compare the \(i\)-th entry of \(T_1'\) with the recovered row \(T_1\).  Since
\(T_1'[i]\le v-2\), the only way to avoid \(v\gg T_1[i]\) would be the
remaining possibility
\[
  T_1'[i]=v-2,
  \qquad
  T_1[i]=v-1.
\]
But the value \(v-1\) is not supplied by \(H\), and it is not already the entry
\(T_1'[i]\) in this exceptional case.  Because the row-insertion monotonicity
just recalled allows position \(i\) to increase by at most one, this exceptional
case cannot occur.  Hence
\[
  v=T_0'[i]\gg T_1[i].
\]

There are now two cases.

First suppose that
\[
  T_0[i]\ge T_0'[i]=v.
\]
Then
\[
  T_0[i]\gg T_1[i],
\]
contradicting the column condition for the original valid two-row window
\((T_0,T_1)\).

It remains to consider the case
\[
  T_0[i]<T_0'[i].
\]
Let \(j\ge i\) be maximal such that the old row entry \(T_0'[j]\) is replaced
by an element strictly smaller than itself during the recovery insertion
\(\rowsert(T_0',F')\).  Then the old row entries
\[
  T_0'[i],T_0'[i+1],\ldots,T_0'[j]
\]
occur in the evicted sequence \(H\), in this left-to-right order.

We claim that the alpha index of \(T_0'[i]\), when this entry is row-inserted
into \(T_1'\) as part of the insertion of \(H\), is at least \(i+1\).  Indeed,
both the initial entry \(T_1'[i]\) and the recovered entry \(T_1[i]\) are at
least two less than \(T_0'[i]\): the first by the choice of \(i\), and the
second by the conclusion \(T_0'[i]\gg T_1[i]\).  Since position \(i\) is affected
at most once during the non-Case~0 part of the insertion of \(H\), at the moment
\(T_0'[i]\) is processed the current entry in position \(i\) is still too small
for \(T_0'[i]\) to interact at position \(i\).  Thus its alpha index is at least
\(i+1\).

Applying Lemma~\ref{lem:rowsert-alpha-monotonicity} to the consecutive entries
\(T_0'[i],\ldots,T_0'[j]\) in the insertion of \(H\), the alpha index of
\(T_0'[j]\) is at least \(j+1\).  By the definition of the alpha index, this
implies that the recovered row satisfies
\[
  T_0'[j]\ge T_1[j+1].
\]

By maximality of \(j\), either \(T_0\) ends at position \(j\), or the next entry
of \(T_0'\) is not replaced by a smaller value.  In the first case,
\(|T_0|=j+1\), while the displayed inequality involves the existing entry
\(T_1[j+1]\).  Hence \(|T_1|>|T_0|\), contradicting the length condition for the
original valid window \((T_0,T_1)\).

In the second case, \(T_0[j+1]\) exists and
\[
  T_0[j+1]\ge T_0'[j+1].
\]
Since \(T_0'\) is dual Dyck,
\[
  T_0'[j+1]\gg T_0'[j].
\]
Together with \(T_0'[j]\ge T_1[j+1]\), this gives
\[
  T_0[j+1]\gg T_1[j+1],
\]
again contradicting the column condition for \((T_0,T_1)\).

All possible cases contradict the validity of the original adjacent pair
\((T_0,T_1)\).  Therefore no bad column exists, and
\[
  T_0'[r]\le T_1'[r]+1
\]
for every common column \(r\).  Together with the row-validity and length
conditions proved above, this shows that \((T_0',T_1')\) is a valid two-row
window in top-to-bottom row order.
\end{proof}

\subsection{The tableau--factorization bijection}
The row and tableau insertion results give the Schur-positivity statements
for the first part of the paper.  The proof of the bijection is written in the
top-to-bottom row convention used throughout the tableau-insertion subsection.
The statistic in the statement remains the row-reading statistic of
Definition~\ref{def:dyck-tableau}: rows are read left-to-right, from bottom row
to top row.
For a semistandard Young tableau \(Q\), write \(m_i(Q)\) for the number of
entries of \(Q\) equal to \(i\).  In this paper semistandard tableaux that
record factorization weights use entries in \(\{0,1,2,\ldots\}\), and their
generating functions are Schur functions in the zero-indexed variables
\(\mathbf{x}=(x_0,x_1,x_2,\ldots)\).
\begin{theorem}[Tableau--factorization bijection]
\label{thm:tableau-factorization-bijection}
Let \(S\) be a finite multiset of integers and let \(d\ge 0\).  There is a
bijection between dual Dyck factorizations
\[
  \mathcal F=(F_0,F_1,\ldots)
\]
of the multiset \(S\) satisfying
\[
  \di(F_0F_1\cdots)=d
\]
and pairs \((P,Q)\) such that:
\begin{enumerate}[label=(\roman*)]
\item \(P\) is a Dyck tableau with entries exactly the multiset \(S\) and
      \(\di(\operatorname{RR}(P))=d\);
\item \(Q\) is a semistandard Young tableau of the same shape as \(P\);
\item the content of \(Q\) records the factorization weight, meaning
      \[
        m_i(Q)=|F_i|
      \]
      for every \(i\ge 0\).
\end{enumerate}
Equivalently, the bijection sends the factorization monomial
\[
  \prod_{i\ge 0}x_i^{|F_i|}
\]
to the semistandard-tableau weight of \(Q\).
\end{theorem}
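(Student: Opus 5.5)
The map is the RSK-style insertion: starting from the empty tableau, insert the factors in order, $P^{(r)}=\texttt{tabsert}(P^{(r-1)},F_r)$. The recording tableau $Q^{(r)}$ is obtained from $Q^{(r-1)}$ by labelling every new cell of $P^{(r)}$ with $r$. The final pair is $(P,Q)$, as in Example~\ref{ex:four-factor-tableau-insertion}. The proof has four parts: $P$ is a Dyck tableau with the correct statistic, $Q$ is semistandard with the correct content, the map is invertible, and the inverse lands in dual Dyck factorizations.

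\textbf{Validity and statistic.} I would argue by induction on the number of inserted factors.

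The statistic comes first. Write the rows top to bottom as $T_0,\dots,T_s$. The row-reading word is $T_s\cdots T_0$, and the factorization word with $F_r$ appended is $\operatorname{RR}(P^{(r-1)})F_r$. Inserting $E_0=F_r$ into the top row replaces the adjacent pair $T_0E_0$ by $E_1T_0'$. By Lemma~\ref{lem:rowsert-di-preservation} this preserves $\di$ of that pair. Since $\di$ is only counted pairwise, I would check that the cross terms with the unchanged prefix $T_s\cdots T_1$ are also unchanged; the multiset of $T_0E_0$ equals that of $E_1T_0'$, so this holds. Iterating down the rows, $E_jT_{j-1}$ moves leftward through the word, and the same argument applies at each step. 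Hence $\di(\operatorname{RR}(P))=\di(F_0F_1\cdots)$.

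For validity, Lemma~\ref{lem:rowsert-dual-dyck} gives dual Dyck rows. For the column and length conditions between rows $j$ and $j+1$, I would use Lemma~\ref{lem:two-row-tableau-insertion}. Its hypothesis is that no Case~0 occurs in row $j$ except when the insertion stops there. The key observation is that once Case~0 occurs in row $j$, every later step is Case~0 (shown in Lemma~\ref{lem:rowsert-dual-dyck}). So if the eviction $E_{j+1}$ is nonempty, the Case~0 tail in row $j$ happens only after all evicted material has been produced. I would handle this by splitting row $j$'s insertion into its non-Case~0 part, then applying the two-row lemma to that part, with the appended suffix treated separately.

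\textbf{Semistandard recording tableau.} I need that the new cells created by one $\texttt{tabsert}$ form a horizontal strip, and that the shape stays a partition. New cells appear only at Case~0 appends, which happen at the right end of the rows. The length condition of the two-row lemma keeps the shape a partition. For the horizontal-strip property, I would show that if row $j$ grows, the evicted sequence from row $j$ is short enough that row $j+1$ grows only into columns beyond the old length of row $j$. This needs a bound comparing the number of Case~0 appends with the number of evicted elements; I would get it from alpha monotonicity (Lemma~\ref{lem:rowsert-alpha-monotonicity}). Given the horizontal-strip property, labelling by $r$ yields a semistandard $Q$ with $m_r(Q)=|F_r|$.

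\textbf{Inverse — the main obstacle.} Given $(P,Q)$, remove the cells labelled by the largest label $r$. These form a horizontal strip, so the cells sit in rows as terminal suffixes, and within each row the new cells extend beyond the length of the row below. Proceed from the bottom row upward. The bottom row's removed suffix becomes the evicted input. For each row above, split the row as $T_1=T_1^-\cdot F^+$ with $F^+$ the removed cells, run $\worsert$ of the incoming sequence against $T_1^-$, and pass $F^-\cdot F^+$ upward. The output at the top row is $F_r$.

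Lemmas~\ref{lem:reverse-truncated-row-worsert} and~\ref{lem:second-reverse-insertion-valid-two-row} are tailored to show three things for this reverse step: it never hits Case~0 (so the shape shrinks exactly by the strip), it produces dual Dyck sequences, and it restores valid windows. Hence the reverse output is a Dyck tableau of the smaller shape together with a dual Dyck factor $F_r$.

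That the two maps are mutually inverse follows from Theorem~\ref{thm:rowsert-global-reversibility} and Proposition~\ref{prop:rowsert-one-step-reversibility}, applied row by row. The one remaining check is the Case~0 tails. Forward, the suffix appended in a row is exactly what the reverse step strips off as $F^+$. Backward, the reverse step reproduces the same no-Case~0 run whose forward run ends with precisely those appends. I expect the hardest part to be this bookkeeping: matching the Case~0 suffixes with the horizontal strip and with the $F^+$ of the truncated-row lemma, in both directions.
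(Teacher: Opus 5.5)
Your proposal follows the paper's proof essentially step for step: forward insertion by \texttt{tabsert} with $\di$ tracked through the augmented row-reading word, validity from Lemma~\ref{lem:two-row-tableau-insertion} applied to the no-Case~0 part of each row insertion, a bottom-to-top inverse via Lemmas~\ref{lem:reverse-truncated-row-worsert} and~\ref{lem:second-reverse-insertion-valid-two-row}, and matching of the Case~0 tails with the peeled suffixes $F_j^+$.

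One correction. The horizontal-strip property needs no separate alpha-monotonicity count. Apply Lemma~\ref{lem:two-row-tableau-insertion} to the no-Case~0 part, where row $j$ still has its old length $\lambda_j$; its length conclusion is then exactly $\mu_{j+1}\le\lambda_j$. In other words, row $j+1$ does \emph{not} grow beyond the old length of row $j$, whereas your wording says the opposite. This same inequality, combined with $|E_{j+1}|\le\mu_{j+1}$ from the previous reverse pass, is what verifies the hypothesis $k\le|T_1|-|T_2|$ of Lemma~\ref{lem:reverse-truncated-row-worsert} in the inverse.
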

\begin{proof}
We first prove the fixed-shape insertion step.  Fix a Young shape \(\lambda\),
an integer \(k\ge 0\), a multiset \(S\), and an integer \(d\).  We compare the
following two sets.
The first set consists of pairs \((P,F)\), where \(P\) is a Dyck tableau of
shape \(\lambda\), \(F\) is a dual Dyck sequence of length \(k\), the entries
of \(P\) together with the entries of \(F\) form the multiset \(S\), and
\[
  \di(\operatorname{RR}(P)F)=d.
\]
The second set consists of Dyck tableaux \(P'\) of shape \(\mu\), where
\(\mu\setminus\lambda\) is a horizontal strip of size \(k\), the entries of
\(P'\) form the multiset \(S\), and
\[
  \di(\operatorname{RR}(P'))=d.
\]
The local map sends \((P,F)\) to
\[
  P'=\texttt{tabsert}(P,F).
\]
Let the rows of \(P\), in the local order in which \(\texttt{tabsert}\)
processes them, be \(T_0,T_1,\ldots,T_r\), and set \(E_0=F\).  At row \(j\),
the algorithm applies
\[
  (E_{j+1},T_j')=\rowsert(T_j,E_j)
\]
and replaces \(T_j\) by \(T_j'\), continuing until the evicted sequence is empty
or until a final nonempty evicted sequence is appended as a new bottom row.  By
Lemma~\ref{lem:rowsert-dual-dyck}, every updated row and every evicted sequence
is dual Dyck.  Since each row-insertion step only moves entries among the row,
the input sequence, and the evicted sequence, the multiset of entries is
preserved.
It remains in the forward direction to check the tableau shape, the column
condition, and \(\di\).  Consider two adjacent rows during an insertion step.
If the insertion into the upper row has a terminal Case~0 phase, that phase
only appends a terminal suffix to that upper row and sends no entries farther
down.  On the preceding no-Case~0 part, Lemma~\ref{lem:two-row-tableau-insertion}
applies to the adjacent two-row window.  It shows that the updated lower row has
length at most the old upper row length and that the adjacent column condition
is preserved.  The terminal appended suffix on the upper row is to the right of
the old upper row, so it cannot create a new column violation with the lower
row.  The same argument at the bottom boundary shows that a newly appended
bottom row has length at most the old bottom row.  Therefore the final shape is
obtained from \(\lambda\) by adding a horizontal strip of size \(k\), and the
output is a Dyck tableau.
For the statistic, apply Lemma~\ref{lem:rowsert-di-preservation} at each row
step.  The invariant is most naturally stated for an augmented row-reading word.
Immediately before row \(j\) is processed, this word is
\[
  T_r\cdots T_{j+1}\,T_j\,E_j\,T_{j-1}'\cdots T_0',
\]
where \(T_r,\ldots,T_{j+1}\) are the lower rows not yet reached and
\(T_{j-1}',\ldots,T_0'\) are the already updated upper rows.  Thus the current
row step replaces the contiguous block \(T_jE_j\) by \(E_{j+1}T_j'\).  If
\(\rowsert(R,G)=(E,R')\), then
\[
  \di(RG)=\di(ER').
\]
The blocks \(RG\) and \(ER'\) have the same multiset of entries, so their
cross-\(\di\) contributions with the fixed outside context agree: such cross
terms depend only on the value counts in the moving block.  The internal
contribution is preserved by the displayed row-level equality.  Thus the
augmented word has the same \(\di\) before and after each row step.  At the end
of the process the augmented word is exactly the row-reading word of the output
tableau, including the final evicted sequence as a new bottom row if one is
created.  Hence
\[
  \di(\operatorname{RR}(P)F)=\di(\operatorname{RR}(P')).
\]
The forward local map therefore lands in the stated codomain.
We describe the inverse local map.  Let \(P'\) have shape \(\mu\) with
\(\mu\setminus\lambda\) a horizontal strip of size \(k\).  In each row, the
cells of \(\mu\setminus\lambda\) form a terminal suffix.  Write \(F_j^+\) for
that terminal suffix in row \(j\), read left-to-right, and write \(T_j^-\) for
the initial segment that remains after the suffix is removed.
Process rows from bottom to top.  Suppose the accumulated sequence passed up
from the lower part is \(E_{j+1}\).  Apply
\[
  (T_j^{\mathrm{old}},F_j^-)=\worsert(E_{j+1},T_j^-),
\]
and then pass upward the sequence
\[
  E_j=F_j^-F_j^+.
\]
At the end of the upward pass, \(E_0\) is the recovered inserted sequence
\(F\), and the recovered rows \(T_j^{\mathrm{old}}\) form the tableau \(P\).
The validity of this reverse procedure is exactly the role of
Lemmas~\ref{lem:reverse-truncated-row-worsert} and
\ref{lem:second-reverse-insertion-valid-two-row}.  The induction is on the
rows, moving upward.  The carried object \(E_{j+1}\) is not asserted to be the
row already recovered below \(j\).  Rather, it is the lower input \(T_2\) in
Lemma~\ref{lem:reverse-truncated-row-worsert} for the next three-row window.
The induction hypothesis is precisely that this carried sequence is dual Dyck
and satisfies the compatibility and length hypotheses needed to serve as that
\(T_2\).  This is true at the bottom with the empty sentinel lower row.
After the reverse pass through row \(j\), Lemma~\ref{lem:reverse-truncated-row-worsert}
gives that \(E_j=F_j^-F_j^+\) is again a dual Dyck sequence compatible with
the next row above, when such a row exists.  The adjacent recovered rows below
are validated one step later by Lemma~\ref{lem:second-reverse-insertion-valid-two-row},
when the reverse pass through the row above is performed.

At row \(j\), the number of peeled-off boxes is \(k_j=|F_j^+|\).  At the bottom
row, \(E_{j+1}\) is empty.  Otherwise, the previous lower-row pass gives
\(|E_{j+1}|\le \mu_{j+1}\).  Since \(\mu\setminus\lambda\) is a horizontal
strip, \(\mu_{j+1}\le \lambda_j=|T_j^-|\).  Hence in every row
\(|E_{j+1}|\le |T_j^-|\), which is the length hypothesis needed in
Lemma~\ref{lem:reverse-truncated-row-worsert}:
\[
  0\le k_j\le |T_1|-|T_2|.
\]
Here the lemma's \(T_1\) is the full current row \(T_j^-F_j^+\), and its
\(T_2\) is \(E_{j+1}\).  At the top row there is no further row above to check
after the pass, but the same proof as
Lemma~\ref{lem:reverse-truncated-row-worsert}, with the top row \(T_0\) and the
final column-condition conclusion omitted, still gives that Case~0 does not
occur and that \(E_0\) is dual Dyck.
Inducting upward over the rows produces a valid Dyck tableau \(P\) of shape
\(\lambda\) and a dual Dyck sequence \(F\) of length \(k\).  The entries are
preserved because each \(\worsert\) step only rearranges the current row and
accumulated sequence, and the same \(\di\)-preservation argument as above gives
\[
  \di(\operatorname{RR}(P)F)=\di(\operatorname{RR}(P'))=d.
\]
Thus the inverse local map lands in the stated domain.
The two local maps are mutual inverses.  On the no-Case~0 portions, this is the
row-level reversibility of Theorem~\ref{thm:rowsert-global-reversibility}.  The
terminal Case~0 suffixes
that are appended in the forward map are precisely the suffixes \(F_j^+\) peeled
off before applying \(\worsert\) in the inverse map.  Therefore the inverse
process reconstructs the same row states and carried sequences in reverse
order, and the fixed-shape local map is a bijection.
We iterate the local bijection.  Start with a dual Dyck factorization
\((F_0,F_1,\ldots)\) and set \(P^{(-1)}=\varnothing\).  For \(i\ge 0\), set
\[
  P^{(i)}=\texttt{tabsert}(P^{(i-1)},F_i).
\]
Only finitely many factors are nonempty.  At step \(i\), the local bijection
says that the shape grows by a horizontal strip of size \(|F_i|\).  Label the
cells added at step \(i\) by \(i\).  This produces a filling \(Q\) of the final
shape.  The filling \(Q\) is semistandard: rows are weakly increasing because
later labels are appended to row ends.  Columns are strictly increasing because
each shape difference is a horizontal strip, and the nested Young shapes force
any cell above a newly added cell in the same column to have been added at an
earlier step.  The content condition \(m_i(Q)=|F_i|\) is immediate.  Empty
factors add no cells and correspond to labels absent from \(Q\).
The local \(\di\)-preservation gives, step by step,
\[
  \di(\operatorname{RR}(P^{(i-1)})F_i)
  =
  \di(\operatorname{RR}(P^{(i)})).
\]
To pass from this local equality to the whole factorization word, include the
untouched tail in the induction.  If every factor is empty, then \(P\) is empty
and the desired \(\di\)-identity is immediate.  Otherwise, let \(N\) be the last
nonempty factor index.
We prove by induction on \(i\) that
\[
  \di(F_0F_1\cdots F_iF_{i+1}\cdots F_N)
  =
  \di(\operatorname{RR}(P^{(i)})F_{i+1}\cdots F_N),
\]
and that \(\operatorname{RR}(P^{(i)})\) has the same value multiset as
\(F_0F_1\cdots F_i\).  The multiset assertion follows at each step because
the local insertion rearranges entries.  The displayed local
\(\di\)-preservation gives equality for the processed block after adding
\(F_i\).  The remaining cross terms with the fixed tail
\(F_{i+1}\cdots F_N\) depend only on the value counts in the processed block,
so they are unchanged when \(F_0F_1\cdots F_i\) is replaced by
\(\operatorname{RR}(P^{(i)})\).  Taking \(i=N\) gives
\[
  \di(F_0F_1\cdots)=\di(\operatorname{RR}(P)),
\]
so the final tableau has parameter \(d\).
Conversely, given \((P,Q)\), let \(P^{(i)}\) be the subtableau consisting of
cells whose recording labels are at most \(i\).  Since \(Q\) is semistandard,
\(P^{(i)}\setminus P^{(i-1)}\) is a horizontal strip.  Applying the inverse
local bijection for the labels in decreasing order recovers \(P^{(i-1)}\) and
the factor \(F_i\).  Because the local maps are mutual inverses at every step,
this recovers a unique dual Dyck factorization and is inverse to the forward
construction.  The theorem follows.
\end{proof}
\begin{corollary}[Schur positivity of dual Dyck symmetric functions]
\label{cor:dual-dyck-schur-positivity}
For every finite multiset \(S\) of integers and every \(d\ge 0\),
\[
  \DSstar(S,d;\mathbf{x})
  =
  \sum_P s_{\lambda(P)}(\mathbf{x}),
\]
where \(P\) ranges over all Dyck tableaux with entries the multiset \(S\) and
\(\di(\operatorname{RR}(P))=d\).
\end{corollary}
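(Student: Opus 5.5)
The corollary follows from Theorem~\ref{thm:tableau-factorization-bijection} by summing weights over both sides of the bijection. By Definition~\ref{def:dyck-symmetric-functions}, \(\DSstar(S,d;\mathbf{x})\) is the sum of \(x^{\operatorname{wt}(\mathcal F)}=\prod_{i\ge0}x_i^{|F_i|}\) over dual Dyck factorizations \(\mathcal F\) of \(S\) with \(\di(\mathcal F)=d\). The theorem identifies this index set with pairs \((P,Q)\). Here \(P\) is a Dyck tableau with entries \(S\) and \(\di(\operatorname{RR}(P))=d\), and \(Q\) is a semistandard Young tableau of shape \(\lambda(P)\) with entries in \(\{0,1,2,\ldots\}\). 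Condition~(iii) says \(m_i(Q)=|F_i|\) for all \(i\), so the factorization monomial equals the tableau monomial \(x^Q=\prod_i x_i^{m_i(Q)}\).

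I would then rewrite the sum as
\[
  \DSstar(S,d;\mathbf{x})
  =\sum_{(P,Q)} x^{Q}
  =\sum_{P}\ \sum_{Q\in\mathrm{SSYT}(\lambda(P))} x^{Q}.
\]
The inner sum ranges over all semistandard tableaux of the fixed shape \(\lambda(P)\) with entries in \(\{0,1,2,\ldots\}\). By the combinatorial definition of Schur functions in the zero-indexed variables \(x_0,x_1,\ldots\), this inner sum is \(s_{\lambda(P)}(\mathbf{x})\).

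Two points need checking. First, the pairs must decouple as a product over \(P\), so that every \(Q\) of shape \(\lambda(P)\) occurs with every admissible \(P\). The theorem asserts the bijection onto all such pairs with no further compatibility condition, so this holds. Second, the sums must be finite or formally well defined. For fixed \(S\) the set of Dyck tableaux with entries \(S\) is finite, so the outer sum is finite. The inner sums are the standard formal power series defining Schur functions.

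The main obstacle is already handled by the theorem. The only remaining subtleties are degenerate cases. If \(S\) is empty, then only \(d=0\) contributes: the empty factorization corresponds to the empty tableau and gives \(s_\varnothing=1\). If no tableau has the given \(d\), both sides are zero.
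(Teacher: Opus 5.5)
Your proposal is correct and follows essentially the same route as the paper: you push the factorization monomial through the bijection of Theorem~\ref{thm:tableau-factorization-bijection} using \(m_i(Q)=|F_i|\), group the pairs by \(P\), and identify the inner sum over semistandard tableaux of shape \(\lambda(P)\) as \(s_{\lambda(P)}(\mathbf{x})\). Your extra remarks on decoupling, finiteness, and the empty case are consistent with this argument; the paper's proof leaves them implicit.
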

\begin{proof}
By definition,
\[
  \DSstar(S,d;\mathbf{x})
  =
  \sum_{\mathcal F}\prod_{i\ge 0}x_i^{|F_i|},
\]
where \(\mathcal F=(F_0,F_1,\ldots)\) ranges over the dual Dyck factorizations
with parameter \(d\).  Theorem~\ref{thm:tableau-factorization-bijection} sends
such factorizations to pairs \((P,Q)\), where \(P\) is a Dyck tableau with
parameter \((S,d)\), \(Q\) is a semistandard Young tableau of shape
\(\lambda(P)\), and \(m_i(Q)=|F_i|\).  Hence the monomial attached to
\(\mathcal F\) becomes
\[
  \prod_{i\ge 0}x_i^{m_i(Q)}=x^{\operatorname{wt}(Q)}.
\]
Therefore
\[
  \DSstar(S,d;\mathbf{x})
  =
  \sum_P\sum_{Q\in \operatorname{SSYT}(\lambda(P))}
  x^{\operatorname{wt}(Q)}.
\]
For each fixed shape \(\lambda\), the standard semistandard-tableau generating
function is
\[
  \sum_{Q\in \operatorname{SSYT}(\lambda)}x^{\operatorname{wt}(Q)}
  =s_\lambda(\mathbf{x}).
\]
Substituting this identity gives the claimed Schur expansion.
\end{proof}
\begin{corollary}[Schur positivity of affine Dyck symmetric functions]
\label{cor:affine-dyck-schur-positivity}
For every finite multiset \(S\) of integers and every \(d\ge 0\),
\[
  \DS(S,d;\mathbf{x})
  =
  \sum_P s_{\lambda(P)'}(\mathbf{x}),
\]
where \(P\) ranges over all Dyck tableaux with entries the multiset \(S\) and
\(\di(\operatorname{RR}(P))=d\), and \(\lambda(P)'\) denotes the conjugate
partition.
\end{corollary}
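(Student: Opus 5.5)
The plan is to deduce the affine expansion from the dual one, Corollary~\ref{cor:dual-dyck-schur-positivity}, using the involution \(\omega\) on symmetric functions. Since \(\omega s_\lambda=s_{\lambda'}\), it suffices to prove
\[
  \DS(S,d;\mathbf{x})=\omega\,\DSstar(S,d;\mathbf{x}).
\]
I would establish this identity by comparing both sides through a common refinement indexed by a word and its descent data.

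\textbf{Step 1 (fixed word, free cuts).} Fix a word \(w\) that is a permutation of \(S\) with \(\di(w)=d\). Call position \(p\) an \emph{affine break} if \(w_{p+1}>w_p+1\), and a \emph{dual break} if \(w_{p+1}<w_p+2\). A factorization with concatenation \(w\) is affine Dyck exactly when every affine break is a cut. It is dual Dyck exactly when every dual break is a cut. The two break sets are complementary in \(\{0,\ldots,m-2\}\). Hence the factorizations of \(w\) are governed by Gessel fundamental quasisymmetric functions: the affine Dyck factorizations of \(w\) contribute \(L_{\mathrm{Aff}(w)}\), and the dual ones contribute \(L_{\mathrm{Dual}(w)}\), up to the usual strict/weak convention. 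Here empty factors are allowed and the cuts are weakly increasing variable indices. I would check carefully that a forced cut means the variable index strictly increases, while an optional position allows equality. This makes the sum over weakly increasing sequences of factor indices exactly \(L\) of the forced set.

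\textbf{Step 2 (complement).} Since \(\mathrm{Aff}(w)\) and \(\mathrm{Dual}(w)\) are complements, the standard identity gives \(L_{D^c}=\omega L_D\), where \(\omega\) is extended to quasisymmetric functions by the fundamental-basis involution that complements descent sets. This is valid when the sums are symmetric, and both \(\DS\) and \(\DSstar\) are symmetric. Summing over \(w\) gives \(\DS(S,d)=\sum_w L_{\mathrm{Aff}(w)}\) and \(\DSstar(S,d)=\sum_w L_{\mathrm{Dual}(w)}\). Because the statistic \(\di\) depends only on \(w\), the sets of words agree, so \(\DS=\omega\,\DSstar\) on the quasisymmetric level. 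Restricted to symmetric functions, this map is the classical \(\omega\).

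\textbf{Main obstacle.} The main subtlety is the direction and indexing convention of \(\omega\) on the fundamental basis. In general, \(\omega(L_{n,D})\) is \(L\) of the complement of the \emph{reversed} descent set, not merely the complement. The plan is to note that symmetry makes the reversal harmless. More precisely, \(\rho(L_D)=L_{\mathrm{rev}D}\) acts trivially on symmetric functions, since the sum is symmetric and the reversal map \(\rho\) fixes every symmetric function. Thus \(\omega=\rho\circ(\text{complement})\) agrees with plain complementation on our symmetric sums. Once this is settled, applying \(\omega\) to Corollary~\ref{cor:dual-dyck-schur-positivity} yields \(\sum_P s_{\lambda(P)'}\), which is the claim.
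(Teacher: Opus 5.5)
Your proposal is correct and is essentially the paper's proof: for each fixed word, expand its affine and dual factorizations as fundamental quasisymmetric functions with complementary forced-cut sets, apply the descent-complement involution to the symmetric function $\DSstar(S,d;\mathbf{x})$, and use that this involution restricts to $\omega$ on symmetric functions. Your extra discussion of the reversal convention is sound, but you should not presuppose that $\DS$ is symmetric: either apply the reversal $\rho$ to $\DSstar$ (legitimate because reversal commutes with complementation), or note that plain complementation already sends $s_\lambda$ to $s_{\lambda'}$, since transposing a standard tableau complements its descent set.
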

\begin{proof}
We use the fundamental-quasisymmetric expansion of the two factorization
sums.  We use the zero-indexed version of the usual fundamental
quasisymmetric basis.  For \(D\subseteq\{0,1,\ldots,n-2\}\), write
\[
  F_{n,D}(\mathbf{x})
  =
  \sum_{\substack{0\le i_0\le \cdots\le i_{n-1}\\
                  j\in D\Rightarrow i_j<i_{j+1}}}
  x_{i_0}\cdots x_{i_{n-1}}.
\]
Fix a word \(\pi=(\pi_0,\ldots,\pi_{n-1})\) with multiset \(S\) and
\(\di(\pi)=d\).  A factorization of \(\pi\) into consecutive factors is encoded
by a weakly increasing sequence of labels
\[
  a_0\le a_1\le \cdots\le a_{n-1},
\]
where \(a_j\) is the factor containing \(\pi_j\).  Equality
\(a_j=a_{j+1}\) means that the two adjacent letters lie in the same factor, and
strict inequality means that there is a factor boundary between them.
For dual Dyck factorizations, equality is allowed at position \(j\) only when
\[
  \pi_{j+1}\ge \pi_j+2.
\]
Thus a strict increase is forced on the complementary set
\[
  D^*(\pi)=\{j:\pi_{j+1}\le \pi_j+1\},
\]
and the contribution of all dual Dyck factorizations of the fixed word \(\pi\)
is \(F_{n,D^*(\pi)}(\mathbf{x})\).
For affine Dyck factorizations, equality is allowed at position \(j\) only when
\[
  \pi_{j+1}\le \pi_j+1.
\]
Thus the forced-strict set is
\[
  D(\pi)=\{j:\pi_{j+1}\ge \pi_j+2\},
\]
and the contribution of all affine Dyck factorizations of \(\pi\) is
\(F_{n,D(\pi)}(\mathbf{x})\).  Because the entries are integers, the two
conditions \(\pi_{j+1}\le \pi_j+1\) and \(\pi_{j+1}\ge \pi_j+2\) are exact
complements, so
\[
  D(\pi)=\{0,1,\ldots,n-2\}\setminus D^*(\pi).
\]
Let \(\Omega\) be the linear involution on the fundamental basis defined by
\[
  \Omega(F_{n,D})
  =
  F_{n,\{0,\ldots,n-2\}\setminus D}.
\]
This is the usual descent-complement involution on quasisymmetric functions.
Summing the preceding fixed-word comparison over all words \(\pi\) with
multiset \(S\) and \(\di(\pi)=d\), \(\Omega\) sends
\(\DSstar(S,d;\mathbf{x})\) to \(\DS(S,d;\mathbf{x})\).  By
Corollary~\ref{cor:dual-dyck-schur-positivity}, the dual function is symmetric,
and on symmetric functions \(\Omega\) restricts to the standard involution
\(\omega\).  Therefore
\[
  \DS(S,d;\mathbf{x})
  =
  \omega(\DSstar(S,d;\mathbf{x})).
\]
Using Corollary~\ref{cor:dual-dyck-schur-positivity} and the standard identity
\(\omega(s_\lambda)=s_{\lambda'}\), we get
\[
  \DS(S,d;\mathbf{x})
  =
  \omega\left(\sum_P s_{\lambda(P)}(\mathbf{x})\right)
  =
  \sum_P s_{\lambda(P)'}(\mathbf{x}),
\]
as claimed.
\end{proof}

\section{Dyck sequences and their decompositions}
\label{sec:dyck-decompositions}
This section begins the chain of decompositions used later in the
\(q,t\)-Catalan formula.  The affine and ordinary Dyck conventions are those
of Section~\ref{sec:definitions}: affine sequences are integer-valued
step-condition objects, while an ordinary Dyck sequence is normalized with
first entry and minimum equal to \(0\).  All components below are ordered
finite sequences; order is part of the data.
\begin{notation}[Concatenation and statistics]
For finite sequences \(A,B,C\), we write
\[
  A:B:C
\]
for their concatenation.  The notation \(\operatorname{len}(A)\) denotes the
length of \(A\), and \(|P|\) denotes the number of cells of a tableau \(P\).
For a nonempty sequence \(A\), the notation \(A[-1]\) denotes its final
entry.
For any finite sequence \(x=(x_0,\ldots,x_{r-1})\), define
\[
  \operatorname{nv}(x)
  =\#\{(i,j):0\le i<j<r,\ x_i=x_j\},
  \qquad
  \dinv(x)=\di(x)+\operatorname{nv}(x).
\]
When the entries of \(x\) are nonnegative, we write
\[
  \area(x)=\sum_{i=0}^{r-1}x_i .
\]
These conventions apply to concatenations such as \(E:F:G\) whenever the
components have the stated nonnegative interval bounds.
\end{notation}
\begin{definition}[Dyck sequence statistics]
\label{def:section4-dyck-statistics}
In this section, a \emph{Dyck sequence} is an ordinary Dyck sequence in the
sense of Definition~\ref{def:affine-dyck}.  Thus it is nonempty, starts with
\(0\), has all entries nonnegative, and satisfies the affine Dyck inequality.
For a Dyck sequence \(D\), its area is \(\area(D)\), and its dinv is
\(\dinv(D)\) as in the notation above.
\end{definition}
\begin{definition}[Reverse Dyck and interval-bounded affine Dyck sequences]
\label{def:section4-bounded-dyck}
A finite sequence \(E=(e_0,\ldots,e_{r-1})\) is a \emph{reverse Dyck sequence}
if the reversed sequence \((e_{r-1},\ldots,e_0)\) is an affine Dyck sequence.
Equivalently,
\[
  e_{i+1}\ge e_i-1
  \qquad\text{for }0\le i<r-1.
\]
The empty sequence is allowed and satisfies this condition vacuously.
For integers \(a\le b\), an \emph{affine \([a,b]\) Dyck sequence} is an affine
Dyck sequence all of whose entries lie in the interval \([a,b]\).  A
\emph{reverse \([a,b]\) Dyck sequence} is defined similarly using the reverse
Dyck condition.  If \(a>b\), the only sequence satisfying the interval
condition is the empty sequence.
\end{definition}
\begin{definition}[Extractable element]
\label{def:extractable-element}
Let \(C=(c_0,\ldots,c_{r-1})\) be a Dyck sequence.  An index \(j\) is called
\emph{eligible} if the following two conditions hold:
\begin{enumerate}[label=(\roman*)]
  \item there is exactly one index \(i<j\) with \(c_i=c_j-1\);
  \item if \(j+1<r\), then \(c_{j+1}\le c_j\).
\end{enumerate}
If at least one eligible index exists, the \emph{extractable element} of
\(C\) is the entry \(c_j\) at the leftmost eligible index \(j\).  Thus
extractability is a global leftmost-selection convention rather than a claim
that only one index can satisfy the two local eligibility conditions.  When a
construction asks for a nonfinal extractable element, the same leftmost rule is
applied among eligible indices \(j<r-1\).
\end{definition}
\begin{definition}[Dyck \(m\)-skeleton]
\label{def:dyck-m-skeleton}
Let \(m\ge 0\).  A Dyck sequence \(F\) is a \emph{Dyck \(m\)-skeleton} if
\[
  \max(F)=m=F[-1]
\]
and \(F\) has no nonfinal extractable element.  Equivalently, the only
eligible index in \(F\), if one exists at all, is allowed to be the last index.
\end{definition}
\begin{definition}[Dyck triples]
\label{def:dyck-triples}
Fix a nonnegative integer \(m\).  We use the following four families of
objects and their statistics.
\begin{enumerate}[label=\textup{(\arabic*)}, wide]
\item \textbf{Family 1.}
A Family 1 object is a Dyck sequence \(D\) with \(\max(D)=m\).  Its statistics
are \(\area(D)\) and \(\dinv(D)\).
\item \textbf{Type 1 Dyck triples.}
A Type 1 triple is an ordered triple \((E,F,G)\) such that \(F\) is a Dyck
\(m\)-skeleton, \(E\) is a reverse \([1,m]\) Dyck sequence, and \(G\) is an
affine \([0,m-1]\) Dyck sequence.  Its statistics are
\[
\begin{aligned}
  \area(E,F,G)&=\area(E:F:G),\\
  \dinv(E,F,G)&=\dinv(E:F:G)-\operatorname{len}(E).
\end{aligned}
\]
\item \textbf{Type 2 Dyck triples.}
A Type 2 triple is an ordered triple \((F,G,E)\) where \(F\) and \(G\) satisfy
the same conditions as in Type 1, while \(E\) is a reverse \([0,m-1]\) Dyck
sequence.  Its statistics are
\[
\begin{aligned}
  \area(F,G,E)&=\area(F:G:E)+\operatorname{len}(E),\\
  \dinv(F,G,E)&=\dinv(F:G:E)-\operatorname{len}(E).
\end{aligned}
\]
\item \textbf{Type 3 Dyck triples.}
A Type 3 triple is an ordered triple \((F,G,E)\) where \(F\) and \(G\) satisfy
the same conditions as in Type 1, while \(E\) is an affine \([0,m-1]\) Dyck
sequence.  Its statistics are the same as for Type 2:
\[
\begin{aligned}
  \area(F,G,E)&=\area(F:G:E)+\operatorname{len}(E),\\
  \dinv(F,G,E)&=\dinv(F:G:E)-\operatorname{len}(E).
\end{aligned}
\]
\item \textbf{Type 4 Dyck triples.}
A Type 4 triple is an ordered triple \((F,P,Q)\) such that \(F\) is a Dyck
\(m\)-skeleton, \(P\) is an at-most-two-column Dyck tableau whose entries lie
in \([0,m-1]\), and \(Q\) is a binary reverse semistandard Young tableau of
the same shape as \(P\).  Thus the entries of \(Q\) lie in \(\{0,1\}\), its rows
are strictly increasing from left to right, and its columns are weakly
increasing from top to bottom.  Its statistics are
\[
\begin{aligned}
  \area(F,P,Q)&=\area(F)+\sum_{u\in P}u+\sum_{v\in Q}v,\\
  \dinv(F,P,Q)&=\dinv\bigl(F:\operatorname{RR}(P)\bigr)-\sum_{v\in Q}v.
\end{aligned}
\]
\end{enumerate}
Empty auxiliary components are allowed.  Their elementwise interval, affine,
and reverse-Dyck conditions are vacuous, while the displayed length, shape,
and parameter requirements remain in force.  The following subsections
construct the recorded area- and dinv-preserving bijections between these
families.
\end{definition}
\subsection{The map from Family 1 to Type 1}
\begin{definition}[The map \(\phi_1\)]
\label{def:phi1}
Let \(D=(d_0,\ldots,d_{n-1})\) be a Dyck sequence with \(\max(D)=m\).
Let \(k\) be the last index with \(d_k=m\), and write
\[
  C=D[0:k+1],
  \qquad
  G=D[k+1:n].
\]
Starting with this mutable prefix \(C\), repeatedly remove the extractable
entry of \(C\) whenever that extractable entry is not the final entry of
\(C\), and append the removed value to a growing sequence \(E\).  When no
nonfinal extractable entry remains, set \(F=C\).  We define
\[
  \phi_1(D)=(E,F,G).
\]
\end{definition}
\begin{lemma}[Structure of extractable elements]
\label{lem:extractable-structure}
Let \(C=(c_0,\ldots,c_{r-1})\) be a Dyck sequence, and suppose that the
extractable entry of \(C\) occurs at index \(i\) with value \(x=c_i\).  Then
\(c_i\) is the leftmost occurrence of the value \(x\) in \(C\).  Consequently
\(i>0\) and \(c_{i-1}=x-1\).  Moreover, deleting the entry \(c_i\) from \(C\)
produces another Dyck sequence.
\end{lemma}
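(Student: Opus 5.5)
The argument follows directly from the two eligibility conditions in Definition~\ref{def:extractable-element} together with the Dyck inequality \(c_{h+1}\le c_h+1\). I take \(i\) to be the eligible index that realizes the extractable entry; only eligibility is used, not leftmostness, so the argument applies to any eligible index. Write \(x=c_i\).

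\emph{Step 1: \(i>0\).} By condition (i) there is an index \(h<i\) with \(c_h=x-1\). Hence \(i\ge 1\), and also \(x\ge 1\) because \(c_h\ge 0\).

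\emph{Step 2: leftmost occurrence.} Suppose some \(p<i\) had \(c_p=x\). Since \(c_0=0<x\), the sequence starts below \(x\) and reaches \(x\) at or before \(p\). Consider the first index \(q\le p\) with \(c_q\ge x\). Because steps up are at most \(+1\), we get \(c_q=x\) and \(c_{q-1}=x-1\).

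Now use \(c_i=x\) with \(i>p\ge q\). Take \(i_0\) to be the last index in \([p,i)\) with value \(x-1\), if one exists. If none exists, the segment from \(p\) to \(i\) stays at values \(\ge x\); otherwise it dips to \(x-1\) somewhere.

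The dip case is the main obstacle, so I would organize it as a discrete intermediate value argument. After position \(p\) (value \(x\)), in order to get back to \(c_i=x\), either the walk never goes below \(x\) in between, or it drops below \(x\). If it drops below \(x\), it must climb back up by \(+1\) steps, passing through \(x-1\) at some index in \((p,i)\). That is a second occurrence of \(x-1\) before \(i\), together with \(c_{q-1}=x-1\) at \(q-1<p\). This contradicts the uniqueness in condition (i).

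So the walk stays \(\ge x\) on \([p,i]\). But \(c_{q-1}=x-1\) is one occurrence of \(x-1\) before \(i\), and uniqueness then says \(q-1\) is the only one.

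To finish this case I would try the direct route: eligibility of \(i\) only constrains the right neighbour, so I need to show \(q\) itself (or the first index after \(q-1\) with value \(x\)) is eligible and lies to the left of \(i\), contradicting leftmostness. That index has exactly one earlier \(x-1\), namely \(q-1\), so condition (i) holds. For condition (ii) I would pick the last index \(r<i\) in the run starting at \(q\) such that \(c_r=x\) and \(c_{r+1}\le x\); such an \(r\) exists because the walk returns to \(x\) at \(i\) from values \(\ge x\) by steps that need not be \(-1\). Here I genuinely use leftmostness. An earlier eligible \(r<i\) contradicts the choice of \(i\), so \(c_i\) is the leftmost occurrence of \(x\).

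\emph{Step 3: \(c_{i-1}=x-1\).} Since \(c_0<x\) and \(i\) is the first index with value \(x\), if \(c_{i-1}\ge x\) then some earlier index would attain exactly \(x\), because upward steps are \(+1\). Hence \(c_{i-1}\le x-1\), and \(c_i\le c_{i-1}+1\) forces \(c_{i-1}=x-1\).

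\emph{Step 4: deletion.} After deleting \(c_i\), the only new adjacency is \((c_{i-1},c_{i+1})\). By condition (ii), \(c_{i+1}\le x=c_{i-1}+1\), so the affine Dyck inequality holds there. The first entry is still \(0\) since \(i>0\), and all entries remain nonnegative, so the result is again a Dyck sequence.
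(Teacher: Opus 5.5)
Your Steps 1, 3 and 4 are fine and match the paper, but the end of Step 2 has a genuine gap. You assert that there is an index \(r<i\) with \(c_r=x\) and \(c_{r+1}\le x\), on the grounds that the walk returns to \(x\) at \(i\) from values \(\ge x\). Nothing forces this. The walk may run \(x-1,x,x+1,x\), reaching \(c_i=x\) by a down-step from \(x+1\), so that no entry \(x\) before \(i\) is followed by an entry \(\le x\).

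Concretely, take \(C=(0,1,2,1)\). The index \(i=3\) is eligible, the value \(x=1\) already occurs at \(p=q=1\), the walk stays \(\ge 1\) on \([1,3]\), and the only \(0\) before \(i\) is \(c_{q-1}\). Yet no \(r<3\) has \(c_r=1\) and \(c_{r+1}\le 1\). Your derivation of \(r\) uses only these facts; leftmostness enters only afterwards, to reach the contradiction. So that derivation cannot be correct.

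The same example refutes your opening remark that only eligibility is used. Index \(3\) is eligible, but its value is not a leftmost occurrence, and \(c_2\ne x-1\). So the first two conclusions of the lemma genuinely require that \(i\) be the leftmost eligible index.

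The missing idea is that the earlier eligible index need not carry the value \(x\); in the example it is index \(2\), of value \(x+1\). Let \(s\ge 0\) be maximal with \((c_q,\ldots,c_{q+s})=(x,x+1,\ldots,x+s)\). This run ends strictly before \(i\), since \(c_i=x\) and \(i>q\). Its top \(q+s\) is eligible:
\begin{itemize}
\item If \(s=0\), the unique \(x-1\) before it is \(c_{q-1}\); you already showed this is the only \(x-1\) before \(i\).
\item If \(s>0\), the unique \(x+s-1\) before it is \(c_{q+s-1}\), because every entry before \(q\) is \(<x\) and the run is strictly increasing.
\item Maximality together with the affine step gives \(c_{q+s+1}\le x+s\).
\end{itemize}
Thus \(q+s<i\) is eligible, contradicting the leftmost choice of \(i\). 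This is exactly the paper's argument, and it makes your analysis of dips on \([p,i]\) unnecessary.
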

\begin{proof}
The value \(x\) is positive: if \(x=0\), then eligibility would require an
entry \(-1\) to the left of \(i\), impossible in an ordinary Dyck sequence.
Assume for contradiction that the value \(x\) occurs before \(i\), and let
\(k<i\) be its leftmost occurrence.  Since \(C\) starts at \(0\), has
nonnegative entries, and satisfies \(c_{t+1}\le c_t+1\), no entry before
position \(k\) is greater than or equal to \(x\).  The adjacent inequality at
the first occurrence of \(x\) then forces \(c_{k-1}=x-1\).  Thus the unique
\(x-1\) lying to the left of the eligible index \(i\) already lies to the left
of \(k\).
Let \(s\ge 0\) be maximal such that
\[
  (c_k,c_{k+1},\ldots,c_{k+s})=(x,x+1,\ldots,x+s).
\]
This rising chain must stop before \(i\), because \(c_i=x\) and \(k<i\).  The
index \(k+s\) is eligible: its unique predecessor value is either the already
identified unique \(x-1\), if \(s=0\), or the preceding entry \(x+s-1\) in the
chain, if \(s>0\); and maximality of the chain, together with the affine Dyck
inequality, gives the successor condition.  This eligible index lies strictly
before \(i\), contradicting the definition of the extractable entry as the
leftmost eligible entry.  Hence \(c_i\) is the leftmost occurrence of \(x\).
Since \(c_i\) is the first occurrence of \(x\) and \(C\) is affine Dyck, the
entry immediately before it must be \(x-1\): the affine inequality gives
\(c_i\le c_{i-1}+1\), while any value \(\ge x\) before \(i\) would contradict
leftmostness.  Finally, deleting \(c_i=x\) can only affect the adjacent
inequality across the deletion site.  The new adjacent pair is
\((c_{i-1},c_{i+1})\), if \(c_{i+1}\) exists, and eligibility gives
\(c_{i+1}\le x=c_{i-1}+1\).  The first entry remains \(0\), all entries remain
nonnegative, and all other affine inequalities are unchanged.  Thus the
resulting sequence is again Dyck.
\end{proof}
\begin{proposition}[Well-definedness of \(\phi_1\)]
\label{prop:phi1-well-defined}
For every Family 1 object \(D\) with \(\max(D)=m\), the output
\(\phi_1(D)=(E,F,G)\) is a Type 1 Dyck triple of height \(m\).
\end{proposition}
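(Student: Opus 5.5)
We must show three things about \(\phi_1(D)=(E,F,G)\): \(F\) is a Dyck \(m\)-skeleton, \(G\) is an affine \([0,m-1]\) Dyck sequence, and \(E\) is a reverse \([1,m]\) Dyck sequence. We also need the extraction loop to terminate, which is immediate since each pass removes an entry from the finite prefix \(C\).

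\emph{The sequence \(G\).} Since \(k\) is the last index with \(d_k=m=\max(D)\), every entry of \(G=D[k+1:n]\) lies in \([0,m-1]\). The affine inequality is inherited from \(D\) for consecutive pairs inside \(G\). Hence \(G\) is an affine \([0,m-1]\) Dyck sequence, possibly empty.

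\emph{The sequence \(F\).} I will carry the following invariant through the loop: \(C\) is a Dyck sequence whose final entry is \(m\) and whose maximum is \(m\).

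1. Initially \(C=D[0:k+1]\) is a prefix of a Dyck sequence, so it is Dyck, and it ends at \(d_k=m=\max\).
2. Each removal deletes a nonfinal extractable entry. By Lemma~\ref{lem:extractable-structure}, the result is again Dyck.
3. The final entry is never deleted, so \(C\) still ends at \(m\), and no entry exceeds \(m\).

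When the loop stops, \(C\) has no nonfinal extractable element. Hence \(F\) satisfies Definition~\ref{def:dyck-m-skeleton}.

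\emph{The sequence \(E\).} Every removed value \(x\) is positive by the lemma's proof, and at most \(m\), so all entries of \(E\) lie in \([1,m]\). The main obstacle is the reverse Dyck condition: if \(x\) is extracted and \(y\) is the next extracted value, we need \(y\ge x-1\). I plan to argue this by locating the next leftmost eligible index after the deletion at position \(i\), where \(c_{i-1}=x-1\) and \(c_{i+1}\le x\).

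1. \emph{Indices before \(i-1\).} For an index \(j<i-1\), the successor condition is unchanged. The predecessor count (the number of earlier entries equal to \(c_j-1\)) is also unchanged, since only entries to the right of \(j\) moved. So such \(j\) were not eligible before the deletion and remain ineligible.
2. \emph{Index \(i-1\).} Its successor changes to \(c_{i+1}\), and \(i-1\) may become eligible. This yields \(y=x-1\), which is allowed.
3. \emph{Indices after \(i\).} Deleting the first occurrence of \(x\) can change predecessor counts only for entries with value \(x+1\), namely from \(2\) down to \(1\). A new eligible index of this kind gives \(y=x+1\ge x-1\).
4. \emph{Previously eligible indices after \(i\).} I still have to rule out the case \(y<x-1\) coming from an index that was already eligible to the right of \(i\) and remains so. For such an entry of value \(y\le x-2\), I will show that its first occurrence lies left of \(i-1\). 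The climb to \(x-1\) at position \(i-1\) must pass through all smaller values, and the ascent-chain argument from the proof of Lemma~\ref{lem:extractable-structure} would then produce an eligible index strictly before \(i\). That contradicts the leftmost choice of \(i\) in the previous round.

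I expect this last case analysis, the reverse-Dyck property of \(E\), to require the most care.
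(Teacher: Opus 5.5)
Your handling of $G$, of $F$ (the Dyck invariant with terminal maximum $m$), of the bound $E\subseteq[1,m]$, and steps 1--3 of the reverse-Dyck argument is fine. \textbf{Step 4 has a genuine gap.} You claim that a previously eligible index $j'>i$ of value $y\le x-2$ forces an eligible index of $C$ strictly before $i$. That is false.

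Take $D=C=(0,1,2,3,1,1,2,3)$. The leftmost eligible index is $i=3$, with $x=3$. Index $4$, of value $1=x-2$, is also eligible. It stays eligible in $C'=(0,1,2,1,1,2,3)$. Yet no index of $C$ before $i$ is eligible. The flaw is in the ascent chain you start at the first occurrence of $y$. It can run through consecutive first occurrences $y,y+1,\ldots,x$ and stop exactly at $i$, since $C[i+1]\le x$. The eligible index it produces is then $i$ itself, which is no contradiction. Your argument does succeed when that chain ends strictly before $i$, but not in this case.

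What is missing is that $y$ is the \emph{leftmost} eligible entry of $C'$, not merely an eligible one. You have to show that the deletion creates an eligible index of $C'$ to the left of $j$.

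In the bad configuration, the first occurrences of $y,\ldots,x-1$ are consecutive and end at $i-1$. So $C'[i-1]=x-1$ has exactly one $x-2$ to its left. Split on the entry $C'[i]=C[i+1]$:
\begin{enumerate}
\item If $C'[i]\le x-1$, then $i-1$ is eligible in $C'$ and lies left of $j$. This is what happens in the example, where the next extraction is $2=x-1$.
\item If $C'[i]=x$, this is now the first $x$ in $C'$. The maximal rising run $x,x+1,\ldots$ starting at position $i$ ends at an eligible index of value at least $x$. Since $j\ge i$ and $C'[j]=y<x$, this index also lies left of $j$.
\end{enumerate}
Either way $j$ is not leftmost, so $y\ge x-1$. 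This is the second half of the paper's proof: the consecutive first-occurrence chain ending at $p_{x-1}=i-1$, followed by the split on $C'[i]$. Step 4 needs it to be complete.
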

\begin{proof}
The extraction loop terminates because each iteration removes one entry from
\(C\).  It never removes the final entry of \(C\).  Initially that final entry
is the last occurrence of the maximum value \(m\) in \(D\), so the same
terminal \(m\) remains at the end of the mutable prefix throughout the loop.
The suffix \(G\) inherits the affine Dyck inequality from \(D\).  Since \(k\)
is the last index at which the maximum value \(m\) occurs, every entry of
\(G\) lies in \([0,m-1]\).  If \(G\) is empty, including the case \(m=0\), the
interval and affine conditions are vacuous.  Thus \(G\) is an affine
\([0,m-1]\) Dyck sequence.
The initial prefix \(D[0:k+1]\) is a Dyck sequence, and Lemma~\ref{lem:extractable-structure}
shows inductively that every extraction leaves a Dyck sequence.  Therefore the
terminal sequence \(F\) is Dyck.  Its last entry is still \(m\), no larger
entry has been introduced, and the entry \(m\) remains present, so
\(\max(F)=m=F[-1]\).  The loop stops exactly when \(F\) has no nonfinal
extractable entry.  Hence \(F\) is a Dyck \(m\)-skeleton.
Every extracted value lies in \([1,m]\).  It is at most \(m\) because it came
from the original prefix of \(D\), and it cannot be \(0\) because an eligible
entry of value \(0\) would require a \(-1\) to its left.
It remains to prove the reverse-Dyck condition for \(E\).  Let \(x\) and
\(y\) be consecutive extracted values, in the order appended to \(E\).  Suppose
\(x\) is extracted from position \(i\) of a current Dyck sequence \(C\), leaving
\(C'\), and suppose \(y\) is then extracted from position \(j\) of \(C'\).  By
Lemma~\ref{lem:extractable-structure}, the entry just before the removed
\(x\) is \(x-1\), so \(C'[i-1]=x-1\).  Also, because \(x\) was eligible,
\(C'[i]=C[i+1]\le x\) whenever this successor exists.
Assume, toward a contradiction, that \(y<x-1\).  If \(j<i-1\), then the
entries, successor, and predecessor-count data relevant to position \(j\) were
unchanged by removing \(x\), so the same position would have been eligible in
\(C\) before the leftmost eligible position \(i\), a contradiction.  If
\(j=i-1\), then \(y=C'[i-1]=x-1\), also a contradiction.
Thus \(j\ge i\).  Since \(C'\) starts at \(0\) and reaches the value \(x-1\)
at position \(i-1<j\), it reaches every value \(y,y+1,\ldots,x-1\) before
position \(j\).  Let \(p_t\) be the first occurrence of \(t\) in \(C'\), for
\(y\le t\le x-1\).  Then \(p_y<j\).  The extractability of \(C'[j]=y\) gives a
unique \(y-1\) to the left of \(j\); that occurrence must lie before the first
\(y\), so \(p_y\) has exactly one \(y-1\) to its left.  Since \(j\) is the
leftmost eligible index of \(C'\), the earlier index \(p_y\) cannot be
eligible.  Its predecessor-count condition holds, so its successor condition
must fail.  Together with the affine inequality, this forces
\(C'[p_y+1]=y+1\).

Suppose for some \(t<x-1\) that the first occurrence \(p_t\) lies before \(j\)
and has exactly one \(t-1\) to its left.  Then \(p_t\) is not eligible, so its
successor must be \(t+1\).  Hence \(p_{t+1}=p_t+1\).  Since \(p_t\) is the first
occurrence of \(t\), it is the unique \(t\) to the left of the first \(t+1\).
Inducting on \(t\) gives a consecutive first-occurrence chain
\[
  y,y+1,\ldots,x-1
\]
before \(j\), and the first occurrence of \(x-1\) has exactly one \(x-2\) to
its left.  But the extraction of \(x\) from \(C\) had exactly one \(x-1\) to
the left of the removed position \(i\), namely the predecessor \(C[i-1]\).
After deletion this same entry is \(C'[i-1]\), so \(p_{x-1}=i-1\).
If \(C'[i]\le x-1\), then the index \(i-1\) would be eligible before \(j\), a
contradiction.

The only remaining possibility is \(C'[i]=x\).  In that case, let \(s\ge0\)
be maximal such that
\[
  (C'[i],C'[i+1],\ldots,C'[i+s])=(x,x+1,\ldots,x+s).
\]
The endpoint of this maximal rising chain is eligible by the same argument
used in Lemma~\ref{lem:extractable-structure}: its unique predecessor value is
either \(C'[i-1]=x-1\), if \(s=0\), or the preceding entry of the chain, if
\(s>0\), and maximality plus the affine inequality gives the successor
condition.  Therefore the next leftmost eligible index lies on this chain and
has value at least \(x\), contradicting \(y<x-1\).
Hence every pair of consecutive extracted values satisfies \(y\ge x-1\).  This
is exactly the reverse-Dyck adjacent inequality for \(E\).  Together with the
interval bound \([1,m]\), this proves that \(E\) is a reverse \([1,m]\) Dyck
sequence.  Thus \((E,F,G)\) is a Type 1 Dyck triple.
\end{proof}
\subsection{The inverse map from Type 1 to Family 1}
\begin{definition}[Injection]
\label{def:injection}
Let \(C\) be a Dyck sequence and let \(e\) be a positive integer with
\(e\le \max(C)+1\).  The \emph{injection} of \(e\) into \(C\), denoted
\(\operatorname{inject}(C,e)\), is obtained by finding the first index \(p\)
with \(C[p]=e-1\) and inserting \(e\) immediately after position \(p\).
\end{definition}
\begin{definition}[The inverse map \(\phi_1^{-1}\)]
\label{def:phi1-inverse}
Let \((E,F,G)\) be a Type 1 Dyck triple of height \(m\), and write
\(E=(e_0,\ldots,e_{\ell-1})\).  Define a sequence of intermediate Dyck
sequences by setting \(C^{(\ell)}=F\) and, for
\(r=\ell-1,\ell-2,\ldots,0\), setting
\[
  C^{(r)}=\operatorname{inject}(C^{(r+1)},e_r).
\]
If \(E\) is empty, this says that \(C^{(0)}=F\).  Set
\(D=C^{(0)}:G\).  We define
\[
  \phi_1^{-1}(E,F,G)=D.
\]
\end{definition}
\begin{proposition}[The maps \(\phi_1\) and \(\phi_1^{-1}\) are inverse]
\label{prop:phi1-inverse}
For fixed \(m\ge 0\), the maps \(\phi_1\) and \(\phi_1^{-1}\) are mutual
inverses between Family 1 Dyck sequences with maximum \(m\) and Type 1 Dyck
triples of height \(m\).
\end{proposition}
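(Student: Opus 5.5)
Both compositions are handled by the same mechanism: a one-step inverse relation between extraction and injection, iterated along the extraction loop. Concretely, I will prove two local facts about a Dyck sequence \(C\) with \(\max(C)=m=C[-1]\).

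\textbf{Fact (A).} If \(x\) is the nonfinal extractable entry of \(C\) and \(C'\) is the result of deleting it, then \(\operatorname{inject}(C',x)=C\).

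\textbf{Fact (B).} Let \(C'\) be such a sequence and \(e\in[1,m]\). Let \(C=\operatorname{inject}(C',e)\), and assume either that \(C'\) is a skeleton or that \(e\) satisfies the reverse-Dyck compatibility with the value that was injected just before it. Then the inserted \(e\) is exactly the leftmost nonfinal eligible entry of \(C\), so one extraction step returns \(C'\).

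Given these facts, \(\phi_1^{-1}\circ\phi_1=\mathrm{id}\) follows by running the extraction loop backwards. Write \(C^{(0)}=D[0:k+1]\), \(C^{(1)},\dots,C^{(\ell)}=F\) for the successive prefixes, with \(e_r\) removed from \(C^{(r)}\). Fact (A) gives \(C^{(r)}=\operatorname{inject}(C^{(r+1)},e_r)\), which is precisely the recursion in Definition~\ref{def:phi1-inverse}. Reattaching \(G\) then gives back \(D\).

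\textbf{Proof of (A).} By Lemma~\ref{lem:extractable-structure}, the removed \(x\) at index \(i\) is the leftmost occurrence of \(x\), and \(C[i-1]=x-1\). Eligibility gives a unique \(x-1\) to the left of \(i\), so \(i-1\) is the first occurrence of \(x-1\) in \(C\), and deleting \(c_i\) does not change this. Injection therefore reinserts \(x\) right after position \(i-1\), that is, back at index \(i\).

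For \(\phi_1\circ\phi_1^{-1}=\mathrm{id}\), start from a Type~1 triple \((E,F,G)\). First I check that every \(C^{(r)}\) is Dyck with \(\max=m=C^{(r)}[-1]\). The injection is well defined because \(1\le e_r\le m\), so \(e_r-1\) occurs in \(C^{(r+1)}\). Inserting \(e_r\) right after the first \(e_r-1\) preserves the affine inequality, since the next entry is \(\le e_r\). The inserted value is \(\le m\), and it is never placed after the final \(m\): when \(e_r=m\), the first \(m-1\) precedes the final \(m\). Consequently the last occurrence of \(m\) in \(D=C^{(0)}:G\) is the end of \(C^{(0)}\), because \(G\subseteq[0,m-1]\). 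Hence the split in Definition~\ref{def:phi1} recovers \(C=C^{(0)}\) and the same \(G\). It then suffices to show that the extraction loop on \(C^{(0)}\) removes \(e_0,e_1,\dots,e_{\ell-1}\) in order and stops at \(F\). By induction on \(r\), this is exactly Fact (B) for the pair \((C^{(r+1)},e_r)\). The stopping condition at \(F\) holds because \(F\) is a skeleton.

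\textbf{Proof of (B), the main obstacle.} Place \(e\) at position \(p+1\), where \(p\) is the first occurrence of \(e-1\). I must show three things.
\begin{enumerate}[label=(\roman*)]
\item Position \(p+1\) is eligible in \(C\). There is exactly one \(e-1\) to its left, namely at \(p\), and its successor is \(\le e\) by the affine inequality in \(C'\).
\item Position \(p+1\) is not the final position. This holds because the final \(m\) lies after it.
\item No index \(q<p+1\) is eligible in \(C\). This is the delicate point. Entries at indices \(\le p\) are unchanged, and their predecessor counts are unchanged. Only the successor of \(p\) changed, and it is now \(e=C[p]+1\), so \(p\) itself is not eligible. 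Any \(q<p\) that is eligible in \(C\) was already eligible in \(C'\).
\end{enumerate}

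For \(C'=F\) (the case \(r=\ell-1\)), \(F\) has no nonfinal eligible index, so (iii) is immediate. For general \(r\), \(C'=C^{(r+1)}=\operatorname{inject}(C^{(r+2)},e_{r+1})\). By induction, the leftmost nonfinal eligible index of \(C'\) is the injected \(e_{r+1}\), sitting right after the first \(e_{r+1}-1\). I then use the reverse-Dyck inequality \(e_{r+1}\ge e_r-1\), and argue as follows.
\begin{itemize}
\item If \(e_r\le e_{r+1}\), the first occurrence of \(e_r-1\) is at or after the position of the injected \(e_{r+1}\) chain. Since first occurrences of increasing values appear in increasing order in a Dyck sequence, every eligible index of \(C'\) lying before \(p\) would lie before the \(e_{r+1}\) slot. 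By induction no such index exists.
\item If \(e_r=e_{r+1}+1\), the new \(e_r\) is injected immediately after the freshly injected \(e_{r+1}\), which is the first occurrence of \(e_r-1\). The old eligible \(e_{r+1}\) loses eligibility because its successor increased, and the new \(e_r\) becomes leftmost.
\end{itemize}

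The remaining work is to make this first-occurrence ordering argument rigorous, in the spirit of the chain argument in Proposition~\ref{prop:phi1-well-defined}. I expect this to be the hardest step. I will organize it by tracking the invariant that all eligible nonfinal indices of \(C^{(r)}\) are at or after the position of the last injected entry.
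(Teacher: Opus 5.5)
Your proposal is correct and follows the paper's proof essentially step for step. It uses the one-step extraction--injection inverse for $\phi_1^{-1}\circ\phi_1$, and a downward induction showing that each injected entry is the leftmost (nonfinal) eligible index, split into the cases $e_r\le e_{r+1}$ and $e_r=e_{r+1}+1$. One correction, in the case $e_r\le e_{r+1}$: the first occurrence $p$ of $e_r-1$ lies strictly \emph{before} the injected $e_{r+1}$, not at or after it, because that injected entry is the first occurrence of the larger value $e_{r+1}$. This is exactly what places every index $<p$ in the inductively non-eligible region, so the step you flagged as hardest is already complete once the direction is fixed.
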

\begin{proof}
We first record the local fact that makes the inverse map well-defined.  Let
\(C\) be a Dyck sequence with \(\max(C)=m=C[-1]\), and let \(1\le e\le m\).
Because \(C\) starts at \(0\), ends at \(m\), and satisfies the affine step
condition, it contains every value \(0,1,\ldots,m\) before or at its final
entry.  Hence the first occurrence of \(e-1\) exists and, since \(e-1<m\),
occurs before the final \(m\).  Inserting \(e\) after this first occurrence
preserves the affine inequalities at the insertion site: the new left adjacent
pair is \((e-1,e)\), and the old successor of the first \(e-1\), if it exists,
was at most \((e-1)+1=e\).  Thus injection produces another Dyck sequence with
maximum \(m\) and last entry \(m\).
Now let \((E,F,G)\) be a Type 1 triple of height \(m\).  If \(m=0\), the
interval conventions force \(E\) and \(G\) to be empty.  In general, each entry
of \(E\), if any, lies in \([1,m]\), so the local fact applies inductively to
the right-to-left injections into \(F\).  The mutable sequence \(C\) remains
Dyck with maximum and last entry \(m\).  Concatenating with \(G\) gives a Dyck
sequence: the only new adjacent inequality to check is the boundary from the
last entry \(m\) of \(C\) to the first entry of \(G\), and if \(G\) is nonempty
that first entry is at most \(m-1\).  Therefore \(\phi_1^{-1}(E,F,G)\) is a
Family 1 object.
We next show \(\phi_1^{-1}\circ\phi_1=\mathrm{id}\).  Let \(D\) be a Family 1
object, and write \(D=C^0:G\) at the last occurrence of \(m\).  Suppose the
extraction process removes the entries \(e_0,e_1,\ldots,e_{\ell-1}\), producing
successive prefixes
\[
  C^0,C^1,\ldots,C^\ell=F.
\]
Fix one extraction step, in which \(x=e_t\) is removed from position \(i\) of
\(C^t\).  The eligibility condition says that exactly one \(x-1\) lies to the
left of position \(i\), and Lemma~\ref{lem:extractable-structure} says that the
immediate predecessor \(C^t[i-1]\) is \(x-1\).  After deleting \(x\), this same
entry is the first occurrence of \(x-1\) in \(C^{t+1}\).  Thus injecting \(x\)
into \(C^{t+1}\) places \(x\) immediately after the same predecessor and
recovers \(C^t\).  Reversing the extraction steps from \(t=\ell-1\) down to
\(0\) recovers \(C^0\), and then concatenating with the unchanged suffix \(G\)
recovers \(D\).
It remains to show \(\phi_1\circ\phi_1^{-1}=\mathrm{id}\).  If \(E\) is empty,
then \(\phi_1^{-1}(E,F,G)=F:G\).  The last occurrence of \(m\) in this sequence
is the final entry of \(F\), because all entries of \(G\) are at most \(m-1\),
and the extraction loop removes nothing from \(F\) because \(F\) is a Dyck
\(m\)-skeleton.  Hence the result is \((\emptyset,F,G)\).
Assume now that \(E=(e_0,\ldots,e_{\ell-1})\) is nonempty.  Let
\(C_\ell=F\), and for \(k=\ell-1,\ell-2,\ldots,0\), let
\[
  C_k=\operatorname{inject}(C_{k+1},e_k).
\]
Since \(E\) is reverse Dyck, for every \(k<\ell-1\) we have
\(e_k\le e_{k+1}+1\).  Because entries are integral, each adjacent pair falls
into one of the two cases \(e_k\le e_{k+1}\) or \(e_k=e_{k+1}+1\).
We prove by downward induction on \(k\) that, in \(C_k\), the occurrence of
\(e_k\) just inserted in the step \(C_{k+1}\mapsto C_k\) is a nonfinal eligible
index and no earlier index is eligible.  This says exactly that the most
recently injected entry is the extractable entry selected by
Definition~\ref{def:extractable-element}; it does not assert uniqueness among
all locally eligible indices to its right.
For the base case \(k=\ell-1\), the sequence \(F\) has no nonfinal eligible
index, because it is a Dyck \(m\)-skeleton.  Injecting \(e_{\ell-1}\) after the
first \(e_{\ell-1}-1\) creates an eligible entry: it has exactly one such
predecessor value to its left, and its old successor, if any, is at most
\(e_{\ell-1}\).  No earlier index becomes eligible.  Entries before the
insertion site have unchanged local data except for the insertion site's
predecessor, whose new successor is one larger than itself and hence fails the
successor condition.  Thus the invariant holds at \(k=\ell-1\).
For the induction step, assume the invariant for \(k+1\).  Let the previously
inserted occurrence of \(e_{k+1}\) be at position \(q\) in \(C_{k+1}\).  It is
the extractable entry of \(C_{k+1}\), so Lemma~\ref{lem:extractable-structure}
shows that it is the leftmost occurrence of the value \(e_{k+1}\).  Inject
\(e_k\) after the first occurrence \(p\) of \(e_k-1\).  The newly inserted
entry is eligible by the same local argument used above.
If \(e_k\le e_{k+1}\), then the first occurrence of \(e_k-1\) occurs before
position \(q\): an affine Dyck sequence starting at \(0\) cannot reach the
value \(e_{k+1}\ge e_k\) without first reaching \(e_k-1\).  Hence the new
insertion lies weakly to the left of the previous inserted occurrence.  Every
index before the new insertion either has unchanged eligibility data from
\(C_{k+1}\) or is the index \(p\), which now has successor \(e_k=(e_k-1)+1\)
and so is not eligible.  By the induction hypothesis, there was no eligible
index before the previous inserted occurrence; therefore none lies before the
new inserted entry.
If \(e_k=e_{k+1}+1\), then \(e_k-1=e_{k+1}\), and the first occurrence of this
value is exactly the previous inserted entry at \(q\), by
Lemma~\ref{lem:extractable-structure}.  Injection places \(e_k\) immediately
after it.  The old occurrence of \(e_{k+1}\) ceases to be eligible, because its
new successor is \(e_k=e_{k+1}+1>e_{k+1}\).  The new \(e_k\) is eligible: the
old \(e_{k+1}\) is the only \(e_k-1\) to its left, and its successor, if any,
is the old successor of an eligible \(e_{k+1}\), hence is at most
\(e_{k+1}\le e_k\).  Earlier indices have unchanged local data and were not
eligible by induction.  The invariant follows in this case as well.
The induction is complete.  Starting from \(C_0\), the extraction loop in
\(\phi_1\) therefore removes the entries \(e_0,e_1,\ldots,e_{\ell-1}\) in order
and leaves \(F=C_\ell\).  It then stops because \(F\) is a Dyck
\(m\)-skeleton.  Finally, the splitting step recovers the same suffix \(G\): in
\(C_0:G\), the last entry of \(C_0\) is \(m\), while every entry of \(G\) lies
in \([0,m-1]\).  Hence the last occurrence of \(m\) is the final entry of
\(C_0\).  Thus \(\phi_1(\phi_1^{-1}(E,F,G))=(E,F,G)\), completing the proof.
\end{proof}
\subsection{\texorpdfstring{Area and \(\dinv\) preservation for \(\phi_1\)}{Area and dinv preservation for phi1}}
\begin{proposition}[Statistics preserved by \(\phi_1\)]
\label{prop:phi1-statistics}
Let \(D\) be a Family 1 Dyck sequence and let \(\phi_1(D)=(E,F,G)\).  Then
\[
  \area(E,F,G)=\area(D),
  \qquad
  \dinv(E,F,G)=\dinv(D).
\]
\end{proposition}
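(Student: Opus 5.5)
Area is immediate: \(\phi_1\) only rearranges entries, removing them from the prefix \(C=D[0:k+1]\) and appending them to \(E\), so the multiset of entries of \(E:F:G\) equals that of \(D\). Hence \(\area(E:F:G)=\area(D)\), which is the Type 1 area. For dinv, I will induct on the extraction steps, carrying the invariant
\[
  \dinv(E:C:G)-\operatorname{len}(E)=\dinv(D)
\]
for the mutable state \((E,C)\). It holds trivially at the start, since \(E=\varnothing\) and \(C:G=D\). It therefore suffices to show that a single extraction, which moves the extractable value \(x\) from position \(i\) of \(C\) to the end of \(E\), increases \(\dinv(E:C:G)\) by exactly \(1\).

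In the word \(E:C:G\), that step moves the letter \(x\) leftward past two blocks: the prefix \(C[0:i]\) of the current \(C\), and nothing else. The letters of \(E\) stay to the left of \(x\), and the letters of \(C[i+1:]\) and \(G\) stay to the right. So the only pairs whose contribution changes are \((c,x)\) with \(c\in C[0:i]\), which become \((x,c)\). Since \(\dinv=\di+\operatorname{nv}\), a pair with equal values contributes to \(\operatorname{nv}\) in either order. A pair contributes to \(\di\) before the move exactly when \(c=x+1\), and after the move exactly when \(c=x-1\). The change in \(\dinv\) is therefore
\[
  \#\{c\in C[0:i]: c=x-1\}-\#\{c\in C[0:i]: c=x+1\}.
\]
Eligibility condition (i) gives exactly one \(x-1\) to the left of \(i\). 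By Lemma~\ref{lem:extractable-structure}, \(c_i\) is the leftmost occurrence of \(x\) in \(C\). An affine Dyck sequence starting at \(0\) cannot reach \(x+1\) without first passing through \(x\), so no \(x+1\) occurs before position \(i\). The change is therefore exactly \(+1\), matching the increase of \(\operatorname{len}(E)\) by one, and the invariant is preserved.

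At termination \(C=F\), so \(\dinv(E:F:G)-\operatorname{len}(E)=\dinv(D)\), which is the Type 1 dinv. The only delicate point is the claim that no \(x+1\) precedes position \(i\). This rests on the leftmost-occurrence part of Lemma~\ref{lem:extractable-structure} together with the affine step condition, which forces the first occurrence of \(x+1\) to come after the first occurrence of \(x\). Everything else is bookkeeping of pair inversions for a single moved letter.
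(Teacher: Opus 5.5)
Your proof is correct and matches the paper's argument: extraction only permutes entries, so area and \(\operatorname{nv}\) are unchanged, and each step moves the extractable \(x\) leftward past exactly \(C[0:i]\). That prefix contains exactly one \(x-1\) and, by the leftmost-occurrence part of Lemma~\ref{lem:extractable-structure}, no \(x+1\), so \(\di\) rises by one per extraction and cancels the \(-\operatorname{len}(E)\) correction (two small wording points: the letter crosses one block, not ``two blocks,'' and the fact that the mutable prefix stays Dyck, which you use at each step, is the deletion clause of that same lemma).
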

\begin{proof}
It is enough to analyze one extraction step.  Write the full
concatenation before extraction as
\[
  E_{\mathrm{old}}:A:x:B:G,
\]
where \(x\) is the extractable entry in the mutable prefix.  After extraction
the full concatenation is
\[
  E_{\mathrm{old}}:x:A:B:G.
\]
Thus only the relative order of \(x\) with the entries of \(A\) changes.
Extraction preserves the multiset, so area is unchanged and
\(\operatorname{nv}\) is unchanged.
For \(\di\), before the move an entry \(a\in A\) contributes with \(x\) exactly
when \(a=x+1\); after the move it contributes exactly when \(a=x-1\).  Since
\(x\) is extractable, exactly one entry of \(A\) has value \(x-1\).  By
Lemma~\ref{lem:extractable-structure}, this occurrence of \(x\) is the
leftmost \(x\), and in a normalized affine Dyck sequence no value \(x+1\) can
appear before the leftmost \(x\).  Hence each extraction increases \(\di\) by
exactly \(1\) and leaves \(\operatorname{nv}\) unchanged.  After
\(\operatorname{len}(E)\) extractions,
\[
  \di(E:F:G)=\di(D)+\operatorname{len}(E),
  \qquad
  \operatorname{nv}(E:F:G)=\operatorname{nv}(D).
\]
Using the Type 1 statistic definition,
\[
  \dinv(E,F,G)=\dinv(E:F:G)-\operatorname{len}(E)=\dinv(D),
\]
while \(\area(E,F,G)=\area(E:F:G)=\area(D)\).
\end{proof}
\subsection{The map from Type 1 to Type 2}
\begin{definition}[The map \(\phi_2\)]
\label{def:phi2}
Let \((E,F,G)\) be a Type 1 Dyck triple of height \(m\), and write
\(E=(e_0,\ldots,e_{\ell-1})\).  Define
\[
  E^-=(e_0-1,\ldots,e_{\ell-1}-1),
\]
with the convention that \(E^-=\emptyset\) when \(E=\emptyset\).  The map
\(\phi_2\) is
\[
  \phi_2(E,F,G)=(F,G,E^-).
\]
\end{definition}
\begin{proposition}[The Type 1--Type 2 shift]
\label{prop:phi2-bijection}
For fixed \(m\ge 0\), the map \(\phi_2\) is an area- and
\(\dinv\)-preserving bijection from Type 1 Dyck triples of height \(m\) to
Type 2 Dyck triples of height \(m\).
\end{proposition}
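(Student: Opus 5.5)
The plan is to verify the three required properties of \(\phi_2\) directly from the definitions: that it lands in the right set, that it is invertible, and that it preserves both statistics. Everything reduces to a uniform shift of the block \(E\), together with a pair-by-pair comparison for \(\dinv\).

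\emph{Well-definedness and bijectivity.} The components \(F\) and \(G\) are passed through unchanged, so their Type~2 conditions are exactly the Type~1 conditions. For \(E\), subtracting \(1\) from every entry preserves adjacent differences. Hence \(e_{i+1}\ge e_i-1\) holds for \(E\) if and only if it holds for \(E^-\). The same shift carries the interval \([1,m]\) bijectively onto \([0,m-1]\), and the empty sequence is sent to the empty sequence. So \(E\) is a reverse \([1,m]\) Dyck sequence exactly when \(E^-\) is a reverse \([0,m-1]\) Dyck sequence. The inverse map is \((F,G,E')\mapsto(E'^{+},F,G)\), where \(E'^{+}\) adds \(1\) to each entry, and this gives the bijection.

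\emph{Area.} Since \(\operatorname{len}(E^-)=\operatorname{len}(E)\) and \(\area(E^-)=\area(E)-\operatorname{len}(E)\), we get
\[
  \area(F,G,E^-)=\area(F:G)+\area(E)-\operatorname{len}(E)+\operatorname{len}(E)=\area(E:F:G).
\]

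\emph{Dinv.} This is the only step with content. Because both statistics subtract \(\operatorname{len}(E)=\operatorname{len}(E^-)\), it suffices to show \(\dinv(E:F:G)=\dinv(F:G:E^-)\). I would split the pairs counted by \(\di\) and \(\operatorname{nv}\) into three classes.
\begin{itemize}
  \item Pairs inside \(F:G\) are unchanged, since their entries and relative order are untouched.
  \item Pairs inside \(E\) versus inside \(E^-\) are unchanged, since both \(\di\) and \(\operatorname{nv}\) depend only on differences of entries and the order within the block is kept.
  \item Cross pairs are the key case. Take \(e\in E\) and \(w\in F:G\). In \(E:F:G\) the entry \(e\) precedes \(w\), so the pair contributes to \(\di\) iff \(e=w+1\) and to \(\operatorname{nv}\) iff \(e=w\). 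In \(F:G:E^-\) the entry \(w\) precedes \(e-1\), so the pair contributes to \(\di\) iff \(w=(e-1)+1=e\) and to \(\operatorname{nv}\) iff \(w=e-1\).
\end{itemize}
In the cross case the condition \(e=w+1\) swaps from a \(\di\) contribution to an \(\operatorname{nv}\) contribution, and \(e=w\) swaps from \(\operatorname{nv}\) to \(\di\). So each cross pair contributes the same amount to \(\dinv=\di+\operatorname{nv}\) before and after. Summing the three classes gives equality of \(\dinv\).

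I do not expect a real obstacle. The one point that needs care is this cross-pair swap between \(\di\) and \(\operatorname{nv}\): \(\di\) alone is \emph{not} preserved, and it is only the combined statistic \(\dinv\) that is invariant.
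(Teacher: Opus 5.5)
Your proposal is correct and takes essentially the same route as the paper. Translation-invariance of the reverse-Dyck condition together with the interval shift gives the bijection, the added \(\operatorname{len}(E)\) compensates the area drop, and on cross pairs the \(\dinv\) conditions \(e=w+1\) or \(e=w\) match \(w=e-1\) or \(w=e\) after \(E\) is lowered and moved to the right; your extra remark that \(\di\) and \(\operatorname{nv}\) individually trade places on cross pairs, so only their sum is invariant, is accurate and makes this step slightly more explicit than the paper's version.
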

\begin{proof}
Let \((E,F,G)\) be Type 1.  Since every entry of \(E\) lies in \([1,m]\),
every entry of \(E^-\) lies in \([0,m-1]\).  The reverse-Dyck condition is
invariant under translating all entries by a constant: for adjacent entries,
\[
  e_i\le e_{i+1}+1
  \qquad\Longleftrightarrow\qquad
  e_i-1\le (e_{i+1}-1)+1.
\]
Thus \(E^-\) is a reverse \([0,m-1]\) Dyck sequence.  The components \(F\) and
\(G\) are unchanged and already satisfy the Type 2 requirements.  Hence
\((F,G,E^-)\) is Type 2.  Conversely, from a Type 2 triple \((F,G,H)\), adding
\(1\) to every entry of \(H\) gives a reverse \([1,m]\) Dyck sequence
\(H^+\), and \((H^+,F,G)\) is Type 1.  These two entrywise translations are
inverse, so \(\phi_2\) is a bijection.  The empty-component convention covers
\(m=0\), where the interval \([1,0]\) or \([0,-1]\) forces the shifted
component to be empty.
Set \(H=F:G\).  Since \(\sum E=\sum E^-+\operatorname{len}(E)\),
\[
  \area(E,F,G)=\sum E+\sum H
  =\sum H+\sum E^-+\operatorname{len}(E^-)
  =\area(F,G,E^-).
\]
Thus area is preserved.
For \(\dinv\), first observe that translating \(E\) by \(-1\) preserves both
internal \(\di\) and internal equal-value pairs, so
\(\dinv(E)=\dinv(E^-)\).  It remains only to compare cross terms with
\(H=F:G\).  In the Type 1 concatenation \(E:H\), a pair with \(x\in E\) and
\(y\in H\) contributes to \(\dinv\) exactly when
\[
  x=y+1 \quad\text{or}\quad x=y.
\]
For the corresponding entry \(b=x-1\in E^-\), these alternatives are
\[
  y=b \quad\text{or}\quad y=b+1.
\]
In the Type 2 concatenation \(H:E^-\), the cross pair \((y,b)\) contributes
exactly under the same two alternatives, namely when \(y=b\) or \(y=b+1\).
The internal contribution of \(H\) is unchanged.  Hence
\[
  \dinv(E:F:G)=\dinv(F:G:E^-).
\]
Finally \(\operatorname{len}(E)=\operatorname{len}(E^-)\), and the Type 1 and
Type 2 definitions subtract this same length.  Therefore
\(\dinv(E,F,G)=\dinv(F,G,E^-)\).
\end{proof}
\subsection{The map from Type 2 to Type 3}
We next use a normal-form transport between reverse and affine interval Dyck
sequences.  This construction is applied only to the final component of a
Type 2 triple; the skeleton component \(F\) and the affine component \(G\) stay
fixed.
For a word \(Y\) and an integer \(k\), write
\[
  Y|_{\{k,k+1\}}
\]
for the subsequence of \(Y\) consisting of entries equal to \(k\) or
\(k+1\), in their original order.  If \(B\) is a word in the alphabet
\(\{k,k+1\}\), the \emph{rank} of an occurrence of \(k+1\) in \(B\) is the
number of occurrences of \(k\) preceding it in \(B\).
\begin{lemma}[Local rank-insertion step]
\label{lem:local-rank-insertion}
Fix \(k\ge 1\).
\begin{enumerate}[label=\textup{(\alph*)}]
\item Let \(Y\) be an affine \([0,k+1]\) Dyck sequence.  Delete all entries
\(k+1\), and record \(B=Y|_{\{k,k+1\}}\).  The remaining sequence \(X\) is
affine \([0,k]\).  Conversely, from \(X\) and \(B\), insert the occurrences
of \(k+1\), in their order in \(B\), at the leftmost positions, to the right
of previously inserted \(k+1\)'s, so that each occurrence has its prescribed
number of \(k\)'s before it in the resulting \(k/(k+1)\)-subword.  This
reconstructs \(Y\).
\item Let \(Y\) be a reverse \([0,k+1]\) Dyck sequence.  Delete all entries
\(k+1\), and record \(B=Y|_{\{k,k+1\}}\).  The remaining sequence \(X\) is a
reverse \([0,k]\) Dyck sequence.  Conversely, from \(X\) and \(B\), insert the
occurrences of \(k+1\) using the rightmost rank rule: process the prescribed
occurrences of \(k+1\) from right to left, and place each at the rightmost
position, to the left of previously inserted \(k+1\)'s, so that it has its
prescribed number of \(k\)'s before it in the resulting \(k/(k+1)\)-subword.
This reconstructs \(Y\).
\end{enumerate}
In both cases,
\[
  \di(Y)=\di(X)+\di(B).
\]
\end{lemma}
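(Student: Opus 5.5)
We verify three things in turn: that \(X\) stays in the right class, that the rank-insertion rule reconstructs \(Y\), and the \(\di\) identity. The last is the easiest, so we begin there.

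\textbf{The \(\di\) identity.} A \(\di\)-pair of \(Y\) is a pair of positions \(i<j\) with \(Y_i=Y_j+1\). Sort these pairs by how many entries equal \(k+1\).
\begin{itemize}
\item Pairs with neither entry equal to \(k+1\) are exactly the \(\di\)-pairs of \(X\), since deleting entries preserves the relative order of the remaining ones.
\item A pair with \(Y_j=k+1\) would need \(Y_i=k+2\), which is out of range.
\item Hence the remaining pairs are exactly those with \(Y_i=k+1\) and \(Y_j=k\).
\end{itemize}
These are precisely the \(\di\)-pairs of the two-letter word \(B=Y|_{\{k,k+1\}}\). This gives \(\di(Y)=\di(X)+\di(B)\) in both parts (a) and (b). No Dyck hypothesis is used here, only the interval bound \([0,k+1]\).

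\textbf{Validity of \(X\).} In part (a), look at a maximal run of consecutive entries equal to \(k+1\), say between a left neighbour \(u\) and a right neighbour \(w\). The affine condition into the run forces \(u\ge k\), so \(u=k\) whenever \(u\) exists. Afterwards \(w\le k\) is automatic, since \(k+1\) is now excluded. Deleting the run therefore makes \(u\) adjacent to \(w\), and \(w\le k=u\le u+1\) holds. So \(X\) is affine \([0,k]\).

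Part (b) is the mirror image. The reverse condition \(e_{i+1}\ge e_i-1\) leaving the run forces the right neighbour to be \(k\). The new adjacent pair \((u,k)\) then satisfies \(k\ge u-1\) because \(u\le k\). Alternatively, we could derive (b) from (a) by reversal, as the adjoint trick was used for \(\worsert\). However, reversal does not commute with the rank convention, so the direct argument is cleaner.

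\textbf{Reconstruction.} This is the main obstacle. The goal is to describe exactly where a \(k+1\) can sit in \(Y\) relative to \(X\).

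For part (a), take an occurrence of \(k+1\) in \(Y\). Its left neighbour in \(Y\) is either \(k\) or another \(k+1\). So every maximal block of \((k+1)\)'s sits immediately after an occurrence of \(k\) in \(X\), or at the very start of \(Y\) if \(k+1\) is the first entry. In a Dyck-normalized setting the start is impossible; in the affine setting it is allowed, and it is handled as rank \(0\).

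Consequently, an occurrence of \(k+1\) with rank \(r\ge1\) must lie in the block immediately after the \(r\)-th \(k\) of \(X\). Within that block, the order of the \((k+1)\)'s is forced. This is exactly "the leftmost position to the right of previously inserted \(k+1\)'s with the prescribed rank". Thus the rule determines \(Y\) uniquely, and it returns \(Y\) itself.

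For part (b) the argument is symmetric. Blocks of \((k+1)\)'s sit immediately before an occurrence of \(k\), or at the end of \(Y\). A block whose entries have rank \(r\) must lie just before the \((r+1)\)-st \(k\), or at the end if there are only \(r\) copies of \(k\). That is the rightmost position with the prescribed rank.

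In both parts, the care needed is in matching the leftmost/rightmost conventions to these forced positions, including the boundary blocks at the start or end.
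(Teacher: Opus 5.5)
Your proposal is correct and follows the paper's proof essentially step for step. You use the same maximal-run neighbour argument to show $X$ stays affine (resp.\ reverse), and the same observation that each $k+1$ is initial (resp.\ final) or adjacent to a $k$ or another $k+1$, which forces its position from its rank. Your $\di$ identity also comes from the same classification: the only $\di$-pairs involving $k+1$ are the $(k+1,k)$ pairs recorded in $B$.

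The paper additionally checks that the rank-insertion rule, applied to arbitrary compatible data $(X,B)$, produces an affine (resp.\ reverse) $[0,k+1]$ sequence with subword $B$. This is not required by the statement as posed, but it is what Proposition~\ref{prop:affine-reverse-transport} actually uses, so a line covering it would be worth adding.
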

\begin{proof}
We prove the affine statement first.  Deleting entries \(k+1\) leaves all
entries in \([0,k]\).  It cannot create an affine-Dyck violation.  Indeed, if
a new adjacency \(u,v\) crosses a deleted block and \(u\) is present, then in
the original sequence the first deleted \(k+1\) immediately after \(u\) forced
\(k+1\le u+1\).  Since \(u\) remains after deletion, \(u\le k\), so
\(u=k\).  The new right endpoint satisfies \(v\le k\), hence
\(v\le u+1\).
For the inverse construction, the leftmost rank rule reproduces the prescribed
\(k/(k+1)\)-subsequence \(B\) by definition.  The sequence remains affine.  A
newly inserted \(k+1\) has no left neighbor, or has left neighbor \(k\) or a
previously inserted \(k+1\), so the left adjacent inequality is valid.  Its
right neighbor, if any, is at most \(k+1\), so the right adjacent inequality is
automatic.  No other adjacent inequality changes.
These deletion and insertion operations are inverse.  Indeed, in an affine
\([0,k+1]\) sequence, an occurrence of \(k+1\) can only be first in the word or
can have immediate predecessor \(k\) or \(k+1\).  Thus, once its rank in the
recorded \(k/(k+1)\)-subsequence and the order among equal inserted values are
fixed, it already occupies the leftmost possible position compatible with
those data.
The reverse statement is the mirror image.  Deleting \(k+1\)'s from a reverse
\([0,k+1]\) sequence preserves the reverse adjacent inequality: if a new
adjacency \(u,v\) crosses a deleted block and \(v\) is present, then the last
deleted \(k+1\) immediately before \(v\) forced \(k+1\le v+1\); since
\(v\) remains, \(v=k\), and then \(u\le k\le v+1\).  The rightmost rank
rule inserts each new \(k+1\) so that it has no right neighbor, or has right
neighbor \(k\) or \(k+1\), which is exactly what is needed for the new right
adjacent reverse inequality.  The left adjacent reverse inequality is automatic
because all entries are at most \(k+1\).  Conversely, in a reverse
\([0,k+1]\) sequence, each occurrence of \(k+1\) is last or has immediate
successor \(k\) or \(k+1\), so it is forced to be in the rightmost possible
position with its rank data.  Hence the reverse deletion and insertion
operations are inverse.
Finally, in either the affine or reverse setting, the only \(\di\)-pairs that
involve a deleted or newly inserted \(k+1\) are pairs \((k+1,k)\).  Their
number is exactly \(\di(B)\), because \(B\) is the full subsequence of the
\(k\)'s and \(k+1\)'s with their order preserved.  All other \(\di\)-pairs are
unchanged.  Therefore \(\di(Y)=\di(X)+\di(B)\).
\end{proof}
\begin{proposition}[Affine--reverse transport]
\label{prop:affine-reverse-transport}
For every \(M\ge 0\), there is a multiset-preserving and
\(\di\)-preserving bijection
\[
  \mathrm{fw}_M:
  \{\text{reverse }[0,M]\text{ Dyck sequences}\}
  \longrightarrow
  \{\text{affine }[0,M]\text{ Dyck sequences}\}.
\]
Its inverse is denoted \(\mathrm{bk}_M\).
\end{proposition}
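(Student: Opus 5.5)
We will build \(\mathrm{fw}_M\) by induction on \(M\), peeling off the top value with Lemma~\ref{lem:local-rank-insertion} and reinserting it with the other rule. Concretely, we set \(\mathrm{fw}_M=\mathrm{bk}_M=\mathrm{id}\) when \(M=0\). A reverse or affine \([0,0]\) Dyck sequence is a constant word \(0^r\), and both conditions hold vacuously.

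For the inductive step, let \(M=k+1\ge1\) and let \(Y\) be a reverse \([0,k+1]\) Dyck sequence.
\begin{enumerate}[label=(\arabic*)]
\item If \(k\ge 1\), part~(b) of Lemma~\ref{lem:local-rank-insertion} gives a pair \((X,B)\). Here \(X\) is reverse \([0,k]\) and \(B=Y|_{\{k,k+1\}}\) is an arbitrary word in \(\{k,k+1\}\), with \(\di(Y)=\di(X)+\di(B)\).
\item Apply \(\mathrm{fw}_k\) to get an affine \([0,k]\) sequence \(X'=\mathrm{fw}_k(X)\). By induction, \(X'\) has the same multiset as \(X\), and in particular the same number of \(k\)'s, and \(\di(X')=\di(X)\).
\item Use the leftmost rank rule of part~(a) to insert the \(k+1\)'s of \(B\) into \(X'\), so that the \(k/(k+1)\)-subword of the result \(Y'\) is exactly \(B\). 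Part~(a) makes \(Y'\) affine \([0,k+1]\) and gives \(\di(Y')=\di(X')+\di(B)=\di(Y)\).
\end{enumerate}
The multiset is preserved because \(X'\) and \(X\) have the same multiset and the same number of \(k+1\)'s is added back. The map \(\mathrm{bk}_M\) is defined symmetrically: delete with (a), apply \(\mathrm{bk}_k\), and insert with the rightmost rule of (b).

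Bijectivity comes from composing three bijections:
\begin{itemize}
\item reverse \([0,k+1]\) sequences \(\leftrightarrow\) pairs (reverse \([0,k]\) \(X\), word \(B\) whose number of \(k\)'s equals that of \(X\));
\item \(\mathrm{fw}_k\times\mathrm{id}\) on such pairs, which respects the count-of-\(k\) compatibility because \(\mathrm{fw}_k\) preserves multisets;
\item the inverse of (a), onto affine \([0,k+1]\) sequences.
\end{itemize}
The first and third maps are bijections by the mutual-inverse assertions in Lemma~\ref{lem:local-rank-insertion}. I will spell out that their images are exactly the compatible pairs: every word \(B\) whose number of \(k\)'s matches \(X\) is realizable, since the rank rule is always executable. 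For the leftmost rule, an occurrence of rank \(r\) goes immediately after the \(r\)-th \(k\), or at the front if \(r=0\), or right after a previous \(k+1\) of equal rank. Once the images are identified, \(\mathrm{bk}_M\) is the inverse composite.

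The main obstacle is the base of the recursion at \(k=0\), i.e.\ \(M=1\), since Lemma~\ref{lem:local-rank-insertion} is stated only for \(k\ge1\). I will treat \(M=1\) directly.
\begin{itemize}
\item Any word in \(\{0,1\}\) is both affine and reverse Dyck, since steps are at most \(1\) in absolute value. So \(\mathrm{fw}_1=\mathrm{id}\) works.
\item Equivalently, the proof of Lemma~\ref{lem:local-rank-insertion} goes through verbatim for \(k=0\).
\end{itemize}
After that, the induction proceeds from \(k=1\) upward. The remaining care is checking that the insertion rules realize every compatible \(B\) (surjectivity of the recording), which I expect to be routine but will verify explicitly.
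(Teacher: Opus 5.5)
Your proposal is correct and is essentially the paper's argument: unrolling your recursion, in which you delete the top value by the reverse rule, apply $\mathrm{fw}_k$, and reinsert by the leftmost affine rule, gives exactly the paper's map. The paper performs all the reverse deletion steps for $k=M-1,\dots,1$, keeps the binary base (using the identity for $M\le 1$ as you do), and then reinserts by the leftmost rule for $k=1,\dots,M-1$. Your explicit check that every word $B$ with the correct number of $k$'s is realized by the insertion rules fills in a step the paper leaves implicit when it says the two constructions ``use the same normal-form data.''
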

\begin{proof}
If \(M\le 1\), every \([0,M]\) interval sequence is both affine and reverse,
so we take \(\mathrm{fw}_M\) and \(\mathrm{bk}_M\) to be the identity.  Assume
\(M\ge 2\).
Starting from a reverse \([0,M]\) Dyck sequence \(Y\), apply the reverse
deletion step of Lemma~\ref{lem:local-rank-insertion} for
\(k=M-1,M-2,\ldots,1\).  This records words \(B_k\) and leaves a binary
sequence, which is both affine and reverse.  Reconstruct from the same binary
sequence using the affine leftmost insertion steps for \(k=1,2,\ldots,M-1\)
with the recorded words \(B_k\).  The resulting affine \([0,M]\) sequence is
\(\mathrm{fw}_M(Y)\).
The inverse map \(\mathrm{bk}_M\) starts from an affine \([0,M]\) Dyck
sequence, applies the affine deletion steps for \(k=M-1,M-2,\ldots,1\), and
then reconstructs from the same binary normal form using the reverse rightmost
insertion steps for \(k=1,2,\ldots,M-1\).  Lemma~\ref{lem:local-rank-insertion}
shows that the two constructions use the same normal-form data and are inverse
to one another.
Each local deletion/insertion step preserves the total multiset.  The
\(\di\)-identity in Lemma~\ref{lem:local-rank-insertion} shows that both a
sequence and its normal-form data have the same total value
\[
  \di(\text{binary base})+
  \sum_{k=1}^{M-1}\di(B_k).
\]
Thus \(\mathrm{fw}_M\) and \(\mathrm{bk}_M\) preserve \(\di\) as well as the
multiset.
\end{proof}
\begin{definition}[The map \(\phi_3\)]
\label{def:phi3}
Let \((F,G,E)\) be a Type 2 Dyck triple of height \(m\).  If \(m=0\), the
interval convention forces \(E=\emptyset\), and we set \(E'=\emptyset\).  If
\(m>0\), set
\[
  E'=\mathrm{fw}_{m-1}(E).
\]
The Type 2 to Type 3 map is
\[
  \phi_3(F,G,E)=(F,G,E').
\]
\end{definition}
\begin{proposition}[The Type 2--Type 3 transport]
\label{prop:phi3-bijection}
For fixed \(m\ge 0\), the map \(\phi_3\) is an area- and
\(\dinv\)-preserving bijection from Type 2 Dyck triples of height \(m\) to
Type 3 Dyck triples of height \(m\).
\end{proposition}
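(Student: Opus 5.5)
The plan is to verify that \(\phi_3\) is well defined, then bijective, then statistic-preserving, using Proposition~\ref{prop:affine-reverse-transport} with \(M=m-1\).

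\textbf{Well-definedness and bijectivity.} Dispose of \(m=0\) first. There the interval \([0,-1]\) forces the final component to be empty in both Type 2 and Type 3. So \(\phi_3\) is the identity on the common set \(\{(F,G,\emptyset)\}\), and nothing else needs checking.

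For \(m>0\), the Type 2 and Type 3 conditions on \(F\) and \(G\) coincide. The final component ranges over reverse \([0,m-1]\) Dyck sequences in Type 2 and over affine \([0,m-1]\) Dyck sequences in Type 3. Proposition~\ref{prop:affine-reverse-transport} says \(\mathrm{fw}_{m-1}\) is a bijection between exactly these two sets, with inverse \(\mathrm{bk}_{m-1}\). Hence \((F,G,E)\mapsto(F,G,\mathrm{fw}_{m-1}(E))\) is a bijection, with inverse \((F,G,E')\mapsto(F,G,\mathrm{bk}_{m-1}(E'))\).

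\textbf{Area.} The Type 2 and Type 3 statistics have the same formula, \(\area(F:G:E)+\operatorname{len}(E)\). The map \(\mathrm{fw}_{m-1}\) preserves the multiset of \(E\), so it preserves both \(\sum E\) and \(\operatorname{len}(E)\). Therefore area is preserved.

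\textbf{Dinv.} Write \(H=F:G\), and decompose
\[
  \dinv(H:E)=\dinv(H)+\dinv(E)+\mathrm{cross}(H,E),
\]
where the cross term counts pairs \((y,x)\) with \(y\in H\) to the left of \(x\in E\) and \(y=x\) or \(y=x+1\). The terms change as follows.
\begin{enumerate}[label=(\roman*)]
\item The internal term \(\dinv(H)\) is unchanged, since \(H\) is unchanged.
\item The cross term depends only on the value multisets of \(H\) and \(E\), because every element of \(H\) precedes every element of \(E\). Multiset preservation therefore leaves it unchanged.
\item The term \(\dinv(E)=\di(E)+\operatorname{nv}(E)\) is unchanged. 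Here \(\di(E)\) is preserved by Proposition~\ref{prop:affine-reverse-transport}, and \(\operatorname{nv}(E)=\sum_v\binom{c_v}{2}\), where \(c_v\) is the number of entries of \(E\) equal to \(v\), so it depends only on the multiset.
\end{enumerate}
The correction \(-\operatorname{len}(E)\) is also unchanged. So the Type 3 dinv equals the Type 2 dinv.

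The main, though mild, obstacle is recognizing that the cross-block contributions are order-independent. This holds because the moved component \(E\) is a contiguous terminal block, and it is the same observation already used in the proof of Theorem~\ref{thm:tableau-factorization-bijection}. Everything substantive, namely that the transport is bijective and preserves \(\di\), is supplied by Proposition~\ref{prop:affine-reverse-transport}.
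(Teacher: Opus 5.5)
Your proposal is correct and follows essentially the same route as the paper: bijectivity comes from Proposition~\ref{prop:affine-reverse-transport} with $M=m-1$, and area from preservation of the multiset and length. For $\dinv$, both arguments split into the unchanged internal part of $H=F:G$, the multiset-determined $\operatorname{nv}(E)$ together with the preserved $\di(E)$, and the order-independent cross term. Your separate treatment of $m=0$ just spells out the convention already built into Definition~\ref{def:phi3}.
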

\begin{proof}
Only the final component changes.  Proposition~\ref{prop:affine-reverse-transport}
shows that \(E'\) is an affine \([0,m-1]\) Dyck sequence with the same multiset
and the same \(\di\) as \(E\), and that the construction is invertible using
\(\mathrm{bk}_{m-1}\).  Hence \((F,G,E')\) is Type 3 and \(\phi_3\) is a
bijection.
Area is preserved because \(E\) and \(E'\) have the same multiset and the same
length:
\[
  \area(F:G:E)+\operatorname{len}(E)
  =\area(F:G:E')+\operatorname{len}(E').
\]
For \(\dinv\), set \(H=F:G\).  The internal equal-value contribution
\(\operatorname{nv}(E)\) is determined by the multiset, and
\(\di(E)=\di(E')\) by Proposition~\ref{prop:affine-reverse-transport}.  The
cross contribution between \(H\) and the final component also depends only on
the final component's multiset: for \(h\in H\) and a final-component entry
\(e\), the pair contributes to \(\dinv(H:E)\) exactly when
\[
  h=e+1 \quad\text{or}\quad h=e.
\]
The same criterion, with the same multiplicities, holds for \(E'\).  Therefore
\[
  \dinv(F:G:E)=\dinv(F:G:E').
\]
Since \(\operatorname{len}(E)=\operatorname{len}(E')\), the common length
subtraction in the Type 2 and Type 3 statistic definitions gives
\[
  \dinv(F,G,E)=\dinv(F,G,E').
\]
Thus \(\phi_3\) preserves both statistics.
\end{proof}

\subsection{The chosen correspondence from Type 3 to Type 4}

\begin{proposition}[The Type 3--Type 4 correspondence]
\label{prop:phi4-bijection}
For fixed \(m\ge 0\), the construction below chooses an area- and
\(\dinv\)-preserving bijection
\[
  \phi_4:
  \{\text{Type 3 triples of height }m\}
  \longrightarrow
  \{\text{Type 4 triples of height }m\}
.
\]
The chosen bijection fixes the Dyck \(m\)-skeleton component.
\end{proposition}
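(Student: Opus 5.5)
The plan is to recognize the pair \((G,E)\) of a Type~3 triple as a two-factor affine Dyck factorization. Its generating function is then a two-variable specialization of \(\DS(S,d)\), and Corollary~\ref{cor:affine-dyck-schur-positivity} converts it into a sum over at-most-two-column Dyck tableaux weighted by binary reverse tableaux. The skeleton \(F\) is held fixed throughout. We only need a bijection that matches the remaining data with the right statistics, fiber by fiber; this is why the statement speaks of a \emph{chosen} bijection rather than an explicit algorithm.

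\textbf{Step 1: fibers.} Fix \(F\), a multiset \(S\subseteq[0,m-1]\), an integer \(d\), and an integer \(r\ge 0\). On the Type~3 side, consider the pairs \((G,E)\) of affine \([0,m-1]\) Dyck sequences with \(G:E\) a permutation of \(S\), \(\di(G:E)=d\), and \(\operatorname{len}(E)=r\). Viewing \((G,E)\) as the factorization \((F_0,F_1)=(G,E)\) with all later factors empty, these pairs are exactly the affine Dyck factorizations of \(S\) with \(\di=d\) and weight \(x_0^{|S|-r}x_1^{r}\). The interval condition is automatic from \(S\subseteq[0,m-1]\). Hence the size of this set is the coefficient of \(x_0^{|S|-r}x_1^r\) in \(\DS(S,d;x_0,x_1,0,0,\ldots)\).

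\textbf{Step 2: the tableau side.} By Corollary~\ref{cor:affine-dyck-schur-positivity}, this coefficient equals \(\sum_P [x_0^{|S|-r}x_1^r]\,s_{\lambda(P)'}(x_0,x_1)\), with \(P\) ranging over Dyck tableaux with entries \(S\) and \(\di(\operatorname{RR}(P))=d\). Only shapes with \(\ell(\lambda(P)')\le2\) survive, that is, \(P\) has at most two columns. For such a shape, the semistandard tableaux of shape \(\lambda'\) in \(\{0,1\}\) with \(r\) ones transpose to fillings \(Q\) of \(\lambda\) with rows strictly increasing, columns weakly increasing, and \(\sum_{v\in Q}v=r\). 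These are exactly the binary reverse semistandard tableaux in the Type~4 definition. The entries of \(P\) lie in \([0,m-1]\) because they form \(S\). So the two fibers have equal size, and we choose any bijection between them for each \((F,S,d,r)\). The union of these choices defines \(\phi_4\); it fixes \(F\) and is a bijection because the fibers partition both sides.

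\textbf{Step 3: statistics.} Area: \(\area(F,G,E)=\area(F)+\sum S+r\), which matches \(\area(F,P,Q)=\area(F)+\sum S+\sum Q\). For \(\dinv\), expand \(\dinv(F:W)=\dinv(F)+c(F,S)+\di(W)+\operatorname{nv}(S)\) for any word \(W\) with multiset \(S\). Here \(c(F,S)\) counts the cross pairs \(f\in F\), \(s\in S\) with \(f=s\) or \(f=s+1\), so it depends only on the multiset \(S\). Applying this with \(W=G:E\) and with \(W=\operatorname{RR}(P)\), both of which have \(\di=d\), and subtracting \(r=\operatorname{len}(E)=\sum Q\) on both sides, gives equality of the two \(\dinv\) values.

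\textbf{Main obstacle.} The step needing most care is Step~2. One must check that the transpose of a two-row semistandard tableau really matches the "binary reverse" conventions: strictness moves from columns to rows. One must also check that restricting to two variables, i.e.\ setting the factors \(F_i\) with \(i\ge2\) to be empty, is legitimate inside the symmetric-function identity. The latter holds because the identity is an equality of symmetric functions, so specializing \(x_i=0\) for \(i\ge2\) preserves it. Finally, the degenerate case \(m=0\) must be checked separately: there \(G\), \(E\), and \(P\) are all empty and the map is the identity on \(F\).
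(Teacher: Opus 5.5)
Your proposal is correct and follows the paper's proof essentially step for step. Like the paper, you extract the coefficient of $x_0^{|G|}x_1^{|E|}$ from Corollary~\ref{cor:affine-dyck-schur-positivity}, transpose the two-letter semistandard tableaux of shape $\lambda(P)'$ into binary reverse tableaux of shape $\lambda(P)$, and choose a bijection on each fiber indexed by the multiset, the $\di$-value and the factor lengths, with $F$ held fixed. Area and $\dinv$ then agree because the cross terms with $F$ depend only on the suffix multiset. Your separate check of $m=0$ and your remark about specializing to two variables are harmless extras: the general fiber argument already covers these cases, including $S=\varnothing$.
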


\begin{proof}
Let \((F,G,E)\) be a Type 3 Dyck triple of height \(m\), and set
\[
  H=G:E.
\]
The two components \(G\) and \(E\) are affine \([0,m-1]\) Dyck sequences,
so \((G,E)\) is a two-factor affine Dyck factorization of the multiset of
entries of \(H\).  We use the affine, conjugate-shape form of the
Section~\ref{sec:insertion-algorithm} tableau--factorization result in the
two-factor specialization.  More explicitly, take the coefficient of
\(x_0^{|G|}x_1^{|E|}\) in
Corollary~\ref{cor:affine-dyck-schur-positivity}.  On the affine-factorization
side this coefficient counts the two-factor affine factorizations of words with
the same multiset and \(\di\)-value as \(H\).  On the tableau side it counts
pairs \((P,Q')\), where \(P\) is a Dyck tableau with the same multiset and
\(\di(\operatorname{RR}(P))=\di(H)\), and \(Q'\) is an ordinary semistandard
tableau of shape \(\lambda(P)'\) with \(|G|\) entries equal to \(0\) and
\(|E|\) entries equal to \(1\).
Transposing \(Q'\) gives a binary
reverse semistandard tableau \(Q\) of shape \(\lambda(P)\).  For each fixed
multiset, \(\di\)-value, and pair of factor lengths \((|G|,|E|)\), the equality
of the two finite coefficients, equivalently the equal cardinalities of the two
fibers, allows us to choose a bijection from this
affine-factorization fiber to the tableau fiber with the same entry multiset,
\(\di\)-value, and factor lengths \((|G|,|E|)\).  Thus \((G,E)\) is sent to a
pair \((P,Q)\) with the following properties:
\begin{enumerate}[label=\textup{(\roman*)}]
\item \(P\) is a Dyck tableau with entries in \([0,m-1]\), and because only two
      affine factors are present, \(P\) has at most two columns;
\item \(Q\) is a binary reverse semistandard Young tableau of the same shape as
      \(P\): rows are strictly increasing from left to right and columns are
      weakly increasing from top to bottom;
\item \(H\) and \(\operatorname{RR}(P)\) have the same multiset of entries;
\item \(\di(H)=\di(\operatorname{RR}(P))\);
\item the number of entries of \(Q\) equal to \(1\) is \(\operatorname{len}(E)\).
\end{enumerate}
Thus \(\phi_4\) is defined nonconstructively here; no separate concrete
algorithm is introduced.  Define
\[
  \phi_4(F,G,E)=(F,P,Q).
\]
Since the chosen correspondence between the finite fibers is
bijective and the component \(F\) is unchanged, this gives a bijection between
Type 3 and Type 4 triples of height \(m\).

We check the statistics.  Multiset preservation gives
\[
  \area(H)=\area(\operatorname{RR}(P))=\sum_{u\in P}u.
\]
Under the binary convention for \(Q\),
\[
  \sum_{v\in Q}v=\#\{v\in Q:v=1\}=\operatorname{len}(E).
\]
Therefore
\[
\begin{aligned}
  \area(F,G,E)
  &=\area(F:G:E)+\operatorname{len}(E)\\
  &=\area(F)+\area(H)+\operatorname{len}(E)\\
  &=\area(F)+\sum_{u\in P}u+
    \sum_{v\in Q}v
   =\area(F,P,Q).
\end{aligned}
\]

For \(\dinv\), the equality \(\di(H)=\di(\operatorname{RR}(P))\) and multiset
preservation imply
\[
  \dinv(H)=\dinv(\operatorname{RR}(P)).
\]
The cross contribution between the fixed prefix \(F\) and the suffix also
only depends on the suffix multiset: a pair consisting of \(f\in F\) and a
suffix entry \(h\) contributes to \(\dinv(F:H)\) exactly when
\[
  f=h+1\qquad\text{or}\qquad f=h.
\]
Since \(H\) and \(\operatorname{RR}(P)\) have the same multiset, these cross
contributions agree.  Hence
\[
  \dinv(F:H)=\dinv\bigl(F:\operatorname{RR}(P)\bigr).
\]
Using again \(\operatorname{len}(E)=\sum_{v\in Q}v\), we obtain
\[
\begin{aligned}
  \dinv(F,G,E)
  &=\dinv(F:G:E)-\operatorname{len}(E)\\
  &=\dinv\bigl(F:\operatorname{RR}(P)\bigr)-\sum_{v\in Q}v
   =\dinv(F,P,Q).
\end{aligned}
\]
Thus \(\phi_4\) preserves both area and \(\dinv\).
\end{proof}

\begin{theorem}[Two-column tableau formula for \(C_n(q,t)\)]
\label{thm:section4-qt-catalan-formula}
For \(n\ge 1\), we use the convention
\[
  C_n(q,t)=\sum_D q^{\area(D)}t^{\dinv(D)},
\]
where \(D\) ranges over Dyck sequences of length \(n\).  Then
\[
  C_n(q,t)
  =
  \sum_{(F,P)}
  q^{\area(F:\operatorname{RR}(P))}
  t^{\dinv(F:\operatorname{RR}(P))-|P|}
  s_{\lambda(P)'}(q,t).
\]
The sum is over all pairs \((F,P)\) such that \(F\) is a Dyck
\(m\)-skeleton for some \(m\ge 0\), \(P\) is an at-most-two-column Dyck tableau
with entries in \([0,m-1]\), and
\[
  |F|+|\operatorname{RR}(P)|=n.
\]
Here \(\lambda(P)\) is the shape of \(P\), \(\lambda(P)'\) is its conjugate
partition, and \(s_{\lambda(P)'}(q,t)\) is the Schur function in the two
variables \(q,t\).
\end{theorem}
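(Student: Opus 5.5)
The plan is to compose the bijections of Section~\ref{sec:dyck-decompositions} and then sum over the binary tableaux \(Q\). First, split \(C_n(q,t)\) according to \(m=\max(D)\). For each \(m\ge0\), the composite \(\phi_4\circ\phi_3\circ\phi_2\circ\phi_1\) gives an area- and \(\dinv\)-preserving bijection from Family~1 sequences of maximum \(m\) to Type~4 triples \((F,P,Q)\) of height \(m\). This uses Propositions~\ref{prop:phi1-inverse}, \ref{prop:phi1-statistics}, \ref{prop:phi2-bijection}, \ref{prop:phi3-bijection} and~\ref{prop:phi4-bijection}.

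None of these maps changes the total length. \(\phi_1\) moves entries; \(\phi_2\) and \(\phi_3\) shift or rearrange the final component; \(\phi_4\) sends the word \(G:E\) to a tableau with the same multiset. Hence Dyck sequences of length \(n\) correspond exactly to triples with \(|F|+|\operatorname{RR}(P)|=n\). Therefore
\[
C_n(q,t)=\sum_{m\ge0}\sum_{(F,P,Q)} q^{\area(F)+\sum_{u\in P}u+\sum_{v\in Q}v}\,t^{\dinv(F:\operatorname{RR}(P))-\sum_{v\in Q}v}.
\]

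Next, fix \((F,P)\) and sum over the admissible \(Q\). Since \(\area(F)+\sum_{u\in P}u=\area(F:\operatorname{RR}(P))\), the inner sum becomes
\[
q^{\area(F:\operatorname{RR}(P))}\,t^{\dinv(F:\operatorname{RR}(P))-|P|}\sum_{Q}q^{\#\{v=1\}}\,t^{|P|-\#\{v=1\}}
=q^{\area(F:\operatorname{RR}(P))}\,t^{\dinv(F:\operatorname{RR}(P))-|P|}\sum_Q q^{\#1}t^{\#0}.
\]
Here \(Q\) runs over binary reverse semistandard tableaux of shape \(\lambda(P)\): rows strictly increasing, columns weakly increasing. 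Transposing \(Q\) gives an ordinary semistandard tableau of shape \(\lambda(P)'\) with entries in \(\{0,1\}\), exactly as in the proof of Proposition~\ref{prop:phi4-bijection}. Transposition is a bijection between the two kinds of tableaux. So the sum over \(Q\) equals \(s_{\lambda(P)'}(x_0,x_1)\) at \(x_0=t\), \(x_1=q\). Because Schur polynomials are symmetric, this is \(s_{\lambda(P)'}(q,t)\).

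The main obstacle is bookkeeping the range of \((F,P)\). Two things must be checked. First, for every at-most-two-column Dyck tableau \(P\) with entries in \([0,m-1]\), the set of admissible \(Q\) is exactly all binary reverse semistandard fillings. Second, every such \(P\) actually occurs in the image of \(\phi_4\). Both follow because \(\phi_4\) is a bijection onto all Type~4 triples, as stated in Proposition~\ref{prop:phi4-bijection}. Its construction comes from the coefficient of \(x_0^{a}x_1^{b}\) in Corollary~\ref{cor:affine-dyck-schur-positivity}, and that coefficient is nonzero only for shapes \(\lambda(P)'\) with at most two rows, that is, at most two columns for \(P\).

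The edge case \(m=0\) also needs care. There \(P\) is empty, \(F=(0)\), and \(n\) must be~\(1\). This agrees with the empty-component conventions, so summing over \(m\) completes the formula.
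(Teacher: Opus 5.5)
Your argument is correct and is essentially the paper's proof. You compose $\phi_1,\phi_2,\phi_3,\phi_4$ into a statistic-preserving bijection onto Type~4 triples, factor $q^{\#1(Q)}t^{-\#1(Q)}=t^{-|P|}q^{\#1(Q)}t^{\#0(Q)}$, and transpose $Q$ to obtain $s_{\lambda(P)'}(t,q)=s_{\lambda(P)'}(q,t)$.

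Your $m=0$ remark is wrong, though. $F$ need not be $(0)$: every all-zero word $0^k$ is a Dyck $0$-skeleton, because a zero is never extractable. So for every $n$ the $m=0$ term is $F=0^n$, $P=\varnothing$, which contributes $t^{\binom n2}$ as the image of $D=0^n$. This does not damage the main argument, since your bijection already covers $m=0$ uniformly, but the remark as written is false and should be corrected.
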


\begin{proof}
The maps \(\phi_1,\phi_2,\phi_3\), and \(\phi_4\) give an area- and
\(\dinv\)-preserving bijection from Dyck sequences of length \(n\) to Type 4
triples \((F,P,Q)\) satisfying
\[
  |F|+|\operatorname{RR}(P)|=n.
\]
Therefore
\[
\begin{aligned}
  C_n(q,t)
  &=\sum_{(F,P,Q)}
    q^{\area(F,P,Q)}t^{\dinv(F,P,Q)}\\
  &=\sum_{(F,P,Q)}
    q^{\area(F:\operatorname{RR}(P))+\sum_{v\in Q}v}
    t^{\dinv(F:\operatorname{RR}(P))-\sum_{v\in Q}v}.
\end{aligned}
\]
Now fix \((F,P)\) and sum over all binary reverse semistandard tableaux \(Q\)
of shape \(\lambda(P)\).  Since entries of \(Q\) are \(0\) and \(1\),
\[
  \sum_{v\in Q}v=\#1(Q),
  \qquad
  \#0(Q)+\#1(Q)=|Q|=|P|.
\]
Thus
\[
  q^{\sum_{v\in Q}v}t^{-\sum_{v\in Q}v}
  =q^{\#1(Q)}t^{-\#1(Q)}
  =t^{-|P|}q^{\#1(Q)}t^{\#0(Q)}.
\]
The fixed \((F,P)\)-contribution is therefore
\[
  q^{\area(F:\operatorname{RR}(P))}
  t^{\dinv(F:\operatorname{RR}(P))-|P|}
  \sum_Q q^{\#1(Q)}t^{\#0(Q)}.
\]
It remains to identify the inner sum.  Transposing a binary reverse
semistandard tableau of shape \(\lambda(P)\) turns its strictly increasing rows
and weakly increasing columns into the ordinary semistandard condition on shape
\(\lambda(P)'\): rows are weakly increasing and columns are strictly
increasing.  With variables assigned by \(0\mapsto t\) and \(1\mapsto q\), this
gives
\[
  \sum_{\substack{Q\text{ binary reverse SSYT}\\
                  \operatorname{shape}(Q)=\lambda(P)}}
  q^{\#1(Q)}t^{\#0(Q)}
  =s_{\lambda(P)'}(t,q).
\]
Schur functions are symmetric in the variables, so
\(s_{\lambda(P)'}(t,q)=s_{\lambda(P)'}(q,t)\).  Substituting this identity into
the fixed \((F,P)\)-contribution gives the stated formula.
\end{proof}

\section{\texorpdfstring{Full Dyck skeletons and the \(q,t\)-Catalan formula}{Full Dyck skeletons and the q,t-Catalan formula}}
\label{sec:full-skeletons}

The preceding section expressed the \(q,t\)-Catalan polynomial in terms of
Dyck \(m\)-skeletons and two-column tableaux.  We introduce a related
family of skeletons that indexes the high-total-degree part of
\(C_n(q,t)\).  The maps used later in this section are analogous to the
insertion constructions of Section~\ref{sec:insertion-algorithm}, but they act on
ordinary Dyck sequences by controlled extraction, local switching, and
reinsertion operations.

Throughout this section, a Dyck sequence means an ordinary Dyck sequence in
the convention of Definition~\ref{def:affine-dyck}, and extractable elements
are selected by the leftmost convention of Definition~\ref{def:extractable-element}.
We also use the area and \(\dinv\) statistics recalled in
Section~\ref{sec:dyck-decompositions}.

\subsection{Skeletons, deficit, and the target formula}

\begin{definition}[Full Dyck skeleton]
\label{def:full-dyck-skeleton}
A Dyck sequence \(S\) is a \emph{full Dyck skeleton} if it has no extractable
element under the leftmost convention of
Definition~\ref{def:extractable-element}.
\end{definition}

\begin{remark}
\label{rem:full-versus-m-skeleton}
A full Dyck skeleton differs from a Dyck \(m\)-skeleton in that no final entry
is exempted from non-extractability. Thus a full skeleton whose final entry is
its maximum \(m\) is a Dyck \(m\)-skeleton, but a Dyck \(m\)-skeleton may still
have an extractable final entry.
\end{remark}

\begin{definition}[Special Dyck skeleton]
\label{def:special-dyck-skeleton}
For \(n\ge 4\), set
\[
  \epsilon_n=(0,0,1,\underbrace{0,\ldots,0}_{n-4\text{ entries}},1).
\]
For \(n\ge4\), a \emph{special Dyck skeleton} of length \(n\) is a full Dyck
skeleton of length \(n\) that is not equal to \(\epsilon_n\).  For \(n<4\),
every full Dyck skeleton is special.
\end{definition}

For a finite nonnegative integer word \(T=(T_0,\ldots,T_{n-1})\), define its
\emph{deficit} by
\[
  \defc(T)=\binom{n}{2}-\area(T)-\dinv(T).
\]
When \(T\) is a Dyck sequence, this is the deficit statistic used for the
corresponding term of \(C_n(q,t)\).
We also set
\[
  \Delta_n=\binom{n}{2}-(2n-8).
\]
For a Dyck sequence \(D\), since
\(\area(D)+\dinv(D)=\binom{n}{2}-\defc(D)\), the terms of total degree at least
\(\Delta_n\) are exactly the terms with deficit at most \(2n-8\).

\begin{lemma}[Arbitrary-word deficit correction]
\label{lem:extended-deficit-pairs}
Let \(T=(T_0,\ldots,T_{n-1})\) be a finite nonnegative integer word.
For \(0\le i<j<n\), call \((i,j)\) type~\textup{(A)} if \(T_i>T_j+1\), and
call it type~\textup{(B)} if \(T_i<T_j\) and \(i\) is not the first occurrence
of the value \(T_i\) in the full word \(T\).  Then
\begin{align*}
  \defc(T)
  &=
  \#\{\text{type~\textup{(A)} or type~\textup{(B)} pairs in }T\} \\
  &\quad -
  \sum_{j=0}^{n-1}
  \#\{\,v\in\{0,\ldots,T_j-1\}:
       v\text{ occurs in no position }<j\,\}.
\end{align*}
\end{lemma}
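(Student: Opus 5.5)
The plan is to sort all pairs \(0\le i<j<n\) by how \(T_i\) compares with \(T_j\), and then do a per-position count of first occurrences. Every pair falls into exactly one of four classes:
\[
  T_i=T_j+1,\qquad T_i=T_j,\qquad T_i>T_j+1,\qquad T_i<T_j .
\]
Since the entries are integers, these cases are exhaustive and mutually exclusive. The first class is counted by \(\di(T)\) and the second by \(\operatorname{nv}(T)\), so together they give \(\dinv(T)\). The third class consists exactly of the type~(A) pairs. Hence
\[
  \binom n2=\dinv(T)+\#\{\text{type (A)}\}+\#\{(i,j):i<j,\ T_i<T_j\},
\]
and therefore
\[
  \defc(T)=\#\{\text{type (A)}\}+\#\{(i,j):i<j,\ T_i<T_j\}-\area(T).
\]

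Next I split the ascending pairs \(T_i<T_j\) according to whether \(i\) is the first occurrence of the value \(T_i\) in \(T\). The pairs where \(i\) is not a first occurrence are exactly the type~(B) pairs. Types (A) and (B) are disjoint, since one has \(T_i>T_j\) and the other \(T_i<T_j\). It therefore remains to prove
\[
  \#\{(i,j):i<j,\ T_i<T_j,\ i\text{ a first occurrence}\}
  =\area(T)-\sum_j\#\{v<T_j:\ v\text{ occurs in no position }<j\}.
\]

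To prove this, fix \(j\) and count over the left index \(i\). The positions \(i<j\) that are first occurrences with \(T_i<T_j\) correspond bijectively to the distinct values \(v\in\{0,\ldots,T_j-1\}\) that occur somewhere before position \(j\). Indeed, the first occurrence of such a \(v\) lies before \(j\), and distinct first occurrences carry distinct values. Nonnegativity of \(T\) guarantees that \(\{0,\ldots,T_j-1\}\) has exactly \(T_j\) elements. So this count equals \(T_j\) minus the number of values \(v<T_j\) occurring in no position \(<j\). Summing over \(j\) gives \(\area(T)=\sum_j T_j\) minus the correction sum, which is the required identity.

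No step is a serious obstacle. The only points needing care are that the trichotomy into \(\di\), \(\operatorname{nv}\), (A) and the ascending pairs is exact for integer entries, and that nonnegativity is what makes \(\{0,\ldots,T_j-1\}\) have size exactly \(T_j\).
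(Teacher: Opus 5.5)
Your proof is correct and follows essentially the same route as the paper. Like the paper, you split all pairs into $\dinv$-pairs, type~(A) pairs, and ascending pairs. You then separate the ascending pairs whose left endpoint is a first occurrence, and count those per right endpoint $j$ as $T_j$ minus the number of missing values below $T_j$, which turns $\area(T)$ into the correction term.
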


\begin{proof}
Fix \(j\), and write \(e=T_j\).  Since \(T\) is nonnegative,
\[
  e
  =
  \#\{\,v<e:v\text{ occurs to the left of }j\,\}
  +
  \#\{\,v<e:v\text{ does not occur to the left of }j\,\}.
\]
If \(v<e\) occurs to the left of \(j\), then the first occurrence of \(v\)
also occurs to the left of \(j\), and this first occurrence is unique.  Thus,
after summing over \(j\),
\[
  \area(T)
  =
  \#\{\,i<j:T_i<T_j\text{ and }i\text{ is the first occurrence of }T_i
  \text{ in }T\,\}
  + M(T),
\]
where
\[
  M(T)=
  \sum_{j=0}^{n-1}
  \#\{\,v<T_j:v\text{ does not occur to the left of }j\,\}.
\]

The statistic \(\dinv(T)\) counts exactly the pairs \(i<j\) with
\(T_i=T_j\) or \(T_i=T_j+1\).  Therefore
\(\defc(T)\) is obtained from all pairs \(i<j\) by removing
the \(\dinv\)-pairs, removing the pairs \(T_i<T_j\) whose left endpoint is the
first occurrence of its value, and then subtracting \(M(T)\).  Among the
remaining pairs with \(T_i<T_j\) are exactly the type~\textup{(B)} pairs, and
among the remaining pairs with \(T_i>T_j\) are exactly the
type~\textup{(A)} pairs.  This gives the displayed formula.
\end{proof}

\begin{proposition}[Deficit pairs]
\label{prop:deficit-pair-count}
Let \(D=(x_0,\ldots,x_{n-1})\) be a Dyck sequence. Then \(\defc(D)\) is the
number of pairs \((i,j)\) with \(0\le i<j<n\) satisfying one of the following
two conditions:
\begin{enumerate}[label=\textup{(\Alph*)}]
  \item \(x_i>x_j+1\);
  \item \(x_i<x_j\) and \(i\) is not the first occurrence of the value \(x_i\)
  in \(D\).
\end{enumerate}
\end{proposition}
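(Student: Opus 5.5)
The proposition follows from Lemma~\ref{lem:extended-deficit-pairs} once the correction term vanishes for Dyck sequences. A Dyck sequence \(D\) is in particular a nonnegative integer word, so the lemma expresses \(\defc(D)\) as the number of type~(A) and type~(B) pairs minus
\[
  M(D)=\sum_{j=0}^{n-1}\#\{\,v\in\{0,\ldots,x_j-1\}: v\text{ occurs in no position }<j\,\}.
\]
Type~(A) and type~(B) pairs are exactly conditions (A) and (B) of the proposition. So the proof reduces to showing \(M(D)=0\).

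To prove \(M(D)=0\), fix \(j\) and \(v\) with \(0\le v<x_j\). We show that \(v\) occurs at some position \(i<j\).

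Since \(x_j>v\ge 0\), we have \(x_j\ge1\), so \(j>0\) because \(x_0=0\). Let \(i_0\) be the largest index \(\le j\) with \(x_{i_0}\le v\); it exists because \(x_0=0\le v\). Then \(i_0<j\), since \(x_j>v\). Also \(x_{i_0+1}>v\) by maximality of \(i_0\). The affine Dyck inequality gives \(x_{i_0+1}\le x_{i_0}+1\le v+1\), so \(x_{i_0+1}=v+1\) and \(x_{i_0}=v\). Hence \(v\) occurs at position \(i_0<j\).

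This is the discrete intermediate-value property of sequences that start at \(0\) and rise by at most \(1\) per step. It is the only step with any content, and it is short. Every summand of \(M(D)\) is therefore zero, and the formula follows.
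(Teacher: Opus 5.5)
Your proposal is correct and follows the paper's proof: both specialize Lemma~\ref{lem:extended-deficit-pairs} and show the missing-value correction vanishes for Dyck sequences. Your crossing argument is the same one the paper uses. The paper takes the first index $t\le j$ with $x_t>v$, while you take the last index $i_0<j$ with $x_{i_0}\le v$; these pick out the same up-crossing step $x_{i_0}=v$, $x_{i_0+1}=v+1$.
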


\begin{proof}
This is the Dyck-sequence specialization of
Lemma~\ref{lem:extended-deficit-pairs}.  It remains only to check that the
missing-value correction in that lemma vanishes.  Fix \(j\), and let \(v\) be
an integer with \(0\le v<x_j\). Since \(x_0=0\) and \(x_j>v\), let \(t\le j\)
be the first index with \(x_t>v\). Then \(t>0\), and by minimality
\(x_{t-1}\le v\). The Dyck step condition gives
\[
  x_t\le x_{t-1}+1\le v+1.
\]
Together with \(x_t>v\), this forces \(x_t=v+1\) and \(x_{t-1}=v\). Hence
every value below \(x_j\) occurs to the left of \(j\), so the correction term
is zero.
\end{proof}

Theorem~\ref{thm:qt-catalan-skeleton} then gives the corresponding
special-skeleton expansion for the terms of \(C_n(q,t)\) with deficit at most
\(2n-8\), equivalently with total degree at least \(\Delta_n\).


\subsection{The East map}
\label{subsec:east-map}

We next isolate the local seven-entry map used by the global
\(\mathrm{up}\) and \(\mathrm{down}\) operations.  The local map moves the
boundary between an affine block and a reverse block one step to the right.
Before defining it, we record the small amount of far-apart combinatorics used
in the case analysis.

\begin{definition}[Far-apart decomposable seven-multiset]
\label{def:far-apart-decomposable}
Let \(S\) be a multiset of seven integers. We say that \(S\) is
\emph{far-apart decomposable} if its seven selected occurrences can be
partitioned into three unordered pairs and one singleton so that, for each pair
\(\{a,b\}\), one has \(|a-b|\ge 2\).
\end{definition}

We write \(u\gg v\) to mean \(u\ge v+2\).

\begin{lemma}[A far-apart criterion]
\label{lem:far-apart-separated-values}
Let \(S\) be a multiset of seven integers. Suppose that \(S\) contains five
distinct selected occurrences \(a,b,c,d,e\) such that
\[
  a\gg c\gg e
  \qquad\text{and}\qquad
  b\gg d.
\]
Then \(S\) is far-apart decomposable.
\end{lemma}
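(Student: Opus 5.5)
Let \(f,g\) denote the two occurrences of \(S\) other than \(a,b,c,d,e\). The plan is to exhibit an explicit partition into three far-apart pairs and one singleton, choosing among a small number of candidate pairings according to how \(f\) and \(g\) compare with the five selected values. The two hypothesized chains supply enough flexibility. Since \(a\gg c\gg e\) gives \(a\ge e+4\), the three pairs \(\{a,c\},\{c,e\},\{a,e\}\) are all far apart, and \(\{b,d\}\) is far apart as well.

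First consider \(f\). If \(f\) is far from some element \(x\) of the chain \(\{a,c,e\}\), pair \(f\) with \(x\). The remaining two chain elements are still far apart, so with \(\{b,d\}\) we obtain three far-apart pairs, and \(g\) is left as the singleton. The same argument works with \(f\) and \(g\) interchanged. It therefore remains to treat the case where both \(f\) and \(g\) are close (difference at most \(1\)) to each of \(a,c,e\). This case is impossible, because closeness to both \(a\) and \(e\) would force \(a-e\le 2\), contradicting \(a\ge e+4\).

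The only real step is noticing that the length-three chain \(a\gg c\gg e\) can absorb either extra element: any two of its three members remain a far-apart pair, while the spread \(a-e\ge4\) ensures every integer is at distance at least \(2\) from \(a\) or from \(e\). The second chain \(b\gg d\) is only needed to supply the third pair. I expect no real obstacle. The one point of care is that occurrences are selected with multiplicity, so the argument must be phrased in terms of the seven occurrences rather than distinct values; the argument above uses only occurrences, so it goes through unchanged. The final write-up is then the short case split: pair \(f\) with \(a\) if \(f\le a-2\) or \(f\ge a+2\), and otherwise pair \(f\) with \(e\). In the second case \(f\ge a-1\ge e+3\), so \(\{f,e\}\) is far apart. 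The remaining pairs are \(\{c,e\}\) or \(\{a,c\}\) respectively, together with \(\{b,d\}\), and \(g\) is the singleton.
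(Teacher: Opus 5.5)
Your proof is correct and is essentially the paper's argument: attach the sixth occurrence $f$ to $a$ or $e$, keep the other two members of the chain $a\gg c\gg e$ and the pair $\{b,d\}$, and leave the seventh occurrence as the singleton. The only difference is the case split: the paper compares $f$ with $c$ ($f\ge c$ versus $f<c$), while you ask whether $|f-a|\ge 2$, which is inessential.
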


\begin{proof}
Choose a sixth selected occurrence and call it \(f\); the remaining seventh
occurrence will be the singleton.  If \(f\ge c\), then
\[
  \{a,c\},\qquad \{f,e\},\qquad \{b,d\}
\]
are three far-apart pairs.  Indeed, \(a\gg c\), \(b\gg d\), and
\(f\ge c\gg e\).  If instead \(f<c\), then integrality gives
\(f\le c-1\).  Since \(a\ge c+2\), we have \(a\ge f+3\), so
\(\{a,f\}\) is far apart.  In this case
\[
  \{a,f\},\qquad \{c,e\},\qquad \{b,d\}
\]
are three far-apart pairs.  In either case these pairs, together with the
seventh occurrence as singleton, form a far-apart decomposition of \(S\).
\end{proof}

\begin{corollary}[Five distinct values]
\label{cor:five-distinct-far-apart}
Let \(S\) be a multiset of seven integers. If \(S\) contains five selected
occurrences with five distinct values
\[
  a>b>c>d>e,
\]
then \(S\) is far-apart decomposable.
\end{corollary}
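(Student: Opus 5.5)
The plan is to reduce directly to Lemma~\ref{lem:far-apart-separated-values} by choosing the five occurrences \(a,b,c,d,e\) appropriately from the given five values. Because the values are integers and strictly decreasing, any two of them that are separated by at least one intermediate value in the chain \(a>b>c>d>e\) differ by at least \(2\). So \(a\ge b+1\ge c+2\) gives \(a\gg c\), and likewise \(c\gg e\) and \(b\gg d\). The argument uses only integrality; no further structure of \(S\) is needed.

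Concretely, I would relabel the five given occurrences as the inputs to Lemma~\ref{lem:far-apart-separated-values} in the same order. The lemma's ``\(a\)'', ``\(c\)'', ``\(e\)'' are the largest, middle, and smallest values. Its ``\(b\)'', ``\(d\)'' are the second and fourth values. The three required relations \(a\gg c\), \(c\gg e\), and \(b\gg d\) each follow from two unit steps:
\[
  a\ge b+1\ge c+2,\qquad c\ge d+1\ge e+2,\qquad b\ge c+1\ge d+2.
\]
The five occurrences are distinct selected occurrences, since their values are distinct. The lemma then applies verbatim and yields a far-apart decomposition of \(S\).

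I do not expect a genuine obstacle. The only point to state explicitly is that strict integer inequalities upgrade to steps of size at least one. That gives the gaps of size two needed for \(\gg\).
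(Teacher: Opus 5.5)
Your proposal is correct and matches the paper's proof: integrality upgrades the strict inequalities to \(a\ge c+2\), \(c\ge e+2\), and \(b\ge d+2\). Lemma~\ref{lem:far-apart-separated-values} then applies directly to the same five occurrences.
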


\begin{proof}
Because the entries are integers, the inequalities imply
\(a\ge c+2\), \(c\ge e+2\), and \(b\ge d+2\).  Thus
\(a\gg c\gg e\) and \(b\gg d\), so the result follows from
Lemma~\ref{lem:far-apart-separated-values}.
\end{proof}

\begin{definition}[The sets \(L(S,k)\) and \(R(S,k)\)]
\label{def:LR-windows}
Let \(S\) be a multiset of seven integers and let \(k\ge 0\).  Define
\(L(S,k)\) to be the set of seven-entry sequences
\[
  (x_{-3},x_{-2},x_{-1},x_0,x_1,x_2,x_3)
\]
whose entries are the multiset \(S\), whose \(\di\)-value is \(k\), and such
that
\[
  (x_{-3},x_{-2},x_{-1},x_0)
  \quad\text{is affine Dyck,}
  \qquad
  (x_1,x_2,x_3)
  \quad\text{is reverse Dyck.}
\]
Define \(R(S,k)\) to be the set of seven-entry sequences with the same multiset
and \(\di\)-value conditions such that
\[
  (x_{-3},x_{-2},x_{-1})
  \quad\text{is affine Dyck,}
  \qquad
  (x_0,x_1,x_2,x_3)
  \quad\text{is reverse Dyck.}
\]
\end{definition}

\begin{proposition}[Local count symmetry]
\label{prop:LR-cardinality}
For every seven-element integer multiset \(S\) and every \(k\ge 0\),
\[
  |L(S,k)|=|R(S,k)|.
\]
\end{proposition}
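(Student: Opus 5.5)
The plan is to avoid any direct seven-entry case analysis. I will convert both \(L(S,k)\) and \(R(S,k)\) into two-factor affine Dyck factorizations, and then read off the equality from the symmetry of \(\DS(S,k;\mathbf{x})\) given by Corollary~\ref{cor:affine-dyck-schur-positivity}.

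First, a normalization. Translating all seven entries by a common constant preserves the affine and reverse Dyck conditions and preserves \(\di\). So I may assume \(S\subseteq[0,M]\) for some \(M\ge 0\). Then every reverse block occurring in \(L(S,k)\) or \(R(S,k)\) is a reverse \([0,M]\) Dyck sequence, and Proposition~\ref{prop:affine-reverse-transport} applies to it.

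Next, the key decomposition. For a concatenation \(A:B\),
\[
  \di(A:B)=\di(A)+\di(B)+c(A,B),
\]
where \(c(A,B)\) counts pairs \(a\in A\), \(b\in B\) with \(a=b+1\). This cross term depends only on the multisets of \(A\) and \(B\).

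Given \((x_{-3},\ldots,x_3)\in L(S,k)\), write \(A=(x_{-3},\ldots,x_0)\) and \(B=(x_1,x_2,x_3)\). I replace \(B\) by \(\mathrm{fw}_M(B)\). This is affine, has the same multiset as \(B\), and has the same \(\di\), so the total \(\di\) stays \(k\). Since \(\mathrm{fw}_M\) is a bijection with inverse \(\mathrm{bk}_M\), this gives a bijection from \(L(S,k)\) to the set of ordered pairs \((A,B')\) of affine Dyck sequences with lengths \((4,3)\), concatenated multiset \(S\), and \(\di(A:B')=k\). These pairs are exactly the affine Dyck factorizations \((F_0,F_1,\varnothing,\ldots)\) of \(S\) with \(|F_0|=4\), \(|F_1|=3\) and \(\di=k\). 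Their number is the coefficient of \(x_0^4x_1^3\) in \(\DS(S,k;\mathbf{x})\).

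The same argument applied to \(R(S,k)\), transporting the length-four reverse block, identifies \(|R(S,k)|\) with the coefficient of \(x_0^3x_1^4\). By Corollary~\ref{cor:affine-dyck-schur-positivity}, \(\DS(S,k;\mathbf{x})\) is a sum of Schur functions and hence symmetric in the variables. In particular it is invariant under swapping \(x_0\) and \(x_1\), so the two coefficients agree, which proves \(|L(S,k)|=|R(S,k)|\).

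The step needing the most care is checking that the transport really respects all the data. Specifically, \(\mathrm{fw}_M\) must preserve the multiset and \(\di\) exactly, and the cross term \(c\) must depend only on the multisets, so that total \(\di\) is preserved when only one block is replaced. Both facts are stated in Proposition~\ref{prop:affine-reverse-transport} and in the cross-term remarks used in the proof of Proposition~\ref{prop:phi3-bijection}. The result is nonconstructive, mirroring how \(\phi_4\) was chosen. If an explicit map is wanted later, composing \(\mathrm{fw}\), a Schur-symmetry bijection (e.g.\ via the tableau bijection of Theorem~\ref{thm:tableau-factorization-bijection} with a Bender--Knuth swap on \(Q\)), and \(\mathrm{bk}\) would provide one.
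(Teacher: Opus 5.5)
Your proposal is correct and follows the paper's proof essentially step for step: you translate \(S\) into some \([0,M]\), transport only the reverse block by \(\mathrm{fw}\) (using that the cross \(\di\)-term depends only on the two block multisets), and compare the coefficients of \(x_0^4x_1^3\) and \(x_0^3x_1^4\) in the symmetric function \(\DS(S,k;\mathbf{x})\) from Corollary~\ref{cor:affine-dyck-schur-positivity}. Only your closing aside about an explicit bijection overreaches, and it does not affect the proof. Theorem~\ref{thm:tableau-factorization-bijection} concerns dual Dyck factorizations, and the affine Schur expansion is obtained through the \(\omega\)-involution rather than through a bijection, so a Bender--Knuth swap on \(Q\) does not by itself give an explicit map between the affine \((4,3)\) and \((3,4)\) factorizations.
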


\begin{proof}
Translation of all seven entries by a common integer preserves the affine and
reverse inequalities and preserves \(\di\).  It also identifies \(L(S,k)\) and
\(R(S,k)\) with the corresponding sets for the translated multiset.  Thus, when
we appeal to the symmetric function \(\DS\), we may first translate \(S\) so
that all entries are nonnegative and then translate back.

We next convert the reverse block in each window into an affine block.  For an
arbitrary finite integer block, the affine--reverse transport of
Proposition~\ref{prop:affine-reverse-transport} is applied after translating
all entries of the block by a common constant so that the block lies in an
interval \([0,M]\), and then translating back.  Again, this translation preserves
affine and reverse inequalities and also preserves \(\di\).

For an element of \(L(S,k)\), leave the first block
\(A=(x_{-3},x_{-2},x_{-1},x_0)\) fixed and apply this transport to the reverse
block \(B=(x_1,x_2,x_3)\).  This gives an affine two-factor factorization of
\(S\) with factor lengths \((4,3)\).  The total \(\di\) is preserved: the
internal \(\di\) of \(A\) is unchanged, the internal \(\di\) of \(B\) is
preserved by the transport, and the cross-block contribution depends only on
the two block multisets, not on the order within the right block.

Thus \(L(S,k)\) is in \(\di\)-preserving bijection with the set of affine Dyck
factorizations of \(S\) with two displayed factors of lengths \((4,3)\) and
\(\di=k\).  Similarly, \(R(S,k)\) is in \(\di\)-preserving bijection with the
set of affine Dyck factorizations of \(S\) with two displayed factors of
lengths \((3,4)\) and \(\di=k\).

Corollary~\ref{cor:affine-dyck-schur-positivity} applies to the integer
multiset \(S\) and says that the affine Dyck symmetric function
\(\DS(S,k;\mathbf{x})\) is symmetric.  With the paper's zero-indexed
symmetric-function variables, the coefficient of \(x_0^4x_1^3\) in
\(\DS(S,k;\mathbf{x})\) counts the affine factorizations of lengths \((4,3)\), and the
coefficient of \(x_0^3x_1^4\) counts the affine factorizations of lengths
\((3,4)\).  Symmetry makes these two coefficients equal.  Composing with the
two reverse-to-affine conversions proves the proposition.
\end{proof}

For the remainder of this subsection, \(\mathrm{fw}\) and \(\mathrm{bk}\)
denote this local affine--reverse transport, with \(\mathrm{fw}\) carrying a
reverse Dyck ordering to an affine Dyck ordering of the same multiset and
\(\mathrm{bk}\) its inverse.  For integer-valued blocks, this means translate
the block by a common constant into an interval \([0,M]\), apply the transport
there, and translate back.  The result is independent of the chosen common
translation and ambient interval, since the deletion words \(B_k\) and the
binary base in Proposition~\ref{prop:affine-reverse-transport} are unchanged
by common translation and depend only on the relative order of equal and
adjacent values.  All windows below are therefore treated in their absolute
integer coordinates, not re-based
separately after choosing the window.  On a two-entry affine window \((a,b)\),
this means
\[
  \mathrm{bk}(a,b)=
  \begin{cases}
    (b,a), & a>b+1,\\
    (a,b), & a\le b+1.
  \end{cases}
\]
For a two-entry reverse window \((a,b)\), equivalently,
\[
  \mathrm{fw}(a,b)=
  \begin{cases}
    (b,a), & b>a+1,\\
    (a,b), & b\le a+1.
  \end{cases}
\]

\begin{definition}[The East map]
\label{def:east}
Let \(S\) be a seven-element integer multiset that is not far-apart
decomposable, and let
\[
  x=(x_{-3},x_{-2},x_{-1},x_0,x_1,x_2,x_3)\in L(S,k).
\]
The map \(\mathrm{East}\) is defined by the following cases, checked in order.
In every case, the first and last entries \(x_{-3}\) and \(x_3\) are fixed.

\smallskip
\noindent\textbf{Case 1.} If \(x_0\le x_1+1\), set \(\mathrm{East}(x)=x\).

\smallskip
\noindent\textbf{Case 2.} Suppose now that \(x_0>x_1+1\), and write
\[
  (y_{-1},y_0)=\mathrm{bk}(x_{-1},x_0).
\]

\smallskip
\noindent\textbf{Case 2a.} If \(x_{-1}>x_1+1\) and \(y_0\le x_2+1\), set
\[
  \mathrm{East}(x)=
  (x_{-3},x_{-2},x_1,y_{-1},y_0,x_2,x_3).
\]

\smallskip
\noindent\textbf{Case 2b.} If Case 2a does not apply, and if
\(x_{-1}\le x_1+1\) and \(x_{-1}\le x_2+1\), set
\[
  \mathrm{East}(x)=
  (x_{-3},x_{-2},x_1,x_0,x_{-1},x_2,x_3).
\]
In this branch the standing conditions imply
\(x_0=x_{-1}+1=x_1+2\).

\smallskip
\noindent\textbf{Case 3.} If neither Case 2a nor Case 2b applies and
\[
  \min(x_{-2},x_{-1},x_0)>\max(x_1,x_2)+1,
\]
then set
\[
  (u_{-2},u_{-1})=\mathrm{fw}(x_1,x_2),
  \qquad
  (u_0,u_1,u_2)=\mathrm{bk}(x_{-2},x_{-1},x_0),
\]
and define
\[
  \mathrm{East}(x)=
  (x_{-3},u_{-2},u_{-1},u_0,u_1,u_2,x_3).
\]

\smallskip
\noindent\textbf{Case 4.} The remaining case is
\[
  \min(x_{-2},x_{-1},x_0)\le \max(x_1,x_2)+1.
\]
Translate all seven entries by the same constant so that
\(\max(x_1,x_2)=2\), apply the appropriate table below to the middle five
entries, and translate back.  Proposition~\ref{prop:east-case4-exhaustive}
below proves that, under the Case 4 hypotheses together with the
non-far-apart hypothesis, the normalized middle five entries must appear as
one of the listed input rows.  Since
\((x_1,x_2)\) is reverse Dyck, after this normalization one of the following
four alternatives holds.

If the normalized ordered pair \((x_1,x_2)\) is \((1,2)\), use
\[
\begin{array}{rcl}
{[3,3,4,1,2]} &\mapsto& {[1,2,4,3,3]},\\[2pt]
{[3,4,4,1,2]} &\mapsto& {[1,2,4,3,4]},\\[2pt]
{[4,3,4,1,2]} &\mapsto& {[1,2,4,4,3]},\\[2pt]
{[2,3,4,1,2]} &\mapsto& {[1,2,4,3,2]}.
\end{array}
\]
If the normalized ordered pair \((x_1,x_2)\) is \((2,1)\), use
\[
\begin{array}{rcl}
{[3,3,4,2,1]} &\mapsto& {[2,1,4,3,3]},\\[2pt]
{[3,4,4,2,1]} &\mapsto& {[2,1,4,3,4]},\\[2pt]
{[4,3,4,2,1]} &\mapsto& {[2,1,4,4,3]},\\[2pt]
{[2,3,4,2,1]} &\mapsto& {[2,1,4,3,2]}.
\end{array}
\]
If the normalized ordered pair \((x_1,x_2)\) is \((2,2)\), use
\[
\begin{array}{rcl}
{[3,4,4,2,2]} &\mapsto& {[2,2,4,4,3]},\\[2pt]
{[3,4,5,2,2]} &\mapsto& {[2,2,5,4,3]}.
\end{array}
\]
Finally, if the normalized ordered pair \((x_1,x_2)\) is \((o,2)\) for some
\(o\le 0\), use
\[
\begin{array}{rcl}
{[3,3,4,o,2]} &\mapsto& {[2,o,4,3,3]},\\[2pt]
{[3,4,4,o,2]} &\mapsto& {[2,o,4,3,4]},\\[2pt]
{[4,3,4,o,2]} &\mapsto& {[2,o,4,4,3]},\\[2pt]
{[2,3,4,o,2]} &\mapsto& {[2,o,2,4,3]},\\[2pt]
{[3,4,2,o,2]} &\mapsto& {[2,o,4,3,2]}.
\end{array}
\]
The result after translating back is \(\mathrm{East}(x)\).
\end{definition}

\begin{lemma}[The min--max reduction]
\label{lem:minmax}
In the setup of Definition~\ref{def:east}, suppose that Cases 1, 2a, and 2b
all fail.  Then
\[
  \min(x_{-2},x_{-1},x_0)\ge \max(x_1,x_2).
\]
\end{lemma}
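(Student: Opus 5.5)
The plan is a direct case analysis on which clause of Cases~2a and~2b fails. I will use only the step inequalities for the two blocks, the definition of \(\mathrm{bk}\) on a two-entry affine window, and, in one subcase, the non-far-apart hypothesis via Lemma~\ref{lem:far-apart-separated-values}.

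\textbf{The bound by \(x_1\).} The affine condition on \((x_{-3},\dots,x_0)\) gives \(x_{-1}\ge x_0-1\) and \(x_{-2}\ge x_{-1}-1\). The reverse condition on \((x_1,x_2,x_3)\) gives \(x_2\ge x_1-1\) and \(x_3\ge x_2-1\). Failure of Case~1 means \(x_0\ge x_1+2\). Chaining these gives
\[
  x_0\ge x_1+2,\qquad x_{-1}\ge x_1+1,\qquad x_{-2}\ge x_1 .
\]
So \(\min(x_{-2},x_{-1},x_0)\ge x_1\) with no further work. It remains to show the same minimum is at least \(x_2\).

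\textbf{Subcase A: \(x_{-1}\le x_1+1\).} Together with the bounds above this forces \(x_{-1}=x_1+1\) and \(x_0=x_1+2\). Failure of Case~2b then requires \(x_{-1}>x_2+1\), so \(x_2<x_1\). Hence \(\max(x_1,x_2)=x_1\), and the bound by \(x_1\) already suffices.

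\textbf{Subcase B: \(x_{-1}\ge x_1+2\).} Failure of Case~2a now forces \(y_0\ge x_2+2\), where \((y_{-1},y_0)=\mathrm{bk}(x_{-1},x_0)\). I split according to the two-entry formula for \(\mathrm{bk}\).

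If \(x_{-1}\le x_0+1\), there is no swap, so \(y_0=x_0\ge x_2+2\). The affine steps then give \(x_{-1}\ge x_2+1\) and \(x_{-2}\ge x_2\), which finishes this case.

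If \(x_{-1}>x_0+1\), there is a swap, so \(y_0=x_{-1}\ge x_2+2\), and hence \(x_{-2}\ge x_2+1\). The only entry still to control is \(x_0\).

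\textbf{Main obstacle: ruling out \(x_0<x_2\) in the swap case.} This is where the non-far-apart hypothesis is needed. Suppose \(x_0\le x_2-1\). Then:
\[
  x_{-1}\ge x_2+2,\qquad x_2\ge x_0+1\ge x_1+3,\qquad x_{-2}\ge x_{-1}-1\ge x_2+1\ge x_0+2 .
\]
I will apply Lemma~\ref{lem:far-apart-separated-values} with \(a=x_{-1}\), \(c=x_2\), \(e=x_1\), \(b=x_{-2}\), \(d=x_0\). These are five distinct occurrences, and the displayed inequalities give exactly \(a\gg c\gg e\) and \(b\gg d\). The lemma then makes \(S\) far-apart decomposable, contradicting the standing hypothesis of Definition~\ref{def:east}.

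Therefore \(x_0\ge x_2\) in this case as well, and every case yields \(\min(x_{-2},x_{-1},x_0)\ge\max(x_1,x_2)\). The only nonroutine step is choosing these five occurrences so that the lemma's pattern applies; everything else is chaining adjacent step inequalities.
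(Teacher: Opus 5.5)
Your proof is correct and follows the paper's argument essentially step for step: the same split on whether \(x_{-1}\le x_1+1\), then on whether \(\mathrm{bk}(x_{-1},x_0)\) swaps, with the non-far-apart hypothesis used only to exclude \(x_0<x_2\) in the swap case. The one cosmetic difference is that you get the contradiction by invoking Lemma~\ref{lem:far-apart-separated-values} with \(x_{-1}\gg x_2\gg x_1\) and \(x_{-2}\gg x_0\). The paper instead exhibits the three pairs \(\{x_{-1},x_2\},\{x_{-2},x_0\},\{x_{-3},x_1\}\) directly, which needs an extra lower bound on \(x_{-3}\) that your route avoids.
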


\begin{proof}
Set \(M=\max(x_1,x_2)\).  Failure of Case 1 gives
\(x_0>x_1+1\), hence \(x_0\ge x_1+2\).  Since the left block
\((x_{-3},x_{-2},x_{-1},x_0)\) is affine Dyck, we also have
\[
  x_{-1}\ge x_0-1,
  \qquad
  x_{-2}\ge x_{-1}-1.
\]

First assume \(x_{-1}>x_1+1\).  Since Case 2a fails, if
\((y_{-1},y_0)=\mathrm{bk}(x_{-1},x_0)\), then \(y_0>x_2+1\).
There are two possible two-entry \(\mathrm{bk}\) outputs.

If \(\mathrm{bk}(x_{-1},x_0)=(x_{-1},x_0)\), then
\(x_0\ge x_2+2\).  Together with \(x_0\ge x_1+2\), this gives
\(x_0\ge M+2\).  Also
\[
  x_{-1}\ge x_1+2,
  \qquad
  x_{-1}\ge x_0-1\ge x_2+1,
\]
so \(x_{-1}\ge M\); applying \(x_{-2}\ge x_{-1}-1\) gives
\(x_{-2}\ge x_1+1\) and \(x_{-2}\ge x_2\), hence \(x_{-2}\ge M\).
Thus the desired inequality holds in this subcase.

If \(\mathrm{bk}(x_{-1},x_0)=(x_0,x_{-1})\), then
\(x_{-1}\ge x_2+2\), and with \(x_{-1}\ge x_1+2\) we obtain
\(x_{-1}\ge M+2\).  Hence
\(x_{-2}\ge x_{-1}-1\ge M\).  It remains to prove \(x_0\ge M\).  We already
know \(x_0\ge x_1+2\).  If, for contradiction, \(x_0<x_2\), then integrality
gives \(x_2\ge x_0+1\), hence \(x_2\ge x_1+3\).  The three disjoint pairs
\[
  \{x_{-1},x_2\},
  \qquad
  \{x_{-2},x_0\},
  \qquad
  \{x_{-3},x_1\}
\]
are then far apart: the first because \(x_{-1}\ge x_2+2\); the second because
\(x_{-2}\ge x_2+1\ge x_0+2\); and the third because
\(x_{-3}\ge x_{-2}-1\ge x_2\ge x_1+3\).  This would make \(S\) far-apart
decomposable, with \(x_3\) as the singleton, contradicting the hypothesis of
Definition~\ref{def:east}.  Therefore \(x_0\ge x_2\), and so \(x_0\ge M\).
This proves the desired inequality in the second subcase.

It remains to consider the complementary case \(x_{-1}\le x_1+1\).  Combining
this with \(x_0>x_1+1\) and the affine inequality \(x_0\le x_{-1}+1\) gives
\[
  x_1+1<x_0\le x_{-1}+1\le x_1+2.
\]
Since the entries are integers, \(x_0=x_1+2\) and
\(x_{-1}=x_1+1\).  Thus the equality part of Case 2b holds.  Since Case 2b
fails, its other condition fails, so \(x_{-1}>x_2+1\).  Hence \(x_1>x_2\) and
\(M=x_1\).  Finally,
\[
  x_{-2}\ge x_{-1}-1=x_1=M,
  \qquad
  x_{-1}=M+1,
  \qquad
  x_0=M+2,
\]
which again gives the desired inequality.
\end{proof}

The next subsection proves that the displayed cases of
Definition~\ref{def:east} are exhaustive on the non-far-apart domain, that the
map lands in \(R(S,k)\), and that it is inverted by the reversal-conjugate West
map, defined there.

\subsection{Well-definedness and inversion for East and West}
\label{subsec:east-west-inverse}

We now prove that the local case list defining \(\mathrm{East}\) is
complete on the non-far-apart domain, that the resulting window lies in the
set \(R(S,k)\), and that the reversal-conjugate map is inverse to it.

\begin{proposition}[Exhaustiveness of the Case 4 table]
\label{prop:east-case4-exhaustive}
Let \(S\) be a seven-element multiset that is not far-apart decomposable, and
let
\[
  x=(x_{-3},x_{-2},x_{-1},x_0,x_1,x_2,x_3)\in L(S,k).
\]
Suppose that the procedure of Definition~\ref{def:east} reaches Case~4. After
translating all entries by a common integer so that
\(\max(x_1,x_2)=2\), the middle five entries
\[
  [x_{-2},x_{-1},x_0,x_1,x_2]
\]
are one of the rows displayed in the Case~4 table of
Definition~\ref{def:east}.
\end{proposition}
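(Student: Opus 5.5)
We work throughout in the normalized coordinates, so \(\max(x_1,x_2)=2\). Since \((x_1,x_2)\) is reverse Dyck, the possibilities are \((2,2)\), \((2,1)\), \((1,2)\), \((2,o)\) with \(o\le 0\), and \((o,2)\) with \(o\le 0\).

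By Lemma~\ref{lem:minmax}, the failure of Cases~1, 2a, 2b gives \(\min(x_{-2},x_{-1},x_0)\ge 2\). The Case~4 hypothesis gives \(\min(x_{-2},x_{-1},x_0)\le 3\). Failure of Case~1 gives \(x_0\ge x_1+3\). The affine inequalities \(x_{-1}\le x_{-2}+1\) and \(x_0\le x_{-1}+1\) bound the left block from above in terms of its minimum. Hence \(x_0\le \min+2\le 5\), and every entry of the middle five lies in a bounded window, \([2,5]\) for the left block. This reduces the statement to a finite check of the possible middle triples \((x_{-2},x_{-1},x_0)\) for each pair \((x_1,x_2)\).

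The plan is to enumerate, for each normalized pair, all affine triples with entries in the allowed window, minimum at most \(3\), and \(x_0\ge x_1+3\). We then discard those triples for which Case~2a or Case~2b would have applied. These are checked using the explicit two-entry formula for \(\mathrm{bk}(x_{-1},x_0)\) recorded before Definition~\ref{def:east}: Case~2a when \(x_{-1}\ge x_1+2\) and \(y_0\le x_2+1\); Case~2b when \(x_{-1}\le \min(x_1,x_2)+1\).

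For \((x_1,x_2)=(2,1)\) and \((2,o)\), the condition \(x_0\ge 4\) combined with the 2a/2b failure should force a short list. For \((1,2)\) and \((o,2)\), \(x_0\) may be as small as \(x_1+2\). Note, however, that \(x_0\ge 2\) always, and for \(x_1=o\le 0\) the triple may have \(x_0=2\), which matches the row \([3,4,2,o,2]\).

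Each surviving candidate not appearing in the tables must then be eliminated using the non-far-apart hypothesis. We do this by exhibiting three disjoint far-apart pairs among the seven entries, using the unused entries \(x_{-3}\) and \(x_3\). Here \(x_{-3}\ge x_{-2}-1\) from the affine left block, and \(x_3\ge x_2-1\) from the reverse right block, are the only available constraints. Often Lemma~\ref{lem:far-apart-separated-values} or Corollary~\ref{cor:five-distinct-far-apart} applies directly, for example when \(x_1=o\le 0\) together with three distinct large values already gives \(a\gg c\gg e\), \(b\gg d\). When it does not, we pair by hand as in the proof of Lemma~\ref{lem:minmax}.

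The main obstacle is the \((o,2)\) family with unbounded \(o\). There, the far-apart argument must not depend on the value of \(o\) beyond \(o\le 0\), and \(x_3\) ranges over all values \(\ge 1\). I would first split on whether \(x_3\ge 3\) or \(x_3\le 2\) and on \(x_{-3}\), pairing \(o\) with whichever large entry is left over.

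We also expect a degenerate spurious candidate, such as \([3,3,4,2,2]\) or \([4,4,5,2,2]\). Each such candidate must be shown to fall into Case~2a or 2b, or into the far-apart set. This bookkeeping is the error-prone part; I would organize it as a table indexed by the pair \((x_1,x_2)\) and the triple, and give one line of justification per row.
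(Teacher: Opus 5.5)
You use the same framework as the paper: normalize, split on \((x_1,x_2)\), get \(\min(x_{-2},x_{-1},x_0)\in\{2,3\}\) from Lemma~\ref{lem:minmax} and the failure of Case~3, then discard candidates by Cases~2a/2b or far-apart decomposability. However, the step that reduces everything to a finite check is false. The affine inequalities \(x_{-1}\le x_{-2}+1\) and \(x_0\le x_{-1}+1\) bound each entry above by its \emph{predecessor}. They do give \(x_0\le\min+2\le 5\), but they give no upper bound on \(x_{-2}\) or \(x_{-1}\) when the minimum of the triple occurs later.

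For \((x_1,x_2)=(1,2)\), consider every triple \((A,B,3)\) with \(B\ge 5\), \(A\ge B-1\), and every triple \((A,3,4)\) with \(A\ge 5\). Each is affine with minimum \(3\) and fails Cases~1, 2a (the second entry of \(\mathrm{bk}(x_{-1},x_0)\) exceeds \(3\)), 2b and~3, so it reaches Case~4. The same holds for \((A,B,2)\) with \(B\ge 5\), and for \((A,4,2)\) with \(A\ge 4\), when \((x_1,x_2)=(o,2)\). None of these lies in your window \([2,5]\), so your enumeration never produces them and nothing in the proposal excludes them. They are exactly the non-table inputs that the non-far-apart hypothesis has to kill, so exhaustiveness does not follow from the plan as written.

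What is missing is a uniform treatment of these unbounded branches, which is how the paper argues. Since \(x_0\) is bounded, the only unbounded possibilities are \(x_{-1}\) large (which forces \(x_{-2}\ge x_{-1}-1\) large) or \(x_{-2}\) large with \(x_{-1}\) fixed. The ones that survive Cases~2a/2b and \(\min\le 3\) are removed as follows:
\begin{enumerate}
\item when \(x_{-1}\ge 5\) and \(x_0\le 3\), by Lemma~\ref{lem:far-apart-separated-values} with \(x_{-1}\gg x_0\gg x_1\) and \(x_{-2}\gg x_2\);
\item when \((x_{-1},x_0)=(3,4)\) and \(x_{-2}\ge 5\), by Corollary~\ref{cor:five-distinct-far-apart};
\item in the branch \((x_{-2},4,2,o,2)\) with \(x_{-2}\ge 4\), by three explicit far-apart pairs. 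The paper uses \(\{x_{-2},x_0\},\{x_{-1},x_2\},\{x_{-3},o\}\), with \(x_{-3}\ge x_{-2}-1\ge 3\).
\end{enumerate}
All of this is uniform in \(o\le 0\). So the unboundedness of \(o\), which you flag as the main obstacle, is harmless, and \(x_3\) is never needed.

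There are also some smaller slips.
\begin{enumerate}
\item Failure of Case~1 gives only \(x_0\ge x_1+2\), not \(x_1+3\). With \(x_1+3\) you would skip the \(x_0=3\) branch for \((1,2)\), which must be removed by Case~2b, Case~2a, or far-apartness.
\item \((2,o)\) with \(o\le 0\) is not reverse Dyck, since the reverse condition is \(x_2\ge x_1-1\). Only the four normalized pairs \((1,2),(2,1),(2,2),(o,2)\) occur.
\item Your sample candidate \([4,4,5,2,2]\) has minimum \(4\), so it is already excluded by the Case~4 hypothesis.
\end{enumerate}
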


\begin{proof}
Set
\[
  A=x_{-2},\qquad B=x_{-1},\qquad C=x_0,
  \qquad (p,q)=(x_1,x_2).
\]
Since \((x_{-3},A,B,C)\) is affine Dyck and \((p,q,x_3)\) is reverse Dyck, we
have
\[
  B\le A+1,
  \qquad C\le B+1,
  \qquad p\le q+1.
\]
After the translation \(\max(p,q)=2\), the reverse Dyck inequality for
\((p,q)\) leaves exactly four possibilities:
\[
  (p,q)=(1,2),\qquad (p,q)=(2,1),\qquad (p,q)=(2,2),
  \qquad (p,q)=(o,2)\text{ with }o\le 0.
\]
Because Cases~1, 2a, and 2b have failed, Lemma~\ref{lem:minmax} gives
\[
  \min(A,B,C)\ge \max(p,q)=2.
\]
Because Case~3 has failed, its strict inequality is false, so
\[
  \min(A,B,C)\le \max(p,q)+1=3.
\]
Thus
\[
  \min(A,B,C)\in\{2,3\}.
\]

First suppose \((p,q)=(1,2)\). Failure of Case~1 gives \(C>p+1=2\), so
\(C\ge 3\). If \(C=3\) and \(B\le 2\), then Case~2b applies. If \(C=3\) and
\(3\le B\le 4\), then the second entry of \(\mathrm{bk}(B,3)\) is \(3\), so
Case~2a applies. Hence any surviving case with \(C=3\) has \(B\ge 5\), and
then \(A\ge B-1\ge 4\). The five selected entries
\[
  B,\quad C=3,\quad p=1,\quad A,\quad q=2
\]
satisfy \(B\gg C\gg p\) and \(A\gg q\), so
Lemma~\ref{lem:far-apart-separated-values} makes \(S\) far-apart
decomposable, a contradiction. Therefore \(C\ne 3\). If \(C\ge 5\), then the
affine inequalities force \((A,B,C)=(3,4,5)\), since otherwise
\(\min(A,B,C)>3\). But then five distinct values \(1,2,3,4,5\) occur, contrary
to Corollary~\ref{cor:five-distinct-far-apart}. Hence \(C=4\). Now
\(C\le B+1\) gives \(B\ge 3\), while \(B\ge 5\) would force
\(\min(A,B,C)\ge 4\). Thus \(B\in\{3,4\}\). If \(B=3\), then
\(A\ge 2\), and \(A\ge 5\) would again give five distinct values; hence
\(A\in\{2,3,4\}\). If \(B=4\), then \(A\ge 3\), and \(A\ge 4\) would force
\(\min(A,B,C)\ge 4\); hence \(A=3\). This gives exactly the four rows
\[
  [3,3,4,1,2],\quad [3,4,4,1,2],\quad [4,3,4,1,2],\quad [2,3,4,1,2].
\]

The case \((p,q)=(2,1)\) is similar but slightly shorter. Failure of Case~1
now gives \(C>3\), so \(C\ge 4\). If \(C\ge 5\), the affine inequalities and
\(\min(A,B,C)\le 3\) force \((A,B,C)=(3,4,5)\), producing five distinct values
\(1,2,3,4,5\), a contradiction. Thus \(C=4\). Then \(B\ge 3\), and
\(B\ge 5\) would force \(\min(A,B,C)\ge 4\), so \(B\in\{3,4\}\). If
\(B=3\), then \(A\in\{2,3,4\}\) by the same five-distinct-values exclusion;
if \(B=4\), then \(A=3\). These are exactly
\[
  [3,3,4,2,1],\quad [3,4,4,2,1],\quad [4,3,4,2,1],\quad [2,3,4,2,1].
\]

If \((p,q)=(2,2)\), failure of Case~1 gives \(C\ge 4\). If \(B\le 3\), then
\(C\le B+1\) forces \((B,C)=(3,4)\), and Case~2b applies because
\(B\le p+1=q+1=3\). Hence \(B\ge 4\). It follows that \(A\ge B-1\ge 3\), and
\(A\ge 4\) would force \(\min(A,B,C)\ge 4\). Thus \(A=3\), whence
\(B=4\) and \(C\in\{4,5\}\). The only rows are
\[
  [3,4,4,2,2],\qquad [3,4,5,2,2].
\]

It remains to treat \((p,q)=(o,2)\) with \(o\le 0\). Since \(A,B,C\ge 2\), we
have \(B>o+1\); hence failure of Case~2a means that the second entry of
\(\mathrm{bk}(B,C)\) is greater than \(3\).

Assume first that none of \(A,B,C\) is equal to \(2\). Then
\(\min(A,B,C)=3\). If \(C=3\), then \(B\le 4\) would make Case~2a apply, so a
surviving case has \(B\ge 5\) and hence \(A\ge 4\). The five selected entries
\[
  B,
  \qquad C=3,
  \qquad o,
  \qquad A,
  \qquad 2
\]
satisfy \(B\gg C\gg o\) and \(A\gg 2\), contradicting
Lemma~\ref{lem:far-apart-separated-values}. Thus \(C\ne 3\). If \(C\ge 5\),
then the affine inequalities and \(\min(A,B,C)=3\) force
\((A,B,C)=(3,4,5)\), giving the five distinct values \(o,2,3,4,5\), a
contradiction. Therefore \(C=4\). Then \(B\in\{3,4\}\). If \(B=3\), the
five-distinct-values exclusion gives \(A\in\{3,4\}\); if \(B=4\), then
\(A=3\). This gives the rows
\[
  [3,3,4,o,2],
  \qquad [4,3,4,o,2],
  \qquad [3,4,4,o,2].
\]

Now suppose that one of \(A,B,C\) is equal to \(2\). If \(B=2\), then
\(C\le 3\), and the second entry of \(\mathrm{bk}(2,C)\) is at most \(3\), so
Case~2a applies; hence \(B\ne 2\). If \(A=2\), then \(B\le 3\), and since
\(B\ne 2\) we have \(B=3\). Case~2a excludes \(C\le 3\), while
\(C\le B+1=4\), so \(C=4\), giving \([2,3,4,o,2]\). Finally suppose
\(C=2\), with \(A\ne 2\) and \(B\ne 2\). If \(B\le 3\), then Case~2a applies;
if \(B\ge 5\), then \(A\ge 4\), and the selected entries
\[
  B,
  \qquad C=2,
  \qquad o,
  \qquad A,
  \qquad q=2
\]
where the two occurrences of \(2\) are distinct, satisfy the hypotheses of
Lemma~\ref{lem:far-apart-separated-values}. Hence \(B=4\). If \(A\ge 4\), then
\(x_{-3}\ge A-1\ge 3\), and the three pairs
\[
  \{A,C\},
  \qquad \{B,q\},
  \qquad \{x_{-3},o\}
\]
are all far apart, again a contradiction. Thus \(A=3\), giving
\([3,4,2,o,2]\).

The four normalized possibilities for \((p,q)\) are exhaustive, and each gives
only the displayed rows.
\end{proof}

\begin{proposition}[East is well-defined]
\label{prop:east-well-defined}
Let \(S\) be a seven-element multiset that is not far-apart decomposable, let
\(k\ge 0\), and let \(x\in L(S,k)\). Then \(\mathrm{East}(x)\) is defined and
lies in \(R(S,k)\).
\end{proposition}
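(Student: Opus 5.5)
We verify, case by case in the order of Definition~\ref{def:east}, three properties of the output: it is a permutation of the same seven occurrences, so the multiset \(S\) is preserved; it has the same \(\di\)-value \(k\); and its first three entries form an affine Dyck sequence while its last four form a reverse Dyck sequence. Definedness is immediate: the cases are checked in order, Case~4 is the complement of Case~3, and Proposition~\ref{prop:east-case4-exhaustive} guarantees that every input reaching Case~4 matches a table row. So the work is the membership in \(R(S,k)\).

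For \(\di\)-preservation I will use one bookkeeping principle throughout. Moving a block past entries that differ from every block entry by at least \(2\) changes no \(\di\)-pair. Applying \(\mathrm{fw}\) or \(\mathrm{bk}\) to a contiguous block preserves its internal \(\di\) by Proposition~\ref{prop:affine-reverse-transport}. Its cross terms with entries outside the block depend only on the block's multiset.

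\textbf{Case 1.} The output is \(x\) itself. Since \(x_0\le x_1+1\), prepending \(x_0\) to the reverse block \((x_1,x_2,x_3)\) keeps it reverse Dyck, and the left three entries are a prefix of an affine block.

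\textbf{Case 2a.} Here \(x_1\) moves left past \((x_{-1},x_0)\), both of which are at least \(x_1+2\), so no pair changes; then \(\mathrm{bk}\) is applied to the pair. For the shape, \((x_{-3},x_{-2},x_1)\) is affine because \(x_1\le x_{-1}-2\le x_{-2}-1\). The block \((y_{-1},y_0,x_2,x_3)\) is reverse because \(\mathrm{bk}\) outputs a reverse pair, \(y_0\le x_2+1\) by hypothesis, and \((x_2,x_3)\) was already reverse.

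\textbf{Case 2b.} We have \(x_0=x_{-1}+1=x_1+2\). The word changes from \(x_{-1}x_0x_1\) to \(x_1x_0x_{-1}\). The pair \((x_0,x_{-1})\) becomes a \(\di\)-pair while \((x_{-1},x_1)\) ceases to be one. The pair between \(x_0\) and \(x_1\) differs by \(2\) and contributes nothing either way. The net change is zero. The shape conditions follow from \(x_{-1}\le x_2+1\) and the affine inequality.

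\textbf{Case 3.} Every left middle entry exceeds every right middle entry by at least \(2\), so moving the pair \((x_1,x_2)\) past the triple \((x_{-2},x_{-1},x_0)\) changes nothing. The transports preserve internal \(\di\). For the boundaries: \(x_{-3}\) against \(u_{-2}\) holds because \(x_{-3}\ge x_{-2}-1\) exceeds every right entry; the junction \(u_{-1}\) to \(u_0\) is automatic since \(u_0\) is large; and the junction \(u_2\) to \(x_3\) needs \(u_2\le x_3+1\).

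The last junction is the point I expect to be the main obstacle. Because \(\mathrm{bk}\) only rearranges the triple, it suffices that \(\min(x_{-2},x_{-1},x_0)\le x_3+1\), or that the rightmost output entry is the minimum. If instead every entry of the triple is at least \(x_3+2\), I would try to exhibit a far-apart decomposition, pairing \(x_3\) and \(x_1\), \(x_2\) with large left entries, in order to contradict the hypothesis.

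\textbf{Case 4.} After normalizing, this is a finite check of the fifteen table rows, with \(o\le 0\) treated symbolically. For each row I will verify that the multiset is preserved, that the middle \(\di\) count is equal (the fixed entries \(x_{\pm3}\) interact only through the multiset), and that the affine and reverse conditions hold. The conditions at the ends involve \(x_{-3}\) and \(x_3\). For these I use the constraints inherited from the input, namely \(x_{-3}\ge x_{-2}-1\) and \(x_2\le x_3+1\), together with non-far-apart decomposability to exclude the residual values of \(x_3\) for which the output's reverse boundary would fail.
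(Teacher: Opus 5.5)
\paragraph{Verdict.} Your case-by-case plan follows the paper's route: multiset, \(\di\), and block shapes checked in each case, with Case~4 as a finite check of the table. Cases~1, 2a, 2b and the left boundary of Case~3 are handled correctly.

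\paragraph{The gap: the right boundary in Case~3.} The gap is at the step you flagged, \(u_2\le x_3+1\). Your reduction, that it suffices to have \(\min(x_{-2},x_{-1},x_0)\le x_3+1\), is false. The map \(\mathrm{bk}\) need not place the minimum of the triple last; an affine triple that is already reverse Dyck is fixed by it.

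Concretely, take \(x=(4,5,6,7,3,3,5)\).
\begin{itemize}
\item It lies in \(L(S,k)\), fails Cases~1, 2a and 2b, and satisfies the Case~3 hypothesis.
\item It has \(\min(5,6,7)=5\le x_3+1\).
\item Yet Case~3 outputs \((4,3,3,5,6,7,5)\), with \(u_2=7\ge x_3+2\).
\end{itemize}
This window is excluded only because \(S\) is far-apart decomposable, via \(\{7,5\},\{5,3\},\{6,3\}\) with singleton \(4\). Your argument invokes non-decomposability only when all three triple entries are at least \(x_3+2\). So every configuration with \(u_2\ge x_3+2\) but some triple entry \(\le x_3+1\) is left unproved.

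\paragraph{The repair.} Use your own pairing idea, but with the right hypothesis: assume directly that the boundary fails, i.e.\ \(u_2\ge x_3+2\).
\begin{itemize}
\item Pair \(u_2\) with \(x_3\).
\item Pair the two remaining triple entries with \(x_1\) and \(x_2\). These pairs are far apart by the strict Case~3 inequality \(\min(x_{-2},x_{-1},x_0)>\max(x_1,x_2)+1\).
\item Take \(x_{-3}\) as the singleton.
\end{itemize}
This is a far-apart decomposition of \(S\), a contradiction. It is exactly the paper's argument.

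\paragraph{Minor points.} The ``junction \(u_{-1}\) to \(u_0\)'' needs no check, since positions \(-1\) and \(0\) lie in different blocks of \(R(S,k)\).

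Your Case~4 plan agrees with the paper's. The paper makes the exclusion of bad values of \(x_3\) uniform: for each table row it lists two disjoint far-apart pairs among the first four middle output entries. Together with \(\{y_2,x_3\}\) and the singleton \(x_{-3}\), these give a far-apart decomposition whenever \(y_2\ge x_3+2\).
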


\begin{proof}
Write
\[
  x=(x_{-3},x_{-2},x_{-1},x_0,x_1,x_2,x_3),
  \qquad
  y=\mathrm{East}(x)=(y_{-3},\ldots,y_3).
\]
Every case fixes the two outer entries and only reorders the middle five
entries, possibly after applying \(\mathrm{fw}\) or \(\mathrm{bk}\) to an
affine or reverse subblock. These auxiliary maps preserve the underlying
multiset and the internal \(\di\)-statistic of the subblock. The Case~4 rows
are explicit permutations of the middle five entries. Thus the multiset is
preserved in all cases; it remains to check the mixed affine/reverse structure
and the total \(\di\)-value.

In Case~1 the map is the identity. The first output block is the affine prefix
\((x_{-3},x_{-2},x_{-1})\), and the last output block
\((x_0,x_1,x_2,x_3)\) is reverse Dyck because the Case~1 condition is
\(x_0\le x_1+1\) and \((x_1,x_2,x_3)\) was already reverse Dyck. The statistic
\(\di\) is unchanged.

In Case~2a, write
\[
  (y_{-1},y_0)=\mathrm{bk}(x_{-1},x_0).
\]
The output is
\[
  (x_{-3},x_{-2},x_1,y_{-1},y_0,x_2,x_3).
\]
The pair \((x_{-1},x_0)\) is affine and is sent by \(\mathrm{bk}\) to a
reverse pair with the same internal \(\di\). The element \(x_1\) moves from the
right of the two values \(x_{-1},x_0\) to their left. Since Case~1 failed and
Case~2a holds,
\[
  x_0>x_1+1,
  \qquad
  x_{-1}>x_1+1,
\]
so both crossed values differ from \(x_1\) by at least \(2\). No \(\di\)-pair
is created or destroyed by this move, and \(x_1\) does not cross
\(x_{-2}\). Contributions involving the two outer sides of the middle block are
unchanged because those entries remain on the same side of the same middle
multiset.

The first output block is affine: \(x_{-2}\le x_{-3}+1\) is inherited, and
\[
  x_{-2}\ge x_{-1}-1\ge x_1+1
\]
gives \(x_1\le x_{-2}+1\). The last output block is reverse Dyck because
\((y_{-1},y_0)\) is reverse, the Case~2a success condition gives
\(y_0\le x_2+1\), and \((x_1,x_2,x_3)\) was reverse Dyck.

In Case~2b, the output is
\[
  (x_{-3},x_{-2},x_1,x_0,x_{-1},x_2,x_3).
\]
Combining failure of Case~1 with the affine inequality and the first Case~2b
inequality gives
\[
  x_1+1<x_0\le x_{-1}+1\le x_1+2.
\]
By integrality,
\[
  x_0=x_1+2,
  \qquad
  x_{-1}=x_1+1.
\]
Only the local triple \((x_{-1},x_0,x_1)\) changes order. The change
\((x_{-1},x_0)\mapsto(x_0,x_{-1})\) creates one \(\di\)-pair, the change
\((x_{-1},x_1)\mapsto(x_1,x_{-1})\) destroys one \(\di\)-pair, and the change
\((x_0,x_1)\mapsto(x_1,x_0)\) changes no \(\di\)-pair because
\(x_0=x_1+2\). Thus the local net change in \(\di\) is zero; all outside
entries remain on the same side of the same local multiset.

The first output block is affine because \(x_{-2}\le x_{-3}+1\) is inherited
and \(x_1\le x_{-2}\) follows from \(x_{-1}=x_1+1\) and
\(x_{-1}\le x_{-2}+1\). The last output block is reverse Dyck because
\(x_0=x_{-1}+1\), the Case~2b success condition gives
\(x_{-1}\le x_2+1\), and \((x_1,x_2,x_3)\) was reverse Dyck.

In Case~3, set
\[
  L=\{x_1,x_2\},
  \qquad
  H=\{x_{-2},x_{-1},x_0\}
\]
as multisets. The Case~3 hypothesis says every element of \(H\) is at least
\(2\) larger than every element of \(L\). The map applies \(\mathrm{fw}\) to
the reverse pair \((x_1,x_2)\) and \(\mathrm{bk}\) to the affine triple
\((x_{-2},x_{-1},x_0)\). The internal \(\di\)-contributions in these two
blocks are preserved. There are no \(\di\)-pairs between \(H\) and \(L\), either
before or after the move, by the strict separation. The boundary entries
\(x_{-3}\) and \(x_3\) remain outside the middle multiset, so their
contributions are unchanged.

The first two middle output entries form an affine pair by construction. If
\((y_{-2},y_{-1})=\mathrm{fw}(x_1,x_2)\), then
\(y_{-2}\le \max L\). By Lemma~\ref{lem:minmax}, \(\max L\le x_{-2}\), and the
input affine condition gives \(x_{-2}\le x_{-3}+1\). Hence
\(y_{-2}\le x_{-3}+1\), so the first output block is affine. The last three
middle output entries form a reverse triple by construction. If the final
reverse boundary failed, then \(y_2\ge x_3+2\). Pair \(y_2\), which is one of
the high values, with \(x_3\), and pair the two remaining high values with the
two low values. These are three disjoint far-apart pairs, with \(x_{-3}\) as
singleton, contradicting the hypothesis that \(S\) is not far-apart
decomposable. Thus \(y_2\le x_3+1\), so the last output block is reverse Dyck.

Finally suppose Case~4 is reached. By
Proposition~\ref{prop:east-case4-exhaustive}, after translating so that
\(\max(x_1,x_2)=2\), the middle five entries are one of the displayed table
rows. Translation preserves all relevant inequalities, far-apart
decomposability, and \(\di\). The boundary entries remain fixed outside the
middle five entries, so their \(\di\)-contributions with the middle entries
depend only on the middle multiset. It is therefore enough to check the
normalized middle-five table. The following table records the common
middle-five \(\di\)-value and, for later use in the boundary argument, two
disjoint far-apart pairs among the first four output middle entries:
\[
\begin{array}{c|c|c|c|c}
\text{case} & \text{input} & \text{output} & \di_{\rm in}=\di_{\rm out} &
\text{disjoint far-apart pairs}\\ \hline
4a &[3,3,4,1,2]&[1,2,4,3,3]&2&\{1,3\},\{2,4\}\\
4a &[3,4,4,1,2]&[1,2,4,3,4]&1&\{1,3\},\{2,4\}\\
4a &[4,3,4,1,2]&[1,2,4,4,3]&2&\{1,4\},\{2,4\}\\
4a &[2,3,4,1,2]&[1,2,4,3,2]&2&\{1,3\},\{2,4\}\\
4b &[3,3,4,2,1]&[2,1,4,3,3]&3&\{2,4\},\{1,3\}\\
4b &[3,4,4,2,1]&[2,1,4,3,4]&2&\{2,4\},\{1,3\}\\
4b &[4,3,4,2,1]&[2,1,4,4,3]&3&\{2,4\},\{1,4\}\\
4b &[2,3,4,2,1]&[2,1,4,3,2]&3&\{2,4\},\{1,3\}\\
4c &[3,4,4,2,2]&[2,2,4,4,3]&2&\{2,4\},\{2,4\}\\
4c &[3,4,5,2,2]&[2,2,5,4,3]&2&\{2,5\},\{2,4\}\\
4d &[3,3,4,o,2]&[2,o,4,3,3]&2&\{2,4\},\{o,3\}\\
4d &[3,4,4,o,2]&[2,o,4,3,4]&1&\{2,4\},\{o,3\}\\
4d &[4,3,4,o,2]&[2,o,4,4,3]&2&\{2,4\},\{o,4\}\\
4d &[2,3,4,o,2]&[2,o,2,4,3]&1&\{2,4\},\{o,2\}\\
4d &[3,4,2,o,2]&[2,o,4,3,2]&2&\{2,4\},\{o,3\}
\end{array}
\]
In the Case~4d rows, \(o\le 0\), so the displayed pairs involving
\(o\) are far apart. The table proves middle-five \(\di\)-preservation in
Case~4, hence whole-window \(\di\)-preservation.

The first two middle output entries are affine in every row, and the first of
them is at most \(2\). Since Lemma~\ref{lem:minmax} gives
\(\min(x_{-2},x_{-1},x_0)\ge 2\), the input affine inequality implies
\(x_{-3}\ge 1\) in the normalized coordinates. Thus the first middle output
entry is at most \(x_{-3}+1\), so the first output block is affine. The last
three middle output entries are reverse in every row of the table. If the last
reverse boundary failed, so that \(y_2\ge x_3+2\), then \(\{y_2,x_3\}\)
would be a far-apart pair. The last column gives two more disjoint
far-apart pairs using only the first four middle output entries. Together with
\(x_{-3}\) as singleton, these pairs would make \(S\) far-apart decomposable,
a contradiction. Hence \(y_2\le x_3+1\), and the output lies in \(R(S,k)\).

All cases preserve the multiset and \(\di\) and produce the required mixed
window. Therefore \(\mathrm{East}(x)\in R(S,k)\).
\end{proof}

\begin{definition}[The West map]
\label{def:west}
Let \(\rho\) denote ordinary reversal of a seven-entry window:
\[
  \rho(x_{-3},x_{-2},x_{-1},x_0,x_1,x_2,x_3)
  =(x_3,x_2,x_1,x_0,x_{-1},x_{-2},x_{-3}).
\]
For a window in the corresponding reversed local domain, define
\[
  \mathrm{West}=\rho\circ \mathrm{East}\circ \rho.
\]
Thus \(\mathrm{West}\) is the reversal-conjugate of \(\mathrm{East}\).
\end{definition}

We shall also use the same symbol \(\rho\) for reversal of the shorter blocks
appearing in the definitions of \(\mathrm{fw}\) and \(\mathrm{bk}\). On these
blocks,
\[
  \mathrm{fw}=\rho\circ\mathrm{bk}\circ\rho,
  \qquad
  \mathrm{bk}=\rho\circ\mathrm{fw}\circ\rho.
\]

\begin{theorem}[East and West are inverse local maps]
\label{thm:east-west-inverse}
Let \(S\) be a seven-element multiset that is not far-apart decomposable, and
let \(k\ge 0\). For every \(x\in L(S,k)\),
\[
  \mathrm{West}(\mathrm{East}(x))=x.
\]
For every \(y\in R(S,k)\),
\[
  \mathrm{East}(\mathrm{West}(y))=y.
\]
\end{theorem}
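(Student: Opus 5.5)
The plan is to prove the first identity \(\mathrm{West}\circ\mathrm{East}=\mathrm{id}\) on \(L(S,k)\) directly, and to obtain the second from it by counting, using Proposition~\ref{prop:LR-cardinality}.

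First I would check that \(\mathrm{West}\) is defined on \(R(S,k)\) and lands in \(L(S,k)\). Reversal \(\rho\) turns an affine block into a reverse block and vice versa. Hence \(\rho\) maps windows with an affine \(3\)-block followed by a reverse \(4\)-block onto windows with an affine \(4\)-block followed by a reverse \(3\)-block. For the statistic, for any word \(w\) with multiset \(S\) one has
\[
  \di(w)+\di(\rho w)=c(S),
\]
where \(c(S)\) is the number of unordered pairs of occurrences whose values differ by exactly \(1\). Thus \(\rho\) restricts to bijections
\[
  R(S,k)\to L(S,c(S)-k),\qquad L(S,c(S)-k)\to R(S,k).
\]
Proposition~\ref{prop:east-well-defined}, applied with parameter \(c(S)-k\), then gives \(\mathrm{West}(R(S,k))\subseteq L(S,k)\); non-far-apart decomposability depends only on \(S\).

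Next I would prove \(\mathrm{West}(\mathrm{East}(x))=x\), which is equivalent to \(\mathrm{East}(\rho\,\mathrm{East}(x))=\rho x\). The argument runs case by case on the branch of Definition~\ref{def:east} used for \(x\). The claim is that \(z=\rho\,\mathrm{East}(x)\) falls into the same branch, and that this branch undoes the first application once \(\mathrm{fw}=\rho\circ\mathrm{bk}\circ\rho\) and the involutive description of the two-entry maps are used.

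\emph{Case 1.} \(\mathrm{East}(x)=x\), and \(z=\rho x\) satisfies the Case~1 inequality. The relevant pair of \(z\) is \((x_0,x_{-1})\), and the affine condition \(x_0\le x_{-1}+1\) is exactly that inequality.

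\emph{Case 2a.} Here \(z\) has middle entries \((x_2,y_0,y_{-1},x_1,\dots)\). Case~1 fails for \(z\) because \(y_{-1}>x_1+1\) (both \(x_{-1}\) and \(x_0\) exceed \(x_1+1\)). The 2a conditions for \(z\) read \(\max(y_0,y_{-1})>x_1+1\), and \(x_1\) is compatible with the next entry \(x_{-2}\); the latter is the inequality already checked in Proposition~\ref{prop:east-well-defined}. Applying \(\mathrm{bk}\) to \((y_0,y_{-1})\) recovers \((x_0,x_{-1})\) by the reversal-conjugacy, and reinserting \(x_2\) then restores \(\rho x\).

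\emph{Case 2b.} With \(x_0=x_{-1}+1=x_1+2\), the same bookkeeping shows that \(z\) fails Case~2a and satisfies the Case~2b equalities.

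\emph{Case 3.} The strict separation \(\min H>\max L+1\) is symmetric under reversal, so \(z\) reaches Case~3 once it fails Cases~1--2b. Those failures come from the strict separation together with Lemma~\ref{lem:minmax}-type inequalities. \(\mathrm{fw}\) and \(\mathrm{bk}\) then invert each other on the swapped blocks.

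\emph{Case 4.} Here it is a finite verification over the fifteen normalized rows, with the outer entries fixed. For each output row, I would reverse it, renormalize so that the new \((x_1,x_2)\) has maximum \(2\), and check that it is a listed input row whose image is the reversed original input. For example, \([1,2,4,3,3]\) reversed gives middle \([3,3,4,2,1]\), which should map to \([2,1,4,3,3]\), matching \(\rho\) of \([3,3,4,1,2]\). At the same time I would check that the reversed row fails Cases~1--3. I expect the \(o\le 0\) rows, where \(o\) can be arbitrarily small, to need a short separate argument.

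Given the first identity, \(\mathrm{East}\colon L(S,k)\to R(S,k)\) is injective. Since \(|L(S,k)|=|R(S,k)|\) by Proposition~\ref{prop:LR-cardinality}, it is bijective. For \(y\in R(S,k)\), write \(y=\mathrm{East}(x)\); then \(\mathrm{West}(y)=x\), so \(\mathrm{East}(\mathrm{West}(y))=y\).

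The main obstacle is the ordering of the cases. It is not enough that the intended branch inverts the map; one must also show that \(\rho\,\mathrm{East}(x)\) does not trigger an earlier branch. This is most delicate in Case~3 and in the Case~4 rows. There I would first try to reuse the inequalities of Lemma~\ref{lem:minmax} and the far-apart pairs recorded in the Case~4 table: an unexpected earlier branch should force three disjoint far-apart pairs, contradicting the hypothesis on \(S\).
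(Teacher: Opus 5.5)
Your proof of \(\mathrm{West}\circ\mathrm{East}=\mathrm{id}\) follows the paper. It uses the same branch-by-branch check that \(\mathrm{East}(\rho\,\mathrm{East}(x))=\rho x\), including that \(\rho\,\mathrm{East}(x)\) fails the earlier branches, and it uses the same finite pairing of the Case~4 rows.

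Where you depart is the second identity. You argue by counting: \(\mathrm{East}\colon L(S,k)\to R(S,k)\) is injective, and \(|L(S,k)|=|R(S,k)|\) by Proposition~\ref{prop:LR-cardinality}. The paper gets it formally instead. For \(y\in R(S,k)\), the reversal \(\rho(y)\) lies in \(L(S,k')\) for some \(k'\). The first identity then gives \(\rho\,\mathrm{East}\,\rho\,\mathrm{East}(\rho y)=\rho y\). Applying \(\rho\) and using \(\rho^2=\mathrm{id}\) yields \(\mathrm{East}(\mathrm{West}(y))=y\).

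Your route is valid: the sets are finite and there is no circularity. The cost is that a purely local statement now depends on Proposition~\ref{prop:LR-cardinality}, and hence on the Schur positivity of \(\mathsf{DS}\) from Section~3. The paper's argument is self-contained, and it reproves \(|L(S,k)|=|R(S,k)|\) for non-far-apart \(S\) as a byproduct. Your first step already shows that \(\rho\) maps \(R(S,k)\) onto \(L(S,c(S)-k)\), which is exactly what the formal argument needs, so the counting detour is unnecessary.

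Three small corrections to the case analysis:
\begin{enumerate}
\item In Case~2a, the branch conditions for \(z=\rho\,\mathrm{East}(x)\) are \(y_0>x_1+1\), and that the second entry of \(\mathrm{bk}(y_0,y_{-1})=(x_0,x_{-1})\) is at most \(x_{-2}+1\). The second condition is \(x_{-1}\le x_{-2}+1\), which is inherited from the affine left block of \(x\). The inequality \(x_1\le x_{-2}+1\) you cite is not a branch condition; it is part of \(z\) lying in \(L\).
\item The \(o\le 0\) rows need no separate argument. Every test involving \(o\), such as \(x_0\le o+1\) or \(x_{-1}>o+1\), is decided uniformly for \(o\le 0\).
\item No far-apart reasoning is needed to exclude earlier branches. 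In Case~3 the strict separation alone rules out Cases~1, 2a and~2b for the reversed window. In Case~4 the reversed outputs are explicit table rows and can be tested directly.
\end{enumerate}
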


\begin{proof}
We first prove the identity \(\mathrm{West}(\mathrm{East}(x))=x\). Write
\[
  x=(x_{-3},x_{-2},x_{-1},x_0,x_1,x_2,x_3),
  \qquad
  y=\mathrm{East}(x).
\]
By Proposition~\ref{prop:east-well-defined}, \(y\in R(S,k)\), so
\(\mathrm{West}(y)\) is defined. Since \(\mathrm{West}=\rho\mathrm{East}\rho\),
it is enough to show
\[
  \mathrm{East}(\rho(y))=\rho(x).
\]
We check this according to the East case used on \(x\).

In Case~1, \(y=x\). In the reversed window \(\rho(y)=\rho(x)\), the entry
playing the role of the central \(x_0\) is the original \(x_0\), and the entry
playing the role of \(x_1\) is the original \(x_{-1}\). Since the original
left block is affine Dyck, \(x_0\le x_{-1}+1\). Thus Case~1 applies to
\(\rho(y)\), and \(\mathrm{East}(\rho(y))=\rho(x)\).

In Case~2a, let
\[
  (a,b)=\mathrm{bk}(x_{-1},x_0).
\]
Then
\[
  y=(x_{-3},x_{-2},x_1,a,b,x_2,x_3),
\]
so
\[
  \rho(y)=(x_3,x_2,b,a,x_1,x_{-2},x_{-3}).
\]
Because Case~1 failed and Case~2a held on \(x\), both \(x_{-1}\) and \(x_0\)
are greater than \(x_1+1\). The entries \(a,b\) are these two values in some
order, so Case~1 fails on \(\rho(y)\) and the strict inequality needed for
Case~2a holds. Since \(\mathrm{fw}\) is inverse to \(\mathrm{bk}\), and using
\(\mathrm{bk}=\rho\mathrm{fw}\rho\),
\[
  \mathrm{bk}(b,a)=\rho(\mathrm{fw}(a,b))=(x_0,x_{-1}).
\]
The remaining Case~2a condition is \(x_{-1}\le x_{-2}+1\), which is inherited
from the original affine block. Therefore East applies to \(\rho(y)\) by
Case~2a and gives
\[
  (x_3,x_2,x_1,x_0,x_{-1},x_{-2},x_{-3})=\rho(x).
\]

In Case~2b, integrality gives
\[
  x_{-1}=x_1+1,
  \qquad
  x_0=x_1+2.
\]
The output is
\[
  y=(x_{-3},x_{-2},x_1,x_0,x_{-1},x_2,x_3),
\]
so
\[
  \rho(y)=(x_3,x_2,x_{-1},x_0,x_1,x_{-2},x_{-3}).
\]
In this reversed window, Case~1 fails because \(x_0=x_1+2\), and Case~2a
fails because \(x_{-1}=x_1+1\) is not strictly larger than \(x_1+1\). The
Case~2b inequalities hold, the last one being \(x_{-1}\le x_{-2}+1\) from the
original affine block. Hence Case~2b sends \(\rho(y)\) to
\[
  (x_3,x_2,x_1,x_0,x_{-1},x_{-2},x_{-3})=\rho(x).
\]
Equivalently, up to translation the local pattern \((a+1,a+2,a)\) is sent to
\((a,a+2,a+1)\), and reversal applies the same pattern in reverse.

In Case~3, write
\[
  (u,v)=\mathrm{fw}(x_1,x_2),
  \qquad
  (a,b,c)=\mathrm{bk}(x_{-2},x_{-1},x_0).
\]
Then
\[
  y=(x_{-3},u,v,a,b,c,x_3),
  \qquad
  \rho(y)=(x_3,c,b,a,v,u,x_{-3}).
\]
The Case~3 hypothesis says
\[
  \min(x_{-2},x_{-1},x_0)>\max(x_1,x_2)+1.
\]
Since \(\mathrm{fw}\) and \(\mathrm{bk}\) preserve the relevant multisets, this
is equivalent to
\[
  \min(a,b,c)>\max(u,v)+1.
\]
Thus, in the reversed window, Case~1 fails, Case~2a fails at its success
condition because the \(\mathrm{bk}\)-output from the high pair remains larger
than the low comparison entry plus \(1\), and Case~2b fails at its first
inequality. The Case~3 min--max condition holds, so East applies by Case~3:
\[
  \mathrm{East}(\rho(y))
  =(x_3,\mathrm{fw}(v,u),\mathrm{bk}(c,b,a),x_{-3}).
\]
Using the reversal-conjugation identities and the fact that \(\mathrm{fw}\) and
\(\mathrm{bk}\) are inverse maps on the relevant block domains,
\[
  \mathrm{fw}(v,u)=(x_2,x_1),
  \qquad
  \mathrm{bk}(c,b,a)=(x_0,x_{-1},x_{-2}).
\]
Hence \(\mathrm{East}(\rho(y))=\rho(x)\) in Case~3.

It remains to check Case~4. Translation commutes with reversal, so it suffices
to inspect the normalized table. If a normalized middle-five row is written as
\[
  [A,B,C,D,E]\longmapsto [F,G,H,J,K],
\]
then the reversed output has middle five entries \([K,J,H,G,F]\). The table is
paired so that the row with input \([K,J,H,G,F]\) has output
\([E,D,C,B,A]\). The pairings are
\[
\begin{array}{rclcrcl}
[3,3,4,1,2]&\mapsto&[1,2,4,3,3]
&\leftrightarrow&
[3,3,4,2,1]&\mapsto&[2,1,4,3,3],\\[2pt]
[3,4,4,1,2]&\mapsto&[1,2,4,3,4]
&\leftrightarrow&
[4,3,4,2,1]&\mapsto&[2,1,4,4,3],\\[2pt]
[4,3,4,1,2]&\mapsto&[1,2,4,4,3]
&\leftrightarrow&
[3,4,4,2,1]&\mapsto&[2,1,4,3,4],\\[2pt]
[2,3,4,1,2]&\mapsto&[1,2,4,3,2]
&\leftrightarrow&
[2,3,4,2,1]&\mapsto&[2,1,4,3,2].
\end{array}
\]
The two Case~4c rows are self-paired:
\[
  [3,4,4,2,2]\mapsto[2,2,4,4,3],
  \qquad
  [3,4,5,2,2]\mapsto[2,2,5,4,3].
\]
For Case~4d, with the same parameter \(o\le 0\), the pairings are
\[
\begin{array}{rcl}
[3,3,4,o,2]\mapsto[2,o,4,3,3] & \text{self-paired},\\[2pt]
[3,4,4,o,2]\mapsto[2,o,4,3,4]
&\leftrightarrow&
[4,3,4,o,2]\mapsto[2,o,4,4,3],\\[2pt]
[2,3,4,o,2]\mapsto[2,o,2,4,3]
&\leftrightarrow&
[3,4,2,o,2]\mapsto[2,o,4,3,2].
\end{array}
\]
Each displayed reversed-output row reaches Case~4.  For each displayed
pairing, the reversed output middle-five row is exactly the paired Case~4 input
row, and the paired output is the reversed original input.  Hence
\(\mathrm{East}(\rho(y))=\rho(x)\).

This completes the proof that \(\mathrm{West}(\mathrm{East}(x))=x\) for every
\(x\in L(S,k)\).

Now let \(y\in R(S,k)\). Then \(\rho(y)\in L(S,k')\), where
\(k'=\di(\rho(y))\). No relation between \(k\) and \(k'\) is needed. Applying
the identity just proved to \(\rho(y)\) gives
\[
  \mathrm{West}(\mathrm{East}(\rho(y)))=\rho(y).
\]
Substituting \(\mathrm{West}=\rho\mathrm{East}\rho\), applying \(\rho\) to both
sides, and using \(\rho^2=\mathrm{id}\), we obtain
\[
  \mathrm{East}\,\rho\,\mathrm{East}(\rho(y))=y.
\]
Since \(\mathrm{West}(y)=\rho\mathrm{East}(\rho(y))\), this is exactly
\[
  \mathrm{East}(\mathrm{West}(y))=y.
\]
Thus the two local maps are mutual inverses on the stated domains.
\end{proof}


\subsection{\texorpdfstring{The \(\mathrm{up}\) and \(\mathrm{down}\) maps}{The up and down maps}}
\label{subsec:up-down}

We extend the local East--West moves to global maps.  The maps in this subsection are
partial maps on Dyck sequences of a fixed length.  They are defined by a staged
procedure: extract one or more leftmost-extractable elements, lower the
extracted values by one and place them at the right end, apply a local
East--West move to a final window, and then inject a raised suffix back into the
remaining prefix.

We use the following notation for the local stages.  The map
\(\mathrm{East}_3\) is the three-window identity test: on a window
\((u_{-1},u_0,u_1)\), it succeeds and returns the same window exactly when
\(u_0\le u_1+1\).  After this test has failed,
\(\mathrm{East}_5\) is the five-window part of Definition~\ref{def:east}
corresponding to Cases~2a and~2b.  If neither of those cases applies,
\(\mathrm{East}_5\) is undefined and the staged construction proceeds to the
seven-window stage.  Finally, \(\mathrm{East}_7\) is the full local map
\(\mathrm{East}\) of Definition~\ref{def:east}.  The maps
\(\mathrm{West}_3\), \(\mathrm{West}_5\), and \(\mathrm{West}_7\) are defined
by the same reversal conjugation as Definition~\ref{def:west}.  A failed local
attempt only directs the staged algorithm to continue to the next case, when
such a next case exists.

Write
\[
  \omega_n=(\underbrace{0,\ldots,0}_{n-1\text{ entries}},1)
\]
for the length-\(n\) Dyck sequence with \(n-1\) initial zeros followed by a
single \(1\).  Thus the exceptional full skeleton of
Definition~\ref{def:special-dyck-skeleton} is
\[
  \epsilon_n=(0,0,1,\underbrace{0,\ldots,0}_{n-4\text{ entries}},1)
  \qquad (n\ge 4).
\]

\begin{remark}
The exclusion of \(\epsilon_n\) from the special skeletons is a bookkeeping
choice forced by the exceptional input \(\omega_n\), rather than by any
intrinsic defect of \(\epsilon_n\).  Among the paths of deficit at most
\(2n-8\), the path \(\omega_n\) has no natural destination under the ordinary
skeleton case below.  We must nevertheless assign it an image, and
\(\epsilon_n\) is the most compatible full skeleton: it has the same deficit
and area one larger.  Thus the special case sends \(\omega_n\) to
\(\epsilon_n\), and \(\epsilon_n\) is withheld from the set of special
skeletons only so that this exceptional two-element string is not counted
twice.
\end{remark}

\begin{definition}[The \(\mathrm{up}\) map]
\label{def:up}
Let \(x=(x_0,\ldots,x_{n-1})\) be a Dyck sequence of length \(n\ge 4\).  If
any extraction, local East move, or injection required below is not defined,
then \(\mathrm{up}(x)\) is undefined.  Otherwise \(\mathrm{up}(x)\) is
determined by the following cases, checked in order.

\begin{enumerate}[label=\textup{(\arabic*)}]
  \item \textbf{Special case.}  If \(x=\omega_n\), set
  \[
    \mathrm{up}(x)=\epsilon_n.
  \]

  \item \textbf{Skeleton case.}  If \(x\) is a full Dyck skeleton and
  \(x\ne\omega_n\), remove the final entry \(x_{n-1}\), set
  \(v=x_{n-1}+1\), and inject \(v\) into
  \((x_0,\ldots,x_{n-2})\) immediately after the first occurrence of
  \(x_{n-1}\).

  \item \textbf{Non-skeleton, first local stage.}  Otherwise, extract the
  leftmost-extractable element \(e_1\) from \(x\), and let \(C_1\) be the
  remaining Dyck sequence.  Form
  \[
    \sigma_1=C_1:(e_1-1).
  \]
  Apply \(\mathrm{East}_3\) to the final three entries of \(\sigma_1\).  If
  this local move succeeds, remove the final two entries of \(\sigma_1\), add
  \(1\) to each of the removed entries, and inject the resulting two entries
  from right to left into the remaining prefix.

  \item \textbf{Non-skeleton, second local stage.}  If the preceding stage
  does not terminate, extract the leftmost-extractable element \(e_2\) from
  \(C_1\), and let \(C_2\) be the remainder.  Form
  \[
    \sigma_2=C_2:(e_1-1):(e_2-1).
  \]
  Apply \(\mathrm{East}_5\) to the final five entries of \(\sigma_2\).  If the
  local move succeeds, replace those five entries by the output window, remove
  the final three entries of the resulting sequence, add \(1\) to each removed
  entry, and inject those three entries from right to left into the remaining
  prefix.

  \item \textbf{Non-skeleton, third local stage.}  If the preceding stage does
  not terminate, extract the leftmost-extractable element \(e_3\) from
  \(C_2\), and let \(C_3\) be the remainder.  Form
  \[
    \sigma_3=C_3:(e_1-1):(e_2-1):(e_3-1).
  \]
  Apply \(\mathrm{East}_7\) to the final seven entries of \(\sigma_3\).  If
  this local move succeeds, replace those seven entries by the output window,
  remove the final four entries of the resulting sequence, add \(1\) to each
  removed entry, and inject those four entries from right to left into the
  remaining prefix.
\end{enumerate}
\end{definition}

\begin{remark}[Statistics under \(\mathrm{up}\)]
\label{rem:up-statistics}
Whenever \(\mathrm{up}(x)\) is defined, it raises area by one and lowers
\(\dinv\) by one:
\[
  \area(\mathrm{up}(x))=\area(x)+1,
  \qquad
  \dinv(\mathrm{up}(x))=\dinv(x)-1.
\]
Consequently \(\defc(\mathrm{up}(x))=\defc(x)\).
\end{remark}

\begin{definition}[The \(\mathrm{down}\) map]
\label{def:down}
Let \(y=(y_0,\ldots,y_{n-1})\) be a Dyck sequence of length \(n\ge 4\).  If
any extraction, local West move, or injection required below is not defined,
then \(\mathrm{down}(y)\) is undefined.  Otherwise \(\mathrm{down}(y)\) is
determined by the following cases, checked in order.

\begin{enumerate}[label=\textup{(\arabic*)}]
  \item \textbf{Special case.}  If \(y=\epsilon_n\), set
  \[
    \mathrm{down}(y)=\omega_n.
  \]

  \item \textbf{Skeleton case.}  Otherwise, extract the leftmost-extractable
  element \(f_1\) from \(y\), and let \(D_1\) be the remainder.  Put
  \(z=f_1-1\).  If the candidate word \(D_1:z\) has no index satisfying the
  two local eligibility conditions of Definition~\ref{def:extractable-element},
  then the skeleton branch returns \(D_1:z\).  The assertion that this branch
  does return a Dyck sequence in the bounded range is part of
  Lemma~\ref{lem:skeleton-wd} below.

  \item \textbf{First local West stage.}  If the skeleton branch is not
  triggered, extract the leftmost-extractable element \(f_2\) from \(D_1\), and
  let \(D_2\) be the remainder.  Form
  \[
    \tau_1=D_2:(f_1-1):(f_2-1).
  \]
  Apply \(\mathrm{West}_3\) to the final three entries of \(\tau_1\).  If this
  local move succeeds, remove the final entry of \(\tau_1\), add \(1\) to it,
  and inject the resulting entry into the remaining prefix.

  \item \textbf{Second local West stage.}  If the preceding stage does not
  terminate, extract the leftmost-extractable element \(f_3\) from \(D_2\), and
  let \(D_3\) be the remainder.  Form
  \[
    \tau_2=D_3:(f_1-1):(f_2-1):(f_3-1).
  \]
  Apply \(\mathrm{West}_5\) to the final five entries of \(\tau_2\).  If the
  local move succeeds, replace those five entries by the output window, remove
  the final two entries of the resulting sequence, add \(1\) to each removed
  entry, and inject those two entries from right to left into the remaining
  prefix.

  \item \textbf{Third local West stage.}  If the preceding stage does not
  terminate, extract the leftmost-extractable element \(f_4\) from \(D_3\), and
  let \(D_4\) be the remainder.  Form
  \[
    \tau_3=D_4:(f_1-1):(f_2-1):(f_3-1):(f_4-1).
  \]
  Apply \(\mathrm{West}_7\) to the final seven entries of \(\tau_3\).  If this
  local move succeeds, replace those seven entries by the output window, remove
  the final three entries of the resulting sequence, add \(1\) to each removed
  entry, and inject those three entries from right to left into the remaining
  prefix.
\end{enumerate}
\end{definition}

\begin{remark}[Statistics under \(\mathrm{down}\)]
\label{rem:down-statistics}
Whenever \(\mathrm{down}(y)\) is defined, it lowers area by one and raises
\(\dinv\) by one:
\[
  \area(\mathrm{down}(y))=\area(y)-1,
  \qquad
  \dinv(\mathrm{down}(y))=\dinv(y)+1.
\]
Consequently \(\defc(\mathrm{down}(y))=\defc(y)\).
\end{remark}

\begin{lemma}[Statistics for \(\mathrm{up}\) and \(\mathrm{down}\)]
\label{lem:up-down-statistics}
The statistic changes in Remarks~\ref{rem:up-statistics}
and~\ref{rem:down-statistics} hold whenever the corresponding map is defined.
\end{lemma}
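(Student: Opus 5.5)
Every branch of \(\mathrm{up}\) and \(\mathrm{down}\) can be read as a composition of four kinds of moves. The plan is to track, for each kind, how it changes \(\di\), \(\operatorname{nv}\), area, and length. The kinds are:
\begin{itemize}
\item[(i)] extraction of a leftmost-extractable entry, which moves it to the right end;
\item[(ii)] lowering by \(1\) of the entries parked at the right end;
\item[(iii)] a local East or West move on a final window;
\item[(iv)] raising by \(1\) of a suffix, followed by injection into the prefix.
\end{itemize}
Rather than computing statistics of intermediate words that may not be Dyck, I will work with the arbitrary-word quantities \(\area\), \(\di\), \(\operatorname{nv}\) defined on all nonnegative words. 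Translated segments can go negative, but only \(\di\) of the window matters there, and that is translation-invariant.

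\emph{Step 1 (special and skeleton cases).} The pair \(\omega_n\leftrightarrow\epsilon_n\) is checked directly. We have \(\area(\omega_n)=1\) and \(\area(\epsilon_n)=2\). For \(\dinv\), \(\omega_n\) has \(\binom{n-1}{2}\) equal pairs and \(n-1\) pairs \((1,0)\)-free counts, which gives \(\binom{n-1}{2}\) plus zero \(\di\)-pairs. For \(\epsilon_n\) one counts equal pairs among \(n-2\) zeros and two ones, plus the \(\di\)-pairs \((1,0)\) coming from the zeros after the first \(1\). I expect these to differ by exactly one.

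In the skeleton case of \(\mathrm{up}\), the final \(x_{n-1}\) is replaced by \(v=x_{n-1}+1\) injected right after the first \(x_{n-1}\). The area rises by \(1\). For dinv, the moved element now sits left of every later entry and right of the earlier ones; since it is placed right after the first occurrence of \(x_{n-1}\), the earlier entries contain no copy of \(v\) or \(v+1\). I would count its pairs before and after: before, it contributes pairs with earlier entries equal to \(x_{n-1}\) or \(x_{n-1}+1\); after, it contributes pairs with later entries equal to \(v\) or \(v-1\). The difference is \(-1\), because the first occurrence of \(x_{n-1}\) is lost from the equal-pair count. This is the same counting as in Proposition~\ref{prop:phi1-statistics}. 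The skeleton branch of \(\mathrm{down}\) is the inverse computation (extract, then lower).

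\emph{Step 2 (staged cases).} By the proof of Proposition~\ref{prop:phi1-statistics}, each extraction raises \(\di\) by \(1\) and fixes \(\operatorname{nv}\) and area. Lowering an entry at the right end by \(1\) converts its \(\di\)-relations exactly as in Proposition~\ref{prop:phi2-bijection}, so \(\dinv\) of the whole word is unchanged and area drops by \(1\). Thus after \(r\) extract-and-lower steps the word \(\sigma_r\) satisfies
\[\dinv(\sigma_r)=\dinv(x)+r,\qquad \area(\sigma_r)=\area(x)-r.\]
By Proposition~\ref{prop:east-well-defined}, the East move preserves \(\di\) and the multiset of the window. Since it only permutes the window, it also preserves \(\operatorname{nv}\), area, and the cross terms with the prefix. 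Raising a suffix of \(s\) entries by \(1\) reverses the lowering computation: it adds \(s\) to the area and keeps \(\dinv\). Each injection is the inverse of an extraction, by Proposition~\ref{prop:phi1-inverse}'s local fact, and so lowers \(\di\) by \(1\).

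In the up stages, \(r\) extractions are followed by \(s=r+1\) injections:
\[(r,s)=(1,2),\ (2,3),\ (3,4).\]
Hence the net change is \(\area\,+(s-r)=+1\) and \(\dinv\,+(r-s)=-1\). In the down stages the counts are
\[(r,s)=(2,1),\ (3,2),\ (4,3),\]
which gives \(-1\) and \(+1\).

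\emph{Main obstacle.} The delicate point is justifying that the raise-then-inject stage exactly inverts extract-then-lower in its effect on \(\dinv\). The injected value \(e\) goes right after the first \(e-1\), and we need that value \(e\) does not already occur before that position. This holds because the prefix is Dyck (Lemma~\ref{lem:extractable-structure}), so the injected entry gains exactly one \(\di\)-deficiency relative to its former position at the end. Note also that the raised entries may come from the window, not from the original extracted values. For this reason the argument must be per-entry generic: moving an entry \(w\) from the right end to right after the first \(w-1\) in a Dyck prefix loses exactly one \(\dinv\)-pair, namely the \(\di\)-pair with that \(w-1\). I will prove this single lemma first and apply it uniformly.
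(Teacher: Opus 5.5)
Your global bookkeeping matches the paper's. You split each branch into extraction steps, one window move that preserves the window multiset and its internal $\di$, and raise-and-inject steps, then count $k$ against $k+1$. Your treatment of the special pair is correct (indeed $\dinv(\omega_n)=\binom{n-1}{2}$ and $\dinv(\epsilon_n)=\binom{n-1}{2}-1$), and so is your treatment of the $\mathrm{up}$ skeleton branch.

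The gap is in Step~2. There you cut each extraction--decrement--append step at the wrong place, and the pieces do not have the changes you assign them.
\begin{itemize}
\item Moving the extracted $e$ to the right end without changing its value does not raise $\di$ by $1$. Proposition~\ref{prop:phi1-statistics} moves it \emph{leftward}, past the part $A$ of the remainder before it. Moving it rightward changes $\dinv$ by the number of crossed entries equal to $e+1$ minus the number equal to $e-1$.
\item Lowering an entry in place at the right end does not preserve $\dinv$. Proposition~\ref{prop:phi2-bijection} moves a block from the front to the end \emph{while} lowering it.
\end{itemize}
Concretely, take $x=(0,1,0,0)$, with length $4$, deficit $0$ and area $1$. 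Moving the $1$ to the end gives $(0,0,0,1)$ and takes $\dinv$ from $5$ to $3$. Lowering it then gives $(0,0,0,0)$ with $\dinv=6$. The increments are $-2,+3$, not $+1,0$. On the way back, raising the last two entries of $\sigma_1=(0,0,0,0)$ in place gives $(0,0,1,1)$ with $\dinv=2$, and the two injections give $3$ and then $4$. The increments are $-4,+1,+1$, not $0,-1,-1$.

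The errors cancel in the totals, which is why your formula for $\sigma_r$ and the final net changes come out right. But the cited facts do not prove them. Your per-entry lemma in the last paragraph is also false as literally stated: carrying an already raised $w$ from the end to just after the first $w-1$ changes $\dinv$ by $\#_R(w-1)-\#_R(w+1)$, where $R$ is the stretch crossed.

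The repair is to treat the move and the $\pm1$ value change as one step, exactly as you already did for the skeleton branch.
\begin{itemize}
\item Extraction: consider $A{:}e{:}B{:}Q\mapsto A{:}B{:}Q{:}(e-1)$, where $A{:}e{:}B$ is the Dyck remainder and $Q$ holds the earlier lowered entries. Before the move, the moved entry's partners are the entries of $A$ equal to $e$ or $e+1$ and those of $B{:}Q$ equal to $e$ or $e-1$. After the move, they are all entries equal to $e-1$ or $e$. The net change is $\#_A(e-1)-\#_A(e+1)=1-0$, by eligibility and the leftmostness in Lemma~\ref{lem:extractable-structure}.
\item Injection: consider $A{:}R{:}u\mapsto A{:}(u+1){:}R$, where $A$ ends at the first $u$ of the Dyck prefix. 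The change is $\#_A(u+2)-\#_A(u)=0-1$. This holds whatever the origin of $u$ and whatever not-yet-injected entries lie in $R$.
\end{itemize}
The paper gets the first fact by routing through the front. It extracts leftward as in Proposition~\ref{prop:phi1-statistics}, then carries the entry from the front to the end while lowering it, as in Proposition~\ref{prop:phi2-bijection}. The second fact is its inverse. With these two composite facts replacing your four separate claims, your summation goes through unchanged.
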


\begin{proof}
We use three elementary observations about area and \(\dinv\).  First consider one
extraction--decrement--append step.  Suppose an eligible entry \(e\) is
extracted from a Dyck word.  Move this occurrence of \(e\) left to the front of
the word, and then move it from the front to the right end while lowering it to
\(e-1\).  In the first move, Definition~\ref{def:extractable-element} says
that the extracted \(e\) has exactly one occurrence of \(e-1\) to its left.
Crossing that occurrence changes the moved entry's \(\di\)-contribution by
one; crossings with all other entries leave the moved entry's
\(\dinv\)-contribution unchanged.  In the second move, a leftmost
\(e\) contributes with precisely the entries equal to \(e\) or \(e-1\), and a
rightmost \(e-1\) contributes with precisely the same entries.  Thus one
extraction--decrement--append step changes \((\area,\dinv)\) by
\((-1,+1)\).

The inverse operation gives the opposite change.  Removing a final appended
entry \(e-1\), raising it to \(e\), and injecting it immediately after the
first occurrence of \(e-1\) changes \((\area,\dinv)\) by \((+1,-1)\).

Finally, a local East or West replacement preserves the multiset of the local
window and the \(\di\)-contribution inside that window.  The internal
\(\operatorname{nv}\)-contribution is also determined by the multiset of window
values.  The entries outside the window are unchanged.  Pairs with both entries
outside the window are therefore unchanged, and a pair with one entry outside
and one entry in the window depends only on the outside entry and the multiset
of window values, since the outside entry lies entirely to one side of the
final window.  Hence the local East or West step preserves both area and
\(\dinv\).

The special case \(\omega_n\mapsto\epsilon_n\) is checked directly from the two
displayed words, and the reverse special case is its inverse.  The skeleton
branch of \(\mathrm{up}\) consists of one removal--increment--injection step, so
it changes \((\area,\dinv)\) by \((+1,-1)\).  The skeleton branch of
\(\mathrm{down}\) consists of one extraction--decrement--append step, so it
changes \((\area,\dinv)\) by \((-1,+1)\).

In a non-skeleton \(\mathrm{up}\) branch, the \(2k+1\)-window stage extracts
\(k\) entries, applies one local East replacement, and then removes, raises,
and injects \(k+1\) entries.  The net change is
\[
  k(-1,+1)+(0,0)+(k+1)(+1,-1)=(+1,-1).
\]
In a non-skeleton \(\mathrm{down}\) branch, the matching West stage extracts
\(k+1\) entries, applies one local West replacement, and then removes, raises,
and injects \(k\) entries.  The net change is
\[
  (k+1)(-1,+1)+(0,0)+k(+1,-1)=(-1,+1).
\]
Since \(\defc(D)=\binom n2-\area(D)-\dinv(D)\) for length-\(n\) Dyck
sequences, these changes preserve \(\defc\).
\end{proof}

\begin{lemma}[No premature lower local branch]
\label{lem:no-premature-local-branch}
In the staged non-skeleton branches of Definition~\ref{def:up}, suppose the
first terminating local branch is the \(2k+1\)-window East branch, with
\(k\in\{1,2,3\}\).  After that East move, no lower West branch
\(\mathrm{West}_{2k'+1}\) with \(k'<k\) can terminate on the centered proper
subwindow tested by Definition~\ref{def:down}.  The same statement holds with
East and West interchanged.
\end{lemma}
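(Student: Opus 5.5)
The plan is to make precise what the ``centered proper subwindow'' is, and then to check the finitely many pairs \((k,k')\) with \(k'<k\) directly.

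Suppose the first terminating branch of \(\mathrm{up}\) is the \((2k+1)\)-window East branch. It produces a window
\[
  y=(y_{-k},\ldots,y_k)=\mathrm{East}_{2k+1}(x),\qquad x=(x_{-k},\ldots,x_k).
\]
The test in Definition~\ref{def:down} for \(\mathrm{West}_{2k'+1}\) looks at the centered subwindow \((y_{-k'},\ldots,y_{k'})\). The point to exploit is that \(\mathrm{West}_{2k'+1}\) terminating on this subwindow is, after reversal by \(\rho\), the statement that \(\mathrm{East}_{2k'+1}\) would terminate on the corresponding subwindow of \(\rho(y)\). By Theorem~\ref{thm:east-west-inverse}, the East case applied to \(\rho(y)\) is the mirror of the case used on \(x\). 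So the lemma reduces to a single statement: \emph{if a lower East stage fails on \(x\), then the corresponding lower East stage also fails on the centered subwindow of \(\rho(y)\)}.

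The cases split as follows.

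For \(k=2\), \(k'=1\), the output falls in Case~2a or Case~2b. In Case~2a the centered triple of \(\rho(y)\) is \((b,a,x_1)\), where \(\{a,b\}=\{x_{-1},x_0\}\). We have \(a>x_1+1\) because both values exceed \(x_1+1\). Hence the \(\mathrm{East}_3\) test \(a\le x_1+1\) fails. In Case~2b the centered triple is \((x_{-1},x_0,x_1)\) with \(x_0=x_1+2\), and again the test fails. These are exactly the inequalities already recorded in the proof of Theorem~\ref{thm:east-west-inverse}.

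For \(k=3\), \(k'=1\): in Case~3 the min--max hypothesis gives \(a>\max(u,v)+1\). In Case~4 one reads the normalized middle entries of the table and checks that the reversed center violates \(u_0\le u_1+1\); for instance \([1,2,4,3,3]\) has center \(4\) and right neighbour \(2\).

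For \(k=3\), \(k'=2\), I will reuse the case-failure analysis from the proof of Theorem~\ref{thm:east-west-inverse}. There it is shown that \(\rho(y)\) is handled by the same case as \(x\). In particular, if \(x\) reached Case~3 or Case~4, then on \(\rho(y)\) Cases~2a and~2b fail. This is precisely the non-termination of \(\mathrm{East}_5\), and hence of \(\mathrm{West}_5\) on the subwindow. For the Case~4 rows, the failure is read off the paired rows in the pairing table, since each paired input row was established to reach Case~4.

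Finally, the statement with East and West interchanged follows by conjugating everything by \(\rho\), using \(\mathrm{West}=\rho\,\mathrm{East}\,\rho\) and the inverse identity \(\mathrm{East}(\mathrm{West}(y))=y\).

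The main obstacle is the Case~4 rows with parameter \(o\le 0\). There the parameter and the untranslated outer entries \(x_{\pm3}\) enter the Case~2a test through \(\mathrm{bk}\) on a pair. For these rows I will check explicitly that the reversed center pair still triggers neither Case~2a nor Case~2b, using the same far-apart exclusions that appear in Proposition~\ref{prop:east-case4-exhaustive}.
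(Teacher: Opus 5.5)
Your proposal is correct and takes essentially the paper's route. You use $\mathrm{West}=\rho\,\mathrm{East}\,\rho$ to reduce the claim to showing that the lower East tests fail on the centered subwindows of $\rho(y)$, read those failures off the case analysis of Theorem~\ref{thm:east-west-inverse} (using the reversal-pairing table for the Case~4 rows), and conjugate by $\rho$ for the interchanged statement. One small correction to your anticipated obstacle: the Case~2a and Case~2b tests involve only the middle entries, not the outer entries $x_{\pm3}$, so the rows with parameter $o\le 0$ need no far-apart argument. For instance, the reversed output $[3,3,4,o,2]$ fails Case~2a because $\mathrm{bk}(3,4)=(3,4)$ and $4>2+1$, and it fails Case~2b because $3>o+1$.
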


\begin{proof}
The assertion is a finite local consequence of the case analysis in
Theorem~\ref{thm:east-west-inverse}.  For a full East image
\[
  (c_1,\ldots,c_k,d_1,\ldots,d_{k+1}),
\]
the only lower West windows tested by the staged algorithm are the centered
proper subwindows
\[
  (c_{k-k'+1},\ldots,c_k,d_1,\ldots,d_{k'+1}),
  \qquad k'<k.
\]
These are exactly the subwindows obtained by deleting the same number of outer
entries from the two sides of the full matching window.

In East Cases~1, 2a, 2b, and 3, the proof of
Theorem~\ref{thm:east-west-inverse} identifies the matching full West case only
after checking that the earlier West tests fail on these centered windows: the
Case~1 inequality fails, and then the Case~2a and Case~2b tests fail before the
full matching branch is reached.  In East Case~4, the same exclusion is the
finite inspection of the displayed reversal-pairing table in that theorem: for
each normalized Case~4 row, the reversed output is paired with another Case~4
input row, so its centered three- and five-entry proper subwindows do not pass
the lower West tests that would terminate the staged algorithm earlier.
Applying the reversal conjugacy
\(\mathrm{West}=\rho\circ\mathrm{East}\circ\rho\) gives the West-to-East
statement.
\end{proof}

We next record the local well-definedness inputs for the bounded deficit range.
For the four local lemmas
Lemmas~\ref{lem:skeleton-wd}, \ref{lem:extractions-wd},
\ref{lem:east7-wd}, and~\ref{lem:positions-wd}, fix
\[
  n\ge 4,
  \qquad
  d\le 2n-8,
  \qquad
  M=\binom n2,
  \qquad
  \ell=\left\lfloor\frac{M-d}{2}\right\rfloor.
\]
All Dyck sequences in these four lemmas have length \(n\) and deficit exactly
\(d\).  The proofs are deferred to Appendix~\ref{app:local-proofs}.  The
appendix proves the four local lemmas uniformly outside the finite residual
ranges, with the remaining cases \(4\le n\le 7\), \(4\le n\le 8\), \(n\le 13\),
and \(n\le 16\) verified by the finite case analyses stated there.

\begin{lemma}[Skeleton cases succeed]
\label{lem:skeleton-wd}
Fix \(n\ge4\), \(d\le2n-8\), \(M=\binom n2\), and
\(\ell=\lfloor(M-d)/2\rfloor\).  The following hold.
\begin{enumerate}[label=\textup{(\roman*)}]
  \item If \(x\) is a full Dyck skeleton with \(\area(x)\le \ell-1\) and
  \(x\ne\omega_n\), then the skeleton case of \(\mathrm{up}\) produces a
  well-defined Dyck sequence.

  \item Let \(y\) be a Dyck sequence with \(\area(y)\le \ell\).  If the
  skeleton branch of \(\mathrm{down}\) is triggered after the first extraction,
  then that branch produces a well-defined Dyck sequence; in particular, the
  returned sequence is a full Dyck skeleton.
\end{enumerate}
\end{lemma}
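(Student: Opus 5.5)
The plan is to reduce both parts to one question: whether a single insertion or append step is defined. The steps that are automatically fine are handled by local arguments already in the paper. The one remaining degenerate configuration is then excluded using the deficit-pair count of Proposition~\ref{prop:deficit-pair-count} together with the area bound. Any residual small lengths are handled by the finite check indicated in Appendix~\ref{app:local-proofs}.

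For part (i), let \(c=x_{n-1}\) and \(x'=(x_0,\ldots,x_{n-2})\). If \(c\) occurs in \(x'\), injecting \(c+1\) right after the first occurrence of \(c\) gives a Dyck sequence. The reason is the same as in the local fact opening the proof of Proposition~\ref{prop:phi1-inverse}: the new left adjacency is \((c,c+1\)), and the old successor of that first \(c\) was at most \(c+1\). Since \(x_0=0\), the case \(c=0\) is automatic. So the only possible failure is \(c=\max(x')+1\), a final new maximum.

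In that case the final index has no successor, so it is eligible unless \(c-1\) occurs at least twice in \(x'\). Since \(x\) is a full skeleton, we are in the situation where \(c\) is a new maximum and \(c-1\) is repeated. I will show this forces \(\defc(x)+\area(x)\) to be too large. The argument uses Proposition~\ref{prop:deficit-pair-count}: every non-first occurrence of a value \(v<c\) forms a type~(B) pair with the final entry. In addition, the no-eligible-index condition forces, for each value whose first occurrence has a unique predecessor, a rising successor, and this produces repeated values and hence further (B) pairs or extra area. The target inequality is \(\area(x)\ge\ell\) or \(\defc(x)>2n-8\), which contradicts the hypotheses \(\area(x)\le\ell-1\) and \(d\le 2n-8\). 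I expect the count to work uniformly for \(n\) beyond a small threshold and to need the finite enumeration below it. The exceptional \(\omega_n\) is excluded explicitly because it is exactly the borderline configuration.

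For part (ii), \(D_1\) is Dyck by Lemma~\ref{lem:extractable-structure}, and \(z=f_1-1\ge0\) because extractable values are positive. So \(D_1:z\) is Dyck exactly when \(z\le D_1[-1]+1\), that is, \(D_1[-1]\ge f_1-2\). Once \(D_1:z\) is Dyck, the branch hypothesis that no index is eligible says precisely that it is a full skeleton, so the "in particular" clause is immediate.

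To rule out \(D_1[-1]\le f_1-3\), I use that \(\defc\) and area are preserved in the sense of Lemma~\ref{lem:up-down-statistics} under one extraction–decrement–append step. I then apply Lemma~\ref{lem:extended-deficit-pairs} to the possibly non-Dyck word \(D_1:z\). The drop at the end creates type~(A) pairs between \(z\) and the tail of \(D_1\). The tail sits well above \(z+1\), and because \(D_1\) is affine it must descend gradually, so this tail is long. Also, a Dyck prefix reaching \(f_1-1\) contains the values \(0,\ldots,f_1-1\), so the area is at least of order \(f_1^2/2\). Either the (A) count exceeds \(2n-8\), or the area forced by reaching height \(f_1\) and staying high exceeds \(\ell\). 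The missing-value correction term vanishes here because \(D_1\) is Dyck.

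The main obstacle is making the counting in part (i) uniform: turning "full skeleton ending in a new maximum" into at least about \(2n-7\) deficit pairs, or area at least \(\ell\), for all admissible shapes. I would first try to show that such a skeleton contains a repeated plateau of length linear in \(n\) below the final maximum, each entry of which pairs with the final entry, and leave the small-\(n\) exceptions to the appendix computation.
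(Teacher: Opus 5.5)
\textbf{Part (i).} Your reduction is correct: the only failure is when \(c=x_{n-1}\) is a unique final maximum with \(x_{n-2}=c-1\), and \(c\ge 2\) because \(x\ne\omega_n\). But you never actually exclude this configuration, and the route you sketch cannot do it. The pairs involving the final entry are exactly the type~(B) pairs with the non-initial entries, so there are exactly \(n-c-1\le n-3\) of them, however long a plateau you find. That is about half of the \(2n-7\) you need.

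The missing idea is to apply full non-extractability to maximal rising chains. This gives three facts:
\begin{enumerate}
\item \(x_1=0\), since otherwise the chain starting at position \(1\) ends at an eligible index;
\item the value \(1\) occurs at least twice;
\item every initial occurrence of a value \(\ge 2\) lies to the right of the second \(1\), since otherwise the chain \(1,2,\ldots,r\) from the first \(1\) ends at an eligible index.
\end{enumerate}
These give two disjoint families of \(n-3\) pairs each. In the first family, each entry other than the final one, the first \(0\), and the first \(1\) pairs with the final entry if it is non-initial, and with the second \(1\) if it is initial. In the second family, each position \(2\le j\le n-2\) pairs with the second zero \(x_1\) if \(x_j>0\), and with \(x_{n-2}=c-1\ge 1\) if \(x_j=0\). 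Hence \(\defc(x)\ge 2n-6\). The area hypothesis is never needed.

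\textbf{Part (ii).} Here the approach fails outright. First, the geometry is reversed. The condition \(z>D_1[-1]+1\) is a rise of at least two at the end, so the tail of \(D_1\) ends at least two below \(z\), not above \(z+1\). Moreover, the affine condition bounds ascents, not descents, so nothing forces that tail to be long.

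More seriously, you use the branch hypothesis (no eligible index in \(D_1:z\)) only for the ``in particular'' clause. Without it, the Dyck property genuinely fails within the stated bounds. Take \(n\ge 8\) and \(y=(0,1,2,3,0,\ldots,0)\). Then \(\defc(y)=2n-8\), \(\area(y)=6\le\ell\), and \(f_1=3\). The candidate \(D_1:z=(0,1,2,0,\ldots,0,2)\) is not Dyck, yet it has only \(n-4\) type~(A) pairs and tiny area, so your claimed dichotomy fails. The branch is not triggered here only because \(D_1:z\) has eligible indices.

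So the skeleton test must do the work. Run the same chain argument on \(S=D_1:z\). The chains end inside \(D_1\), because \(D_1\) still contains the predecessor copy of \(z\) while \(D_1[-1]\le z-2\). This gives \(S_1=0\) and a second \(1\) to the left of every non-final entry greater than \(1\). Now count \(2n-6\) pairs in two families:
\begin{enumerate}
\item each position \(2\le j\le n-2\) pairs with \(S_1\) if \(S_j>0\), and with the first copy of \(z\) in \(D_1\) if \(S_j=0\) (type~(B) or type~(A), depending on the side);
\item each entry other than the first \(0\), the first \(1\), and the final \(z\) pairs with the final \(z\) if its value is at most \(1\), and with the second \(1\) otherwise.
\end{enumerate}
The suffix has length one and \(z\) occurs in \(D_1\), so Lemma~\ref{lem:suffix-corrected-deficit-lower-bound} gives \(\defc(S)\ge 2n-6\). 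The extraction--decrement--append step preserves the deficit, so \(\defc(S)=\defc(y)\le 2n-8\), a contradiction. Note that area drops by one under this step, so it is not preserved as you state.
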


\begin{lemma}[Extraction chains never fail]
\label{lem:extractions-wd}
Fix \(n\ge4\), \(d\le2n-8\), \(M=\binom n2\), and
\(\ell=\lfloor(M-d)/2\rfloor\).  The following hold.
\begin{enumerate}[label=\textup{(\roman*)}]
  \item If \(\area(x)\le \ell-1\), then every extraction step called during the
  computation of \(\mathrm{up}(x)\) succeeds.  The element extracted at each
  such step is the element selected by the leftmost-extractable convention.

  \item If \(\area(y)\le \ell\), and \(y\) is neither \(\epsilon_n\) nor a
  special Dyck skeleton, then every extraction step called during the
  computation of \(\mathrm{down}(y)\) succeeds.  The element extracted at each
  such step is the element selected by the leftmost-extractable convention.
\end{enumerate}
\end{lemma}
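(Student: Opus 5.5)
The plan is to reduce both parts to one claim: a Dyck sequence of length \(n\), deficit \(d\le 2n-8\) and small area has many extractable elements, enough for up to three (for \(\mathrm{up}\)) or four (for \(\mathrm{down}\)) successive leftmost extractions. The selection clause needs no separate argument. Each extraction in Definitions~\ref{def:up} and~\ref{def:down} is by definition the leftmost-extractable entry of the current remainder, and Lemma~\ref{lem:extractable-structure} shows each remainder is again a Dyck sequence. So once an eligible index exists at every stage, the extracted element is the one selected by Definition~\ref{def:extractable-element}.

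The core estimate comes from Proposition~\ref{prop:deficit-pair-count}. Area at most \(\ell\approx(M-d)/2\) together with \(\area+\dinv=M-d\) forces \(\dinv\ge \area\). The pair count then bounds the number of type (A) and (B) pairs by \(d\le 2n-8\). Next, consider a Dyck sequence with no eligible index at all, or with fewer than \(r\) extractable elements along the extraction chain, where \(r\le 4\). I expect every maximal rising run to end at an entry \(c\) followed by a drop, and this structure forces either a repeated value \(c-1\) to the left (producing type (B) pairs with each later larger entry) or a large descent (producing type (A) pairs). Summing these, I would aim to show that a full skeleton, or a sequence whose extraction chain stops early, has either deficit exceeding \(2n-8\) or area larger than \(\ell\). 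The lower bound is of order \(2n\), minus a constant for each extraction already performed.

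Each extraction lowers area by one and leaves deficit unchanged (Lemma~\ref{lem:up-down-statistics} and Proposition~\ref{prop:phi1-statistics}). Hence the remainders \(C_1,C_2\) (respectively \(D_1,D_2,D_3\)) stay in a slightly enlarged window of area and deficit, and the same estimate applies to them, up to an error of at most four in the area slack.

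For part (i), non-skeleton \(x\) has a first extractable element by definition. I would then show that if \(C_1\) or \(C_2\) had no extractable element, \(C_1\) or \(C_2\) would be a full skeleton of length \(n-1\) or \(n-2\) with the same deficit \(d\). The type (A)/(B) pair count for that skeleton must then be at least roughly \(2(n-1)-c\), which, after accounting for the pairs lost when the removed entries are deleted, exceeds \(2n-8\) once \(n\) is large. For part (ii), the excluded cases \(y=\epsilon_n\) and \(y\) special handle exactly the situation where \(y\) itself has no extractable element. The analogous bound gives the remaining three extractions.

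The main obstacle is making the deficit lower bound for skeletons sharp enough. The threshold \(2n-8\) is tight (witness \(\omega_n\) and \(\epsilon_n\)), so a crude count fails near the boundary. I would classify the small-deficit full skeletons explicitly, showing they are \(\omega_n\), \(\epsilon_n\) and a few families of bounded shape. Then I would check their extraction behaviour directly, and finish the small cases \(n\le 16\) by the computer verification the paper defers to Appendix~\ref{app:local-proofs}.
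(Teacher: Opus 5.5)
Your handling of the selection clause and of the first extraction is fine: a non-skeleton input has an extractable entry, and $y\ne\epsilon_n$ with $y$ not special covers $\mathrm{down}$. The gap is the core claim you reduce to, which is false. Take $x=(0,1,0,\ldots,0)$ of length $n\ge4$. It has $\defc(x)=0$ and $\area(x)=1\le\ell-1$, and its leftmost extractable entry is the $1$. The remainder $C_1=(0,\ldots,0)$ is then a full skeleton. So a low-deficit, low-area Dyck sequence can have an extraction chain that stops after one step. No estimate of the form ``a chain that stops early forces $\defc>2n-8$ or $\area>\ell$'' can hold, and no pair count on a full skeleton alone can exceed $2n-8$, since all-zero skeletons have deficit $0$.

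The lemma survives this example only because $\sigma_1=(0,\ldots,0)$ passes $\mathrm{East}_3$, so the second extraction is never called. That is the missing idea. The $(k+1)$st extraction is invoked only after the preceding local test has failed: $\mathrm{East}_3$ or $\mathrm{East}_5$ for $\mathrm{up}$, and the skeleton test, $\mathrm{West}_3$ or $\mathrm{West}_5$ for $\mathrm{down}$. These failures, not the skeleton property, supply both the structure and the extra pairs.

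The paper argues as follows for $n\ge8$. If the second $\mathrm{up}$ extraction fails, $C_1$ is a full skeleton, and the failed $\mathrm{East}_3$ test gives $C_1[-1]\ge(e_1-1)+2\ge2$. Full non-extractability then forces $C_1[1]=0$, a second $1$, and every initial occurrence of a value $\ge2$ to the right of that second $1$. The count is done in the whole length-$n$ word $\sigma_1=C_1:(e_1-1)$, which has deficit exactly $d$ but is not Dyck; this is why Lemma~\ref{lem:suffix-corrected-deficit-lower-bound} is needed. It yields the type~A pair $(C_1[-1],e_1-1)$ plus two groups of $n-4$ pairs, so $2n-7$ in all. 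The other cases are analogous: the failed $\mathrm{East}_5$ test supplies three pairs, and the failed $\mathrm{West}_3$ test forces $f_1-1\ge D_2[-1]+2$.

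Two further points in your plan need repair.
\begin{itemize}
\item $C_i$ does not have deficit $d$; only $\sigma_i$ does. Every deficit pair of $C_i$ is one of $x$, so $\defc(C_i)\le d$, but working in $C_i$ discards the pairs with the lowered suffix that the count relies on.
\item Small-deficit full skeletons are not a few bounded families. By the proof of Corollary~\ref{cor:partition-formula}, binary special skeletons of deficit at most $n-3$ already correspond to all partitions of size at most $n-3$.
\end{itemize}
The area hypothesis plays no role in the paper's proof of this lemma.
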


\begin{lemma}[The seven-window branches do not fail]
\label{lem:east7-wd}
Fix \(n\ge4\), \(d\le2n-8\), \(M=\binom n2\), and
\(\ell=\lfloor(M-d)/2\rfloor\).  The following hold.
\begin{enumerate}[label=\textup{(\roman*)}]
  \item If \(\area(x)\le \ell-1\), then any seven-entry window presented to
  \(\mathrm{East}_7\) during the computation of \(\mathrm{up}(x)\) is not
  far-apart decomposable.

  \item If \(\area(y)\le \ell\), then any seven-entry window presented to
  \(\mathrm{West}_7\) during the computation of \(\mathrm{down}(y)\) is not
  far-apart decomposable.
\end{enumerate}
\end{lemma}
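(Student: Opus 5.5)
Lemma~\ref{lem:east7-wd} will follow from a deficit-counting argument: a far-apart decomposition of the presented window forces many deficit pairs in \(x\) (or \(y\)), contradicting \(d\le 2n-8\) together with the area bound. I treat part~(i); part~(ii) follows by the same argument, or by the reversal conjugacy \(\mathrm{West}=\rho\circ\mathrm{East}\circ\rho\) applied to the \(\mathrm{down}\) staging.

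\textbf{Step 1: locate the window entries in \(x\).} When the seven-window stage of \(\mathrm{up}(x)\) is reached, the window is the last four entries of \(C_3\) followed by \(e_1-1,e_2-1,e_3-1\). By Lemma~\ref{lem:extractions-wd}, the three extractions are genuine leftmost extractions. By Lemma~\ref{lem:extractable-structure}, each \(e_t\) is the leftmost occurrence of its value and sits immediately after an entry \(e_t-1\). Moreover, the failure of the three- and five-window stages gives Lemma~\ref{lem:minmax}: the last three entries of \(C_3\) are all at least \(\max(e_1-1,e_2-1)\), while \(\mathrm{East}_3\) has already failed.

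\textbf{Step 2: convert far-apart pairs into deficit pairs.} Suppose the window were far-apart decomposable, with pairs \(\{a,b\}\), \(a\ge b+2\). Each such pair corresponds to two entries of \(x\), since the extracted values sit at known positions and are shifted by one. I will show that each pair yields a type~(A) pair of Proposition~\ref{prop:deficit-pair-count} relative to many other entries. In each pair, the larger entry sits before the smaller entry (or its \(+1\) pre-image) in \(x\). The tail of \(C_3\) lies after the extraction sites, and Dyck walks descend with steps of size at least one, so the larger value also dominates a long stretch of intermediate entries.

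Concretely, I aim for a bound
\[
  \defc(x)\;\ge\; 2n-7
  \quad\text{or}\quad
  \area(x)\;\ge\;\ell,
\]
either of which gives a contradiction. The mechanism is as follows. A far-apart pair \(a\gg b\) with \(a\) to the left of a value at most \(b\) forces, through the Dyck step condition, that every entry of value at most \(b\) after \(a\) produces an (A)-pair with \(a\). The three disjoint pairs together with the singleton then give roughly three runs of such pairs. Alternatively, the area is so large that, by \(q,t\)-degree considerations (\(\area+\dinv=M-d\)), \(\area(x)>\ell-1\).

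\textbf{Step 3: split by the height of the window values.} If the values are small (maximum at most \(2\) after normalization), no three far-apart pairs can exist within a window of height at most \(2\) unless the window contains a repeated structure. A small case check handles this. If the values are large, the entries \(0,1,\dots,\) up to the maximum all occur before the window (Proposition~\ref{prop:deficit-pair-count} argument). Each value climbing to height \(h\) costs area, and each descent past it costs (A)-pairs. Balancing these gives the required bound once \(n\) is large.

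\textbf{Main obstacle.} The hard step is the uniform inequality in Step~2. Turning three abstract far-apart pairs into at least \(2n-7\) deficit pairs, counted across the whole word rather than just the window, requires a careful case analysis on where the window entries sit. For small \(n\) the linear bound is too weak. I would therefore first prove the uniform estimate for \(n\ge 17\), and then rely on the finite verification for \(n\le 16\) mentioned in the paper (an exhaustive enumeration of Dyck paths of deficit at most \(2n-8\)).
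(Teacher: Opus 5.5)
\textbf{There is a genuine gap: Step~2 carries the whole content of the lemma, and the mechanism you propose for it fails.}

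By Lemma~\ref{lem:positions-wd}, when \(\mathrm{East}_7\) is reached none of \(e_1,e_2,e_3\) comes from the last three positions. So \(x_{-2},x_{-1},x_0\) are the last three entries of \(x\), and every \(e_t\) lies to their left. By Lemma~\ref{lem:extractable-structure}, each \(e_t\) is the leftmost occurrence of its value.

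Now take a far-apart pair \(\{x_i,x_t\}\) with \(i\in\{-2,-1,0\}\), \(t\in\{1,2,3\}\) and \(x_i\ge x_t+2\). This is the typical orientation; for example \(x_0\ge x_1+2\) because \(\mathrm{East}_3\) failed. In \(x\) this pair appears as \((e_t,x_i)\), with the smaller, initial entry first. That is neither type~(A) nor type~(B). So your claim that the larger entry precedes the smaller one in \(x\) is false. A pair count in \(x\) via Proposition~\ref{prop:deficit-pair-count} does not see these pairs at all.

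The paper counts instead in the intermediate word \(Y=X:w\) reached just before the local move. This word has the same value of \(\binom n2-\area-\dinv\), and in it these pairs are genuine type~(A) pairs. Since \(Y\) is not Dyck, the missing-value correction of Lemma~\ref{lem:extended-deficit-pairs} must be subtracted. That is the role of the suffix term \(C_s^{(m)}\) in the local statistic \(\operatorname{id}(w,m)\).

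Separately, Step~1 cannot invoke Lemma~\ref{lem:minmax}. Its proof uses that the window is \emph{not} far-apart decomposable, which is exactly what you are assuming fails.

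\textbf{The missing global idea is where the roughly \(3n\) deficit pairs come from.} They do not come from entries to the right of a large window value, because there are none. They come from the non-window Dyck prefix \(X\). If \(u\) is a non-initial entry of \(X\) and \(\{a,b\}\) is a far-apart pair with \(a\ge b+2\), then \((u,a)\) is type~(B) when \(u<a\), and \((u,b)\) is type~(A) when \(u\ge a\). With \(m=\max X\), the prefix has exactly \(n-m-8\) non-initial entries. This gives \(\defc(Y)\ge\operatorname{id}+3(n-m-8)\), with \(g\) in place of \(3\) when \(g\ge 4\) window entries exceed \(m\).

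Against \(\defc\le 2n-8\), this forces \(m\gtrsim n/3\). The prefix must then climb to \(m\) and stay near \(m-1\), except at the cost of further deficit; the paper uses \(\area\gtrsim\binom{m+1}2+(m-1)(n-m-1)-q\). This pushes \(\area(Y)\) above \(\tfrac12\bigl(\binom n2-\defc(Y)\bigr)-4\), the bound coming from the area hypothesis and the three (respectively four) area-lowering moves.

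Your Step~3 ``balancing'' is this trade-off, but it has to be quantified. That requires a window lower bound such as \(\operatorname{id}_{\mathrm{mid}}\ge 10\), which the paper verifies over all \(7194\) surviving East patterns and their reversals; for (ii) the reversed patterns carry a length-four suffix and are checked separately.

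Even then, the descent in \(m\) only closes for \(n>N_1(\operatorname{id})\), which is \(33\) when \(\operatorname{id}=10\). So your cutoff \(n\ge 17\) is unsupported. The finite verification you plan to rely on is also not in the paper:
\begin{itemize}
\item its only exhaustive enumeration of Dyck sequences for this lemma is \(4\le n\le 7\), showing the seven-window stage is never reached;
\item the \(n\le 16\) check concerns the prefix forms in Lemma~\ref{lem:positions-wd};
\item the range up to \(N_1,N_2\) is handled by enumerating gap-expanded windows with structured prefixes, not all Dyck paths.
\end{itemize}
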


\begin{lemma}[Bounded extraction positions and injection nonfailure]
\label{lem:positions-wd}
Fix \(n\ge4\), \(d\le2n-8\), \(M=\binom n2\), and
\(\ell=\lfloor(M-d)/2\rfloor\).  The following hold.  Each extraction
position is measured in the current sequence at the moment that extraction is
performed.
\begin{enumerate}[label=\textup{(\roman*)}]
  \item If \(\area(x)\le \ell-1\), then the extraction positions in the
  computation of \(\mathrm{up}(x)\) satisfy the following bounds:
  \begin{itemize}
    \item if \(\mathrm{East}_3\) is used, the extraction \(e_1\) is not from
    the last two positions;
    \item if \(\mathrm{East}_5\) is used, neither extraction \(e_1,e_2\) is
    from the last three positions;
    \item if \(\mathrm{East}_7\) is used, none of the extractions
    \(e_1,e_2,e_3\) is from the last three positions.
  \end{itemize}
  Moreover, \(\mathrm{up}\) never fails because of an injection step.

  \item If \(\area(y)\le \ell\), then the extraction positions in the
  computation of \(\mathrm{down}(y)\) satisfy the following bounds:
  \begin{itemize}
    \item if \(\mathrm{West}_3\) is used, neither extraction \(f_1,f_2\) is
    from the last position;
    \item if \(\mathrm{West}_5\) is used, none of the extractions
    \(f_1,f_2,f_3\) is from the last two positions;
    \item if \(\mathrm{West}_7\) is used, none of the extractions
    \(f_1,f_2,f_3,f_4\) is from the last two positions.
  \end{itemize}
  Moreover, \(\mathrm{down}\) never fails because of an injection step.
\end{enumerate}
\end{lemma}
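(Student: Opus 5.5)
The plan is to use the deficit-pair description of Proposition~\ref{prop:deficit-pair-count} to turn the hypothesis \(\defc=d\le 2n-8\) into structural information near the right end of the sequence. The area bound \(\area\le\ell\approx\tfrac12(\binom n2-d)\) then rules out the configurations in which an extraction would come from the last two or three positions. Every extraction in Definitions~\ref{def:up} and~\ref{def:down} removes the leftmost-extractable entry. By Lemma~\ref{lem:extractable-structure}, that entry is the first occurrence of its value \(x\) and is immediately preceded by \(x-1\). So it suffices to show that, while the stage in question is still running, there is an eligible index well before the end of the current sequence.

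\textbf{Step 1 (low positions cannot be first eligible).} Suppose the leftmost eligible index lies among the last \(r\) positions, with \(r\le 3\). Then every earlier index fails eligibility, either through its predecessor count or through its successor condition. I expect this to force the prefix to be an initial run in which each first occurrence of a value is followed by a larger value, so that the prefix is essentially a long staircase or a constant block. I will count type~(A) and type~(B) pairs in such a prefix. A staircase of height \(h\) followed by returns contributes many type~(A) pairs. Repeated low values followed by larger entries contribute type~(B) pairs. The aim is a lower bound of the form \(\defc\ge 2n-7\) unless the area is large, which contradicts \(d\le 2n-8\) or \(\area\le\ell\) (respectively \(\ell-1\) for \(\mathrm{up}\)). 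The base case is the first extraction from \(x\) or \(y\). For the second and third extractions I use that each extraction--decrement--append step changes \((\area,\dinv)\) by \((-1,+1)\), so the current prefix still has deficit at most \(2n-8\) when computed as in Lemma~\ref{lem:extended-deficit-pairs}. The appended decremented entries are part of that word, so they must be tracked carefully.

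\textbf{Step 2 (injection nonfailure).} Injection of \(e\) requires \(1\le e\le\max(C)+1\) and an occurrence of \(e-1\). The raised entries \(u+1\) come from values that were either extracted entries shifted down (hence at most the old maximum) or outputs of a local East/West window. Those windows preserve the multiset (Proposition~\ref{prop:east-well-defined}), so the raised values lie in the original value range. Because the extraction positions from Step 1 are not among the final entries, the remaining prefix still contains the values \(0,\dots,\max-1\) needed below each injected value. Injecting from right to left keeps this true, just as in the local fact in the proof of Proposition~\ref{prop:phi1-inverse}. The \(\mathrm{down}\) side is identical after conjugating by reversal, as in Definition~\ref{def:west}.

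\textbf{Main obstacle.} The hard step is Step 1 for the seven-window stage: three successive extractions whose positions must all avoid the last three slots, in a sequence whose tail is modified by appended entries. The uniform counting inequality will likely be lossy for small \(n\). Following the paper's own treatment, I would prove the bound for \(n\) beyond an explicit threshold (around \(n\ge 17\)) by the pair count, and verify the residual ranges \(n\le 16\) by finite enumeration of Dyck sequences with \(\defc\le 2n-8\) and the stated area bound.
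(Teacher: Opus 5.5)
Your proposal has a genuine gap in Step~1, and it is more than an unfinished count: the inequality you aim for is false. Take \(x=(0,1,0,\ldots,0,1,2,0)\), with \(n-5\) zeros between the two \(1\)'s.

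\begin{itemize}
\item Its deficit pairs are the \(2(n-5)\) type~(B) pairs from the middle zeros to the later \(1\) and \(2\), the type~(B) pair formed by the second \(1\) and the \(2\), and the type~(A) pair formed by the \(2\) and the final \(0\). Hence \(\defc(x)=2n-8\) and \(\area(x)=4\le\ell-1\).
\item The first extraction removes the \(1\) in position \(1\). In the remainder \(C_1=(0,\ldots,0,1,2,0)\), the leftmost eligible index is the second-to-last one.
\item The intermediate word \(\sigma_1=(0,\ldots,0,1,2,0,0)\) carries exactly \(2n-8\) type~(A)/(B) pairs, not \(2n-7\).
\end{itemize}

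Nothing here contradicts the hypotheses. That extraction is simply never performed, because \(\mathrm{East}_3\) succeeds on \((2,0,0)\). Likewise \((0,1,1,0,\ldots,0,1,2)\) has deficit \(2n-8\) and small area, and would force a third extraction from the last position. That extraction never happens only because \(\mathrm{East}_5\) Case~2b fires on \((0,1,2,0,0)\).

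So the bounds depend on which stage is reached, and the proof must use that the earlier local tests failed. Pair counting plus the area bound cannot do it alone. This is how the paper treats an offending extracted value \(e=2\):
\begin{itemize}
\item Everything left of its predecessor \(1\) is zero.
\item A nonzero entry in the short tail \(P\cup Q\), where \(|P|+|Q|\le4\), yields \(3(n-7)\) pairs.
\item The all-zero tails with \(|P|+|Q|\ge2\) are eliminated one at a time, because \(\mathrm{East}_3\), \(\mathrm{East}_5\) Case~2b, or \(\mathrm{West}_3\) would already have terminated the branch.
\end{itemize}
For \(e\ge3\), the area bound is used only to exclude the two staircase prefixes \(0,1,\ldots,e\) and \(0,0,1,\ldots,e\). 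The first four entries are then \(0000\), \(0001\), \(0010\) or \(0011\), which supplies two early non-initial small entries. Every other entry then contributes two pairs, for \(2n-7\) in total. Your guess that the prefix is ``essentially a long staircase or a constant block'' is not what happens: apart from those four entries the prefix is arbitrary, and the count is uniform.

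Two further ingredients are missing.

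\emph{Suffix correction.} On the non-Dyck intermediate words, a count of type~(A)/(B) pairs is not a lower bound for the deficit. Lemma~\ref{lem:extended-deficit-pairs} subtracts a missing-value correction, so it only gives \(\defc\le\) (pair count). You note that the appended entries ``must be tracked carefully'' but give no mechanism. The paper's Lemma~\ref{lem:suffix-corrected-deficit-lower-bound} supplies it: it injects each missing-value contribution into a type~(A) pair lying wholly inside the appended reverse-Dyck suffix. One must therefore count only pairs not wholly inside \(Q\).

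\emph{Injection nonfailure.} In Step~2 you claim the base still contains \(0,\ldots,\max-1\) and the raised values lie in the original range. This is neither justified nor what is needed. The needed fact is specific:
\begin{itemize}
\item The local move fixes the last window entry, so the first value reinjected is the last extracted element itself.
\item Its predecessor copy sat immediately to its left when it was extracted. That copy survives because the position bounds keep it outside the moved part of the window, or at the window's fixed left endpoint.
\item The remaining reinjections succeed because the reinjected block is reverse Dyck.
\end{itemize}
Also, \(\mathrm{down}\) is not globally the reversal conjugate of \(\mathrm{up}\); only \(\mathrm{West}\) is the reversal conjugate of \(\mathrm{East}\). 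The \(\mathrm{down}\) side therefore has to be rerun with \(\mathrm{West}\) in place of \(\mathrm{East}\).

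A brute-force enumeration for \(n\le16\) would be a legitimate substitute for the paper's targeted finite checks. It does not help, however, until the large-\(n\) argument is repaired along the lines above.
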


\begin{lemma}[Local domains of the staged windows]
\label{lem:staged-window-domains}
Under the hypotheses of Proposition~\ref{prop:up-wd} for \(x\), every local
window presented to \(\mathrm{East}_{2k+1}\) in the computation of
\(\mathrm{up}(x)\), for \(k\in\{1,2,3\}\), has the required affine-left and
reverse-right form for the corresponding East domain.  Under the hypotheses of
Proposition~\ref{prop:down-wd} for \(y\), every local window presented to
\(\mathrm{West}_{2k+1}\) in the computation of \(\mathrm{down}(y)\) has the
corresponding West-domain form.
\end{lemma}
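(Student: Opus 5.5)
We verify the domain conditions directly from the structure of the staged words, treating the \(\mathrm{up}\) side in detail and obtaining the \(\mathrm{down}\) side by the same argument. The hypotheses of Propositions~\ref{prop:up-wd} and~\ref{prop:down-wd} are taken to be the bounded-area hypotheses of Lemmas~\ref{lem:extractions-wd}--\ref{lem:positions-wd}.

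Fix \(k\in\{1,2,3\}\) and consider the word
\[
  \sigma_k=C_k:(e_1-1):\cdots:(e_k-1).
\]
Its final \(2k+1\) entries split into two pieces. The first \(k+1\) entries are the last \(k+1\) entries of \(C_k\); by Lemma~\ref{lem:positions-wd}(i), \(C_k\) has at least \(k+1\) entries, so this piece is well defined. The last \(k\) entries are the decremented extracted values.

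\textbf{The affine-left block.} By Lemma~\ref{lem:extractions-wd}(i), each extraction succeeds. By Lemma~\ref{lem:extractable-structure}, each removal leaves a Dyck sequence. Hence \(C_k\) is an ordinary Dyck sequence, and every contiguous block of \(C_k\), in particular its last \(k+1\) entries, satisfies the affine inequality. This gives the affine-left condition on the first \(k+1\) window entries.

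\textbf{The reverse-right block.} We must show that \((e_1-1,\ldots,e_k-1)\) is reverse Dyck. Translation by \(-1\) preserves the reverse condition, so it suffices to show \((e_1,\ldots,e_k)\) is reverse Dyck. This is exactly the reverse-Dyck part of the proof of Proposition~\ref{prop:phi1-well-defined}: if \(x\) and then \(y\) are consecutive leftmost-extractable entries removed from a Dyck sequence, then \(y\ge x-1\). That argument uses only that each removal is of the leftmost-extractable entry of the current Dyck sequence. It does not use the final-entry exemption of the \(\phi_1\) loop, since it concerns the sequence \(C'\) obtained after deletion together with its leftmost eligible index. Its hypotheses are therefore met here, and the reverse condition follows. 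For \(k=1\) the reverse block is a singleton and there is nothing to check.

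\textbf{The junction, and the main obstacle.} The junction between the two blocks is intentionally unconstrained, since that is what the local East map tests. For \(\mathrm{East}_5\) and \(\mathrm{East}_7\), however, the window entered at stage \(k\) is the original staged window and not the output of a failed lower stage, so the lower-stage failures impose no conflicting requirements. The remaining subtlety is that extractions \(e_2\) and \(e_3\) could in principle come from the last \(k+1\) positions of the current prefix. Such an extraction would alter which entries form the left block, although affineness would still hold because the result is a Dyck sequence. Lemma~\ref{lem:positions-wd}(i) rules out extraction from these final positions, so the left block consists of the intended final entries of \(C_k\) and the shapes match the definitions of \(L(S,k)\) in Definition~\ref{def:LR-windows}. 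The main point needing care is confirming that the consecutive-extraction inequality really transfers verbatim to these staged extractions, which are now leftmost over the whole sequence rather than nonfinal.

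\textbf{The \(\mathrm{down}\) side.} Here \(\tau_k\) has \(k+1\) extracted entries. The argument is identical: \(D_{k+1}\) is Dyck, which gives the affine-left block of length \(k\), and the \(k+1\) consecutive extractions \(f_1,\ldots,f_{k+1}\) form a reverse Dyck sequence, which gives the reverse-right block of length \(k+1\). This is exactly the shape of \(R(S,k)\), the reversal conjugate of the East domain required for \(\mathrm{West}_{2k+1}=\rho\circ\mathrm{East}_{2k+1}\circ\rho\). Lemmas~\ref{lem:extractions-wd}(ii) and~\ref{lem:positions-wd}(ii) supply the nonfailure and position bounds used in place of their (i) counterparts.
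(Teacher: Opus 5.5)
Your proof is correct and follows the paper's argument. The remainder $C_k$ (resp.\ $D_{k+1}$) is Dyck by Lemma~\ref{lem:extractable-structure}, which gives the affine block. The consecutive-extraction inequality from the proof of Proposition~\ref{prop:phi1-well-defined} never uses the nonfinal exemption, so it makes the decremented extraction word reverse Dyck.

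Your appeals to Lemma~\ref{lem:positions-wd}(i) are unnecessary, since affineness holds whichever entries land in the left block. As stated, they also overreach slightly: for $k=3$ that lemma excludes only the last three positions, not the last $k+1=4$, and it does not supply the length bound on $C_k$ (that bound is already implicit in the window being presented).
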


\begin{proof}
For an \(\mathrm{up}\) branch, after \(k\) successful extractions the remainder
\(C_k\) is a Dyck sequence by Lemma~\ref{lem:extractable-structure}.  Hence the
prefix part of the final \(2k+1\)-window, coming from the last \(k+1\) entries
of \(C_k\), is affine.  The extracted values
\((e_1,\ldots,e_k)\) form a reverse Dyck sequence by the same consecutive
extraction argument used in the proof of
Proposition~\ref{prop:phi1-well-defined}; subtracting \(1\) from every entry
preserves the reverse inequality.  Thus
\((e_1-1,\ldots,e_k-1)\) is the required reverse block.

The \(\mathrm{down}\) branches are identical with \(k+1\) extractions followed
by a West window: the last \(k\) entries of the Dyck remainder give the affine
block, and the decremented extraction word gives the reverse block of length
\(k+1\).  This is precisely the reversal-conjugate local domain used for
\(\mathrm{West}_{2k+1}\).
\end{proof}

\begin{proposition}[Well-definedness of \(\mathrm{up}\)]
\label{prop:up-wd}
Fix \(n\ge 4\) and a deficit value \(d\le 2n-8\), and set
\[
  \ell=\left\lfloor\frac{\binom n2-d}{2}\right\rfloor.
\]
If \(x\) is a Dyck sequence of length \(n\), \(\defc(x)=d\), and
\(\area(x)\le \ell-1\), then \(\mathrm{up}(x)\) is well-defined.
\end{proposition}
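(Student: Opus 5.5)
The plan is to walk through the cases of Definition~\ref{def:up} in order and verify that every step invoked in the branch actually reached is defined. The verification is delegated to the four local lemmas (Lemmas~\ref{lem:skeleton-wd}--\ref{lem:positions-wd}) together with Lemma~\ref{lem:staged-window-domains}. All of these apply because \(x\) has length \(n\ge4\), deficit \(d\le2n-8\), and \(\area(x)\le\ell-1\).

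\textbf{Special and skeleton cases.} If \(x=\omega_n\), the image \(\epsilon_n\) is given explicitly, so there is nothing to check. If \(x\) is a full Dyck skeleton different from \(\omega_n\), the skeleton case produces a well-defined Dyck sequence by Lemma~\ref{lem:skeleton-wd}(i).

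\textbf{Non-skeleton case.} Here I check the stages in order.

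First, each extraction call succeeds and selects the leftmost-extractable element, by Lemma~\ref{lem:extractions-wd}(i). By Lemma~\ref{lem:extractable-structure}, each remainder \(C_k\) is again a Dyck sequence.

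Second, the windows handed to \(\mathrm{East}_3\), \(\mathrm{East}_5\) and \(\mathrm{East}_7\) have the affine-left, reverse-right form, by Lemma~\ref{lem:staged-window-domains}. There is one point to confirm: the window must actually have the claimed length, meaning \(C_k\) has at least \(k+1\) entries and the window's prefix part lies inside \(C_k\). This follows from the position bounds of Lemma~\ref{lem:positions-wd}(i), which keep the extractions away from the tail. For \(k=3\), Lemma~\ref{lem:east7-wd}(i) shows the seven-window is not far-apart decomposable. Hence Proposition~\ref{prop:east-well-defined} applies: \(\mathrm{East}_7\) is defined and its output lies in \(R(S,k)\). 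In particular the Case~4 table is exhaustive by Proposition~\ref{prop:east-case4-exhaustive}.

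Third, I must show the staged procedure terminates. \(\mathrm{East}_3\) either succeeds or fails. If it fails, \(\mathrm{East}_5\) either applies (Case~2a or 2b) or passes control on. At the seven-window stage the full map \(\mathrm{East}\) always yields an output, because its case list is exhaustive on the non-far-apart domain. So some stage always terminates, and no fourth extraction is ever requested.

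Finally, each injection of a raised suffix entry must have a target. The injection of \(e\) requires an occurrence of \(e-1\) in the current prefix; equivalently, \(e\le\max+1\) of the prefix, as in Definition~\ref{def:injection}. This is exactly the injection-nonfailure clause of Lemma~\ref{lem:positions-wd}(i).

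Combining these checks, every operation called in every branch is defined, so \(\mathrm{up}(x)\) is well-defined.

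The main obstacle is the interface between the local output and the global injections. One has to make sure that "the final \(k+1\) entries" after the East move are exactly the entries the paper intends to raise, and that the remaining prefix is still a Dyck sequence to inject into. For the prefix, the first output block of East lies in \(R(S,k)\) and is affine, and it joins the untouched part of \(C_k\) along an inherited affine inequality, because the outer entry \(x_{-3}\) is fixed. For the injected values, the reverse block must be raised by \(1\) to land in the range \([1,\max+1]\); that is the content assumed from Lemma~\ref{lem:positions-wd}. I would spell out this gluing check explicitly and cite that lemma for the value bounds.
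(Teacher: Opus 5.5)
Your proposal is correct and follows the paper's proof step for step. The special and skeleton cases go through Lemma~\ref{lem:skeleton-wd}\textup{(i)}, the extractions through Lemma~\ref{lem:extractions-wd}\textup{(i)}, the seven-window stage through Lemmas~\ref{lem:east7-wd}\textup{(i)} and~\ref{lem:staged-window-domains} together with Proposition~\ref{prop:east-well-defined}, and the injections through Lemma~\ref{lem:positions-wd}\textup{(i)}. Your extra checks are sound elaborations of points the paper leaves implicit: termination of the staged procedure; the window lengths, which the position bounds do force because an extraction never occurs at index \(0\); and the fact that the post-East prefix is still Dyck, since the East maps fix the outer window entry and return an affine first block.
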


\begin{proof}
If \(x=\omega_n\), the special case defines \(\mathrm{up}(x)=\epsilon_n\).
If \(x\) is a full Dyck skeleton and \(x\ne\omega_n\),
Lemma~\ref{lem:skeleton-wd}\textup{(i)} gives the validity of the skeleton
branch.

It remains to consider the non-skeleton branches.  Lemma~\ref{lem:extractions-wd}\textup{(i)}
ensures that every extraction called by the staged procedure exists and is
selected unambiguously by the leftmost convention.  The three- and five-window
East tests either terminate the construction or pass control to the next stage;
they do not create an undefined value of \(\mathrm{up}\).  If the construction
reaches \(\mathrm{East}_7\), Lemma~\ref{lem:east7-wd}\textup{(i)} says that
the seven-entry window is not far-apart decomposable, and
Lemma~\ref{lem:staged-window-domains} gives the required local domain, so
Proposition~\ref{prop:east-well-defined} supplies the required local East
output.  Finally, Lemma~\ref{lem:positions-wd}\textup{(i)} rules out every
possible injection failure.  Thus every branch of Definition~\ref{def:up} is
well-defined in the stated range.
\end{proof}

\begin{proposition}[Well-definedness of \(\mathrm{down}\)]
\label{prop:down-wd}
Fix \(n\ge 4\) and a deficit value \(d\le 2n-8\), and set
\[
  \ell=\left\lfloor\frac{\binom n2-d}{2}\right\rfloor.
\]
If \(y\) is a Dyck sequence of length \(n\), \(\defc(y)=d\),
\(\area(y)\le \ell\), and \(y\) is not a special Dyck skeleton, then
\(\mathrm{down}(y)\) is well-defined.
\end{proposition}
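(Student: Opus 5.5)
The plan mirrors the proof of Proposition~\ref{prop:up-wd}: walk through the cases of Definition~\ref{def:down} in order and discharge each possible failure point with one of the deferred local lemmas.

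\textbf{Special case.} If \(y=\epsilon_n\), the special case sets \(\mathrm{down}(y)=\omega_n\). This is a Dyck sequence of length \(n\), so nothing needs checking.

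\textbf{Remaining \(y\).} Otherwise \(y\neq\epsilon_n\). By hypothesis \(y\) is not a special Dyck skeleton, and for \(n\ge 4\) the only full skeleton that is not special is \(\epsilon_n\). Hence \(y\) is not a full Dyck skeleton, so \(y\) has an extractable element. More generally, Lemma~\ref{lem:extractions-wd}\textup{(ii)} applies, since \(\area(y)\le\ell\) and \(y\) is neither \(\epsilon_n\) nor special. It guarantees that every extraction \(f_1,f_2,f_3,f_4\) requested by the staged procedure exists and is the leftmost-extractable one. In particular \(D_1,\dots,D_4\) are well defined, and they are Dyck sequences by Lemma~\ref{lem:extractable-structure}.

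\textbf{Skeleton branch.} If this branch is triggered, Lemma~\ref{lem:skeleton-wd}\textup{(ii)} says that \(D_1:z\) is a well-defined Dyck sequence, and in fact a full skeleton.

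\textbf{Local West stages.} The three- and five-window tests can only terminate the procedure or pass control to the next stage; a failed test is a routing decision, not an undefined value. For the seven-window stage, two inputs are needed:
\begin{itemize}
\item Lemma~\ref{lem:staged-window-domains} shows that the presented window has the West-domain form (affine block of length three, reverse block of length four).
\item Lemma~\ref{lem:east7-wd}\textup{(ii)} shows that it is not far-apart decomposable.
\end{itemize}
Then \(\rho\) of the window lies in \(L(S,k)\) with \(S\) not far-apart decomposable, and Proposition~\ref{prop:east-well-defined}, together with \(\mathrm{West}=\rho\circ\mathrm{East}\circ\rho\), yields a defined output.

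\textbf{Case~4 of East in the West direction.} One point must be confirmed: that the staged procedure cannot terminate without some branch firing. After \(\mathrm{West}_3\) and \(\mathrm{West}_5\) fail, \(\mathrm{West}_7\) is the full map, and its Case~4 is exhaustive by Proposition~\ref{prop:east-case4-exhaustive}. So \(\mathrm{West}_7\) always succeeds on such a window.

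\textbf{Injections.} The remaining failure point is the injection of the raised suffix entries. Lemma~\ref{lem:positions-wd}\textup{(ii)} states directly that \(\mathrm{down}\) never fails at an injection step. The extraction-position bounds in that lemma also ensure that the removed suffix consists of window entries that are positioned correctly for injection.

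\textbf{Main obstacle.} The hardest step is the reduction at the start of the second case. It must be checked that the hypotheses of Proposition~\ref{prop:down-wd} match exactly the hypotheses of Lemma~\ref{lem:extractions-wd}\textup{(ii)}: the non-special-skeleton condition together with the separate treatment of \(\epsilon_n\). In particular, the case \(n<4\) is excluded and \(\epsilon_n\) is handled first. Everything else is routing through the deferred local lemmas, so the proof is a direct assembly once that bookkeeping is verified.
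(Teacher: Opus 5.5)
Your proposal is correct and follows the paper's proof essentially step for step. You handle \(\epsilon_n\) by the special case, use Lemma~\ref{lem:extractions-wd}\textup{(ii)} for all extractions and Lemma~\ref{lem:skeleton-wd}\textup{(ii)} for the skeleton branch, treat the seven-window stage with Lemmas~\ref{lem:east7-wd}\textup{(ii)} and~\ref{lem:staged-window-domains} plus the reversal conjugacy and Proposition~\ref{prop:east-well-defined}, and use Lemma~\ref{lem:positions-wd}\textup{(ii)} for injections. Your separate paragraph on Case~4 exhaustiveness is harmless but redundant, since Proposition~\ref{prop:east-well-defined} already relies on Proposition~\ref{prop:east-case4-exhaustive}.
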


\begin{proof}
If \(y=\epsilon_n\), the special case defines \(\mathrm{down}(y)=\omega_n\).
Now assume \(y\ne\epsilon_n\).  Since \(y\) is not a special Dyck skeleton,
Lemma~\ref{lem:extractions-wd}\textup{(ii)} supplies the first extraction
\(f_1\).  If the skeleton branch is triggered after this extraction, then
Lemma~\ref{lem:skeleton-wd}\textup{(ii)} says that the branch output is a
well-defined Dyck sequence.

It remains to consider the local West branches.  Lemma~\ref{lem:extractions-wd}\textup{(ii)}
ensures that each further extraction called by the staged procedure exists.
The three- and five-window West tests either terminate the construction or pass
control to the next stage.  If the construction reaches \(\mathrm{West}_7\),
Lemma~\ref{lem:east7-wd}\textup{(ii)} says that the seven-entry window is not
far-apart decomposable, and Lemma~\ref{lem:staged-window-domains} gives the
required local domain.  Since \(\mathrm{West}\) is the reversal-conjugate of
\(\mathrm{East}\), Proposition~\ref{prop:east-well-defined} and
Definition~\ref{def:west} supply the required local West output.  Finally,
Lemma~\ref{lem:positions-wd}\textup{(ii)} rules out every possible injection
failure.  Thus every branch of Definition~\ref{def:down} is well-defined in
the stated range.
\end{proof}

\begin{lemma}[Full-skeleton extraction and injection are inverse]
\label{lem:full-skeleton-extraction-injection-inverse}
Let \(X\) be a Dyck sequence for which repeated leftmost extraction terminates
at a full Dyck skeleton \(X'\), and let \(F=(f_1,\ldots,f_r)\) be the extracted
word.  Then \(F\) is reverse Dyck, and injecting \(F\) from right to left into
\(X'\), using the inverse insertion rule of Definition~\ref{def:phi1-inverse},
reconstructs \(X\).

More generally, if \(G=(g_1,\ldots,g_s)\) is a reverse Dyck word whose
right-to-left injections into a full Dyck skeleton \(X'\) are all defined, then
leftmost extraction of the resulting word removes \(g_1,\ldots,g_s\) in order
and returns to \(X'\).
\end{lemma}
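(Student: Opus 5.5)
The plan is to reduce both assertions to the machinery already developed for \(\phi_1\) and \(\phi_1^{-1}\) in Propositions~\ref{prop:phi1-well-defined} and~\ref{prop:phi1-inverse}. The one difference is that those results stop extraction at a \emph{nonfinal} extractable entry and exempt the last index. Here a full skeleton has no eligible index at all, so extraction may also remove the final entry.

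\textbf{First assertion.} I would proceed in two steps.

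\emph{Reverse Dyck property.} I would rerun the consecutive-extraction argument in the proof of Proposition~\ref{prop:phi1-well-defined}. That argument shows \(y\ge x-1\) for consecutive extracted values \(x,y\). It uses only three facts:
\begin{itemize}
\item Lemma~\ref{lem:extractable-structure}: the extracted entry is the leftmost occurrence of its value and is preceded by \(x-1\);
\item leftmost selection of eligible indices;
\item the affine step condition.
\end{itemize}
None of these uses the exemption of the final index, nor the split \(D=C:G\) at the last maximum. So the argument applies verbatim when extraction runs over all indices of the current Dyck word, including a possible final one. I would check the one place where the final index could matter. If \(x\) is extracted from the last position, then \(j\ge i\) cannot occur, so only the cases \(j<i-1\) and \(j=i-1\) arise, and both are already handled.

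\emph{Reconstruction.} This is the one-step inverse from the proof of Proposition~\ref{prop:phi1-inverse}. When \(x=f_t\) is removed from position \(i\), the predecessor \(x-1\) at position \(i-1\) is the unique \(x-1\) to its left, so it becomes the first occurrence of \(x-1\) in the remainder. Injection of \(x\) therefore reinserts it at the same place. This is true whether or not \(i\) was the last position. Undoing the steps from \(t=r\) down to \(1\), that is, injecting \(F\) from right to left, recovers \(X\).

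\textbf{Second assertion.} I would copy the downward induction in the proof of Proposition~\ref{prop:phi1-inverse}. Put \(C_s=X'\) and \(C_{k-1}=\operatorname{inject}(C_k,g_k)\). The invariant to prove is that in \(C_{k-1}\) the freshly inserted \(g_k\) is eligible and no earlier index is eligible; it may now be the final index.

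\emph{Base case.} The full skeleton \(X'\) has no eligible index whatsoever, so the base case holds without needing the terminal \(m\).

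\emph{Inductive step.} The hypothesis that injections are defined replaces the earlier local fact that \(e\le\max+1\) with a terminal maximum. I would split into the two cases \(g_k\le g_{k+1}\) and \(g_k=g_{k+1}+1\), exactly as before. In each case the predecessor index \(p\) stops being eligible, since its successor becomes \(p\)'s value plus one. Indices before the insertion keep their eligibility data.

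\emph{Conclusion.} Leftmost extraction then removes \(g_1,\dots,g_s\) in order. After the last removal we are back at \(X'\), and the process stops there because \(X'\) has no eligible index.

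\textbf{Main obstacle.} I expect the delicate point to be the index lying immediately after an insertion site in the inductive step. This matters because eligible indices to the right of the new entry are explicitly allowed, so they need not be excluded. What must be ruled out is any eligible index strictly to the left of the new entry. Two situations need checking:
\begin{itemize}
\item when \(g_k\le g_{k+1}\), the first \(g_k-1\) lies weakly before the previous inserted entry, so the new entry is not to its right;
\item when the new entry is inserted at the very end of the word, the successor condition holds vacuously, so it is still eligible.
\end{itemize}
I would verify both directly from the affine step condition.
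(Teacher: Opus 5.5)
Your proposal is correct and follows the paper's own route. The paper likewise reduces both assertions to the extraction--injection inverse of Proposition~\ref{prop:phi1-inverse} and the consecutive-extraction inequality of Proposition~\ref{prop:phi1-well-defined}, noting that neither argument uses the terminal maximum once the stopping rule becomes ``no eligible index at all.'' Your checks of a final-position extraction (where \(j\ge i\) cannot occur) and of an injection at the very end of the word (where the successor condition is vacuous) make explicit exactly the details the paper leaves implicit.
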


\begin{proof}
The proof is the extraction--injection inverse from
Proposition~\ref{prop:phi1-inverse}, with the stopping condition strengthened
from no nonfinal extractable element to no extractable element at all.  The
local assertions used there are exactly Lemma~\ref{lem:extractable-structure}
and the reverse-Dyck inequality for consecutive extracted values proved in
Proposition~\ref{prop:phi1-well-defined}.  Those arguments do not use the
presence of a final maximum once the process has stopped.  Therefore each
injection step creates the next leftmost-extractable entry and creates no
earlier one.  Thus the reverse extraction process removes any defined
right-to-left injection word in the opposite order of injection.  Applying this
to \(F\) reconstructs and then re-extracts \(X\), and the same local induction
applies to the general word \(G\).
\end{proof}

\begin{lemma}[\(\mathrm{up}\) and \(\mathrm{down}\) are mutual inverses]
\label{lem:up-down-inv}
Fix \(n\ge 4\), a deficit value \(d\le 2n-8\), and
\[
  \ell=\left\lfloor\frac{\binom n2-d}{2}\right\rfloor .
\]
\begin{enumerate}[label=\textup{(\roman*)}]
  \item If \(x\) is a Dyck sequence of length \(n\) with \(\defc(x)=d\) and
  \(\area(x)\le \ell-1\), then
  \[
    \mathrm{down}(\mathrm{up}(x))=x.
  \]

  \item If \(y\) is a Dyck sequence of length \(n\) with \(\defc(y)=d\),
  \(\area(y)\le \ell\), and \(\mathrm{down}(y)\) is defined, then
  \[
    \mathrm{up}(\mathrm{down}(y))=y.
  \]
\end{enumerate}
\end{lemma}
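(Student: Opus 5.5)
The plan is to trace $\mathrm{up}$ branch by branch and show that $\mathrm{down}$, run on the output, enters exactly the matching branch and undoes each stage in reverse. For (i), Proposition~\ref{prop:up-wd} makes $\mathrm{up}(x)$ defined. By Lemma~\ref{lem:up-down-statistics}, $y=\mathrm{up}(x)$ has deficit $d$ and $\area(y)=\area(x)+1\le\ell$. So $y$ lies in the range where the staged lemmas apply to $\mathrm{down}$, provided $y$ is not a special skeleton.

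\emph{Special and skeleton cases of (i).} If $x=\omega_n$, then $y=\epsilon_n$ and $\mathrm{down}(\epsilon_n)=\omega_n$. If $x$ is a full skeleton other than $\omega_n$, then $y$ is $x[0:n-1]$ with $v=x_{n-1}+1$ injected after the first occurrence of $x_{n-1}$. I would argue as in the inductive step of Proposition~\ref{prop:phi1-inverse}. First, the injected $v$ is eligible. Second, no earlier index is eligible, because $x[0:n-1]$ has no extractable element: $x$ is a full skeleton, and deleting the final entry only removes a successor condition at index $n-2$, which I must check separately. Hence $f_1=v$ and $D_1=x[0:n-1]$, so $D_1:(f_1-1)=x$. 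Since $x$ has no eligible index, the skeleton branch of $\mathrm{down}$ triggers and returns $x$. The same reasoning shows $y$ is not a special skeleton, since it has an extractable element.

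\emph{Non-skeleton stage $2k+1$ of (i).} Here $\mathrm{up}$ extracts $e_1,\dots,e_k$, appends $e_i-1$, applies $\mathrm{East}_{2k+1}$ to the final window, then raises the last $k+1$ entries $w_1,\dots,w_{k+1}$ and injects them right to left into the prefix $P$. By Lemma~\ref{lem:full-skeleton-extraction-injection-inverse}, whose injection--extraction induction uses only Lemma~\ref{lem:extractable-structure} and the reverse-Dyck property, leftmost extraction from $y$ removes $w_1+1,\dots,w_{k+1}+1$ in order and returns $P$. The injected word is reverse Dyck because the East output's right block is reverse Dyck by Proposition~\ref{prop:east-well-defined}. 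Two side conditions are needed:
\begin{itemize}
\item \emph{Earlier entries of $P$ do not become eligible.} Lemma~\ref{lem:positions-wd} keeps the injections in range.
\item \emph{Extraction does not run past $k+1$ steps.} This is handled by the staging itself.
\end{itemize}
Thus $\mathrm{down}$ reconstructs exactly the window $\mathrm{East}(\text{window})$ with prefix $P$.

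\emph{Branch selection.} I must show $\mathrm{down}$ neither stops earlier nor goes later:
\begin{itemize}
\item \emph{The skeleton branch does not trigger.} $D_1:(f_1-1)$ still has an eligible index; I would aim to exhibit it from the East output, or equivalently from the window the three-window test rejects.
\item \emph{Lower West stages do not terminate.} This is Lemma~\ref{lem:no-premature-local-branch}.
\item \emph{The matching stage does terminate.} $\mathrm{West}_{2k+1}$ on the reconstructed window returns the original window by Theorem~\ref{thm:east-west-inverse}, with $\mathrm{East}_3/\mathrm{West}_3$ and $\mathrm{East}_5/\mathrm{West}_5$ covered by the same case-by-case verification.
\end{itemize}
$\mathrm{down}$ then removes the last $k$ entries $e_k-1,\dots,e_1-1$, raises them, and injects them right to left. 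By Proposition~\ref{prop:phi1-inverse}'s inverse argument, each injection lands immediately after the predecessor $e_i-1$ from which it was extracted, recovering $C_{k-1},\dots,C_0=x$.

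\emph{Part (ii).} This is the mirror argument. If $y=\epsilon_n$, the special case applies. If the skeleton branch triggers, then $\mathrm{down}(y)=D_1:z$ is a full skeleton by Lemma~\ref{lem:skeleton-wd}(ii), and $\mathrm{up}$'s skeleton case removes $z$ and reinjects $f_1$ after the first $f_1-1$, restoring $y$ by Lemma~\ref{lem:extractable-structure}; here I must check that $D_1:z\ne\omega_n$. For the West stages, I reverse the roles of East and West, using the second identity of Theorem~\ref{thm:east-west-inverse}, the East/West-interchanged form of Lemma~\ref{lem:no-premature-local-branch}, and the fact that $\area(\mathrm{down}(y))\le\ell-1$ keeps $\mathrm{up}$ in its defined range.

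\emph{Main obstacle.} I expect the hardest step to be branch selection: showing the skeleton branch of $\mathrm{down}$ is not falsely triggered, and that the extraction order after reinjection coincides with the injection order when earlier prefix entries interact with the window. First I would try reducing both claims to the inductive eligibility invariant in Proposition~\ref{prop:phi1-inverse}. If that proves insufficient, I would fall back on the finite residual checks deferred to Appendix~\ref{app:local-proofs}.
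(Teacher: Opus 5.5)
Your outline (special case, skeleton case, a matching West stage undoing East, then reinjection of $(b_1,\dots,b_k)$) is the paper's, but the central step of the non-skeleton case has a genuine gap. You invoke Lemma~\ref{lem:full-skeleton-extraction-injection-inverse} for injecting the raised block $E=(d_1+1,\dots,d_{k+1}+1)$ into the prefix $P=\widehat X=B:(c_1,\dots,c_k)$. That lemma requires the base to be a \emph{full Dyck skeleton}, and $P$ is in general not one, since it can still contain the next extractable entry $e_{k+1}$ of $C_k$. For example, take $x=(0,1,0,0,0,0,1,2,0)$ with $n=9$, $d=10$. The $\mathrm{East}_3$ branch fires, and $P=(0,0,0,0,0,1,2)$ has an eligible final $2$. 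The base case of the induction in Proposition~\ref{prop:phi1-inverse} uses exactly that the base has no eligible index. Without this, an eligible entry of $P$ lying left of the last injection site would be extracted first, and the reconstruction fails. Your side condition ``earlier entries of $P$ do not become eligible'' misstates the issue, because such entries may already be eligible. Also, Lemma~\ref{lem:positions-wd} says nothing about eligibility in $P$.

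The missing idea is the paper's completion to a skeleton. Set $X=B:c_1$; East fixes its first window entry, so $c_1=a_1$. Fully extract $X$ to a full skeleton $X'$ with extraction word $F=(f_1,\dots,f_r)$, and prove that $E:F$ is reverse Dyck. The only new inequality is the junction $d_{k+1}\le f_1$, which follows from three facts:
\begin{enumerate}
\item East also fixes its last window entry, so $d_{k+1}=b_k=e_k-1$.
\item Lemma~\ref{lem:positions-wd} shows that $a_2,\dots,a_{k+1}$ were untouched by the extractions, so $f_1$ is the hypothetical next extraction $e_{k+1}$.
\item The consecutive-extraction inequality of Proposition~\ref{prop:phi1-well-defined} gives $e_{k+1}\ge e_k-1$.
\end{enumerate}
This inequality is what stops the leftmost extractable entry of $X$ from preempting. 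The injection of $d_{k+1}+1$ lands weakly to its left, or, when $d_{k+1}=f_1$, immediately after it, which destroys its eligibility. Only then does Lemma~\ref{lem:full-skeleton-extraction-injection-inverse}, applied to $X'$ and $E:F$ with $c_2,\dots,c_k$ carried as an inert suffix, show that the first $k+1$ extractions of $\mathrm{down}$ remove exactly $E$ and leave $\widehat X$. This also supplies your unproved claim that the skeleton branch of $\mathrm{down}$ is not triggered.

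Part (ii) needs the mirror completion, with junction $q_1=f_{k+2}\ge f_{k+1}-1=b_k$, so ``the mirror argument'' inherits the same gap. Your fallback to the appendix checks would not help: they verify well-definedness on bounded ranges, not this extraction-order claim for general $n$.
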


\begin{proof}
The special cases are immediate from the definitions:
\(\omega_n\) is sent by \(\mathrm{up}\) to \(\epsilon_n\), and \(\epsilon_n\)
is sent by \(\mathrm{down}\) back to \(\omega_n\).

In the skeleton branch, \(\mathrm{up}\) removes the final entry \(z\) of a full
skeleton and injects \(z+1\) immediately after the first occurrence of \(z\).
The first extraction in \(\mathrm{down}\) removes exactly this inserted entry,
and the skeleton branch of \(\mathrm{down}\) restores the original word.  The
reverse skeleton branch is the same extraction--injection inverse: after
\(\mathrm{down}\) extracts \(f_1\) and returns \(D_1:(f_1-1)\), the skeleton
branch of \(\mathrm{up}\) removes the final \(f_1-1\) and injects \(f_1\) back
into \(D_1\) at the position from which it was extracted.  This returned
skeleton is not \(\omega_n\): otherwise \(D_1\) would be all zeros and
\(f_1=2\), impossible in a Dyck sequence because an occurrence of \(2\)
requires a preceding \(1\).

It remains to compare the non-skeleton local branches.  Suppose \(\mathrm{up}\)
uses the \(2k+1\)-window East branch, where \(k\in\{1,2,3\}\).  It extracts
\(e_1,\ldots,e_k\), leaves a remainder \(C_k\), and forms a local word whose
final \(2k+1\) entries have the form
\[
  (a_1,\ldots,a_{k+1},b_1,\ldots,b_k),
\]
where \((b_1,\ldots,b_k)=(e_1-1,\ldots,e_k-1)\); let \(B\) denote the base
before this window.  The local East step replaces the window by
\[
  (c_1,\ldots,c_k,d_1,\ldots,d_{k+1}).
\]
The map \(\mathrm{up}\) removes the final block
\((d_1,\ldots,d_{k+1})\), raises it by one, and injects the raised word
\[
  E=(d_1+1,\ldots,d_{k+1}+1)
\]
right-to-left into \(\widehat X=B:(c_1,\ldots,c_k)\).  Put
\[
  X=B:a_1=B:c_1,
  \qquad
  \widehat X=X:(c_2,\ldots,c_k).
\]

If \(X\) is already a full skeleton, set \(X'=X\) and \(F=\emptyset\).  If not,
fully extract \(X\) to a skeleton \(X'\), and let
\[
  F=(f_1,\ldots,f_r)
\]
be the extraction word.  The local East map fixes the first and last entries
of its window, so \(c_1=a_1\) and \(d_{k+1}=b_k=e_k-1\).  The word \(E\) is
reverse Dyck because the final block
\((d_1,\ldots,d_{k+1})\) of the local East output is reverse Dyck, and raising
every entry preserves the reverse-Dyck inequalities; \(F\) is reverse Dyck by
construction.  By
Lemma~\ref{lem:positions-wd}\textup{(i)}, the entries
\(a_2,\ldots,a_{k+1}\) were not touched by the first \(k\) extractions.  Hence
the next extractable element from \(X\), if it exists, is the hypothetical next
extraction \(e_{k+1}\).  Thus, if \(F\) is nonempty, \(f_1=e_{k+1}\), and the
reverse-Dyck inequality for consecutive extracted values gives
\[
  f_1=e_{k+1}\ge e_k-1=b_k=d_{k+1}.
\]
This is exactly the single new adjacent inequality needed at the boundary
between \(E\) and \(F\).  Thus the combined word
\[
  E:F=(d_1+1,\ldots,d_{k+1}+1,f_1,\ldots,f_r)
\]
is reverse Dyck.  Injecting \(E:F\), from right to left, into the skeleton
\(X'\) first reconstructs \(X\) from the suffix \(F\) and then performs the
defined injections of \(E\).  The actual \(\mathrm{up}\) branch injects \(E\)
into \(\widehat X\), but these injections may be viewed as injections into
\(X\) with \(c_2,\ldots,c_k\) carried along.  This carried suffix is inert:
truncating it before each right-to-left injection and reinserting it afterward
does not change the first occurrence used as the insertion site, and reversing
the injections gives the same next leftmost extraction with the same boundary
before \(c_2\).  By
Lemma~\ref{lem:full-skeleton-extraction-injection-inverse}, the leftmost
extraction process on \(\mathrm{up}(x)\) removes the entries of \(E:F\) in
order.  In particular, the first \(k+1\) extractions performed by
\(\mathrm{down}\) recover
\[
  d_1+1,\ldots,d_{k+1}+1
\]
and leave the base \(\widehat X=B:(c_1,\ldots,c_k)\).
Thus the input to \(\mathrm{down}\) is not \(\epsilon_n\), and after the first
extraction the skeleton test is not triggered, since the candidate still has
the next local extracted entry as its leftmost extractable element.

The lower West branches cannot terminate prematurely.  If a smaller
\(\mathrm{West}_{2k'+1}\) branch with \(k'<k\) fired, it would fire on the
centered proper subwindow
\[
  (c_{k-k'+1},\ldots,c_k,d_1,\ldots,d_{k'+1})
\]
of the East image.  By Lemma~\ref{lem:no-premature-local-branch},
\(\mathrm{down}\) reaches the matching \(2k+1\)-window.  Then
Theorem~\ref{thm:east-west-inverse} changes the local image block back to the
original local block, and raising and reinjecting
\((b_1,\ldots,b_k)\) reverses the original extractions and recovers \(x\).
This proves \(\mathrm{down}(\mathrm{up}(x))=x\).

The proof of \(\mathrm{up}(\mathrm{down}(y))=y\) is the same staged argument
with West and East interchanged.  If \(\mathrm{down}\) uses a
\((2k+1)\)-window West branch, write the West input and output as
\[
  (c_1,\ldots,c_k,d_1,\ldots,d_{k+1})
  \longmapsto
  (a_1,\ldots,a_{k+1},b_1,\ldots,b_k).
\]
Thus \(\mathrm{down}\) removes the final block, raises
\[
  G=(b_1+1,\ldots,b_k+1),
\]
and injects \(G\) into
\(\widehat X=B:(a_1,\ldots,a_{k+1})\).  By reversal conjugacy the West move
fixes the shared endpoint \(a_1=c_1\); put \(X=B:a_1\), so
\(\widehat X\) is \(X\) with \(a_2,\ldots,a_{k+1}\) carried along.  If \(X\)
is already a full skeleton, set \(X'=X\) and \(Q=\emptyset\).  Otherwise fully
extract \(X\) to a skeleton \(X'\), with extraction word
\(Q=(q_1,\ldots,q_s)\).  The block \(G\) is reverse Dyck, and \(Q\) is reverse
Dyck by construction.  The only new adjacent inequality is the boundary
between \(G\) and \(Q\).  Here Lemma~\ref{lem:positions-wd}\textup{(ii)}
applies before the West move: the suffix \(c_2,\ldots,c_k\) of the pre-West
remainder \(B:(c_1,\ldots,c_k)\) was not touched by the first \(k+1\)
extractions.  Hence, if \(q_1\) exists, it is the hypothetical next extraction
\(f_{k+2}\) from \(X=B:c_1\).  Since the West move fixes the last entry,
\(b_k=d_{k+1}=f_{k+1}-1\), and the consecutive-extraction inequality gives
\[
  q_1=f_{k+2}\ge f_{k+1}-1=b_k.
\]
Thus \(G:Q\) is reverse Dyck.  The post-West suffix
\(a_2,\ldots,a_{k+1}\) is carried only during the actual inverse injections
into \(\widehat X\); truncating and reinserting it does not change the next
leftmost extraction or the first-occurrence insertion sites.
Lemma~\ref{lem:full-skeleton-extraction-injection-inverse} then implies that
the first \(k\) extractions performed by \(\mathrm{up}\) recover exactly the
raised entries that \(\mathrm{down}\) had injected and leave the matching West
image base.  Therefore this input to \(\mathrm{up}\) is neither \(\omega_n\)
nor a full skeleton, so the special and skeleton branches are skipped.  A
smaller East branch would have to fire on the centered proper
subwindow
\[
  (a_{k-k'+1},\ldots,a_{k+1},b_1,\ldots,b_{k'})
\]
of the West image, and Lemma~\ref{lem:no-premature-local-branch} rules this
out.  The matching East window is then inverted by
Theorem~\ref{thm:east-west-inverse}, and the final injection--extraction
steps recover the entries originally
extracted from \(y\).  Hence \(\mathrm{up}(\mathrm{down}(y))=y\), and the two
maps are mutual inverses on the stated domains.
\end{proof}

\subsection{Strings and the skeleton formula}
\label{subsec:strings-and-formula}
We assemble the local maps into global strings.  First, we record an area bound
for full skeletons.  It is used to ensure that the starting
points of the strings lie in the bounded domain where the maps
\(\mathrm{up}\) and \(\mathrm{down}\) are known to be well-defined.
\begin{proposition}[Area bound for full skeletons]
\label{prop:area-leq-defc}
If \(S\) is a full Dyck skeleton, then
\[
  \area(S)\le \defc(S).
\]
\end{proposition}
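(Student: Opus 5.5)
The plan is to use the pair description of the deficit in Proposition~\ref{prop:deficit-pair-count} and to inject the ``units of area'' of \(S\) into the type~(B) deficit pairs. Write \(S=(x_0,\ldots,x_{n-1})\). Then \(\area(S)=\sum_j x_j\) is the number of pairs \((j,v)\) with \(0\le j<n\) and \(0\le v<x_j\). To each such pair \((j,v)\) we will assign a position \(i<j\) with \(x_i=v\) such that \(i\) is \emph{not} the first occurrence of \(v\).

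\textbf{Structural lemma.} The main step is a lemma about full skeletons: for every value \(w\ge1\) occurring in \(S\), the first occurrence of \(w\) is preceded by at least two occurrences of \(w-1\).

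To prove it, let \(p\) be the first occurrence of \(w\). By the Dyck step condition, \(x_{p-1}=w-1\) (this is the argument in the proof of Proposition~\ref{prop:deficit-pair-count}). Suppose, for contradiction, that exactly one \(w-1\) precedes \(p\). Let \(p,p+1,\ldots,q\) be the maximal rising run \(w,w+1,\ldots,w+(q-p)\) starting at \(p\). We check the following.
\begin{enumerate}[label=\textup{(\alph*)}]
\item Every entry of this run is the first occurrence of its value, since no value \(\ge w\) appears before \(p\).
\item For \(r=p+1,\ldots,q\), the only occurrence of \(x_r-1\) before \(r\) is therefore \(x_{r-1}\). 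For \(r=p\), the only one is the assumed unique \(w-1\).
\item At the top \(q\) of the run, either \(q=n-1\), in which case condition (ii) of Definition~\ref{def:extractable-element} is vacuous, or \(x_{q+1}\le x_q\) by maximality of the run together with the affine inequality.
\end{enumerate}
Hence \(q\) is an eligible index, which contradicts the hypothesis that \(S\) is a full Dyck skeleton, that is, that it has no extractable element at all. This is exactly where fullness, rather than merely being an \(m\)-skeleton, is used: the final index is not exempt. The argument is essentially the rising-chain argument already used in the proof of Lemma~\ref{lem:extractable-structure}.

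\textbf{The injection.} Take a pair \((j,v)\) with \(v<x_j\). By Proposition~\ref{prop:deficit-pair-count}, the value \(v+1\le x_j\) occurs at or before \(j\). Let \(p\) be its first occurrence. By the lemma, at least two copies of \(v\) lie before \(p\le j\). Let \(i\) be the position of the second occurrence of \(v\) in \(S\). Then \(i<j\), \(x_i=v<x_j\), and \(i\) is not the first occurrence of \(v\). So \((i,j)\) is a type~(B) pair.

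The map \((j,v)\mapsto(i,j)\) is injective, because \(j\) is retained and \(v=x_i\) is recovered from \(i\). Therefore
\[
\area(S)\le\#\{\text{type (B) pairs}\}\le\defc(S).
\]

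I expect the only delicate point to be the structural lemma, specifically checking that the top of the rising run is eligible in both sub-cases (interior index versus final index). The counting step is routine once the lemma is in hand.
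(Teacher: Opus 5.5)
Your proof is correct and follows essentially the same route as the paper. A rising-run argument at a first occurrence shows that every positive value is preceded by two copies of its predecessor, and each area unit \(v<x_j\) is then charged to the type~(B) pair formed by the second occurrence of \(v\) and position \(j\). The only difference is cosmetic: the paper proves the two-copies claim for every positive entry (via a leftmost counterexample, which it shows is a first occurrence) and iterates down the values, whereas you state it only at first occurrences and directly use the first occurrence of \(v+1\), which lies at or before \(j\), thereby avoiding the iteration.
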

\begin{proof}
Let \(S=(s_0,\ldots,s_{n-1})\).  We first show that every entry
\(s_q=i>0\) has at least two copies of each value
\[
  0,1,\ldots,i-1
\]
to its left.  It is enough to prove that every positive entry of value \(i\)
has at least two copies of \(i-1\) to its left, since applying this one-step
statement to those earlier positive entries and iterating down the values gives
two copies of every smaller value.
Suppose otherwise, and choose the leftmost position \(j\) such that
\(s_j=i>0\) has fewer than two copies of \(i-1\) to its left.  Since \(S\) is a
Dyck sequence, at least one copy of \(i-1\) lies before \(j\), so exactly one
does.  There is no earlier copy of \(i\), since any earlier copy would have at
most the same copies of \(i-1\) before it and would contradict the choice of
\(j\).

The entry \(s_j\) is not extractable because \(S\) is full.  It already has
exactly one predecessor-value copy to its left, so the only possible failure of
extractability is that \(j<n-1\) and \(s_{j+1}>i\).  The Dyck condition then
forces \(s_{j+1}=i+1\).  Continue along the maximal consecutive increasing run
\[
  s_j=i,\quad s_{j+1}=i+1,\quad \ldots,\quad s_{j+k}=i+k,
\]
where either \(j+k=n-1\) or \(s_{j+k+1}\le i+k\).  By induction on \(r\), the
entry \(s_{j+r}=i+r\) has exactly one copy of \(i+r-1\) to its left and no
earlier copy of \(i+r\).  The case \(r=0\) was proved above.  If the assertion
holds for \(r\), then the next run entry \(i+r+1\), if present, has exactly
one copy of \(i+r\) to its left, namely the entry at \(j+r\); and an earlier
copy of \(i+r+1\) would require an earlier copy of \(i+r\), contradicting the
induction hypothesis.

Thus the final entry \(s_{j+k}=i+k\) has exactly one copy of \(i+k-1\) to its
left.  By maximality of the run, its successor, if it exists, is at most
\(i+k\).  Hence this final entry is extractable, contradicting that \(S\) is a
full skeleton.  Therefore every positive entry \(s_q=i\) has at least two
copies of each smaller value to its left.

For each \(r=0,1,\ldots,i-1\), let \(p_r\) be the second occurrence of \(r\)
to the left of \(q\).  Then \(p_r\) is not the leftmost occurrence of its
value and \(s_{p_r}=r<i=s_q\), so \((p_r,q)\) is a type~\textup{(B)} deficit
pair in Proposition~\ref{prop:deficit-pair-count}.  These \(i\) pairs are
distinct for fixed \(q\), and pairs with different right-hand endpoints are
distinct.  Therefore
\[
  \defc(S)\ge \sum_{q:s_q>0}s_q=\area(S),
\]
as required.
\end{proof}
\begin{proposition}[String decomposition]
\label{prop:decomposition}
Fix \(n\ge 4\), set \(M=\binom n2\), and let \(d\le 2n-8\).  Put
\[
  \ell=\left\lfloor\frac{M-d}{2}\right\rfloor .
\]
The Dyck sequences of length \(n\), deficit \(d\), and area at most \(\ell\)
are partitioned into strings
\[
  \mathcal S(S)=
  \bigl\{S,\mathrm{up}(S),\mathrm{up}^2(S),\ldots,
  \mathrm{up}^{\ell-\area(S)}(S)\bigr\},
\]
where \(S\) ranges over the special Dyck skeletons of length \(n\), deficit
\(d\), and area at most \(\ell\).
\end{proposition}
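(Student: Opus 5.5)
Let \(\mathcal D\) be the set of Dyck sequences of length \(n\), deficit \(d\), and area at most \(\ell\). The plan is to regard \(\mathcal D\) as the vertex set of a directed graph with edges \(x\to\mathrm{up}(x)\) for \(\area(x)\le\ell-1\), and to show that this graph is a disjoint union of paths. Each path should start at a special skeleton and climb one area level per step until it reaches area \(\ell\).

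\textbf{Step 1: \(\mathrm{up}\) is a well-defined injective map within \(\mathcal D\).} For \(x\in\mathcal D\) with \(\area(x)\le\ell-1\), Proposition~\ref{prop:up-wd} gives that \(\mathrm{up}(x)\) is defined. Lemma~\ref{lem:up-down-statistics} gives \(\area(\mathrm{up}(x))=\area(x)+1\le\ell\) and \(\defc(\mathrm{up}(x))=d\), so \(\mathrm{up}(x)\in\mathcal D\). Lemma~\ref{lem:up-down-inv}(i) gives \(\mathrm{down}(\mathrm{up}(x))=x\), so \(\mathrm{up}\) is injective. Consequently every \(y\in\mathcal D\) has at most one \(\mathrm{up}\)-preimage.

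\textbf{Step 2: identify the elements of \(\mathcal D\) with no preimage.} I claim these are exactly the special skeletons in \(\mathcal D\). Suppose \(y\in\mathcal D\) is not a special skeleton.
\begin{itemize}
\item By Proposition~\ref{prop:down-wd}, \(\mathrm{down}(y)\) is defined.
\item By Lemma~\ref{lem:up-down-statistics}, \(\mathrm{down}(y)\) has deficit \(d\) and area \(\area(y)-1\le\ell-1\), so it lies in \(\mathcal D\).
\item By Lemma~\ref{lem:up-down-inv}(ii), \(\mathrm{up}(\mathrm{down}(y))=y\), so \(y\) has a preimage.
\end{itemize}
Area nonnegativity is automatic here: \(\mathrm{down}(y)\) is a Dyck sequence, so its area is nonnegative and the area drop never goes below zero.

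Conversely, suppose \(S\) is a special skeleton and \(\mathrm{up}(x)=S\) for some \(x\in\mathcal D\). I would examine which branch of Definition~\ref{def:up} produced \(S\).
\begin{itemize}
\item The special branch outputs \(\epsilon_n\), which is excluded from special skeletons by definition.
\item The skeleton branch outputs a word containing an injected entry. The first extraction of \(\mathrm{down}\) removes this entry (proof of Lemma~\ref{lem:up-down-inv}), so the output has an extractable element and is not a full skeleton.
\item In a non-skeleton branch, the same proof shows that the first \(k+1\) leftmost extractions from \(\mathrm{up}(x)\) exist.
\end{itemize}
In every case \(S\) would have an extractable element, which contradicts fullness. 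Hence the special skeletons are exactly the sources.

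\textbf{Step 3: assemble the strings.} Since \(\mathrm{up}\) strictly increases area, there are no cycles. Since \(\mathrm{up}\) is injective with sources as above, every \(y\in\mathcal D\) lies on a unique maximal chain obtained by applying \(\mathrm{down}\) repeatedly, and this chain must end at a special skeleton \(S\). Going forward from \(S\), \(\mathrm{up}\) remains defined at every element with area at most \(\ell-1\). The chain therefore continues through every area value up to exactly \(\ell\), giving the string \(\{S,\dots,\mathrm{up}^{\ell-\area(S)}(S)\}\).

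The strings are disjoint because \(\mathrm{up}\) is injective and distinct strings start at distinct sources. They cover \(\mathcal D\) by the backward-chain argument. Proposition~\ref{prop:area-leq-defc} is not strictly needed for the partition itself, since the sources are restricted to area at most \(\ell\) by hypothesis. It is useful only as a sanity check that such skeletons satisfy \(\area\le d\le\ell\) once \(n\) is large.

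The main obstacle is the converse half of Step 2: proving that no \(\mathrm{up}\)-image is a full skeleton. I would extract this directly from the proof of Lemma~\ref{lem:up-down-inv}, which shows that \(\mathrm{down}\)'s first extraction succeeds on every \(\mathrm{up}\)-image other than \(\epsilon_n\).
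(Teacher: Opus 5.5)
Your proposal is correct and follows essentially the same route as the paper: it combines Propositions~\ref{prop:up-wd} and~\ref{prop:down-wd}, Lemma~\ref{lem:up-down-statistics}, and Lemma~\ref{lem:up-down-inv}, following $\mathrm{down}$-chains down to special skeletons and $\mathrm{up}$-chains up to area $\ell$. The point you flag as the main obstacle is that no $\mathrm{up}$-image is a special skeleton, which the paper leaves implicit in its disjointness claim; it follows directly from the statement of Lemma~\ref{lem:up-down-inv}\textup{(i)}, with no need to reopen its proof: a special skeleton $S$ has no extractable element and $S\ne\epsilon_n$, so $\mathrm{down}(S)$ is undefined, and hence $\mathrm{down}(\mathrm{up}(x))=x$ rules out $\mathrm{up}(x)=S$.
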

\begin{proof}
Let \(D\) be a Dyck sequence of length \(n\), deficit \(d\), and area at most
\(\ell\).  If \(D\) is a special skeleton, it is the initial element of its own
string.  Otherwise Proposition~\ref{prop:down-wd} applies and
\(\mathrm{down}(D)\) is defined.  By Lemma~\ref{lem:up-down-statistics}, it has
the same deficit and area \(\area(D)-1\).  If the result is still not a special
skeleton, apply \(\mathrm{down}\) again.  Since area decreases by one at each
step and remains nonnegative, the process terminates at some special skeleton
\(S\).  This gives
\[
  D=\mathrm{up}^r(S)
\]
by Lemma~\ref{lem:up-down-inv}, with \(r=\area(D)-\area(S)\).
Conversely, let \(S\) be a special skeleton of deficit \(d\) and area at most
\(\ell\).  If \(0\le r\le \ell-\area(S)\), repeated use of
Proposition~\ref{prop:up-wd} defines \(\mathrm{up}^r(S)\), because all
intermediate areas are \(<\ell\).  Lemma~\ref{lem:up-down-statistics} shows that
the deficit remains \(d\) and the area becomes \(\area(S)+r\).
The two constructions are inverse because Lemma~\ref{lem:up-down-inv} states
that \(\mathrm{up}\) and \(\mathrm{down}\) are mutual inverses whenever both
sides are in the stated domains.  Hence the strings are disjoint and cover the
specified set of Dyck sequences.
\end{proof}

\begin{theorem}[Skeleton formula for the low-deficit part of \(C_n(q,t)\)]
\label{thm:qt-catalan-skeleton}
For \(n\ge 4\),
\[
  \left.C_n(q,t)\right|_{\binom n2-2n+8\le \deg_{q,t}\le \binom n2}
  =
  \sum_{\substack{S\text{ special Dyck skeleton of length }n\\
                  \defc(S)\le 2n-8}}
  \frac{q^{\dinv(S)+1}t^{\area(S)}
        -q^{\area(S)}t^{\dinv(S)+1}}{q-t}.
\]
\end{theorem}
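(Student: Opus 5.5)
The plan is to work one deficit slice at a time and combine the string decomposition of Proposition~\ref{prop:decomposition} with the \(q,t\)-symmetry of \(C_n(q,t)\) (Carlsson--Mellit).

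Fix \(d\le 2n-8\) and put \(N=\binom n2-d\). The terms of \(C_n(q,t)\) of deficit \(d\) are exactly its terms of total degree \(N\). Write this slice as \(\sum_{a=0}^{N}c_a q^{a}t^{N-a}\), where \(c_a\) counts Dyck sequences of length \(n\), deficit \(d\), and area \(a\); here I use the convention \(C_n=\sum q^{\area}t^{\dinv}\) of Theorem~\ref{thm:section4-qt-catalan-formula}. Symmetry gives \(c_a=c_{N-a}\). Summing over \(d\le 2n-8\) recovers the left-hand side, since degree \(\ge \Delta_n\) is equivalent to deficit \(\le 2n-8\).

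\textbf{Low half.} With \(\ell=\lfloor N/2\rfloor\), Proposition~\ref{prop:decomposition} partitions the sequences with area \(\le \ell\) into strings \(\mathcal S(S)\). Each string covers every area from \(\area(S)\) to \(\ell\) exactly once, because Lemma~\ref{lem:up-down-statistics} shows that \(\mathrm{up}\) raises area by one and preserves deficit. Hence for \(a\le \ell\),
\[
c_a=\#\{S \text{ special},\ \defc(S)=d,\ \area(S)\le a\}.
\]
I must also check that every special skeleton of deficit \(d\) has area \(\le \ell\), so that no skeleton is omitted from the right-hand sum. Proposition~\ref{prop:area-leq-defc} gives \(\area(S)\le d\). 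I then need \(d\le \ell\), i.e.\ \(3d\le \binom n2\), which holds for \(d\le 2n-8\) and all \(n\ge4\). This inequality, \(6(2n-8)\le n(n-1)\), is the small arithmetic point to verify.

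\textbf{Symmetrize.} For one skeleton \(S\) with \(a_0=\area(S)\), \(\dinv(S)=N-a_0\), the symmetric rational expression is
\[
\frac{q^{N-a_0+1}t^{a_0}-q^{a_0}t^{N-a_0+1}}{q-t}=\sum_{a=a_0}^{N-a_0} q^{a}t^{N-a}.
\]
This is the indicator of the interval \([a_0,N-a_0]\), which is symmetric under \(a\mapsto N-a\); its nonemptiness is again \(a_0\le N/2\). I then compare coefficients of \(q^at^{N-a}\). For \(a\le\ell\), the sum over skeletons counts those with \(a_0\le a\), which equals \(c_a\) by the low-half step. For \(a>\ell\) we have \(N-a\le \ell\); both sides are symmetric, so the coefficient is \(c_{N-a}=c_a\).

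Summing over \(d\) gives the theorem. If \(N\) is odd, both middle coefficients \(a=\ell\) and \(a=\ell+1\) are covered by this argument; if \(N\) is even, \(a=\ell\) is the center.

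The main obstacle is already absorbed into Proposition~\ref{prop:decomposition}, which rests on the well-definedness lemmas. Within the present proof, the delicate points are exhaustiveness of the skeleton index set and the parity bookkeeping at the middle degree. I would handle both as above: the inequality \(d\le \ell\) guarantees every special skeleton of deficit \(d\) appears, and symmetry fills in the high half.
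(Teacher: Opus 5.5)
Your proposal is correct and follows essentially the same route as the paper's proof. Both slice by deficit, use the $\mathrm{up}$-string decomposition of Proposition~\ref{prop:decomposition} for the low half (with Proposition~\ref{prop:area-leq-defc} and $3d\le\binom n2$ ensuring every special skeleton of deficit $d$ has area at most $\ell$), and use the $q,t$-symmetry of $C_n(q,t)$ together with the symmetry of each interval $[\area(S),\dinv(S)]$ to supply the high half.
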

\begin{proof}
Use the known symmetry of \(C_n(q,t)\) in \(q\) and \(t\).  Set
\(M=\binom n2\).  For a Dyck sequence \(D\) of deficit \(d\),
\[
  \area(D)+\dinv(D)=M-d.
\]
For each \(d\), let \(\operatorname{SDS}_n(d)\) denote the set of special Dyck
skeletons of length \(n\) and deficit \(d\).  If \(S\in\operatorname{SDS}_n(d)\),
write
\[
  a_S=\area(S),
  \qquad
  \nu_S=\dinv(S)=M-d-a_S.
\]
By Proposition~\ref{prop:area-leq-defc}, \(a_S\le d\).  Since \(n\ge4\) and
\(d\le2n-8\), we have \(3d\le M\), hence \(d\le(M-d)/2\).  Therefore
\[
  2a_S\le M-d,
\]
which is equivalent to \(a_S\le\nu_S\).  The finite geometric-series identity gives
\[
  \frac{q^{\nu_S+1}t^{a_S}-q^{a_S}t^{\nu_S+1}}{q-t}
  =
  \sum_{j=a_S}^{\nu_S}q^j t^{M-d-j}.
\]
Thus the right-hand side of the theorem is
\[
  R=
  \sum_{d=0}^{2n-8}
  \sum_{S\in\operatorname{SDS}_n(d)}
  \sum_{j=\area(S)}^{M-d-\area(S)} q^j t^{M-d-j}.
\]
Fix \(d\) with \(0\le d\le 2n-8\), and put
\[
  \ell_d=\left\lfloor\frac{M-d}{2}\right\rfloor .
\]
The fixed-deficit-\(d\) part of \(C_n(q,t)\) is homogeneous of total degree
\(M-d\).  Its terms with \(q\)-power at most \(t\)-power are exactly the
terms with area at most \(\ell_d\).
On the right-hand side, the coefficient of \(q^j t^{M-d-j}\) in the
fixed-deficit-\(d\) part of \(R\), for \(j\le \ell_d\), counts the special
skeletons \(S\in\operatorname{SDS}_n(d)\) with \(\area(S)\le j\).  The upper
endpoint condition is automatic in this low half.  For each such \(S\),
Proposition~\ref{prop:decomposition} gives exactly one element of the string
\(\mathcal S(S)\) with area \(j\), namely
\(\mathrm{up}^{j-\area(S)}(S)\).  Conversely, Proposition~\ref{prop:decomposition}
says that every Dyck sequence of length \(n\), deficit \(d\), and area at most
\(\ell_d\) occurs in exactly one such string.  Hence the fixed-deficit-\(d\)
parts of \(R\) and \(C_n(q,t)\) agree on all monomials with \(q\)-power at most
\(t\)-power.
The fixed-deficit-\(d\) part of \(C_n(q,t)\) is symmetric in \(q,t\), because
it is the homogeneous total-degree-\(M-d\) part of the symmetric polynomial
\(C_n(q,t)\).  The fixed-deficit-\(d\) part of \(R\) is also symmetric: for each
skeleton \(S\), the interval
\[
  j=\area(S),\ldots,M-d-\area(S)
\]
is invariant under \(j\mapsto M-d-j\).  Two symmetric homogeneous polynomials
of degree \(M-d\) that agree for all exponents \(j\le (M-d)/2\) agree
identically.  Therefore the fixed-deficit-\(d\) pieces are equal for every
\(d\le 2n-8\), and summing over \(d\) proves the theorem.
\end{proof}
In particular, we recover the following low-deficit partition formula of Lee and
Li~\cite{LeeLi2011}.

\begin{corollary}[\cite{LeeLi2011}; partition formula for deficit at most \(n-3\)]
\label{cor:partition-formula}
For \(n\ge 1\),
\[
  \left.C_n(q,t)\right|_{\binom n2-n+3\le \deg_{q,t}\le \binom n2}
  =
  \sum_{\substack{\lambda\text{ a partition}\\ |\lambda|\le n-3}}
  \frac{q^{\binom n2-|\lambda|-\ell(\lambda)+1}t^{\ell(\lambda)}
        -q^{\ell(\lambda)}t^{\binom n2-|\lambda|-\ell(\lambda)+1}}{q-t},
\]
where \(|\lambda|\) is the size of \(\lambda\) and \(\ell(\lambda)\) is the
number of parts of \(\lambda\).
\end{corollary}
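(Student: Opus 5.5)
We would derive the corollary from Theorem~\ref{thm:qt-catalan-skeleton} by identifying, for \(d\le n-3\), the special Dyck skeletons of length \(n\) and deficit \(d\) with partitions of \(d\), so that \(\area(S)=\ell(\lambda)\) and \(\defc(S)=|\lambda|\). Given this, \(\dinv(S)=\binom n2-|\lambda|-\ell(\lambda)\), and each summand of the theorem becomes the displayed summand of the corollary. The theorem covers deficit up to \(2n-8\), and \(n-3\le 2n-8\) exactly when \(n\ge5\). We then restrict both sides to total degree \(\ge\binom n2-n+3\); the left side of the theorem is a sum over the fixed-deficit slices, so this restriction keeps exactly the slices \(d\le n-3\). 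The cases \(n\le4\) involve only deficits \(\le1\) and a handful of Dyck sequences, and we would check them directly. (For \(n=4\) the theorem gives only the \(d=0\) slice, so the \(d=1\) slice must be computed by hand.)

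\textbf{Step 1: the \(\{0,1\}\) skeletons.} Let \(S\) be a \(\{0,1\}\)-valued Dyck sequence beginning \(0,0\).
\begin{itemize}
\item No \(0\) is ever eligible. A \(1\) is eligible only if exactly one \(0\) precedes it, and this cannot happen because the sequence begins \(0,0\). Hence every such \(S\) is a full skeleton.
\item Conversely, a \(\{0,1\}\) full skeleton must begin \(0,0\). Otherwise the first \(1\) at index \(1\) would be eligible: its successor is at most \(1\), or it is the last entry.
\item Type~(A) pairs of Proposition~\ref{prop:deficit-pair-count} are absent, since all values differ by at most \(1\). The type~(B) pairs are the pairs (non-first \(0\), later \(1\)).
\item Therefore \(\defc(S)=\sum_{\text{ones}}(z-1)\), where \(z\) is the number of zeros preceding that one.
\end{itemize}
Record the multiset of values \(z-1\) over the ones. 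This is a partition \(\lambda\) with parts \(\ge1\), with \(\ell(\lambda)=\area(S)\) and \(|\lambda|=\defc(S)\). The sequence of values \(z\) along the ones is weakly increasing, so \(S\) is recovered from \(\lambda\) and \(n\). Any partition is realized provided its largest part is at most \(n-\ell(\lambda)-1\). When \(|\lambda|\le n-3\) we have \(\lambda_1\le n-3-(\ell-1)\), so this bound always holds. Finally, \(\epsilon_n\) has deficit \(1+(n-3)=n-2\), so excluding it does not affect the range \(d\le n-3\).

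\textbf{Step 2 (main obstacle): full skeletons with maximum \(\ge2\) have deficit \(\ge n-2\).} By the first part of the proof of Proposition~\ref{prop:area-leq-defc}, any full skeleton containing the value \(2\) has, before the first \(2\) (at index \(q\)), at least two \(0\)'s and at least two \(1\)'s. We count distinct deficit pairs as follows.
\begin{itemize}
\item For each of the \(q-2\) non-first-occurrence entries before \(q\) (values \(0\) or \(1\)), pair it with \(q\). This is type~(B).
\item For each later index \(j>q\): if \(x_j=0\), the pair \((q,j)\) is type~(A). If \(x_j\ge1\), pair the second \(0\) with \(j\); this is type~(B).
\item Add the pair (second \(0\), second \(1\)). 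Its right endpoint lies before \(q\), so it differs from all pairs above.
\end{itemize}
These pairs have distinct right endpoints, or distinct left endpoints when the right endpoint is \(q\). Hence \(\defc\ge(q-2)+(n-1-q)+1=n-2\). A skeleton with maximum \(\ge2\) contains a \(2\), since a Dyck sequence reaching a value \(\ge2\) passes through \(2\). So Step 2 applies to every such skeleton. This makes Step 1 a complete description of the special skeletons with \(\defc\le n-3\). Substituting into Theorem~\ref{thm:qt-catalan-skeleton} gives the corollary.

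The part needing the most care is Step 2: keeping the pair count injective, and in particular not double-counting pairs whose right endpoint is \(q\).
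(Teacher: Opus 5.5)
Your proposal is correct and follows essentially the paper's proof. You reduce to Theorem~\ref{thm:qt-catalan-skeleton} for $n\ge5$, enumerate $n\le4$ directly, show that special skeletons of deficit at most $n-3$ are binary and begin $0,0$, and match them with partitions by recording the number of non-initial zeros before each $1$. The paper does the same, except that it excludes $\epsilon_n$ by noting that the constructed skeleton ends in $0$, rather than by computing $\defc(\epsilon_n)=n-2$ as you do.

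The only real variation is in the binary-ness step. You count pairs around the first $2$ using the two-copies lemma from the proof of Proposition~\ref{prop:area-leq-defc}. The paper instead pairs each position $j\ge2$ with either the second $0$ or a fixed $2$, which needs only that the skeleton begins $0,0$. Both arguments give $\defc\ge n-2$.
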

\begin{proof}
We use the known \(q,t\)-symmetry of \(C_n(q,t)\) through
Theorem~\ref{thm:qt-catalan-skeleton}.  For \(n\le 2\), the deficit bound
\(\defc\le n-3\) is negative, so the restricted part and the sum are both zero.
For \(n=3\), the bound is
\(\defc=0\), and direct enumeration gives
\[
  q^3+q^2t+qt^2+t^3=\frac{q^4-t^4}{q-t},
\]
which is the contribution of the empty partition.  For \(n=4\), direct
enumeration gives the deficit-zero contribution
\[
  \frac{q^7-t^7}{q-t}
\]
and the deficit-one contribution
\[
  \frac{q^5t-qt^5}{q-t},
\]
corresponding respectively to the partitions \(\varnothing\) and \((1)\).
Assume now that \(n\ge 5\).  Then \(n-3\le 2n-8\), so
Theorem~\ref{thm:qt-catalan-skeleton} applies throughout the required deficit
range.  It remains to identify the special Dyck skeletons of length \(n\) and
deficit at most \(n-3\) with partitions \(\lambda\) of size at most \(n-3\), in
a way that sends
\[
  \defc(S)\mapsto |\lambda|,
  \qquad
  \area(S)\mapsto \ell(\lambda).
\]
Let \(S\) be such a special skeleton.  A full skeleton cannot begin with
\(0,1\): if its maximal initial increasing run is
\(0,1,\ldots,r\) with \(r\ge 1\), then the final entry of that run satisfies the
extractability conditions, a contradiction.  Thus, since \(S\) is a Dyck
sequence, it begins with \(0,0\).
We claim that \(S\) is binary.  If an entry at least \(2\) occurs, then some
entry equal to \(2\) occurs; fix one such occurrence.  The second entry, a
noninitial \(0\), forms a type~\textup{(B)} deficit pair with every later
nonzero entry.  Also, every \(0\) after the first two positions forms a deficit
pair with the chosen \(2\): if that \(0\) is before the chosen \(2\), it forms
a type~\textup{(B)} pair with the chosen \(2\), and if it is after the chosen
\(2\), it forms a type~\textup{(A)} pair with it.  Therefore every one of the
\(n-2\) positions after the first two contributes a distinct deficit pair,
contradicting \(\defc(S)\le n-3\).  Hence \(S\) has only \(0\)s and \(1\)s.
For a binary special skeleton, the deficit pairs are exactly the pairs
\((i,j)\) with
\[
  i<j,
  \qquad
  S_i=0,
  \qquad
  S_j=1,
  \qquad
  i\ne 0.
\]
Thus
\[
  \defc(S)=
  \sum_{j:S_j=1}\#\{i<j:S_i=0,\ i\ne 0\},
  \qquad
  \area(S)=\#\{j:S_j=1\}.
\]
Read \(S\) from left to right.  Each time a \(1\) is encountered, record the
number of preceding \(0\)s, not counting the initial \(0\).  This gives a
weakly increasing list
\[
  c_1\le c_2\le\cdots\le c_m.
\]
Reverse the list to obtain the partition
\(\lambda=(c_m,c_{m-1},\ldots,c_1)\), with the empty list giving the empty
partition.  The number of parts is the number of \(1\)s in \(S\), so
\(\ell(\lambda)=\area(S)\).  The size is
\[
  |\lambda|=c_1+\cdots+c_m=\defc(S).
\]
The construction is injective, because the recorded list and the total length
recover the binary word: first place the initial \(0\), then insert \(c_1\)
additional zeros before the first \(1\), \(c_2-c_1\) additional zeros before the
second \(1\), and so on, and finally append the number of trailing zeros needed
to restore length \(n\).
Conversely, let \(\lambda\) be a partition with \(|\lambda|\le n-3\).  If
\(\lambda\) is empty, take the all-zero skeleton of length \(n\).  Otherwise,
list the parts of \(\lambda\) in weakly increasing order
\[
  c_1\le c_2\le\cdots\le c_m.
\]
Construct a binary word \(W\) by writing one \(1\) for each part and inserting,
immediately before the \(r\)-th \(1\), enough \(0\)s so that exactly \(c_r\)
zeros precede that \(1\) inside \(W\).  This word has \(|\lambda|\) zero-before-one
pairs by construction.
The word \(W\) has length at most \(n-2\).  If it had length \(L\ge n-1\), then
the first \(0\) in \(W\) and the last \(1\) in \(W\), together with each symbol
between them, would give at least \(L-1\ge n-2\) zero-before-one pairs,
contradicting \(|\lambda|\le n-3\).  Now prepend one initial \(0\) and append
trailing \(0\)s until the total length is \(n\).  The result is a binary Dyck
sequence.  No \(0\) is extractable, and every \(1\) has at least two \(0\)s to
its left, so no \(1\) is extractable.  Since at least one trailing \(0\) was
appended, the resulting full skeleton is special.  It maps back to
\(\lambda\), proving surjectivity.
Substituting
\[
  \defc(S)=|\lambda|,
  \qquad
  \area(S)=\ell(\lambda),
  \qquad
  \dinv(S)=\binom n2-|\lambda|-\ell(\lambda)
\]
in Theorem~\ref{thm:qt-catalan-skeleton} gives the displayed formula.
\end{proof}
\begin{remark}[Flat middle coefficients]
\label{rem:flat-middle-coefficients}
The skeleton formula in Theorem~\ref{thm:qt-catalan-skeleton} also gives a
flat middle band.  Let \(n\ge 4\), let \(M=\binom n2\), and let
\(0\le d\le 2n-8\).  Proposition~\ref{prop:area-leq-defc} implies that if
\(S\) is a special Dyck skeleton of deficit \(d\), then
\(\area(S)\le d\) and
\[
  \dinv(S)=M-d-
  \area(S)\ge M-2d.
\]
Thus the interval contribution of \(S\) in
Theorem~\ref{thm:qt-catalan-skeleton} contains every monomial
\(q^j t^{M-d-j}\) with \(d\le j\le M-2d\).  It follows that, in this range, the
coefficient of \(q^j t^{M-d-j}\) is independent of \(j\) for
\(d\le j\le M-2d\), and is equal to the number of special Dyck skeletons of
length \(n\) and deficit \(d\).
\end{remark}
\begin{conjecture}[Full flat-middle range]
\label{conj:middle-coefficients}
Let \(M=\binom n2\).  For any \(n\ge 1\) and any
\(0\le d\le \lfloor M/3\rfloor\), the coefficient of \(q^j t^{M-d-j}\) in
\(C_n(q,t)\) is the same for all integers \(j\) satisfying
\[
  d\le j\le M-2d.
\]
\end{conjecture}

\section{Concluding remarks}
\label{sec:concluding-remarks}

The construction in Section~\ref{sec:full-skeletons} leaves several natural
directions open.  The first is to extend the local maps beyond the seven-window
stage used in Theorem~\ref{thm:qt-catalan-skeleton}.  Computations suggest that
there is a distinguished continuation of the maps
\(\mathrm{East}_{2k+1}\) and \(\mathrm{West}_{2k+1}\) to larger values of
\(k\).  If such a continuation could be made uniform, it would give a
decomposition of all Dyck paths of fixed deficit \(d\le 2n-8\) across the
entire area range.  This would give a completely combinatorial proof of
\(q,t\)-symmetry in this low-deficit range, without appealing to the global
symmetry of \(C_n(q,t)\).

We have computed several higher-order instances of these local maps.  Two of
the phenomena observed for \(k\in\{1,2,3\}\) appear to continue in these
examples: the surrounding conditions seem to force the choice of the East--West
maps, and the resulting forced maps are reversal conjugates.  At present,
however, the computed maps have not revealed a simple enough pattern from which
to state a useful general definition for arbitrary \(k\).

From this viewpoint, the construction of the higher East--West maps appears to
be the main missing ingredient for a fully combinatorial treatment of the range
\(\defc\le 2n-8\).  The exceptional assignment
\[
  \omega_n=(0,\ldots,0,1)
  \longmapsto
  \epsilon_n=(0,0,1,0,\ldots,0,1)
\]
seems to remain the only place in this range where the natural
\(\mathrm{up}/\mathrm{down}\) mechanics must be altered artificially.  The
need for this alteration is better viewed as a peculiarity of \(\omega_n\),
which has no natural ordinary-skeleton target, than as a defect of the full
skeleton \(\epsilon_n\).

Beyond the bound \(\defc\le 2n-8\), such exceptional adjustments appear to
accumulate quickly.  In particular, it becomes difficult even to formulate the
right initial objects, or equivalently to find a useful generalization of
special Dyck skeletons.  We expect another major change in behavior once the
deficit passes \(\binom n2/3\).  If \(M=\binom n2\), then the interval
\(d\le j\le M-2d\) in Conjecture~\ref{conj:middle-coefficients} ceases to be
nonempty once \(d>M/3\), so the same flat-middle phenomenon cannot persist in
that form beyond this threshold.

A second direction is rational \(q,t\)-Catalan combinatorics.  The author
generalized part of the Lee--Li--Loehr chain-decomposition framework to
rational \(q,t\)-Catalan polynomials in the subfamily
\(r\equiv1\pmod{s}\).  In the same spirit, we expect the constructions in this
paper to extend naturally to that subfamily: the Dyck-symmetric-function Schur
positivity, the Dyck-skeleton and Dyck-tableau formula for \(C_n(q,t)\), and
the low-deficit skeleton formula should all have analogues for
\(\mathcal C_{r/s}(q,t)\) when \(r\equiv1\pmod{s}\).  This expectation is based
on the compatibility of that framework with the present one, together with
the behavior of the constructions in examples.  By contrast, extending these
constructions to the full rational case should be substantially harder, since
the special features of the \(r\equiv1\pmod{s}\) case no longer provide the
same direct analogue of the ordinary Dyck-sequence operations used here.

\appendix

\section{Reusable computations and string examples}
\label{app:computations}
This appendix records computational constructions used in the finite
checks and later examples.  The code is included for reproducibility and
to make the displayed examples unambiguous.
These routines do not replace the proofs of the local well-definedness lemmas: the
finite checks used in the proofs in the next appendix add their own
assertions and
range restrictions.
\paragraph{Reproducibility.}
The listings are plain Python programs using only the standard library.  Run
them with ordinary assertion checking enabled; Python's optimized mode disables
\lstinline{assert}.  Some listings are standalone scripts; others should be
appended after the core routines in
Subsection~\ref{subsec:core-code} before execution.
Displayed outputs are complete successful runs or explicitly labeled excerpts.
\subsection{Core Dyck-sequence routines}
\label{subsec:core-code}
The first listing contains the common routines for Dyck sequences, deficit, the
leftmost extraction convention, injection, the local East--West maps, and the
global maps \(\mathrm{up}\) and \(\mathrm{down}\).  The functions
\lstinline{up} and \lstinline{down} return both the image and the local level
\(3,5,7\) used in the construction.  The code separates the basic operations
so that later finite checkers can impose their own hypotheses.
\begin{lstlisting}
from itertools import combinations
from math import comb

def is_Dyck(S):
    S = tuple(S)
    return (
        len(S) > 0
        and S[0] == 0
        and all(isinstance(x, int) and x >= 0 for x in S)
        and all(S[i + 1] <= S[i] + 1 for i in range(len(S) - 1))
    )

def generate_Dycks(n):
    out = []
    def rec(S):
        if len(S) == n:
            out.append(tuple(S))
            return
        for x in range(S[-1] + 2):
            rec(S + [x])
    rec([0])
    return out

def area(S):
    return sum(S)

def dinv(S):
    S = tuple(S)
    return sum(
        1
        for i in range(len(S))
        for j in range(i + 1, len(S))
        if S[i] == S[j] or S[i] == S[j] + 1
    )

def defc(S):
    return comb(len(S), 2) - area(S) - dinv(S)

def find_extractable(S):
    S = tuple(S)
    for j, x in enumerate(S):
        if x == 0:
            continue
        if sum(1 for i in range(j) if S[i] == x - 1) != 1:
            continue
        if j + 1 < len(S) and S[j + 1] > x:
            continue
        return j, x
    return None

def remove_at(S, j):
    S = tuple(S)
    return S[:j] + S[j + 1:]

def is_full_skeleton(S):
    return is_Dyck(S) and find_extractable(S) is None

def epsilon(n):
    return () if n < 4 else tuple([0, 0, 1] + [0] * (n - 4) + [1])

def omega(n):
    return tuple([0] * (n - 1) + [1])

def is_special_skeleton(S):
    S = tuple(S)
    return is_full_skeleton(S) and S != epsilon(len(S))

def inject(S, e):
    S = tuple(S)
    for i, x in enumerate(S):
        if x == e - 1:
            ans = S[:i + 1] + (e,) + S[i + 1:]
            assert is_Dyck(ans)
            return ans
    raise ValueError(f"cannot inject {e} into {S}")

def inject_right_to_left(base, entries):
    out = tuple(base)
    for e in reversed(tuple(entries)):
        out = inject(out, e)
    return out
# Local affine/reverse helpers.

def bk2(a, b):
    return (b, a) if a > b + 1 else (a, b)

def fw2(a, b):
    return (b, a) if b > a + 1 else (a, b)

def bk3(a, b, c):
    if a > b + 1:
        a, b = b, a
    if b > c + 1:
        b, c = c, b
    if a > b + 1:
        a, b = b, a
    return a, b, c
# Local East and West maps.

def East3(W):
    W = tuple(W)
    assert len(W) == 3
    return W if W[1] <= W[2] + 1 else None

def East5(W):
    W = tuple(W)
    assert len(W) == 5
    x_m2, x_m1, x_0, x_1, x_2 = W
    y_m1, y_0 = bk2(x_m1, x_0)
    if x_m1 > x_1 + 1 and y_0 <= x_2 + 1:
        return (x_m2, x_1, y_m1, y_0, x_2)
    if x_m1 <= x_1 + 1 and x_m1 <= x_2 + 1:
        return (x_m2, x_1, x_0, x_m1, x_2)
    return None
_CASE4A = {
    (3, 3, 4, 1, 2): (1, 2, 4, 3, 3),
    (3, 4, 4, 1, 2): (1, 2, 4, 3, 4),
    (4, 3, 4, 1, 2): (1, 2, 4, 4, 3),
    (2, 3, 4, 1, 2): (1, 2, 4, 3, 2),
}
_CASE4B = {
    (3, 3, 4, 2, 1): (2, 1, 4, 3, 3),
    (3, 4, 4, 2, 1): (2, 1, 4, 3, 4),
    (4, 3, 4, 2, 1): (2, 1, 4, 4, 3),
    (2, 3, 4, 2, 1): (2, 1, 4, 3, 2),
}
_CASE4C = {
    (3, 4, 4, 2, 2): (2, 2, 4, 4, 3),
    (3, 4, 5, 2, 2): (2, 2, 5, 4, 3),
}
_CASE4D = {
    (3, 3, 4): lambda o: (2, o, 4, 3, 3),
    (3, 4, 4): lambda o: (2, o, 4, 3, 4),
    (4, 3, 4): lambda o: (2, o, 4, 4, 3),
    (2, 3, 4): lambda o: (2, o, 2, 4, 3),
    (3, 4, 2): lambda o: (2, o, 4, 3, 2),
}

def East7(W):
    W = tuple(W)
    assert len(W) == 7
    x_m3, x_m2, x_m1, x_0, x_1, x_2, x_3 = W
    if x_0 <= x_1 + 1:
        return W
    y_m1, y_0 = bk2(x_m1, x_0)
    if x_m1 > x_1 + 1 and y_0 <= x_2 + 1:
        return (x_m3, x_m2, x_1, y_m1, y_0, x_2, x_3)
    if x_m1 <= x_1 + 1 and x_m1 <= x_2 + 1:
        return (x_m3, x_m2, x_1, x_0, x_m1, x_2, x_3)
    if min(x_m2, x_m1, x_0) > max(x_1, x_2) + 1:
        return (x_m3,) + fw2(x_1, x_2) + bk3(x_m2, x_m1, x_0) + (x_3,)
    shift = max(x_1, x_2) - 2
    reduced = (x_m2 - shift, x_m1 - shift, x_0 - shift,
               x_1 - shift, x_2 - shift)
    for table in (_CASE4A, _CASE4B, _CASE4C):
        if reduced in table:
            return (x_m3,) + tuple(y + shift for y in table[reduced]) + (x_3,)
    if reduced[4] == 2 and reduced[3] <= 0 and reduced[:3] in _CASE4D:
        return (x_m3,) + tuple(y + shift for y in _CASE4D[reduced[:3]](reduced[3])) + (x_3,)
    raise ValueError(f"East7 undefined on {W}")

def rev(W):
    return tuple(reversed(tuple(W)))

def West3(W):
    ans = East3(rev(W))
    return None if ans is None else rev(ans)

def West5(W):
    ans = East5(rev(W))
    return None if ans is None else rev(ans)

def West7(W):
    return rev(East7(rev(W)))

def is_far_apart_decomposable(W):
    W = tuple(W)
    assert len(W) == 7
    indices = list(range(7))
    for p1 in combinations(indices, 2):
        if abs(W[p1[0]] - W[p1[1]]) < 2:
            continue
        r1 = [i for i in indices if i not in p1]
        for p2 in combinations(r1, 2):
            if abs(W[p2[0]] - W[p2[1]]) < 2:
                continue
            r2 = [i for i in r1 if i not in p2]
            for p3 in combinations(r2, 2):
                if abs(W[p3[0]] - W[p3[1]]) >= 2:
                    return True
    return False
# Global up and down maps.

def up(S):
    S = tuple(S)
    n = len(S)
    if S == omega(n):
        return epsilon(n), 3
    if is_full_skeleton(S):
        return inject(S[:-1], S[-1] + 1), 3
    j1, e1 = find_extractable(S)
    C1 = remove_at(S, j1)
    sigma1 = C1 + (e1 - 1,)
    if East3(sigma1[-3:]) is not None:
        ans = inject_right_to_left(sigma1[:-2], (sigma1[-2] + 1, sigma1[-1] + 1))
        return ans, 3
    j2, e2 = find_extractable(C1)
    C2 = remove_at(C1, j2)
    sigma2 = C2 + (e1 - 1, e2 - 1)
    W5 = East5(sigma2[-5:])
    if W5 is not None:
        base = sigma2[:-5] + W5[:2]
        ans = inject_right_to_left(base, tuple(x + 1 for x in W5[2:]))
        return ans, 5
    j3, e3 = find_extractable(C2)
    C3 = remove_at(C2, j3)
    sigma3 = C3 + (e1 - 1, e2 - 1, e3 - 1)
    W7 = sigma3[-7:]
    assert not is_far_apart_decomposable(W7)
    E7 = East7(W7)
    new_sigma3 = sigma3[:-7] + E7
    ans = inject_right_to_left(new_sigma3[:-4], tuple(x + 1 for x in new_sigma3[-4:]))
    return ans, 7

def down(S):
    S = tuple(S)
    n = len(S)
    if S == epsilon(n):
        return omega(n), 3
    j1, f1 = find_extractable(S)
    D1 = remove_at(S, j1)
    candidate = D1 + (f1 - 1,)
    if find_extractable(candidate) is None:
        assert is_Dyck(candidate)
        return candidate, 3
    j2, f2 = find_extractable(D1)
    D2 = remove_at(D1, j2)
    tau1 = D2 + (f1 - 1, f2 - 1)
    if West3(tau1[-3:]) is not None:
        return inject(tau1[:-1], tau1[-1] + 1), 3
    j3, f3 = find_extractable(D2)
    D3 = remove_at(D2, j3)
    tau2 = D3 + (f1 - 1, f2 - 1, f3 - 1)
    W5 = West5(tau2[-5:])
    if W5 is not None:
        base = tau2[:-5] + W5[:3]
        ans = inject_right_to_left(base, tuple(x + 1 for x in W5[3:]))
        return ans, 5
    j4, f4 = find_extractable(D3)
    D4 = remove_at(D3, j4)
    tau3 = D4 + (f1 - 1, f2 - 1, f3 - 1, f4 - 1)
    W7 = tau3[-7:]
    assert not is_far_apart_decomposable(W7)
    new_tau3 = tau3[:-7] + West7(W7)
    ans = inject_right_to_left(new_tau3[:-3], tuple(x + 1 for x in new_tau3[-3:]))
    return ans, 7
\end{lstlisting}
\subsection{Generating the lower-half strings}
\label{subsec:string-code}
For fixed \(n\) and \(d\le 2n-8\), set
\[
  \ell=\left\lfloor\frac{\binom n2-d}{2}\right\rfloor .
\]
The next routine constructs the lower half of the \(\mathrm{up}\)-string
decomposition.  It starts from all special Dyck skeletons of length \(n\), deficit
\(d\), and area at most \(\ell\), then repeatedly applies \lstinline{up} until
area \(\ell\) is reached.  The final two assertions are finite coverage checks
for the generated data; in the paper, their general validity is the content of
Proposition~\ref{prop:decomposition}.
\begin{lstlisting}
def make_strings(n, d):
    ell = (comb(n, 2) - d) // 2
    all_dyck = [S for S in generate_Dycks(n) if defc(S) == d]
    target = {S for S in all_dyck if area(S) <= ell}
    starts = sorted(
        [S for S in target if is_special_skeleton(S)],
        key=lambda S: (area(S), S),
    )
    strings = []
    levels = []
    for start in starts:
        chain = [start]
        current = start
        while area(current) < ell:
            nxt, level = up(current)
            assert defc(nxt) == d
            assert area(nxt) == area(current) + 1
            chain.append(nxt)
            levels.append((current, nxt, level))
            current = nxt
        strings.append(tuple(chain))
    covered = [S for chain in strings for S in chain]
    assert set(covered) == target
    assert len(covered) == len(set(covered))
    return tuple(strings), tuple(levels)
\end{lstlisting}
The routine also records the local level used at each step.  These
records are useful for coloring or annotating examples, but the string
decomposition itself only requires the sequence of Dyck words.
\subsection{An example decomposition}
\label{subsec:string-example}
For a larger illustration, take \(n=9\) and \(d=10\).  Then
\(M=\binom92=36\) and
\[
  \ell=\left\lfloor\frac{36-10}{2}\right\rfloor=13 .
\]
The special skeletons of deficit \(10\) and area at most \(13\)
generate 31 lower-half strings, containing 274 Dyck sequences in
all.  The rows below are indexed by area and the string columns are
displayed in blocks of five.  Blank cells mean that the corresponding
string has not yet started at that area.  Within a fixed string column, a
colored consecutive pair records one exceptional \(\mathrm{up}\) step: the
smaller-area entry is the source and the next larger-area entry is the target.
Blue marks an \(\mathrm{East}_5\) step, and orange marks an
\(\mathrm{East}_7\) step.
\par\medskip
\begingroup
\scriptsize
\newcommand{\dseq}[1]{\texttt{[#1]}}
\newcommand{\rseq}[1]{\textcolor{red}{\dseq{#1}}}
\newcommand{\bseq}[1]{\textcolor{blue}{\dseq{#1}}}
\newcommand{\gseq}[1]{\textcolor{orange}{\dseq{#1}}}
\setlength{\tabcolsep}{1.2pt}
\par\medskip
{\scriptsize%
\setlength{\tabcolsep}{1.2pt}%
\begin{tabular}{r|lllll}
\multicolumn{1}{r|}{\(\area\)} & $\mathrm{string}\,1$ & $\mathrm{string}\,2$ & $\mathrm{string}\,3$ & $\mathrm{string}\,4$ & $\mathrm{string}\,5$\\
\hline
2 & \dseq{0,0,0,0,0,0,1,1,0} & \dseq{0,0,0,0,0,1,0,0,1} &  &  & \\
3 & \dseq{0,1,0,0,0,0,0,1,1} & \dseq{0,0,0,0,0,1,2,0,0} & \dseq{0,0,0,0,1,1,0,1,0} & \dseq{0,0,0,1,0,0,1,1,0} & \dseq{0,0,0,1,0,1,0,0,1}\\
4 & \dseq{0,1,2,0,0,0,0,0,1} & \dseq{0,1,0,0,0,0,1,2,0} & \dseq{0,1,0,0,0,1,1,0,1} & \dseq{0,1,0,0,1,0,0,1,1} & \dseq{0,0,0,1,2,0,1,0,0}\\
5 & \dseq{0,1,2,2,0,0,0,0,0} & \bseq{0,1,1,0,0,0,0,1,2} & \dseq{0,1,2,0,0,0,1,1,0} & \dseq{0,1,2,0,0,1,0,0,1} & \dseq{0,1,0,0,1,2,0,1,0}\\
6 & \dseq{0,1,1,2,2,0,0,0,0} & \bseq{0,1,2,3,0,0,0,0,0} & \dseq{0,1,1,2,0,0,0,1,1} & \dseq{0,1,2,2,0,0,1,0,0} & \dseq{0,1,1,0,0,1,2,0,1}\\
7 & \dseq{0,1,1,1,2,2,0,0,0} & \dseq{0,1,1,2,3,0,0,0,0} & \dseq{0,1,2,1,2,0,0,0,1} & \dseq{0,1,1,2,2,0,0,1,0} & \dseq{0,1,2,1,0,0,1,2,0}\\
8 & \dseq{0,1,1,1,1,2,2,0,0} & \dseq{0,1,1,1,2,3,0,0,0} & \dseq{0,1,2,2,1,2,0,0,0} & \dseq{0,1,1,1,2,2,0,0,1} & \bseq{0,1,1,2,1,0,0,1,2}\\
9 & \dseq{0,1,1,1,1,1,2,2,0} & \dseq{0,1,1,1,1,2,3,0,0} & \dseq{0,1,1,2,2,1,2,0,0} & \dseq{0,1,2,1,1,2,2,0,0} & \bseq{0,1,2,3,2,1,0,0,0}\\
10 & \gseq{0,1,1,1,1,1,1,2,2} & \dseq{0,1,1,1,1,1,2,3,0} & \dseq{0,1,1,1,2,2,1,2,0} & \dseq{0,1,1,2,1,1,2,2,0} & \dseq{0,1,1,2,3,2,1,0,0}\\
11 & \gseq{0,1,2,3,3,1,1,0,0} & \gseq{0,1,1,1,1,1,1,2,3} & \bseq{0,1,1,1,1,2,2,1,2} & \gseq{0,1,1,1,2,1,1,2,2} & \dseq{0,1,1,1,2,3,2,1,0}\\
12 & \dseq{0,1,1,2,3,3,1,1,0} & \gseq{0,1,2,3,4,1,1,0,0} & \bseq{0,1,2,3,1,1,2,2,0} & \gseq{0,1,2,3,3,2,1,0,0} & \dseq{0,1,1,1,1,2,3,2,1}\\
13 & \dseq{0,1,1,1,2,3,3,1,1} & \dseq{0,1,1,2,3,4,1,1,0} & \dseq{0,1,1,2,3,1,1,2,2} & \dseq{0,1,1,2,3,3,2,1,0} & \dseq{0,1,2,1,1,1,2,3,2}\\
\end{tabular}%
}
\par\medskip
{\scriptsize%
\setlength{\tabcolsep}{1.2pt}%
\begin{tabular}{r|lllll}
\multicolumn{1}{r|}{\(\area\)} & $\mathrm{string}\,6$ & $\mathrm{string}\,7$ & $\mathrm{string}\,8$ & $\mathrm{string}\,9$ & $\mathrm{string}\,10$\\
\hline
3 & \dseq{0,0,1,0,0,0,1,0,1} &  &  &  & \\
4 & \dseq{0,0,1,2,0,0,0,1,0} & \dseq{0,0,0,1,1,0,1,1,0} & \dseq{0,0,0,1,1,1,0,0,1} & \dseq{0,0,0,1,1,2,0,0,0} & \dseq{0,0,1,0,0,1,1,1,0}\\
5 & \dseq{0,1,0,1,2,0,0,0,1} & \dseq{0,1,0,0,1,1,0,1,1} & \dseq{0,0,0,1,2,1,1,0,0} & \dseq{0,1,0,0,1,1,2,0,0} & \dseq{0,1,0,1,0,0,1,1,1}\\
6 & \dseq{0,1,2,0,1,2,0,0,0} & \dseq{0,1,2,0,0,1,1,0,1} & \dseq{0,1,0,0,1,2,1,1,0} & \dseq{0,1,1,0,0,1,1,2,0} & \dseq{0,1,2,0,1,0,0,1,1}\\
7 & \dseq{0,1,1,2,0,1,2,0,0} & \dseq{0,1,2,2,0,0,1,1,0} & \dseq{0,1,1,0,0,1,2,1,1} & \bseq{0,1,1,1,0,0,1,1,2} & \dseq{0,1,2,2,0,1,0,0,1}\\
8 & \dseq{0,1,1,1,2,0,1,2,0} & \dseq{0,1,1,2,2,0,0,1,1} & \dseq{0,1,2,1,0,0,1,2,1} & \bseq{0,1,2,3,1,0,0,1,0} & \dseq{0,1,2,2,2,0,1,0,0}\\
9 & \bseq{0,1,1,1,1,2,0,1,2} & \dseq{0,1,2,1,2,2,0,0,1} & \dseq{0,1,2,2,1,0,0,1,2} & \dseq{0,1,1,2,3,1,0,0,1} & \dseq{0,1,1,2,2,2,0,1,0}\\
10 & \bseq{0,1,2,3,1,1,2,0,0} & \dseq{0,1,2,2,1,2,2,0,0} & \dseq{0,1,2,3,2,1,0,0,1} & \dseq{0,1,2,1,2,3,1,0,0} & \dseq{0,1,1,1,2,2,2,0,1}\\
11 & \dseq{0,1,1,2,3,1,1,2,0} & \dseq{0,1,1,2,2,1,2,2,0} & \dseq{0,1,2,2,3,2,1,0,0} & \dseq{0,1,1,2,1,2,3,1,0} & \dseq{0,1,2,1,1,2,2,2,0}\\
12 & \bseq{0,1,1,1,2,3,1,1,2} & \gseq{0,1,1,1,2,2,1,2,2} & \dseq{0,1,1,2,2,3,2,1,0} & \dseq{0,1,1,1,2,1,2,3,1} & \bseq{0,1,1,2,1,1,2,2,2}\\
13 & \bseq{0,1,2,3,1,2,3,1,0} & \gseq{0,1,2,3,3,2,2,0,0} & \dseq{0,1,1,1,2,2,3,2,1} & \dseq{0,1,2,1,1,2,1,2,3} & \bseq{0,1,2,3,3,1,1,2,0}\\
\end{tabular}%
}
\par\medskip
{\scriptsize%
\setlength{\tabcolsep}{1.2pt}%
\begin{tabular}{r|lllll}
\multicolumn{1}{r|}{\(\area\)} & $\mathrm{string}\,11$ & $\mathrm{string}\,12$ & $\mathrm{string}\,13$ & $\mathrm{string}\,14$ & $\mathrm{string}\,15$\\
\hline
4 & \dseq{0,0,1,0,0,1,2,0,0} & \dseq{0,0,1,0,1,0,1,0,1} & \dseq{0,0,1,1,0,0,0,1,1} &  & \\
5 & \dseq{0,1,0,1,0,0,1,2,0} & \dseq{0,0,1,2,0,1,0,1,0} & \dseq{0,0,1,2,1,0,0,0,1} & \dseq{0,0,0,1,1,1,1,1,0} & \dseq{0,0,1,0,1,1,1,0,1}\\
6 & \bseq{0,1,1,0,1,0,0,1,2} & \dseq{0,1,0,1,2,0,1,0,1} & \dseq{0,0,1,2,2,1,0,0,0} & \dseq{0,1,0,0,1,1,1,1,1} & \dseq{0,0,1,2,0,1,1,1,0}\\
7 & \bseq{0,1,2,3,0,1,0,0,0} & \dseq{0,1,2,0,1,2,0,1,0} & \dseq{0,1,0,1,2,2,1,0,0} & \dseq{0,1,2,0,0,1,1,1,1} & \dseq{0,1,0,1,2,0,1,1,1}\\
8 & \dseq{0,1,1,2,3,0,1,0,0} & \dseq{0,1,1,2,0,1,2,0,1} & \dseq{0,1,1,0,1,2,2,1,0} & \dseq{0,1,2,2,0,0,1,1,1} & \dseq{0,1,2,0,1,2,0,1,1}\\
9 & \dseq{0,1,1,1,2,3,0,1,0} & \dseq{0,1,2,1,2,0,1,2,0} & \dseq{0,1,1,1,0,1,2,2,1} & \dseq{0,1,2,2,2,0,0,1,1} & \dseq{0,1,2,2,0,1,2,0,1}\\
10 & \dseq{0,1,1,1,1,2,3,0,1} & \bseq{0,1,1,2,1,2,0,1,2} & \dseq{0,1,2,1,1,0,1,2,2} & \dseq{0,1,2,2,2,2,0,0,1} & \dseq{0,1,2,2,2,0,1,2,0}\\
11 & \dseq{0,1,2,1,1,1,2,3,0} & \bseq{0,1,2,3,2,1,2,0,0} & \dseq{0,1,2,3,1,1,0,1,2} & \dseq{0,1,2,2,2,2,2,0,0} & \bseq{0,1,1,2,2,2,0,1,2}\\
12 & \gseq{0,1,1,2,1,1,1,2,3} & \dseq{0,1,1,2,3,2,1,2,0} & \dseq{0,1,2,3,3,1,1,0,1} & \dseq{0,1,1,2,2,2,2,2,0} & \bseq{0,1,2,3,2,2,2,0,0}\\
13 & \gseq{0,1,2,3,4,1,1,0,1} & \dseq{0,1,1,1,2,3,2,1,2} & \dseq{0,1,2,2,3,3,1,1,0} & \dseq{0,1,1,1,2,2,2,2,2} & \dseq{0,1,1,2,3,2,2,2,0}\\
\end{tabular}%
}
\par\medskip
{\scriptsize%
\setlength{\tabcolsep}{1.2pt}%
\begin{tabular}{r|lllll}
\multicolumn{1}{r|}{\(\area\)} & $\mathrm{string}\,16$ & $\mathrm{string}\,17$ & $\mathrm{string}\,18$ & $\mathrm{string}\,19$ & $\mathrm{string}\,20$\\
\hline
5 & \dseq{0,0,1,0,1,2,1,0,0} & \dseq{0,0,1,1,0,1,0,1,1} & \dseq{0,0,1,1,0,1,2,0,0} & \dseq{0,0,1,1,2,0,0,1,0} & \\
6 & \dseq{0,1,0,1,0,1,2,1,0} & \dseq{0,0,1,2,1,0,1,0,1} & \dseq{0,1,0,1,1,0,1,2,0} & \dseq{0,1,0,1,1,2,0,0,1} & \dseq{0,0,1,1,0,1,1,1,1}\\
7 & \dseq{0,1,1,0,1,0,1,2,1} & \dseq{0,0,1,2,2,1,0,1,0} & \bseq{0,1,1,0,1,1,0,1,2} & \dseq{0,1,2,0,1,1,2,0,0} & \dseq{0,0,1,2,1,0,1,1,1}\\
8 & \dseq{0,1,2,1,0,1,0,1,2} & \dseq{0,1,0,1,2,2,1,0,1} & \bseq{0,1,2,3,0,1,1,0,0} & \dseq{0,1,1,2,0,1,1,2,0} & \dseq{0,0,1,2,2,1,0,1,1}\\
9 & \dseq{0,1,2,3,1,0,1,0,1} & \dseq{0,1,2,0,1,2,2,1,0} & \dseq{0,1,1,2,3,0,1,1,0} & \bseq{0,1,1,1,2,0,1,1,2} & \dseq{0,0,1,2,2,2,1,0,1}\\
10 & \dseq{0,1,2,2,3,1,0,1,0} & \dseq{0,1,1,2,0,1,2,2,1} & \dseq{0,1,1,1,2,3,0,1,1} & \bseq{0,1,2,3,1,2,0,1,0} & \dseq{0,0,1,2,2,2,2,1,0}\\
11 & \dseq{0,1,1,2,2,3,1,0,1} & \dseq{0,1,2,1,2,0,1,2,2} & \dseq{0,1,2,1,1,2,3,0,1} & \dseq{0,1,1,2,3,1,2,0,1} & \dseq{0,1,0,1,2,2,2,2,1}\\
12 & \dseq{0,1,2,1,2,2,3,1,0} & \dseq{0,1,2,3,1,2,0,1,2} & \dseq{0,1,2,2,1,1,2,3,0} & \dseq{0,1,2,1,2,3,1,2,0} & \dseq{0,1,2,0,1,2,2,2,2}\\
13 & \dseq{0,1,1,2,1,2,2,3,1} & \dseq{0,1,2,3,3,1,2,0,1} & \dseq{0,1,1,2,2,1,1,2,3} & \dseq{0,1,1,2,1,2,3,1,2} & \dseq{0,1,2,3,0,1,2,2,2}\\
\end{tabular}%
}
\par\medskip
{\scriptsize%
\setlength{\tabcolsep}{1.2pt}%
\begin{tabular}{r|lllll}
\multicolumn{1}{r|}{\(\area\)} & $\mathrm{string}\,21$ & $\mathrm{string}\,22$ & $\mathrm{string}\,23$ & $\mathrm{string}\,24$ & $\mathrm{string}\,25$\\
\hline
6 & \dseq{0,0,1,1,1,1,2,0,0} & \dseq{0,0,1,1,1,2,0,1,0} & \dseq{0,0,1,1,2,0,1,1,0} & \dseq{0,0,1,1,2,1,0,0,1} & \\
7 & \dseq{0,1,0,1,1,1,1,2,0} & \dseq{0,1,0,1,1,1,2,0,1} & \dseq{0,1,0,1,1,2,0,1,1} & \dseq{0,0,1,2,1,2,1,0,0} & \dseq{0,0,1,1,1,1,2,1,0}\\
8 & \bseq{0,1,1,0,1,1,1,1,2} & \dseq{0,1,2,0,1,1,1,2,0} & \dseq{0,1,2,0,1,1,2,0,1} & \dseq{0,1,0,1,2,1,2,1,0} & \dseq{0,1,0,1,1,1,1,2,1}\\
9 & \bseq{0,1,2,3,0,1,1,1,0} & \bseq{0,1,1,2,0,1,1,1,2} & \dseq{0,1,2,2,0,1,1,2,0} & \dseq{0,1,1,0,1,2,1,2,1} & \dseq{0,1,2,0,1,1,1,1,2}\\
10 & \dseq{0,1,1,2,3,0,1,1,1} & \bseq{0,1,2,3,2,0,1,1,0} & \bseq{0,1,1,2,2,0,1,1,2} & \dseq{0,1,2,1,0,1,2,1,2} & \dseq{0,1,2,3,0,1,1,1,1}\\
11 & \dseq{0,1,2,1,2,3,0,1,1} & \dseq{0,1,1,2,3,2,0,1,1} & \bseq{0,1,2,3,2,2,0,1,0} & \dseq{0,1,2,3,1,0,1,2,1} & \dseq{0,1,2,2,3,0,1,1,1}\\
12 & \dseq{0,1,2,2,1,2,3,0,1} & \dseq{0,1,2,1,2,3,2,0,1} & \dseq{0,1,1,2,3,2,2,0,1} & \dseq{0,1,2,2,3,1,0,1,2} & \dseq{0,1,2,2,2,3,0,1,1}\\
13 & \dseq{0,1,2,2,2,1,2,3,0} & \dseq{0,1,2,2,1,2,3,2,0} & \dseq{0,1,2,1,2,3,2,2,0} & \dseq{0,1,2,3,2,3,1,0,1} & \dseq{0,1,2,2,2,2,3,0,1}\\
\end{tabular}%
}
\par\medskip
{\scriptsize%
\setlength{\tabcolsep}{1.2pt}%
\begin{tabular}{r|lllll}
\multicolumn{1}{r|}{\(\area\)} & $\mathrm{string}\,26$ & $\mathrm{string}\,27$ & $\mathrm{string}\,28$ & $\mathrm{string}\,29$ & $\mathrm{string}\,30$\\
\hline
7 & \dseq{0,0,1,1,1,2,1,0,1} & \dseq{0,0,1,1,2,1,0,1,1} &  &  & \\
8 & \dseq{0,0,1,2,1,1,2,1,0} & \dseq{0,0,1,2,1,2,1,0,1} & \dseq{0,0,1,1,1,1,2,1,1} & \dseq{0,0,1,1,2,2,1,1,0} & \\
9 & \dseq{0,1,0,1,2,1,1,2,1} & \dseq{0,0,1,2,2,1,2,1,0} & \dseq{0,0,1,2,1,1,1,2,1} & \dseq{0,1,0,1,1,2,2,1,1} & \dseq{0,0,1,1,2,1,2,1,1}\\
10 & \dseq{0,1,2,0,1,2,1,1,2} & \dseq{0,1,0,1,2,2,1,2,1} & \dseq{0,0,1,2,2,1,1,1,2} & \dseq{0,1,2,0,1,1,2,2,1} & \dseq{0,0,1,2,1,2,1,2,1}\\
11 & \dseq{0,1,2,3,0,1,2,1,1} & \dseq{0,1,2,0,1,2,2,1,2} & \dseq{0,0,1,2,3,2,1,1,1} & \dseq{0,1,2,2,0,1,1,2,2} & \dseq{0,0,1,2,2,1,2,1,2}\\
12 & \dseq{0,1,2,2,3,0,1,2,1} & \dseq{0,1,2,3,0,1,2,2,1} & \dseq{0,0,1,2,2,3,2,1,1} & \dseq{0,1,2,3,2,0,1,1,2} & \dseq{0,0,1,2,3,2,1,2,1}\\
13 & \dseq{0,1,2,2,2,3,0,1,2} & \dseq{0,1,2,2,3,0,1,2,2} & \dseq{0,0,1,2,2,2,3,2,1} & \dseq{0,1,2,3,3,2,0,1,1} & \dseq{0,0,1,2,2,3,2,1,2}\\
\end{tabular}%
}
\par\medskip
{\scriptsize%
\setlength{\tabcolsep}{1.2pt}%
\begin{tabular}{r|l}
\multicolumn{1}{r|}{\(\area\)} & $\mathrm{string}\,31$\\
\hline
10 & \dseq{0,0,1,1,2,2,2,1,1}\\
11 & \dseq{0,0,1,2,1,2,2,2,1}\\
12 & \dseq{0,0,1,2,2,1,2,2,2}\\
13 & \dseq{0,0,1,2,3,2,1,2,2}\\
\end{tabular}%
}
\endgroup
\par\medskip
Each downward column is obtained by repeated application of
\(\mathrm{up}\), and every entry in the display has deficit \(10\).
The display is large enough to contain both five-window and seven-window
local moves while still fitting into a fixed \((n,d)=(9,10)\) slice of
the decomposition.

\section{Local well-definedness proofs}
\label{app:local-proofs}
This appendix supplies the local well-definedness inputs for
Propositions~\ref{prop:up-wd} and~\ref{prop:down-wd}.  Throughout the appendix
we fix
\[
  n\ge 4,
  \qquad
  d\le 2n-8,
  \qquad
  M=\binom n2,
  \qquad
  \ell=\left\lfloor\frac{M-d}{2}\right\rfloor,
\]
and every Dyck sequence under discussion has length \(n\) and deficit \(d\).
The local lemmas were stated in the main body as the four local lemmas,
Lemmas~\ref{lem:skeleton-wd}, \ref{lem:extractions-wd},
\ref{lem:east7-wd}, and~\ref{lem:positions-wd}.
The proof first handles the short cases \(4\le n\le 7\) by a finite check and
then treats \(n\ge 8\) by the arguments below.  For \(n\ge 8\), the proofs use
direct deficit-pair counts on Dyck sequences and suffix-corrected lower-bound
counts on extraction-generated intermediate words.  Lemma~\ref{lem:east7-wd}
uses the seven-window reduction, and Lemma~\ref{lem:positions-wd} uses the
position-bound argument and its finite subchecks.  Thus this finite check is
not a replacement for the
subsequent proofs.  It covers only the short-range branch-prefix cases left
after those proofs are separated out.
\subsection{The finite residual check}
\label{subsec:finite-residual-check}
The residual checker enumerates all Dyck sequences of lengths
\(4,5,6,7\), computes their deficit using the pair-count formula of
Proposition~\ref{prop:deficit-pair-count}, discards only those with
\(\defc>2n-8\), and then applies the exact area bounds in the local lemmas:
\(\area(x)\le \ell-1\) for the \(\mathrm{up}\) side and
\(\area(y)\le \ell\) for the \(\mathrm{down}\) side.  Although
Appendix~\ref{app:computations}, Subsection~\ref{subsec:core-code}, contains
reusable implementations of the full
maps, the checker below repeats only the tests needed for this verification.
This keeps the verification independent of the string-generation and display
code.
The check verifies branch prefixes only, through the following four obligations
in the residual range.
\begin{enumerate}[label=\textup{(\arabic*)}]
  \item The skeleton branches in Lemma~\ref{lem:skeleton-wd} return Dyck
  sequences of the correct length.
  \item Every extraction called along a residual branch prefix exists, as
  required by Lemma~\ref{lem:extractions-wd}.
  \item No residual input reaches \(\mathrm{East}_7\) or
  \(\mathrm{West}_7\).  Thus Lemma~\ref{lem:east7-wd} is vacuous for
  \(4\le n\le 7\).
  \item The extraction positions that occur before the branch stops satisfy the
  relevant bounds in Lemma~\ref{lem:positions-wd}.  In the residual range, the
  only nontrivial five-window branches that are reached are Case~2b and its
  West analogue.
\end{enumerate}
If any obligation fails, the program raises an assertion error.  Its successful
run is printed after the listing.
\begin{lstlisting}
from collections import Counter
from math import comb

def stop(message):
    raise AssertionError(message)

def is_dyck_sequence(seq):
    return (
        bool(seq)
        and seq[0] == 0
        and all(x >= 0 for x in seq)
        and all(seq[i + 1] <= seq[i] + 1
                for i in range(len(seq) - 1))
    )

def deficit_and_area(seq):
    first_index = {}
    for i, value in enumerate(seq):
        first_index.setdefault(value, i)
    deficit = 0
    for i, left in enumerate(seq):
        for right in seq[i + 1:]:
            if left > right + 1:
                deficit += 1
            elif left < right and first_index[left] != i:
                deficit += 1
    return deficit, sum(seq)

def generate_dyck_sequences(length):
    sequences = []
    def extend(prefix):
        if len(prefix) == length:
            sequences.append(prefix)
            return
        for next_value in range(prefix[-1] + 2):
            extend(prefix + (next_value,))
    extend((0,))
    return sequences

def leftmost_extractable(seq):
    for index, value in enumerate(seq):
        has_parent = sum(x == value - 1 for x in seq[:index]) == 1
        next_ok = index == len(seq) - 1 or seq[index + 1] <= value
        if value > 0 and has_parent and next_ok:
            return index, value
    return None

def remove_index(seq, index):
    return seq[:index] + seq[index + 1:]

def is_full_skeleton(seq):
    return is_dyck_sequence(seq) and leftmost_extractable(seq) is None

def almost_zero_sequence(length):
    return (0,) * (length - 1) + (1,)

def excluded_skeleton(length):
    return (0, 0, 1) + (0,) * (length - 4) + (1,)

def is_special_skeleton(seq):
    return is_full_skeleton(seq) and seq != excluded_skeleton(len(seq))

def inject_after_first_parent(seq, value):
    for index, entry in enumerate(seq):
        if entry == value - 1:
            result = seq[:index + 1] + (value,) + seq[index + 1:]
            if is_dyck_sequence(result):
                return result
            stop(("skeleton injection produced non-Dyck",
                  seq, value, result))
    stop(("skeleton injection failed", seq, value))

def east3_applies(window3):
    _, x0, x1 = window3
    return x0 <= x1 + 1

def west3_applies(window3):
    return east3_applies(tuple(reversed(window3)))

def east5_case2b_applies(window5):
    _, x_minus1, x0, x1, x2 = window5
    return (
        x0 > x1 + 1
        and x_minus1 <= x1 + 1
        and x_minus1 <= x2 + 1
    )

def west5_case2b_applies(window5):
    return east5_case2b_applies(tuple(reversed(window5)))

def check_up_prefix(seq, length, deficit, half_area_limit):
    if seq == almost_zero_sequence(length):
        return "up special"
    if is_full_skeleton(seq):
        result = inject_after_first_parent(seq[:-1], seq[-1] + 1)
        if len(result) != length:
            stop(("up skeleton changed length", seq, result))
        return "up skeleton"
    first = leftmost_extractable(seq)
    if first is None:
        stop(("extraction lemma: up first extraction failed",
              length, deficit, half_area_limit, seq))
    index1, value1 = first
    child1 = remove_index(seq, index1)
    word1 = child1 + (value1 - 1,)
    if east3_applies(word1[-3:]):
        if index1 >= length - 2:
            stop(("position lemma: up/East3 position", seq, index1))
        return "up East3"
    second = leftmost_extractable(child1)
    if second is None:
        stop(("extraction lemma: up second extraction failed",
              length, deficit, half_area_limit, seq, child1))
    index2, value2 = second
    child2 = remove_index(child1, index2)
    word2 = child2 + (value1 - 1, value2 - 1)
    if not (index1 < length - 3 and index2 < len(child1) - 3):
        stop(("position lemma: up/East5 position",
              seq, index1, child1, index2))
    if not east5_case2b_applies(word2[-5:]):
        stop(("seven-window lemma: up would reach East7",
              length, deficit, half_area_limit, seq, word2[-5:]))
    return "up East5 case 2b"

def check_down_prefix(seq, length, deficit, half_area_limit):
    if seq == excluded_skeleton(length):
        return "down special"
    first = leftmost_extractable(seq)
    if first is None:
        stop(("extraction lemma: down first extraction failed",
              length, deficit, half_area_limit, seq))
    index1, value1 = first
    child1 = remove_index(seq, index1)
    skeleton_candidate = child1 + (value1 - 1,)
    if is_full_skeleton(skeleton_candidate):
        if len(skeleton_candidate) != length:
            stop(("down skeleton changed length", seq, skeleton_candidate))
        return "down skeleton"
    second = leftmost_extractable(child1)
    if second is None:
        stop(("extraction lemma: down second extraction failed",
              length, deficit, half_area_limit, seq, child1))
    index2, value2 = second
    child2 = remove_index(child1, index2)
    word2 = child2 + (value1 - 1, value2 - 1)
    if west3_applies(word2[-3:]):
        if not (index1 < length - 1 and index2 < len(child1) - 1):
            stop(("position lemma: down/West3 position",
                  seq, index1, child1, index2))
        return "down West3"
    third = leftmost_extractable(child2)
    if third is None:
        stop(("extraction lemma: down third extraction failed",
              length, deficit, half_area_limit, seq, child2))
    index3, value3 = third
    child3 = remove_index(child2, index3)
    word3 = child3 + (value1 - 1, value2 - 1, value3 - 1)
    if not (
        index1 < length - 2
        and index2 < len(child1) - 2
        and index3 < len(child2) - 2
    ):
        stop(("position lemma: down/West5 position",
              seq, index1, child1, index2, child2, index3))
    if not west5_case2b_applies(word3[-5:]):
        stop(("seven-window lemma: down would reach West7",
              length, deficit, half_area_limit, seq, word3[-5:]))
    return "down West5 case 2b"

def main():
    up_counts = Counter()
    down_counts = Counter()
    by_length = {
        length: {"up": Counter(), "down": Counter()}
        for length in range(4, 8)
    }
    for length in range(4, 8):
        for seq in generate_dyck_sequences(length):
            deficit, area = deficit_and_area(seq)
            if deficit > 2 * length - 8:
                continue
            half_area_limit = (comb(length, 2) - deficit) // 2
            if area <= half_area_limit - 1:
                label = check_up_prefix(
                    seq, length, deficit, half_area_limit)
                up_counts[label] += 1
                by_length[length]["up"][label] += 1
            if area <= half_area_limit and not is_special_skeleton(seq):
                label = check_down_prefix(
                    seq, length, deficit, half_area_limit)
                down_counts[label] += 1
                by_length[length]["down"][label] += 1
    print("EverythingOkay = True")
    print("up counts  ", dict(up_counts))
    print("down counts", dict(down_counts))
    print()
    for length in range(4, 8):
        print(f"n={length}")
        print("  up:  ", dict(by_length[length]["up"]))
        print("  down:", dict(by_length[length]["down"]))
    print()
    print("No East7 or West7 branch was reached for 4 <= n <= 7.")
if __name__ == "__main__":
    main()
\end{lstlisting}
A successful run prints:
\begin{lstlisting}
EverythingOkay = True
up counts   {'up skeleton': 42, 'up East3': 152,
             'up special': 2, 'up East5 case 2b': 4}
down counts {'down skeleton': 42, 'down West3': 152,
             'down special': 2, 'down West5 case 2b': 4}
n=4
  up:   {'up skeleton': 1, 'up East3': 2}
  down: {'down skeleton': 1, 'down West3': 2}
n=5
  up:   {'up skeleton': 4, 'up East3': 9}
  down: {'down skeleton': 4, 'down West3': 9}
n=6
  up:   {'up skeleton': 11, 'up special': 1, 'up East3': 32}
  down: {'down special': 1, 'down skeleton': 11, 'down West3': 32}
n=7
  up:   {'up skeleton': 26, 'up special': 1,
         'up East3': 109, 'up East5 case 2b': 4}
  down: {'down skeleton': 26, 'down special': 1,
         'down West3': 109, 'down West5 case 2b': 4}
No East7 or West7 branch was reached for 4 <= n <= 7.
\end{lstlisting}

The generator exhausts the residual finite domain because it recursively lists
all nonnegative words beginning with \(0\) and satisfying the Dyck step
condition.  The labels in the output show that every retained input reaches one
of the accepted branch prefixes and that neither the seven-window East branch
nor the seven-window West branch is ever reached in the residual range.

The same residual data can also be displayed as lower-half
\(\mathrm{up}\)-strings.  In each table a string is a column and
each row has fixed area; blank cells mean that the string has not yet
started at that area.  The colors mark exceptional cases: red is the full Dyck
skeleton that is not special.  Within a fixed string column, a blue consecutive
pair records an exceptional \(\mathrm{up}\) step: the smaller-area entry is the
source and the next larger-area entry is the target of an \(\mathrm{East}_5\)
step.
\begingroup
\scriptsize
\newcommand{\dseq}[1]{\texttt{[#1]}}
\newcommand{\rseq}[1]{\textcolor{red}{\dseq{#1}}}
\newcommand{\bseq}[1]{\textcolor{blue}{\dseq{#1}}}
\setlength{\tabcolsep}{1.5pt}
\par\medskip
\noindent\textbf{$\mathrm{SS}(4,0)$}
\\[2pt]
{\scriptsize%
\setlength{\tabcolsep}{1.5pt}%
\begin{tabular}{r|l}
\multicolumn{1}{r|}{\(\area\)} & $\mathrm{string}\,1$\\
\hline
0 & \dseq{0,0,0,0}\\
1 & \dseq{0,1,0,0}\\
2 & \dseq{0,1,1,0}\\
3 & \dseq{0,1,1,1}\\
\end{tabular}%
}
\par\medskip
\noindent\textbf{$\mathrm{SS}(5,0)$}
\\[2pt]
{\scriptsize%
\setlength{\tabcolsep}{1.5pt}%
\begin{tabular}{r|l}
\multicolumn{1}{r|}{\(\area\)} & $\mathrm{string}\,1$\\
\hline
0 & \dseq{0,0,0,0,0}\\
1 & \dseq{0,1,0,0,0}\\
2 & \dseq{0,1,1,0,0}\\
3 & \dseq{0,1,1,1,0}\\
4 & \dseq{0,1,1,1,1}\\
5 & \dseq{0,1,2,1,1}\\
\end{tabular}%
}
\par\medskip
\noindent\textbf{$\mathrm{SS}(5,1)$}
\\[2pt]
{\scriptsize%
\setlength{\tabcolsep}{1.5pt}%
\begin{tabular}{r|l}
\multicolumn{1}{r|}{\(\area\)} & $\mathrm{string}\,1$\\
\hline
1 & \dseq{0,0,1,0,0}\\
2 & \dseq{0,1,0,1,0}\\
3 & \dseq{0,1,1,0,1}\\
4 & \dseq{0,1,2,1,0}\\
\end{tabular}%
}
\par\medskip
\noindent\textbf{$\mathrm{SS}(5,2)$}
\\[2pt]
{\scriptsize%
\setlength{\tabcolsep}{1.5pt}%
\begin{tabular}{r|ll}
\multicolumn{1}{r|}{\(\area\)} & $\mathrm{string}\,1$ & $\mathrm{string}\,2$\\
\hline
1 & \dseq{0,0,0,1,0} & \\
2 & \dseq{0,1,0,0,1} & \dseq{0,0,1,1,0}\\
3 & \dseq{0,1,2,0,0} & \dseq{0,1,0,1,1}\\
4 & \dseq{0,1,1,2,0} & \dseq{0,1,2,0,1}\\
\end{tabular}%
}
\par\medskip
\noindent\textbf{$\mathrm{SS}(6,0)$}
\\[2pt]
{\scriptsize%
\setlength{\tabcolsep}{1.5pt}%
\begin{tabular}{r|l}
\multicolumn{1}{r|}{\(\area\)} & $\mathrm{string}\,1$\\
\hline
0 & \dseq{0,0,0,0,0,0}\\
1 & \dseq{0,1,0,0,0,0}\\
2 & \dseq{0,1,1,0,0,0}\\
3 & \dseq{0,1,1,1,0,0}\\
4 & \dseq{0,1,1,1,1,0}\\
5 & \dseq{0,1,1,1,1,1}\\
6 & \dseq{0,1,2,1,1,1}\\
7 & \dseq{0,1,2,2,1,1}\\
\end{tabular}%
}
\par\medskip
\noindent\textbf{$\mathrm{SS}(6,1)$}
\\[2pt]
{\scriptsize%
\setlength{\tabcolsep}{1.5pt}%
\begin{tabular}{r|l}
\multicolumn{1}{r|}{\(\area\)} & $\mathrm{string}\,1$\\
\hline
1 & \dseq{0,0,1,0,0,0}\\
2 & \dseq{0,1,0,1,0,0}\\
3 & \dseq{0,1,1,0,1,0}\\
4 & \dseq{0,1,1,1,0,1}\\
5 & \dseq{0,1,2,1,1,0}\\
6 & \dseq{0,1,1,2,1,1}\\
7 & \dseq{0,1,2,1,2,1}\\
\end{tabular}%
}
\par\medskip
\noindent\textbf{$\mathrm{SS}(6,2)$}
\\[2pt]
{\scriptsize%
\setlength{\tabcolsep}{1.5pt}%
\begin{tabular}{r|ll}
\multicolumn{1}{r|}{\(\area\)} & $\mathrm{string}\,1$ & $\mathrm{string}\,2$\\
\hline
1 & \dseq{0,0,0,1,0,0} & \\
2 & \dseq{0,1,0,0,1,0} & \dseq{0,0,1,1,0,0}\\
3 & \dseq{0,1,1,0,0,1} & \dseq{0,1,0,1,1,0}\\
4 & \dseq{0,1,2,1,0,0} & \dseq{0,1,1,0,1,1}\\
5 & \dseq{0,1,1,2,1,0} & \dseq{0,1,2,1,0,1}\\
6 & \dseq{0,1,1,1,2,1} & \dseq{0,1,2,2,1,0}\\
\end{tabular}%
}
\par\medskip
\noindent\textbf{$\mathrm{SS}(6,3)$}
\\[2pt]
{\scriptsize%
\setlength{\tabcolsep}{1.5pt}%
\begin{tabular}{r|lll}
\multicolumn{1}{r|}{\(\area\)} & $\mathrm{string}\,1$ & $\mathrm{string}\,2$ & $\mathrm{string}\,3$\\
\hline
1 & \dseq{0,0,0,0,1,0} &  & \\
2 & \dseq{0,1,0,0,0,1} & \dseq{0,0,1,0,1,0} & \\
3 & \dseq{0,1,2,0,0,0} & \dseq{0,1,0,1,0,1} & \dseq{0,0,1,1,1,0}\\
4 & \dseq{0,1,1,2,0,0} & \dseq{0,1,2,0,1,0} & \dseq{0,1,0,1,1,1}\\
5 & \dseq{0,1,1,1,2,0} & \dseq{0,1,1,2,0,1} & \dseq{0,1,2,0,1,1}\\
6 & \dseq{0,1,1,1,1,2} & \dseq{0,1,2,1,2,0} & \dseq{0,1,2,2,0,1}\\
\end{tabular}%
}
\par\medskip
\noindent\textbf{$\mathrm{SS}(6,4)$}
\\[2pt]
{\scriptsize%
\setlength{\tabcolsep}{1.5pt}%
\begin{tabular}{r|llll}
\multicolumn{1}{r|}{\(\area\)} & $\mathrm{string}\,1$ & $\mathrm{string}\,2$ & $\mathrm{string}\,3$ & $\mathrm{string}\,4$\\
\hline
1 & \dseq{0,0,0,0,0,1} &  &  & \\
2 & \rseq{0,0,1,0,0,1} & \dseq{0,0,0,1,1,0} &  & \\
3 & \dseq{0,0,1,2,0,0} & \dseq{0,1,0,0,1,1} & \dseq{0,0,1,1,0,1} & \\
4 & \dseq{0,1,0,1,2,0} & \dseq{0,1,2,0,0,1} & \dseq{0,0,1,2,1,0} & \dseq{0,0,1,1,1,1}\\
5 & \dseq{0,1,1,0,1,2} & \dseq{0,1,2,2,0,0} & \dseq{0,1,0,1,2,1} & \dseq{0,0,1,2,1,1}\\
\end{tabular}%
}
\par\medskip
\noindent\textbf{$\mathrm{SS}(7,0)$}
\\[2pt]
{\scriptsize%
\setlength{\tabcolsep}{1.5pt}%
\begin{tabular}{r|l}
\multicolumn{1}{r|}{\(\area\)} & $\mathrm{string}\,1$\\
\hline
0 & \dseq{0,0,0,0,0,0,0}\\
1 & \dseq{0,1,0,0,0,0,0}\\
2 & \dseq{0,1,1,0,0,0,0}\\
3 & \dseq{0,1,1,1,0,0,0}\\
4 & \dseq{0,1,1,1,1,0,0}\\
5 & \dseq{0,1,1,1,1,1,0}\\
6 & \dseq{0,1,1,1,1,1,1}\\
7 & \dseq{0,1,2,1,1,1,1}\\
8 & \dseq{0,1,2,2,1,1,1}\\
9 & \dseq{0,1,2,2,2,1,1}\\
10 & \dseq{0,1,2,2,2,2,1}\\
\end{tabular}%
}
\par\medskip
\noindent\textbf{$\mathrm{SS}(7,1)$}
\\[2pt]
{\scriptsize%
\setlength{\tabcolsep}{1.5pt}%
\begin{tabular}{r|l}
\multicolumn{1}{r|}{\(\area\)} & $\mathrm{string}\,1$\\
\hline
1 & \dseq{0,0,1,0,0,0,0}\\
2 & \dseq{0,1,0,1,0,0,0}\\
3 & \dseq{0,1,1,0,1,0,0}\\
4 & \dseq{0,1,1,1,0,1,0}\\
5 & \dseq{0,1,1,1,1,0,1}\\
6 & \dseq{0,1,2,1,1,1,0}\\
7 & \dseq{0,1,1,2,1,1,1}\\
8 & \dseq{0,1,2,1,2,1,1}\\
9 & \dseq{0,1,2,2,1,2,1}\\
10 & \dseq{0,1,2,2,2,1,2}\\
\end{tabular}%
}
\par\medskip
\noindent\textbf{$\mathrm{SS}(7,2)$}
\\[2pt]
{\scriptsize%
\setlength{\tabcolsep}{1.5pt}%
\begin{tabular}{r|ll}
\multicolumn{1}{r|}{\(\area\)} & $\mathrm{string}\,1$ & $\mathrm{string}\,2$\\
\hline
1 & \dseq{0,0,0,1,0,0,0} & \\
2 & \dseq{0,1,0,0,1,0,0} & \dseq{0,0,1,1,0,0,0}\\
3 & \dseq{0,1,1,0,0,1,0} & \dseq{0,1,0,1,1,0,0}\\
4 & \dseq{0,1,1,1,0,0,1} & \dseq{0,1,1,0,1,1,0}\\
5 & \dseq{0,1,2,1,1,0,0} & \dseq{0,1,1,1,0,1,1}\\
6 & \dseq{0,1,1,2,1,1,0} & \dseq{0,1,2,1,1,0,1}\\
7 & \dseq{0,1,1,1,2,1,1} & \dseq{0,1,2,2,1,1,0}\\
8 & \dseq{0,1,2,1,1,2,1} & \dseq{0,1,1,2,2,1,1}\\
9 & \dseq{0,1,2,2,1,1,2} & \dseq{0,1,2,1,2,2,1}\\
\end{tabular}%
}
\par\medskip
\noindent\textbf{$\mathrm{SS}(7,3)$}
\\[2pt]
{\scriptsize%
\setlength{\tabcolsep}{1.5pt}%
\begin{tabular}{r|lll}
\multicolumn{1}{r|}{\(\area\)} & $\mathrm{string}\,1$ & $\mathrm{string}\,2$ & $\mathrm{string}\,3$\\
\hline
1 & \dseq{0,0,0,0,1,0,0} &  & \\
2 & \dseq{0,1,0,0,0,1,0} & \dseq{0,0,1,0,1,0,0} & \\
3 & \dseq{0,1,1,0,0,0,1} & \dseq{0,1,0,1,0,1,0} & \dseq{0,0,1,1,1,0,0}\\
4 & \dseq{0,1,2,1,0,0,0} & \dseq{0,1,1,0,1,0,1} & \dseq{0,1,0,1,1,1,0}\\
5 & \dseq{0,1,1,2,1,0,0} & \dseq{0,1,2,1,0,1,0} & \dseq{0,1,1,0,1,1,1}\\
6 & \dseq{0,1,1,1,2,1,0} & \dseq{0,1,1,2,1,0,1} & \dseq{0,1,2,1,0,1,1}\\
7 & \dseq{0,1,1,1,1,2,1} & \dseq{0,1,2,1,2,1,0} & \dseq{0,1,2,2,1,0,1}\\
8 & \dseq{0,1,2,1,1,1,2} & \dseq{0,1,1,2,1,2,1} & \dseq{0,1,2,2,2,1,0}\\
9 & \dseq{0,1,2,3,1,1,1} & \dseq{0,1,2,1,2,1,2} & \dseq{0,1,1,2,2,2,1}\\
\end{tabular}%
}
\par\medskip
\noindent\textbf{$\mathrm{SS}(7,4)$}
\\[2pt]
{\scriptsize%
\setlength{\tabcolsep}{1.5pt}%
\begin{tabular}{r|lllll}
\multicolumn{1}{r|}{\(\area\)} & $\mathrm{string}\,1$ & $\mathrm{string}\,2$ & $\mathrm{string}\,3$ & $\mathrm{string}\,4$ & $\mathrm{string}\,5$\\
\hline
1 & \dseq{0,0,0,0,0,1,0} &  &  &  & \\
2 & \dseq{0,1,0,0,0,0,1} & \dseq{0,0,0,1,1,0,0} & \dseq{0,0,1,0,0,1,0} &  & \\
3 & \dseq{0,1,2,0,0,0,0} & \dseq{0,1,0,0,1,1,0} & \dseq{0,1,0,1,0,0,1} & \dseq{0,0,1,1,0,1,0} & \\
4 & \dseq{0,1,1,2,0,0,0} & \dseq{0,1,1,0,0,1,1} & \dseq{0,1,2,0,1,0,0} & \dseq{0,1,0,1,1,0,1} & \dseq{0,0,1,1,1,1,0}\\
5 & \dseq{0,1,1,1,2,0,0} & \dseq{0,1,2,1,0,0,1} & \dseq{0,1,1,2,0,1,0} & \dseq{0,1,2,0,1,1,0} & \dseq{0,1,0,1,1,1,1}\\
6 & \dseq{0,1,1,1,1,2,0} & \dseq{0,1,2,2,1,0,0} & \dseq{0,1,1,1,2,0,1} & \dseq{0,1,1,2,0,1,1} & \dseq{0,1,2,0,1,1,1}\\
7 & \bseq{0,1,1,1,1,1,2} & \dseq{0,1,1,2,2,1,0} & \dseq{0,1,2,1,1,2,0} & \dseq{0,1,2,1,2,0,1} & \dseq{0,1,2,2,0,1,1}\\
8 & \bseq{0,1,2,3,1,1,0} & \dseq{0,1,1,1,2,2,1} & \dseq{0,1,1,2,1,1,2} & \dseq{0,1,2,2,1,2,0} & \dseq{0,1,2,2,2,0,1}\\
\end{tabular}%
}
\par\medskip
\noindent\textbf{$\mathrm{SS}(7,5)$}
\\[2pt]
{\scriptsize%
\setlength{\tabcolsep}{1.5pt}%
\begin{tabular}{r|llllll}
\multicolumn{1}{r|}{\(\area\)} & $\mathrm{string}\,1$ & $\mathrm{string}\,2$ & $\mathrm{string}\,3$ & $\mathrm{string}\,4$ & $\mathrm{string}\,5$ & $\mathrm{string}\,6$\\
\hline
1 & \dseq{0,0,0,0,0,0,1} &  &  &  &  & \\
2 & \rseq{0,0,1,0,0,0,1} & \dseq{0,0,0,1,0,1,0} &  &  &  & \\
3 & \dseq{0,0,1,2,0,0,0} & \dseq{0,1,0,0,1,0,1} & \dseq{0,0,1,0,1,1,0} & \dseq{0,0,1,1,0,0,1} &  & \\
4 & \dseq{0,1,0,1,2,0,0} & \dseq{0,1,2,0,0,1,0} & \dseq{0,1,0,1,0,1,1} & \dseq{0,0,1,2,1,0,0} & \dseq{0,0,1,1,1,0,1} & \\
5 & \dseq{0,1,1,0,1,2,0} & \dseq{0,1,1,2,0,0,1} & \dseq{0,1,2,0,1,0,1} & \dseq{0,1,0,1,2,1,0} & \dseq{0,0,1,2,1,1,0} & \dseq{0,0,1,1,1,1,1}\\
6 & \bseq{0,1,1,1,0,1,2} & \dseq{0,1,2,1,2,0,0} & \dseq{0,1,2,2,0,1,0} & \dseq{0,1,1,0,1,2,1} & \dseq{0,1,0,1,2,1,1} & \dseq{0,0,1,2,1,1,1}\\
7 & \bseq{0,1,2,3,1,0,0} & \dseq{0,1,1,2,1,2,0} & \dseq{0,1,1,2,2,0,1} & \dseq{0,1,2,1,0,1,2} & \dseq{0,1,2,0,1,2,1} & \dseq{0,0,1,2,2,1,1}\\
8 & \dseq{0,1,1,2,3,1,0} & \dseq{0,1,1,1,2,1,2} & \dseq{0,1,2,1,2,2,0} & \dseq{0,1,2,3,1,0,1} & \dseq{0,1,2,2,0,1,2} & \dseq{0,0,1,2,2,2,1}\\
\end{tabular}%
}
\par\medskip
\noindent\textbf{$\mathrm{SS}(7,6)$}
\\[2pt]
{\scriptsize%
\setlength{\tabcolsep}{1.5pt}%
\begin{tabular}{r|llll}
\multicolumn{1}{r|}{\(\area\)} & $\mathrm{string}\,1$ & $\mathrm{string}\,2$ & $\mathrm{string}\,3$ & $\mathrm{string}\,4$\\
\hline
2 & \dseq{0,0,0,0,1,1,0} & \dseq{0,0,0,1,0,0,1} &  & \\
3 & \dseq{0,1,0,0,0,1,1} & \dseq{0,0,0,1,2,0,0} & \dseq{0,0,0,1,1,1,0} & \dseq{0,0,1,0,1,0,1}\\
4 & \dseq{0,1,2,0,0,0,1} & \dseq{0,1,0,0,1,2,0} & \dseq{0,1,0,0,1,1,1} & \dseq{0,0,1,2,0,1,0}\\
5 & \dseq{0,1,2,2,0,0,0} & \bseq{0,1,1,0,0,1,2} & \dseq{0,1,2,0,0,1,1} & \dseq{0,1,0,1,2,0,1}\\
6 & \dseq{0,1,1,2,2,0,0} & \bseq{0,1,2,3,0,0,0} & \dseq{0,1,2,2,0,0,1} & \dseq{0,1,2,0,1,2,0}\\
7 & \dseq{0,1,1,1,2,2,0} & \dseq{0,1,1,2,3,0,0} & \dseq{0,1,2,2,2,0,0} & \dseq{0,1,1,2,0,1,2}\\
\end{tabular}%
\par\smallskip
\begin{tabular}{r|llll}
\multicolumn{1}{r|}{\(\area\)} & $\mathrm{string}\,5$ & $\mathrm{string}\,6$ & $\mathrm{string}\,7$ & $\mathrm{string}\,8$\\
\hline
2 &  &  &  & \\
3 &  &  &  & \\
4 & \dseq{0,0,1,1,0,1,1} & \dseq{0,0,1,1,2,0,0} &  & \\
5 & \dseq{0,0,1,2,1,0,1} & \dseq{0,1,0,1,1,2,0} & \dseq{0,0,1,1,2,1,0} & \\
6 & \dseq{0,0,1,2,2,1,0} & \bseq{0,1,1,0,1,1,2} & \dseq{0,1,0,1,1,2,1} & \dseq{0,0,1,1,2,1,1}\\
7 & \dseq{0,1,0,1,2,2,1} & \bseq{0,1,2,3,0,1,0} & \dseq{0,1,2,0,1,1,2} & \dseq{0,0,1,2,1,2,1}\\
\end{tabular}%
}
\endgroup
\subsection{Intermediate strings for the local proofs}
\label{subsec:local-proof-strings}
We fix notation for the intermediate strings used in the remaining local
proofs.  Here ``string'' means an intermediate word produced during an extraction
branch of \(\mathrm{up}\) or \(\mathrm{down}\); it is not a string in the
\(\mathrm{up}\)-string decomposition of Proposition~\ref{prop:decomposition}.
We write \(A:B\) for concatenation and write \(\operatorname{last}_r(A)\) for
the suffix of length \(r\) of a word \(A\).
An occurrence of a value is called \emph{initial} if it is the leftmost
occurrence of that value in the current word, and \emph{non-initial} otherwise.
For any finite word \(T\), not necessarily Dyck, call a pair \(i<j\) a type~A
pair if \(T_i>T_j+1\), and call it a type~B pair if \(T_i<T_j\) and \(T_i\)
is non-initial in \(T\).  Write \(\delta(T)\) for the number of type~A and
type~B pairs in \(T\).  When \(T\) is Dyck,
Proposition~\ref{prop:deficit-pair-count} says \(\delta(T)=\defc(T)\).

The intermediate words below are produced from a Dyck input by initial segments
of the extraction--decrement--append process in \(\mathrm{up}\) and
\(\mathrm{down}\).  The statistic comparison in
Lemma~\ref{lem:up-down-statistics} gives the exact deficit of these
intermediate words: after each extracted \(e\) is moved to the right and
lowered to \(e-1\), the value of
\(\binom n2-\area-\dinv\) remains equal to the original input deficit.  On
non-Dyck intermediate words we therefore use
Lemma~\ref{lem:suffix-corrected-deficit-lower-bound}: a count of type~A and
type~B pairs gives a valid contradiction only after excluding, or otherwise
accounting for, type~A pairs wholly inside the appended lowered suffix.
For the \(\mathrm{up}\) process, set \(C_0=x\).  If the algorithm extracts
\(e_i\) from \(C_{i-1}\), let \(C_i\) be the remainder and put
\[
  \sigma_i=C_i:(e_1-1):(e_2-1):\cdots:(e_i-1).
\]
Thus \(\sigma_1\) is the word whose final three entries are tested by
\(\mathrm{East}_3\), \(\sigma_2\) is the word whose final five entries are
tested by \(\mathrm{East}_5\), and \(\sigma_3\) is the word whose final seven
entries are tested by \(\mathrm{East}_7\).  In symbols,
\[
  W^E_1=\operatorname{last}_3(\sigma_1),\qquad
  W^E_2=\operatorname{last}_5(\sigma_2),\qquad
  W^E_3=\operatorname{last}_7(\sigma_3).
\]
The appended word \((e_1-1,\ldots,e_i-1)\) records the extracted entries after
they have been lowered by one and moved to the right end.
For the \(\mathrm{down}\) process, set \(D_0=y\).  If the algorithm extracts
\(f_i\) from \(D_{i-1}\), let \(D_i\) be the remainder and put
\[
  \tau_i=D_i:(f_1-1):(f_2-1):\cdots:(f_i-1).
\]
The candidate in the skeleton branch is \(\tau_1=D_1:(f_1-1)\).  If the
algorithm proceeds beyond that branch, then \(\operatorname{last}_3(\tau_2)\),
\(\operatorname{last}_5(\tau_3)\), and \(\operatorname{last}_7(\tau_4)\) are
the windows tested by \(\mathrm{West}_3\), \(\mathrm{West}_5\), and
\(\mathrm{West}_7\), respectively.

\begin{lemma}[Suffix-corrected deficit lower bound]
\label{lem:suffix-corrected-deficit-lower-bound}
Let \(T=R:s\), where \(R\) is an ordinary Dyck sequence and \(s\) is a
nonempty reverse Dyck sequence.  Write \(m=s[-1]\), and assume that \(m\)
occurs in the prefix \(R\).  Let \(P(T)\) be the number of type~A and type~B
pairs in the full word \(T\), and let \(A_s(T)\) be the number of type~A pairs
whose two endpoints both lie in the suffix \(s\).  Then
\[
  \defc(T)\ge P(T)-A_s(T).
\]
\end{lemma}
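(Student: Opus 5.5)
Start from Lemma~\ref{lem:extended-deficit-pairs}, which holds for an arbitrary nonnegative word:
\[
  \defc(T)=P(T)-\sum_{j}\mu_j(T),
\]
where \(\mu_j(T)\) is the number of values \(v\in\{0,\ldots,T_j-1\}\) that occur at no position \(<j\). (If \(s\) could contain a negative entry, I would first note that \(m\ge0\) and the reverse-Dyck condition rule this out, since entries of \(s\) can drop by at most one per step toward the final \(m\ge0\); in any case each entry of \(s\) is at least \(m-(\text{distance to the end})\).) It then suffices to prove
\[
  \sum_j \mu_j(T)\le A_s(T).
\]
This amounts to an injection from missing-value incidences to type~A pairs inside the suffix.

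First, the missing-value correction vanishes at every position of \(R\), by the argument of Proposition~\ref{prop:deficit-pair-count}. The same argument covers a suffix position \(j\) with \(T_j\le \max(R)\): every value below \(\max R\) occurs in \(R\). So only suffix entries with \(T_j>\max(R)\) contribute. Fix such a position \(j\) in \(s\), with value \(u=T_j\). The values \(v\) with \(\max R<v<u\) that are absent before \(j\) are exactly the ones counted. Since \(m\) occurs in \(R\), we have \(m\le\max R<u\).

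Now use that \(s\) is reverse Dyck, so entries decrease by at most one per step. Going right from \(j\) to the end value \(m<u\), the suffix must pass through every value \(u-1,u-2,\ldots,m\). For each missing value \(v\) with \(\max R<v\le u-1\), the value \(v-1\ge m\) must therefore occur at some position \(k>j\) in \(s\). This gives a pair \((j,k)\) with \(T_j=u\ge v+1>(v-1)+1\), i.e.\ \(u>T_k+1\), which is type~A inside \(s\). Send the incidence \((j,v)\) to the pair \((j,k_v)\), where \(k_v\) is the first position after \(j\) with value \(v-1\).

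Injectivity is immediate for fixed \(j\): different \(v\) give different values \(v-1\), hence different \(k_v\). For different \(j\) the left endpoints differ. So the map is injective, \(\sum_j\mu_j\le A_s(T)\), and the inequality follows.

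The main obstacle is the boundary case \(v=u-1\)? No: \(v\le u-1\) gives \(T_k=v-1\le u-2\), so the pair is strictly type~A. The real care point is \(v-1\ge m\), which needs \(v>\max R\ge m\). That is exactly where the hypothesis that \(m\) occurs in \(R\) is used.
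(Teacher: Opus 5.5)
Your proof is correct and is essentially the paper's argument: you reduce via Lemma~\ref{lem:extended-deficit-pairs} to bounding the missing-value correction by $A_s(T)$, note that only suffix positions contribute, and inject each missing value $v$ at position $j$ into the type~A pair $(j,k)$ with $T_k=v-1$ (the paper takes $k$ to be the first suffix position after $j$ with $T_k<v$, which is the same position). One small correction: your parenthetical claim that $m\ge 0$ and the reverse-Dyck condition exclude negative entries is false, since the condition only bounds entries of $s$ above by $m$ plus their distance to the end (e.g.\ $(-3,1,0)$); this is harmless, because $\defc$ is only defined for nonnegative words, so nonnegativity of $T$ is implicit in the statement and holds in the paper's applications.
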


\begin{proof}
Apply Lemma~\ref{lem:extended-deficit-pairs} to \(T\).  Since \(R\) is an
ordinary Dyck sequence, every value below a prefix entry occurs earlier in the
prefix.  Thus only suffix positions can contribute to the missing-value
correction.  It is enough to inject the missing-value contributions at suffix
positions into type~A pairs wholly inside \(s\).

Fix a suffix position \(j\), and write \(e=T_j\).  Since \(m\) occurs in
\(R\), and \(R\) is Dyck, every value \(0,\ldots,m\) occurs to the left of
\(j\).  Hence any missing value \(v<e\) at position \(j\) satisfies
\[
  m<v<e.
\]
Because the final suffix entry is \(m\), there is a first suffix position
\(k>j\) with \(T_k<v\).  Then \(T_{k-1}\ge v\), while the reverse-Dyck
condition on \(s\) gives
\[
  T_k\ge T_{k-1}-1.
\]
If \(T_{k-1}>v\), then \(T_k\ge v\), contradicting the choice of \(k\).  Thus
\(T_{k-1}=v\), and the inequalities force \(T_k=v-1\).  Therefore
\((j,k)\) is a type~A pair wholly inside the suffix, since
\[
  T_j=e>v=T_k+1.
\]

For a fixed \(j\), distinct missing values \(v\) give distinct right-hand
values \(T_k=v-1\), and hence distinct pairs.  For distinct \(j\), the
left-hand endpoint is distinct.  Therefore the missing-value correction is at
most \(A_s(T)\).  Lemma~\ref{lem:extended-deficit-pairs} gives
\(\defc(T)\ge P(T)-A_s(T)\).
\end{proof}

We apply Lemma~\ref{lem:suffix-corrected-deficit-lower-bound} only to
pre-local-move intermediate words whose suffix is the lowered extraction
suffix.  In those applications the suffix hypotheses are checked from the
extraction structure; in particular, Lemma~\ref{lem:extractable-structure}
supplies the surviving occurrence in the Dyck remainder of the final lowered
suffix value.

The position-bound proof uses one additional local notation.  Suppose a first
forbidden extraction position occurs, and let \(e\) be the element extracted at
that step.  Immediately before the extraction, Lemma~\ref{lem:extractable-structure}
places \(d=e-1\) directly to the left of \(e\).  The current remainder has the
form
\[
  \cdots,d,e,P,
\]
and the corresponding full intermediate word, including previously extracted
and decremented entries, has the form
\[
  \cdots,d,e,P,Q.
\]
Here \(P\) is the suffix to the right of the offending extracted element in the
current remainder, and \(Q\) is the word of previously appended lowered
extractions.  On the \(\mathrm{up}\) side one has \(|P|\le 2\) and
\(|Q|\le 2\).  On the \(\mathrm{down}\) side one has \(|P|\le 1\) and
\(|Q|\le 3\).  In either direction \(|P|+|Q|\le 4\).  This notation isolates
the short suffix where a possible injection failure could occur.  Since \(Q\)
lies in the appended lowered suffix of the non-Dyck intermediate word, type~A
pairs wholly inside \(Q\) are precisely the suffix-internal pairs that must be
excluded or accounted for by
Lemma~\ref{lem:suffix-corrected-deficit-lower-bound}.
\subsection{Proof of Lemma~\ref{lem:skeleton-wd}}
\label{subsec:proof-skeleton-wd}
The residual checker of Subsection~\ref{subsec:finite-residual-check} already
covers the range \(4\le n\le 7\).  In that range the checker applies the
exact area bounds from Lemma~\ref{lem:skeleton-wd}: on the \(\mathrm{up}\)
side it tests all sequences with \(\area\le \ell-1\), and on the
\(\mathrm{down}\) side it tests all nonspecial sequences with
\(\area\le \ell\).  The labels ``up skeleton'' and ``down skeleton'' in the
successful output are precisely the two skeleton branches of the lemma.  It
therefore remains to prove the two assertions for \(n\ge 8\).
\begin{proof}[Proof of Lemma~\ref{lem:skeleton-wd} for \(n\ge 8\)]
We shall use the pair description of Proposition~\ref{prop:deficit-pair-count}.
Thus a type~A pair is a pair \(i<j\) with \(s_i>s_j+1\), and a type~B pair is
a pair \(i<j\) with \(s_i<s_j\) and \(s_i\) non-initial.  For a Dyck sequence,
the number of such pairs is its deficit.
\paragraph{The \(\mathrm{up}\) skeleton branch.}
Let
\[
  x=(x_0,x_1,\ldots,x_{n-1})
\]
be a full Dyck skeleton with \(\area(x)\le \ell-1\) and \(x\ne\omega_n\).  Put
\(a=x_{n-1}\).  The skeleton branch removes the final entry, forms
\(v=a+1\), and tries to inject \(v\) immediately after the first occurrence of
\(a\) in the prefix \((x_0,\ldots,x_{n-2})\).
If this occurrence of \(a\) exists, the injection is defined.  The output has
length \(n\), remains nonnegative, and still starts with \(0\).  The new left
adjacency is \(a,a+1\).  If the inserted entry has a right neighbor, that right
neighbor was already the successor of an \(a\) in the original Dyck sequence,
so it is at most \(a+1\), hence at most \((a+1)+1\).  Thus the output is again
a Dyck sequence.
It remains to rule out the case in which the prefix contains no occurrence of
\(a\).  Assume this happens.  Then the final entry is the unique occurrence of
its value.  It is also the unique maximum: if a larger value occurred earlier,
the affine Dyck condition and the initial value \(x_0=0\) would force an
earlier occurrence of \(a\) on the way to that larger value.  Since
\(x\ne\omega_n\), this unique final maximum is not equal to \(1\); hence
\[
  a\ge 2.
\]
The last affine inequality gives \(a\le x_{n-2}+1\), while uniqueness of the
maximum gives \(x_{n-2}<a\).  Therefore
\[
  x_{n-2}=a-1\ge 1.
\]
We next record consequences of full non-extractability.  First, the value
\(1\) occurs at least twice.  If it occurred only once, start at that unique
\(1\) and take the maximal consecutive string \(1,2,\ldots,r\) forced on the
way to the final value \(a\ge2\).  Its endpoint has exactly one predecessor
copy to its left, and by maximality its successor, if present, is at most
\(r\).  The endpoint would be extractable, contradiction.  Second, \(x_1=0\).
If \(x_1=1\), the maximal initial consecutive string \(1,2,\ldots,r\) starting
at position \(1\) again ends at an extractable entry.  Third, every initial
occurrence of a value at least \(2\) lies to the right of the second \(1\);
otherwise the maximal string starting at the first \(1\) would end before the
second \(1\) appears and would again be extractable.

Now count deficit pairs in two disjoint groups.  For the first group, consider
every entry except the final maximum, the first \(0\), and the first \(1\).
There are \(n-3\) such entries.  If such an entry is non-initial, it is smaller
than the unique final maximum \(a\), so it forms a type~B pair with the final
entry.  If it is initial, it has value at least \(2\); by the preceding
paragraph it lies to the right of the second \(1\), and the second \(1\) forms
a type~B pair with it.  This gives \(n-3\) pairs.

For the second group, consider the positions \(2,3,\ldots,n-2\), again
\(n-3\) entries.  If \(x_j>0\), then the second zero \(x_1=0\) forms a
type~B pair with \(x_j\).  If \(x_j=0\), then this zero is non-initial and
forms a type~B pair with \(x_{n-2}=a-1\ge1\).  These pairs are disjoint from
the first group: the first group uses either the final position or the second
\(1\), while the second group uses the second \(0\) or a middle zero paired
with position \(n-2\).  Thus \(x\) has at least
\[
  (n-3)+(n-3)=2n-6
\]
deficit pairs, contradicting \(d=\defc(x)\le2n-8\).
Therefore the prefix must contain an occurrence of \(a\), and the \(\mathrm{up}\)
skeleton injection is well-defined.
\paragraph{The \(\mathrm{down}\) skeleton branch.}
Now let \(y\) be a nonspecial Dyck sequence with \(\area(y)\le\ell\).  Suppose
the first extraction in the \(\mathrm{down}\) algorithm removes \(f\), leaving
\(D_1\), and suppose the no-extractable skeleton test is reached.  Put
\[
  z=f-1,\qquad S=D_1:z .
\]
By Lemma~\ref{lem:extractable-structure}, \(D_1\) is Dyck.  The only Dyck
inequality for \(S\) not inherited from \(D_1\) is the final affine boundary
\[
  z\le D_1[-1]+1.
\]
We prove this boundary by contradiction.  Suppose \(z>D_1[-1]+1\).  Then
\(z\ge2\), and \(S\) has no extractable element by the skeleton test.  Let the
first copy of \(z\) be the copy left inside \(D_1\), and let the final appended
\(z\) be the last copy of \(z\).

The same maximal-string argument as above gives \(S_1=0\).  It also shows that
every non-final entry greater than \(1\) lies to the right of a second \(1\):
otherwise the maximal string \(1,2,\ldots,r\) starting at the first \(1\) ends
inside the inherited Dyck part \(D_1\), because a copy of \(z\) already occurs
in \(D_1\) while \(D_1[-1]<z-1\), and the endpoint would be extractable.

We count pairs in \(S\).  The word \(S=D_1:z\) has the same deficit
as the original input, and Lemma~\ref{lem:suffix-corrected-deficit-lower-bound}
applies with appended suffix of length one.  Hence there are no type~A pairs
wholly inside the appended suffix to subtract.  More than \(2n-8\)
type~A/type~B pairs in \(S\) therefore contradicts the original deficit bound.
First
consider positions \(2,\ldots,n-2\).  If \(S_j>0\), the second zero \(S_1\)
forms a type~B pair with \(S_j\).  If \(S_j=0\), pair this zero with the first
copy of \(z\) in \(D_1\): the pair is type~B if the zero precedes that first
\(z\), and type~A if it follows it.  This gives \(n-3\) pairs, none using the
final appended \(z\).

For the second group, consider every entry except the first \(0\), the first
\(1\), and the final appended \(z\).  If the entry has value \(0\) or \(1\), it
is non-initial and forms a type~B pair with the final \(z\).  If it has value
greater than \(1\), the second \(1\) forms a type~B pair with it.  This gives
another \(n-3\) pairs.  The two groups are disjoint by their endpoint roles, so
we obtain at least \(2n-6>2n-8\) pairs, contradiction.

Therefore \(z\le D_1[-1]+1\), so \(S=D_1:z\) is Dyck.  Since the branch test
says \(S\) has no extractable element, \(S\) is a full Dyck skeleton, and the
down skeleton branch is well-defined.
\end{proof}
\subsection{Proof of Lemma~\ref{lem:extractions-wd}}
\label{subsec:proof-extractions-wd}
The residual checker of Subsection~\ref{subsec:finite-residual-check} already
covers \(4\le n\le 7\), so assume \(n\ge 8\).  We prove that the required
extractions cannot fail by showing that any failed branch would force more
than \(2n-8\) deficit pairs.  The argument uses the intermediate words
introduced in Subsection~\ref{subsec:local-proof-strings}.
\begin{proof}[Proof of Lemma~\ref{lem:extractions-wd} for \(n\ge 8\)]
The first extraction in a non-skeleton branch cannot fail: the input is not a
full Dyck skeleton, so an extractable element exists, and the leftmost
convention selects the element used by the algorithm.  After each successful
extraction, Lemma~\ref{lem:extractable-structure} leaves a Dyck remainder.
Thus, if a later attempted extraction fails, the current remainder is a full
Dyck skeleton, and Lemma~\ref{lem:suffix-corrected-deficit-lower-bound} applies
to the full intermediate word with its appended lowered suffix.  We show that
each possible failure would force \(2n-7\) counted pairs, with no counted
type~A pair wholly inside that appended suffix.

We use the following elementary consequence of full non-extractability in each
failed-extraction remainder.  If a full skeleton \(x\) contains a positive
entry that is not the final entry, or if its final entry is at least \(2\),
then \(x[1]=0\), a second \(1\) occurs in \(x\), and every initial occurrence
of a value at least \(2\) lies to the right of that second \(1\).  Indeed, if a
unique \(1\), or an initial run beginning at position \(1\), or an initial
value at least \(2\) before the second \(1\) occurred, the corresponding
maximal consecutive string \(1,2,\ldots,r\) would end at an eligible index,
contradicting fullness.  The hypotheses needed for this observation hold in
the four cases below: in the two \(\mathrm{up}\) cases the final entry of the
full remainder is at least \(2\), while in the two \(\mathrm{down}\) cases a
positive value produced by the previous extraction remains inside the full
remainder.

First suppose the second extraction in \(\mathrm{up}\) fails.  The current
word is \(S=x:z\), where \(x\) is a full skeleton of length \(n-1\),
and the failed \(\mathrm{East}_3\) test gives \(x[-1]\gg z\), i.e.
\(x[-1]\ge z+2\).  Count the type~A pair \((x[-1],z)\), which is not wholly
inside the one-entry appended suffix.  For each
occurrence \(u\) of \(x\) except the first \(0\), the first \(1\), and
\(x[-1]\), take \((u,x[-1])\) if \(u\) is non-initial and \(u<x[-1]\), take
\((u,z)\) if \(u\) is non-initial and \(u\ge x[-1]\), and take
\((\text{second }1,u)\) if \(u\) is initial.  These are respectively type~B,
type~A, and type~B pairs, and give \(n-4\) pairs.  For each \(u=x[j]\) with
\(2\le j\le n-3\), take \((x[1],u)\) if \(u>0\), and \((u,x[-2])\) if
\(u=0\).  Since \(x[1]\) is the second zero and \(x[-2]\ge x[-1]-1\ge1\),
this gives another \(n-4\) type~B pairs.  The three groups are disjoint, giving
\[
  1+(n-4)+(n-4)=2n-7
\]
pairs.

Next suppose the third extraction in \(\mathrm{up}\) fails.  The current word
is \(S=x:(w,z)\), where \(x\) has length \(n-2\).  Put
\[
  A=x[-2],\qquad B=x[-1].
\]
The failed window is \([A,B,w,z]\), and the earlier \(\mathrm{East}_3\) test
did not stop, so \(B\gg w\).  The failed \(\mathrm{East}_5\) test supplies
three pairs, and every listed type~A pair has left endpoint in \(x\), hence is
not wholly inside the appended suffix \((w,z)\):
\[
\begin{cases}
(A,w),(A,z),(B,w),& A\gg B,\\
(A,w),(B,w),(B,z),& A\not\gg B\text{ but }A\gg w,\\
(B,w),(A,z),(B,z),& A\not\gg w.
\end{cases}
\]
For the next disjoint group of pairs, first count
\((\text{second }1,B)\).  Assign one further pair to the occurrence \(A\):
use \((A,B)\) if \(A\gg B\); if \(A<B\), use \((A,B)\) when \(A\) is
non-initial and \((\text{second }1,A)\) when \(A\) is initial; in the remaining
case \(A\ge B\) but \(A\not\gg B\), use \((A,z)\).  For every other occurrence
of \(x\) except the first \(0\), first \(1\), second \(1\), \(A\), and \(B\),
take \((u,B)\) if \(u\) is non-initial and \(u<B\), take \((u,w)\) if \(u\) is
non-initial and \(u\ge B\), and take \((\text{second }1,u)\) if \(u\) is
initial.  This gives \(n-5\) pairs in this group.  Finally, for each \(u=x[j]\)
with \(2\le j\le n-4\), take \((x[1],u)\) if \(u>0\), and \((u,A)\) if
\(u=0\).  Since \(A\ge1\), this gives \(n-5\) more type~B pairs.  Together the
disjoint groups give
\[
  3+(n-5)+(n-5)=2n-7
\]
pairs.

For \(\mathrm{down}\), Lemma~\ref{lem:skeleton-wd} handles the skeleton branch
after the first extraction.  Suppose the third extraction in \(\mathrm{down}\)
fails.  The current word is \(S=x:(w,z)\), where \(x\) has length
\(n-2\), and put \(A=x[-1]\).  The failed \(\mathrm{West}_3\) condition gives
\(A\ll w\), so \(w\ge A+2\).  The appended word is reverse Dyck, hence
\(w\le z+1\), so \(z>A\).  The successful extraction that produced the final
\(z\) left a copy of \(z\) in \(x\); because \(z>A\) and \(x\) ends in \(A\),
the terminal \(A\) is non-initial.  Count the type~B pair \((A,w)\); type~B
pairs are not subtracted by the suffix correction.
For each occurrence \(u\) of \(x\) except the first \(0\), first \(1\), second
\(0\), and \(A\), take \((u,w)\) if \(u\) is non-initial and \(u<w\), take
\((u,A)\) if \(u\) is non-initial and \(u\ge w\), and take
\((\text{second }1,u)\) if \(u\) is initial.  This gives \(n-6\) pairs.  For
each \(u\) in \(x[2],x[3],\ldots,x[-1],w,z\), take \((x[1],u)\) if \(u>0\) and
\((u,z)\) if \(u=0\).  This gives \(n-2\) pairs.  Thus the total is
\[
  1+(n-6)+(n-2)=2n-7.
\]

Finally suppose the fourth extraction in \(\mathrm{down}\) fails.  The current
word is \(S=x:(v,w,z)\), where \(x\) has length \(n-3\), and put
\(A=x[-1]\).  The branch conditions include \(A\ll v\) and \(w>A\), while the
appended word is reverse Dyck, so \(v\le w+1\) and \(w\le z+1\).  First
\(z\ne0\): if \(z=0\), then \(A=0\), \(w=1\), and \(v=2\), and the reversed
\(\mathrm{East}_5\) window \((0,1,2,0,B)\), with \(B=x[-2]\), would satisfy
Case~2b.  Second, \(A\) is non-initial.  Since \(z\ge A\), either a larger
remaining \(z\) forces an earlier \(A\), or another remaining copy of \(z=A\)
does so directly; if the terminal \(A=z\) were the unique remaining \(z\), then
the previous \(\mathrm{West}_3\) window \((z+1,v,w)\) would have succeeded.
Now count the type~B pairs \((A,v)\) and \((A,w)\); type~B pairs are not
subtracted by the suffix correction.  For each
occurrence \(u\) of \(x\) except the first \(0\), first \(1\), second \(0\),
and \(A\), take \((u,v)\) if \(u\) is non-initial and \(u<v\), take \((u,A)\)
if \(u\) is non-initial and \(u\ge v\), and take \((\text{second }1,u)\) if
\(u\) is initial.  This gives \(n-7\) pairs.  For each \(u\) in
\(x[2],x[3],\ldots,x[-1],v,w,z\), take \((x[1],u)\) if \(u>0\) and \((u,z)\)
if \(u=0\).  Since \(z>0\), this gives \(n-2\) pairs.  The total is
\[
  2+(n-7)+(n-2)=2n-7.
\]

All possible attempted extraction failures lead to more than \(2n-8\)
suffix-corrected counted pairs, contradicting the deficit bound by
Lemma~\ref{lem:suffix-corrected-deficit-lower-bound}.  Hence every extraction
called by \(\mathrm{up}\) or \(\mathrm{down}\) succeeds, and the
leftmost-extractable convention determines the chosen element.
\end{proof}

\subsection{Proof of Lemma~\ref{lem:positions-wd}}
\label{subsec:proof-positions-wd}
The proof uses two finite checks, both included after the proof.  The first is a
finite checker for Lemma~\ref{lem:positions-wd} on an enlarged
limited-nonzero domain: all Dyck sequences with \(4\le n\le 13\) and at most
seven nonzero entries.  This covers every short case \(4\le n\le 8\), since
every such sequence begins with \(0\) and therefore has at most \(n-1\le 7\)
nonzero entries.  It also covers the only finite middle
range needed in the \(e=2\) argument below.  The second finite check handles
the finite range \(9\le n\le 16\) needed in the \(e\ge 3\) argument.  These
finite checks are part of the proof for their stated finite domains; the
remaining ranges are handled by the analytic arguments below.
\begin{proof}[Proof of Lemma~\ref{lem:positions-wd}]
By the limited-nonzero finite checker below, the lemma holds for
\(4\le n\le 8\).  We may therefore assume \(n\ge 9\), except when one of the
finite checks is invoked explicitly.
Suppose first that a position bound fails during either the \(\mathrm{up}\) or
\(\mathrm{down}\) computation, and choose the first extraction at which this
happens.  Use the notation of Subsection~\ref{subsec:local-proof-strings}: the
extracted element is \(e\), its immediate predecessor is \(d=e-1\), the current
remainder has the form
\[
  \cdots,d,e,P,
\]
and the full intermediate word in which the lower-bound pairs are counted has the form
\[
  \cdots,d,e,P,Q.
\]
On the \(\mathrm{up}\) side, \(|P|\le 2\) and \(|Q|\le 2\).  On the
\(\mathrm{down}\) side, \(|P|\le 1\) and \(|Q|\le 3\).  Thus, in either case,
if \(p=|P|\) and \(q=|Q|\), then \(p+q\le 4\).  The full intermediate word has
the same deficit as the original input at this branch prefix.  We
therefore use Lemma~\ref{lem:suffix-corrected-deficit-lower-bound}: counted
type~A/type~B pairs give a contradiction once type~A pairs wholly inside the
appended suffix \(Q\) are excluded.
The value \(e\) is not zero, since zero is never extractable.  It is also not
one.  Indeed, by Lemma~\ref{lem:extractable-structure}, an extracted element is
the leftmost occurrence of its value.  Thus an extractable \(1\) is the
leftmost \(1\); since a Dyck sequence begins with \(0\), it occurs in
position~\(1\), which is not one of the forbidden final positions when
\(n\ge9\).
We first handle the case \(e=2\).  Then \(d=1\), and every entry to the left of
this \(d\) is zero.  For \(9\le n\le 13\), a position-bound violation of this
kind implies that the original input has at most \(p+q+2\le 6\) nonzero entries:
after the entries recorded in \(Q\) have been extracted from a word of the form
\[
  0,0,\ldots,0,1,2,P,
\]
there remain at least \(n-p-q-2\) zeros.  This range is therefore covered by
the limited-nonzero finite checker, which checks the larger domain with at most
seven nonzero entries.  It remains to treat \(n\ge 14\).
Assume \(n\ge 14\).  If some entry \(c\) of \(P\cup Q\) is nonzero, then at
least \(n-7\) non-initial zeros occur before the displayed \(1,2\).  Each of
these zeros forms type~B pairs with \(d=1\), with \(e=2\), and with
\(c\).  This gives at least
\[
  3(n-7)=3n-21>2n-8
\]
suffix-corrected counted pairs, a contradiction.  Hence all entries of
\(P\cup Q\) are zero.
If \(p+q=0\), then the non-initial zeros before the displayed \(1,2\) pair with
both \(1\) and \(2\), giving \(2n-6\) counted pairs.  If \(p+q=1\), the same
zeros give \(2n-8\) pairs with \(1\) and \(2\), and \(e=2\) gives one
additional type~A pair with the unique zero in \(P\cup Q\).  If that zero lies
in \(Q\), the other endpoint is \(e\), outside the appended suffix.  Both
alternatives contradict the suffix-corrected lower bound and
\(\defc\le 2n-8\).  Thus only \(p+q\ge 2\), with all entries of \(P\cup Q\)
equal to zero, remains.
On the \(\mathrm{up}\) side, the possible pairs are
\[
  (p,q)\in
  \{(2,0),(1,1),(2,1),(0,2),(1,2),(2,2)\}.
\]
These are impossible for local reasons.  If \((p,q)=(2,0)\), then after the
first extraction the \(\mathrm{East}_3\) window is \((0,0,1)\), so the
algorithm terminates at \(\mathrm{East}_3\).  If \((p,q)=(1,1)\) or
\((2,1)\), then immediately before the offending extraction the
\(\mathrm{East}_3\) window is respectively \((2,0,0)\) or \((0,0,0)\), so
\(\mathrm{East}_3\) would already have applied.  If \((p,q)=(1,2)\) or
\((2,2)\), the preceding extracted element was a \(1\); before that preceding
extraction the corresponding \(\mathrm{East}_3\) window was again respectively
\((2,0,0)\) or \((0,0,0)\).  Finally, if \((p,q)=(0,2)\), then the current
\(\mathrm{East}_5\) window is \((0,1,2,0,0)\), and Case~2b of
\(\mathrm{East}_5\) applies.  Hence no \(e=2\) position-bound violation occurs
for \(\mathrm{up}\).
On the \(\mathrm{down}\) side, the possible remaining pairs are
\[
  (p,q)\in\{(1,1),(0,2),(1,2),(0,3),(1,3)\}.
\]
They are ruled out by the reversal-conjugate West tests.  If
\((p,q)=(1,1)\), then after the first extraction the \(\mathrm{West}_3\)
window is \((0,0,1)\).  If \((p,q)=(0,2)\) or \((1,2)\), then immediately
before the offending extraction the \(\mathrm{West}_3\) window is respectively
\((2,0,0)\) or \((0,0,0)\).  If \((p,q)=(0,3)\) or \((1,3)\), the preceding
extracted element was a \(1\), and before that preceding extraction the
\(\mathrm{West}_3\) window was respectively \((2,0,0)\) or \((0,0,0)\).  In
each case \(\mathrm{West}_3\) would have terminated the branch earlier.  Thus
no \(e=2\) position-bound violation occurs for \(\mathrm{down}\).
Now assume \(e\ge 3\).  We need two structural exclusions.  First, the
intermediate word cannot have the form
\[
  0,1,2,\ldots,e,P,Q.
\]
For \(n\ge 17\), such a word has area at least
\[
  \frac{(n-4)(n-5)}2,
\]
because \(p+q\le 4\).  This lower bound is larger than
\(\binom n2/2\) for \(n\ge 17\), while the original input has area at most
\(\ell\le \binom n2/2\) and the intermediate word is obtained from it only by
lowering previously extracted entries.  This is impossible.  For
\(9\le n\le 16\), the prefix-form finite checker below verifies the same
exclusion on the finite domains for the unspecified entries in \(P,Q\)
described after the proof.
Second, the intermediate word cannot have the form
\[
  0,0,1,2,\ldots,e,P,Q.
\]
For \(n\ge 17\), the second zero forms type~B pairs with at least \(n-6\)
positive entries in the increasing chain.  These type~B pairs are not
subtracted by the suffix correction, so the word has deficit at least
\(n-6\).  Its area is at least
\[
  \frac{(n-5)(n-6)}2,
\]
which is larger than
\[
  \frac{\binom n2-(n-6)}2
\]
for \(n\ge 17\).  Since the deficit is at least \(n-6\), this exceeds
the allowable half-area bound.  The finite range \(9\le n\le 16\) is again
covered by the prefix-form checker.
These two exclusions force the beginning of the word.  If the word began
\(0,1,\ldots\), then the terminal element of the maximal initial increasing
chain would be the leftmost extractable element and the first excluded form
would occur.  Hence the second entry is zero.  The second excluded form then
prevents the word from continuing as \(0,0,1,2,\ldots\).  By nonnegativity and
the affine Dyck inequality, the first four entries are therefore one of
\[
  0000,
  \qquad
  0001,
  \qquad
  0010,
  \qquad
  0011.
\]
In particular, among the first four entries there are two non-initial entries,
call them \(a\) and \(b\), with \(a=0\) and \(b\in\{0,1\}\).
We now count suffix-corrected type~A/type~B pairs in the full intermediate
word.  Excluding the entries \(d,e\), the \(p+q\) entries of \(P\cup Q\), the
first zero, and the first one, there are exactly \(n-p-q-4\) entries to the
left of \(d\).  Each gives two pairs.  If such an entry is non-initial, then
it is less than both
\(d\) and \(e\), so it forms type~B pairs with both.  If it is the initial
occurrence of a value greater than~\(1\), then the two early non-initial entries
\(a,b\) are smaller and lie to its left, so they form two type~B pairs with it.
This gives
\[
  2(n-p-q-4)
\]
pairs.
Next take any entry \(c\) of \(P\cup Q\).  For one pair, either \(a<c\), in
which case \((a,c)\) is type~B, or \(a\ge c\), in which case \(c=0\) and
\((d,c)\) is type~A.  For another pair, either \(b<c\), in which case
\((b,c)\) is type~B, or \(b\ge c\), in which case \(c\le 1\) and \((e,c)\) is
type~A.  Thus the entries of \(P\cup Q\) contribute \(2(p+q)\) further pairs.
Any counted type~A pair with \(c\in Q\) has left endpoint \(d\) or \(e\), hence
is not wholly inside the appended suffix.
Finally, the second zero and the first one form one additional type~B pair.
Altogether the word contains at least
\[
  2(n-p-q-4)+2(p+q)+1=2n-7
\]
suffix-corrected counted pairs, contradicting \(\defc\le 2n-8\).  This proves
all extraction position bounds.
It remains to show that the injection stages do not fail.  Consider a
non-skeleton local branch of \(\mathrm{up}\) that reaches an injection stage,
and let \(e\) be the last extracted element in that branch.  When \(e\) was
extracted, Lemma~\ref{lem:extractable-structure} placed a surviving copy of
\(e-1\) immediately to its left.  The position bounds just proved imply that
this copy is not moved by the local East map: in the three- and five-window
branches it lies outside the local window, and in the seven-window boundary
case it is the leftmost entry of the seven-window.  The East maps fix the
boundary entries of their local windows.  Therefore the adjacent copy of
\(e-1\) survives until the injection stage.
The decremented value \(e-1\) of the last extracted element is the rightmost
entry of the suffix that is incremented and injected, so right-to-left
injection reinserts \(e\) first.  This first \(e\) is reinserted immediately
after the surviving adjacent copy of \(e-1\).  The remaining entries to be
injected form a
reverse Dyck sequence by the local East output condition and the definitions of
the branches.  Thus the standard right-to-left injection argument applies to
the remaining entries: after one insertion succeeds, the next required
occurrence lies at or to the left of the preceding insertion point.  Hence no
injection in \(\mathrm{up}\) fails.
The proof for \(\mathrm{down}\) is the same with West in place of East.
Since West is the reversal conjugate of East, the West maps also fix the
boundary entries of their local windows.  The West position bounds preserve the
adjacent occurrence needed to reinsert the last extracted element, the last
extracted element is again injected first, and the remaining injected word is
reverse Dyck.  Thus no injection in \(\mathrm{down}\) fails.  The lemma
follows.
\end{proof}
\subsubsection*{Limited-nonzero finite checker for Lemma~\ref{lem:positions-wd}}
The following checker is run after the core routines of
Subsection~\ref{subsec:core-code}.  It checks the full position-bound and
injection-nonfailure conclusions on all Dyck sequences with \(4\le n\le 13\)
and at most seven nonzero entries satisfying the fixed-deficit and area
hypotheses of Lemma~\ref{lem:positions-wd}.
\begin{lstlisting}
from collections import Counter
from math import comb
N_MIN, N_MAX = 4, 13
MAX_NONZERO = 7

def require(test, message):
    if not test:
        raise AssertionError(message)

def nonzero_count(S):
    return sum(1 for x in S if x != 0)

def ell_value(n, d):
    return (comb(n, 2) - d) // 2

def check_image(source, image, n, d, delta):
    require(is_Dyck(image), f"non-Dyck image: {source} -> {image}")
    require(len(image) == n, f"length changed: {source} -> {image}")
    require(defc(image) == d, f"deficit changed: {source} -> {image}")
    require(area(image) == area(source) + delta,
            f"wrong area change: {source} -> {image}")

def checked_up(S, n, d, ell):
    S = tuple(S)
    if S == omega(n):
        image = epsilon(n)
        check_image(S, image, n, d, 1)
        return "up special", 3
    if is_full_skeleton(S):
        image = inject(S[:-1], S[-1] + 1)
        check_image(S, image, n, d, 1)
        return "up skeleton", 3
    j1, e1 = find_extractable(S)
    C1 = remove_at(S, j1)
    sigma1 = C1 + (e1 - 1,)
    if East3(sigma1[-3:]) is not None:
        require(j1 < n - 2, f"up East3 position bound: {S}")
        image = inject_right_to_left(sigma1[:-2],
                                     (sigma1[-2] + 1, sigma1[-1] + 1))
        check_image(S, image, n, d, 1)
        return "up East3", 3
    j2, e2 = find_extractable(C1)
    C2 = remove_at(C1, j2)
    sigma2 = C2 + (e1 - 1, e2 - 1)
    E5 = East5(sigma2[-5:])
    if E5 is not None:
        require(j1 < n - 3 and j2 < len(C1) - 3,
                f"up East5 position bound: {S}")
        base = sigma2[:-5] + E5[:2]
        image = inject_right_to_left(base, tuple(x + 1 for x in E5[2:]))
        check_image(S, image, n, d, 1)
        return "up East5", 5
    j3, e3 = find_extractable(C2)
    C3 = remove_at(C2, j3)
    sigma3 = C3 + (e1 - 1, e2 - 1, e3 - 1)
    W7 = sigma3[-7:]
    require(not is_far_apart_decomposable(W7), f"bad East7 window: {S}")
    require(j1 < n - 3 and j2 < len(C1) - 3 and j3 < len(C2) - 3,
            f"up East7 position bound: {S}")
    E7 = East7(W7)
    image = inject_right_to_left(sigma3[:-7] + E7[:-4],
                                 tuple(x + 1 for x in E7[-4:]))
    check_image(S, image, n, d, 1)
    return "up East7", 7

def checked_down(S, n, d, ell):
    S = tuple(S)
    if S == epsilon(n):
        image = omega(n)
        check_image(S, image, n, d, -1)
        return "down special", 3
    j1, f1 = find_extractable(S)
    D1 = remove_at(S, j1)
    candidate = D1 + (f1 - 1,)
    if find_extractable(candidate) is None:
        check_image(S, candidate, n, d, -1)
        return "down skeleton", 3
    j2, f2 = find_extractable(D1)
    D2 = remove_at(D1, j2)
    tau1 = D2 + (f1 - 1, f2 - 1)
    if West3(tau1[-3:]) is not None:
        require(j1 < n - 1 and j2 < len(D1) - 1,
                f"down West3 position bound: {S}")
        image = inject(tau1[:-1], tau1[-1] + 1)
        check_image(S, image, n, d, -1)
        return "down West3", 3
    j3, f3 = find_extractable(D2)
    D3 = remove_at(D2, j3)
    tau2 = D3 + (f1 - 1, f2 - 1, f3 - 1)
    W5 = West5(tau2[-5:])
    if W5 is not None:
        require(j1 < n - 2 and j2 < len(D1) - 2 and j3 < len(D2) - 2,
                f"down West5 position bound: {S}")
        base = tau2[:-5] + W5[:3]
        image = inject_right_to_left(base, tuple(x + 1 for x in W5[3:]))
        check_image(S, image, n, d, -1)
        return "down West5", 5
    j4, f4 = find_extractable(D3)
    D4 = remove_at(D3, j4)
    tau3 = D4 + (f1 - 1, f2 - 1, f3 - 1, f4 - 1)
    W7 = tau3[-7:]
    require(not is_far_apart_decomposable(W7), f"bad West7 window: {S}")
    require(j1 < n - 2 and j2 < len(D1) - 2
            and j3 < len(D2) - 2 and j4 < len(D3) - 2,
            f"down West7 position bound: {S}")
    E7 = West7(W7)
    image = inject_right_to_left(tau3[:-7] + E7[:-3],
                                 tuple(x + 1 for x in E7[-3:]))
    check_image(S, image, n, d, -1)
    return "down West7", 7

def run_limited_nonzero_checker():
    generated = {}
    eligible = Counter()
    branches = Counter()
    levels = Counter()
    failures = []
    for n in range(N_MIN, N_MAX + 1):
        seqs = [S for S in generate_Dycks(n) if nonzero_count(S) <= MAX_NONZERO]
        generated[n] = len(seqs)
        for S in seqs:
            d = defc(S)
            if d > 2 * n - 8:
                continue
            ell = ell_value(n, d)
            try:
                if area(S) < ell:
                    branch, level = checked_up(S, n, d, ell)
                    eligible[(n, "up")] += 1
                    branches[("up", branch)] += 1
                    levels[("up", level)] += 1
                if area(S) <= ell and not is_special_skeleton(S):
                    branch, level = checked_down(S, n, d, ell)
                    eligible[(n, "down")] += 1
                    branches[("down", branch)] += 1
                    levels[("down", level)] += 1
            except Exception as exc:
                failures.append((n, S, str(exc)))
    require(not failures, f"first failure: {failures[0] if failures else None}")
    up_total = sum(v for (n, direction), v in eligible.items()
                   if direction == "up")
    down_total = sum(v for (n, direction), v in eligible.items()
                     if direction == "down")
    print("generated by n:", generated)
    print("eligible up calls:", up_total)
    print("eligible down calls:", down_total)
    print("eligible calls by n/direction:", dict(sorted(eligible.items())))
    print("branches:", dict(sorted(branches.items())))
    print("levels:", dict(sorted(levels.items())))
    print("position-bound or image failures:", len(failures))
    print("status: PASS")
run_limited_nonzero_checker()
\end{lstlisting}
A successful run prints the following excerpt, including the totals and final
status line:
\begin{lstlisting}
generated by n: {4: 14, 5: 42, 6: 132, 7: 429, 8: 1430,
                 9: 3432, 10: 7072, 11: 13260,
                 12: 23256, 13: 38760}
eligible up calls: 11879
eligible down calls: 9486
position-bound or image failures: 0
status: PASS
\end{lstlisting}
\subsubsection*{Finite checker for the two excluded prefix forms}
The next checker verifies the finite range \(9\le n\le 16\) for the excluded
prefix forms (\(0,1,2,\ldots,e,P,Q\) and
\(0,0,1,2,\ldots,e,P,Q\)) in the \(e\ge 3\) part of the proof.  It enumerates
the finite suffix domains allowed by those prefixes and the affine step
inequality, then checks that every resulting intermediate word satisfies one
of the deficit or area contradictions used above.  In the \(p+q=4\) case the
area adjustment is \(3\): it is \(q+1=3\) for the \(\mathrm{up}\) boundary
\((p,q)=(2,2)\), because \(\mathrm{up}\) is only needed below the middle area,
and it is \(q=3\) for the \(\mathrm{down}\) boundary \((p,q)=(1,3)\).
\begin{lstlisting}
from collections import Counter
from itertools import product
from math import comb
N_MIN, N_MAX = 9, 16
EXPECTED = {
    (9, 1, "pq_lt_4"): 504, (9, 1, "pq_eq_4"): 3024,
    (10, 1, "pq_lt_4"): 720, (10, 1, "pq_eq_4"): 5040,
    (11, 1, "pq_lt_4"): 990, (11, 1, "pq_eq_4"): 7920,
    (12, 1, "pq_lt_4"): 1320, (12, 1, "pq_eq_4"): 11880,
    (13, 1, "pq_lt_4"): 1716, (13, 1, "pq_eq_4"): 17160,
    (14, 1, "pq_lt_4"): 2184, (14, 1, "pq_eq_4"): 24024,
    (15, 1, "pq_lt_4"): 2730, (15, 1, "pq_eq_4"): 32760,
    (16, 1, "pq_lt_4"): 3360, (16, 1, "pq_eq_4"): 43680,
    (9, 2, "pq_lt_4"): 336, (9, 2, "pq_eq_4"): 1680,
    (10, 2, "pq_lt_4"): 504, (10, 2, "pq_eq_4"): 3024,
    (11, 2, "pq_lt_4"): 720, (11, 2, "pq_eq_4"): 5040,
    (12, 2, "pq_lt_4"): 990, (12, 2, "pq_eq_4"): 7920,
    (13, 2, "pq_lt_4"): 1320, (13, 2, "pq_eq_4"): 11880,
    (14, 2, "pq_lt_4"): 1716, (14, 2, "pq_eq_4"): 17160,
    (15, 2, "pq_lt_4"): 2184, (15, 2, "pq_eq_4"): 24024,
    (16, 2, "pq_lt_4"): 2730, (16, 2, "pq_eq_4"): 32760,
}

def require(test, message):
    if not test:
        raise AssertionError(message)

def bounded_product(bounds):
    return product(*(range(bound + 1) for bound in bounds))

def defc(word):
    n = len(word)
    dinv_count = sum(
        1
        for i in range(n)
        for j in range(i + 1, n)
        if word[i] == word[j] or word[i] == word[j] + 1
    )
    return comb(n, 2) - area(word) - dinv_count

def claim_words(n, claim, subcase):
    if claim == 1 and subcase == "pq_lt_4":
        prefix = tuple(range(0, n - 3))
        bounds = (n - 3, n - 2, n - 1)
    elif claim == 1 and subcase == "pq_eq_4":
        prefix = tuple(range(0, n - 4))
        bounds = (n - 4, n - 3, n - 2, n - 1)
    elif claim == 2 and subcase == "pq_lt_4":
        prefix = (0,) + tuple(range(0, n - 4))
        bounds = (n - 4, n - 3, n - 2)
    elif claim == 2 and subcase == "pq_eq_4":
        prefix = (0,) + tuple(range(0, n - 5))
        bounds = (n - 5, n - 4, n - 3, n - 2)
    else:
        raise ValueError("unknown claim/subcase")
    for stars in bounded_product(bounds):
        yield prefix + stars

def run_prefix_checker():
    counts = Counter()
    failures = []
    for n in range(N_MIN, N_MAX + 1):
        M = comb(n, 2)
        for claim in (1, 2):
            for subcase in ("pq_lt_4", "pq_eq_4"):
                # In the p+q=4 boundary this is q+1 for up (2,2)
                # and q for down (1,3).
                adjustment = 3 if subcase == "pq_eq_4" else 0
                for word in claim_words(n, claim, subcase):
                    counts[(n, claim, subcase)] += 1
                    D = defc(word)
                    A = area(word)
                    deficit_contradiction = D > 2 * n - 8
                    area_contradiction = 2 * A > M - D - 2 * adjustment
                    if not (deficit_contradiction or area_contradiction):
                        failures.append((n, claim, subcase, word, D, A))
    require(dict(counts) == EXPECTED, "word counts do not match")
    require(not failures, f"first failure: {failures[0] if failures else None}")
    print("counts by n/claim/subcase:", dict(sorted(counts.items())))
    print("failures:", len(failures))
    print("status: PASS")
run_prefix_checker()
\end{lstlisting}
The run matches the two count tables used above and ends with
\begin{lstlisting}
failures: 0
status: PASS
\end{lstlisting}

\subsection{Proof of Lemma~\ref{lem:east7-wd}}
\label{subsec:proof-east7-wd}
The residual checker in Subsection~\ref{subsec:finite-residual-check} covers
\(4\le n\le 7\).  In that range its successful output verifies that no retained
input reaches an \(\mathrm{East}_7\) or \(\mathrm{West}_7\) branch.  We assume
\(n\ge 8\) for the rest of the proof.
The proof uses the seven-window checker below, which enumerates the finite
seven-term patterns remaining after the analytic reductions in the proof.  It
records the East patterns for which the three- and five-window tests have not
stopped the branch and the seven-window is far-apart decomposable; the West
patterns are obtained by reversal.  It then re-inserts possible gaps between
values, applies possible translations, computes the two
\(\operatorname{id}\)-bounds used below, compares the threshold tables, and
checks the remaining possibilities satisfying the \(n,d,\area\) inequalities.
A successful run gives the finite counts stated after the listing; any failure
raises an error.
\begin{proof}[Proof of Lemma~\ref{lem:east7-wd} for \(n\ge 8\)]
Suppose, toward a contradiction, that an \(\mathrm{East}_7\) or
\(\mathrm{West}_7\) branch is reached and its seven-term window is far-apart
decomposable.  Let \(Y\) be the intermediate length-\(n\) word at the moment
when the local seven-window map is about to be applied.  In the
\(\mathrm{up}\) case, \(Y\) is the word obtained after the three
extraction--decrement--append operations and before applying
\(\mathrm{East}_7\).  In the \(\mathrm{down}\) case, \(Y\) is the analogous
word obtained after the four extraction--decrement--append operations and
before applying \(\mathrm{West}_7\).  These operations preserve the deficit
\(\defc=\binom n2-\area-\dinv\), applied here to the intermediate word \(Y\).
In the
\(\mathrm{up}\) case there are three
extraction--decrement--append operations, and the fourth unit in the bound
below comes from the stronger input hypothesis \(\area(x)\le \ell-1\).  In the
\(\mathrm{down}\) case there are four such operations and the input hypothesis
is \(\area(y)\le \ell\).  Thus \(Y\) has length \(n\),
\[
  \defc(Y)\le 2n-8,
  \qquad
  \area(Y)\le \frac{\binom n2-\defc(Y)}2-4.
\]
The displayed area inequality follows from the branch area hypotheses after
dropping the floor, so a contradiction to this displayed system rules out the
original branch configuration.

By reversing the word if necessary, it is enough to treat the East case; the
West case is obtained by the reversal conjugacy between East and West.  Write
the final seven-term window of \(Y\) as
\[
  W=(x_{-3},x_{-2},x_{-1},x_0,x_1,x_2,x_3),
\]
where \(x_0\) is the first element of the extracted entries that have been
lowered and moved to the right.  Since the branch reached
\(\mathrm{East}_7\), neither the three-window nor the five-window East test
applied to the corresponding suffixes.

The checker records the relative value pattern obtained from \(W\) by closing
all gaps of size at least two between used values.  This gives one of the
listed East patterns; the West patterns are their reversals.  The original
window is recovered by re-inserting nonnegative extra gaps between consecutive
used value blocks and then applying a possible uniform translation.  After this
recovery the seven entries have fixed integer values.  The prefix maximum is
also an integer \(m\), and the connection condition \(m\ge x_{-3}-1\) is kept
as one of the constraints.

For the corrected window bookkeeping, a directed seven-window
\(w=(w_0,\ldots,w_6)\) has suffix length \(h=3\) in the East case and \(h=4\)
in the West case.  Put
\[
  S_w=\{7-h,\ldots,6\}.
\]
Thus the East suffix consists of the last three entries of \(w\), while the
West suffix consists of the last four entries.  Fix an integer \(m\).  A
position \(a\) of \(w\) is \(m\)-initial if
\[
  w_a>m
  \quad\text{and}\quad
  w_b\ne w_a\text{ for every }b<a.
\]
Let \(P_w^{(m)}\) be the number of internal window pairs of the following two
types:
\[
\begin{array}{ll}
\text{type A:} & a<b,\quad w_a>w_b+1,\\[2mm]
\text{type B:} & a<b,\quad w_a<w_b,\quad a\text{ is not }m\text{-initial}.
\end{array}
\]
Define the local suffix correction
\[
  C_s^{(m)}(w)=
  \sum_{b\in S_w}
  \#\{\,v\in\{m+1,\ldots,w_b-1\}:
       v\text{ does not occur among }w_0,\ldots,w_{b-1}\,\},
\]
and set
\[
  \operatorname{id}(w,m)=P_w^{(m)}-C_s^{(m)}(w).
\]
Let \(\operatorname{mid}(w)\) be the fourth-largest entry of \(w\), counted
with multiplicity, and define
\[
  \operatorname{id}_{\mathrm{base}}(w)
  =
  \operatorname{id}\bigl(w,\max\{w_0-1,w_6-1\}\bigr),
\]
\[
  \operatorname{id}_{\mathrm{mid}}(w)
  =
  \operatorname{id}\bigl(w,\max\{w_0-1,w_6-1,\operatorname{mid}(w)\}\bigr).
\]

We use two consequences of this definition.  Write the full intermediate word
as \(X:w\), where \(X\) is the non-window Dyck prefix, and let \(M=\max(X)\).
Let \(P_X(X:w)\) count type~A/type~B pairs internal to \(X\), and let
\(P_{X,w}(X:w)\) count type~A/type~B pairs with left endpoint in \(X\) and
right endpoint in \(w\), both in the ambient word \(X:w\).  The split boundary
gives \(M\ge w_0-1\).  Lemma~\ref{lem:positions-wd} gives the corresponding
bound at the extraction suffix end, \(M\ge w_6-1\).  Hence
\[
  \defc(X:w)
  \ge
  P_X(X:w)+P_{X,w}(X:w)+\operatorname{id}_{\mathrm{base}}(w).
\]
Moreover, if at most three entries of \(w\) are larger than \(M\), then
\(M\ge\operatorname{mid}(w)\), and hence
\[
  \defc(X:w)
  \ge
  P_X(X:w)+P_{X,w}(X:w)+\operatorname{id}_{\mathrm{mid}}(w).
\]
Indeed, if \(m\le M\), every type~B pair counted by \(P_w^{(m)}\) is an
ambient type~B pair in \(X:w\), and the local correction
\(C_s^{(m)}(w)\) bounds the exact suffix correction from
Lemma~\ref{lem:extended-deficit-pairs}.  This gives the displayed inequalities
for the stated choices of \(m\).

The proof splits according to
\[
  g=\#\{r: W_r>m\}.
\]
If \(g\le3\), the structural check gives
\(\operatorname{id}_{\mathrm{mid}}\ge10\), and the Case~1 deficit and area inequalities give
the threshold table \(N_1(\operatorname{id})\), \(K_1(\operatorname{id})\)
printed by the checker.  The table is justified by the following descent in
the prefix maximum \(m\).  Write \(q\) for the residual free type~A/type~B
lower-bound contribution in the prefix.  The corrected deficit inequality
forces
\[
  \operatorname{id}+q+3(n-m-8)\le 2n-8.
\]
For fixed \(n\), let \(m_0\) be the smallest \(m\) allowed by this inequality
with \(q=0\), and let \(q^*\) be the largest allowable \(q\) at \(m=m_0\).
The table entry \(N_1(\operatorname{id})\) is chosen so that, for
\(n>N_1(\operatorname{id})\), the relaxed area inequality already fails at
\((m_0,q^*)\).  If \(q<q^*\), the left side of the relaxed area inequality is
larger relative to the right side; if \(q>q^*\), the deficit inequality fails.
Thus no counterexample has \(m=m_0\).  If a counterexample had minimal
\(m>m_0\), then replacing \(m\) by \(m-1\) and \(q\) by \(\max(0,q-3)\) would
preserve the deficit inequality.  The area left side decreases by at least
\(n-m+1\ge9\), while the right side and the \(q\)-adjustment change the slack by
at most \(3\).  Hence the relaxed area inequality would still hold, producing a
counterexample with smaller \(m\), a contradiction.  Therefore no
counterexample exists for \(n>N_1(\operatorname{id})\).  For
\(n\le N_1(\operatorname{id})\), the checker enumerates every remaining
gap-expanded window with fixed integer values and every admissible prefix
maximum and finds no example.

If \(g\ge4\), the same argument uses the corrected
\(\operatorname{id}_{\mathrm{base}}\) convention.  No uniform
lower bound \(\operatorname{id}_{\mathrm{base}}\ge5\) is valid, so the Case~2 table includes
all ids \(0,\ldots,21\).  The same descent applies with \(4\) replacing \(3\):
the deficit inequality is
\[
  \operatorname{id}+q+4(n-m-8)\le 2n-8,
\]
the table entry \(N_2(\operatorname{id})\) is computed from the corresponding
relaxed area inequality at the minimal allowed prefix maximum, and a hypothetical
minimal counterexample with \(m>m_0\) descends to \(m-1\) after replacing \(q\)
by \(\max(0,q-4)\).  The area left side again drops by at least \(n-m+1\ge9\),
whereas the right side and \(q\)-adjustment change the slack by at most \(4\).
Thus the analytic descent handles all \(n>N_2(\operatorname{id})\), and the
checker verifies all remaining gap-expanded windows with fixed integer values
and admissible prefix maxima for \(n\le N_2(\operatorname{id})\).

Cases \(g\le3\) and \(g\ge4\) cover all possibilities.  The finite
checks therefore rule out a far-apart decomposable seven-window at the
\(\mathrm{East}_7\) or \(\mathrm{West}_7\) stage, proving the lemma.
\end{proof}
\subsubsection*{Finite checker for the East7--West7 seven-window lemma}
The checker below implements the verification using gap-expanded windows with
fixed integer values, relative \(\operatorname{id}_{\mathrm{base}}\) and
\(\operatorname{id}_{\mathrm{mid}}\), threshold tables, and the prefix-maximum
constraints.
\begin{lstlisting}
from __future__ import annotations

import math
from functools import lru_cache
from itertools import combinations, permutations
from math import comb


EXPECTED_CASE1_TABLE = {
    10: (33, 23),
    11: (26, 18),
    12: (16, 11),
    13: (9, 6),
    14: (None, None),
    15: (None, None),
    16: (None, None),
    17: (None, None),
    18: (None, None),
    19: (None, None),
    20: (None, None),
    21: (None, None),
}

EXPECTED_CASE2_TABLE = {
    0: (26, 18),
    1: (23, 16),
    2: (23, 16),
    3: (20, 14),
    4: (20, 14),
    5: (19, 13),
    6: (17, 12),
    7: (16, 11),
    8: (16, 11),
    9: (13, 9),
    10: (13, 9),
    11: (12, 8),
    12: (10, 7),
    13: (9, 6),
    14: (9, 6),
    15: (None, None),
    16: (None, None),
    17: (None, None),
    18: (None, None),
    19: (None, None),
    20: (None, None),
    21: (None, None),
}

EXPECTED_FINITE_COUNTS = {
    ("Case 1", "East"): {"children": 2473, "triples": 9919},
    ("Case 1", "West"): {"children": 2911, "triples": 10311},
    ("Case 2", "East"): {"children": 3860, "triples": 715},
    ("Case 2", "West"): {"children": 4827, "triples": 1756},
}


def unique_permutations(seq: tuple[int, ...]):
    """Yield all distinct permutations of seq."""

    seen = set()
    for perm in permutations(seq):
        if perm not in seen:
            seen.add(perm)
            yield perm


def is_far_apart_decomposable(vals: tuple[int, ...]) -> bool:
    """Return True iff vals has three disjoint pairs at distance at least 2."""

    indices = list(range(7))
    for pair1 in combinations(indices, 2):
        if abs(vals[pair1[0]] - vals[pair1[1]]) < 2:
            continue
        remaining1 = [i for i in indices if i not in pair1]
        for pair2 in combinations(remaining1, 2):
            if abs(vals[pair2[0]] - vals[pair2[1]]) < 2:
                continue
            remaining2 = [i for i in remaining1 if i not in pair2]
            for pair3 in combinations(remaining2, 2):
                if abs(vals[pair3[0]] - vals[pair3[1]]) >= 2:
                    return True
    return False


def east3_fails(p: tuple[int, ...]) -> bool:
    """East3 fails iff the central pair violates the reverse condition."""

    return p[3] > p[4] + 1


def east5_fails(p: tuple[int, ...]) -> bool:
    """Return True iff neither appendix East5 Case 2a nor 2b applies."""

    x_m1, x_0, x_1, x_2 = p[2], p[3], p[4], p[5]
    y_0 = x_m1 if x_m1 > x_0 + 1 else x_0
    case2a = (x_m1 > x_1 + 1) and (y_0 <= x_2 + 1)
    case2b = (x_m1 <= x_1 + 1) and (x_m1 <= x_2 + 1)
    return not case2a and not case2b


def is_valid_l_element(p: tuple[int, ...]) -> bool:
    """Return True iff p has affine first four and reverse last three."""

    return all(p[i + 1] <= p[i] + 1 for i in range(3)) and all(
        p[i] <= p[i + 1] + 1 for i in range(4, 6)
    )


def get_ew() -> set[tuple[int, ...]]:
    """Generate normalized East seven-term patterns surviving the preliminary tests."""

    valid_windows = set()
    base_sequences: list[tuple[int, ...]] = []

    def gen_base(seq: tuple[int, ...]) -> None:
        if len(seq) == 7:
            base_sequences.append(seq)
            return
        for step in (0, 1, 2):
            gen_base(seq + (seq[-1] + step,))

    gen_base((0,))

    for base in base_sequences:
        for perm in unique_permutations(base):
            if (
                is_valid_l_element(perm)
                and east3_fails(perm)
                and east5_fails(perm)
                and is_far_apart_decomposable(perm)
            ):
                valid_windows.add(perm)

    return valid_windows


def get_ww(ew: set[tuple[int, ...]]) -> set[tuple[int, ...]]:
    """West windows are ordinary reversals of East windows."""

    return {tuple(reversed(w)) for w in ew}


def window_stats(window: tuple[int, ...], m: int, suffix_len: int) -> tuple[int, int]:
    """Compute corrected local id and q0 for a window and prefix max m."""

    seen = {}
    win_first = []
    for i, value in enumerate(window):
        if value not in seen:
            seen[value] = i
            win_first.append(True)
        else:
            win_first.append(False)

    is_initial = [win_first[i] and window[i] > m for i in range(len(window))]

    pair_count = 0
    for i in range(len(window)):
        for j in range(i + 1, len(window)):
            vi, vj = window[i], window[j]
            if vi > vj + 1:
                pair_count += 1
            elif vi < vj and not is_initial[i]:
                pair_count += 1

    suffix_start = len(window) - suffix_len
    suffix_correction = 0
    for j in range(suffix_start, len(window)):
        for value in range(m + 1, window[j]):
            if value not in window[:j]:
                suffix_correction += 1

    int_defc = pair_count - suffix_correction

    q0 = sum(max(0, (m - 1) - value) for i, value in enumerate(window) if not is_initial[i])
    return int_defc, q0


def compute_id_mid(window: tuple[int, ...], suffix_len: int) -> tuple[int, int]:
    """Return id_mid(w)=id(w,max(w[0]-1,w[6]-1,mid(w)))."""

    mid_value = sorted(window, reverse=True)[3]
    m = max(window[0] - 1, window[6] - 1, mid_value)
    int_defc, _ = window_stats(window, m, suffix_len)
    return int_defc, m


def compute_id_base(window: tuple[int, ...], suffix_len: int) -> int:
    """Return id_base(w)=id(w,max(w[0]-1,w[6]-1))."""

    int_defc, _ = window_stats(window, max(window[0] - 1, window[6] - 1), suffix_len)
    return int_defc


def compute_k_from_n(n_value: int) -> int:
    """Largest K with C(K,2) <= C(n,2)/2."""

    half = comb(n_value, 2) // 2
    test = 0
    while comb(test + 1, 2) <= half:
        test += 1
    return test


def compute_nk_case1(id_val: int) -> tuple[int | None, int | None]:
    """Compute Case 1 N(id), K(id), including the -4 area penalty."""

    max_n = None
    for n_value in range(8, 300):
        m0 = math.ceil((n_value + id_val - 16) / 3)
        q_star = 3 * m0 - (n_value + id_val - 16)
        lhs_twice = 2 * (comb(m0 + 1, 2) + (m0 - 1) * (n_value - m0 - 1) - q_star)
        rhs_twice = comb(n_value, 2) - id_val - q_star - 3 * (n_value - m0 - 8) - 8
        if lhs_twice <= rhs_twice:
            max_n = n_value
    if max_n is None:
        return None, None
    return max_n, compute_k_from_n(max_n)


def compute_nk_case2(id_val: int) -> tuple[int | None, int | None]:
    """Compute Case 2 N(id), K(id), including the -4 area penalty."""

    max_n = None
    for n_value in range(8, 300):
        chi_numer = 2 * n_value + id_val - 24
        m0 = max(0, math.ceil(chi_numer / 4))
        q_star = max(0, min(4 * m0 - chi_numer, 3))
        lhs_twice = 2 * (comb(m0 + 1, 2) + (m0 - 1) * (n_value - m0 - 1) - q_star)
        rhs_twice = comb(n_value, 2) - id_val - q_star - 4 * (n_value - m0 - 8) - 8
        if lhs_twice <= rhs_twice:
            max_n = n_value
    if max_n is None:
        return None, None
    return max_n, compute_k_from_n(max_n)


def get_groups(window: tuple[int, ...]) -> list[tuple[int, ...]]:
    """Partition sorted(window) into maximal blocks separated by gaps at least 2."""

    sorted_vals = sorted(window)
    groups: list[tuple[int, ...]] = []
    current = [sorted_vals[0]]
    for i in range(1, len(sorted_vals)):
        if sorted_vals[i] - sorted_vals[i - 1] <= 1:
            current.append(sorted_vals[i])
        else:
            groups.append(tuple(current))
            current = [sorted_vals[i]]
    groups.append(tuple(current))
    return groups


@lru_cache(maxsize=None)
def get_children_absolute(window: tuple[int, ...], k_limit: int) -> tuple[tuple[int, ...], ...]:
    """Generate absolute gap-expanded children with max value at most k_limit."""

    extra = k_limit - max(window)
    if extra < 0:
        return ()

    groups = get_groups(window)
    num_gaps = len(groups) + 1
    children = set()

    def gen_compositions(remaining: int, num_parts: int, current: tuple[int, ...] = ()):
        if num_parts == 1:
            yield current + (remaining,)
            return
        for part in range(remaining + 1):
            yield from gen_compositions(remaining - part, num_parts - 1, current + (part,))

    for composition in gen_compositions(extra, num_gaps):
        cumulative_shift = 0
        group_shifts = []
        for gap_index in range(len(groups)):
            cumulative_shift += composition[gap_index]
            group_shifts.append(cumulative_shift)

        value_map = {}
        for group_index, group in enumerate(groups):
            for value in group:
                if value not in value_map:
                    value_map[value] = value + group_shifts[group_index]

        children.add(tuple(value_map[value] for value in window))

    return tuple(sorted(children))


def gen_partitions(total: int, max_parts: int, max_val: int):
    """Yield partitions of exactly total with <= max_parts parts in [1,max_val]."""

    if total == 0:
        yield ()
        return
    if max_parts == 0 or max_val <= 0:
        return
    for first in range(min(total, max_val), 0, -1):
        for rest in gen_partitions(total - first, max_parts - 1, first):
            yield (first,) + rest


def gen_partitions_upto(max_total: int, max_parts: int, max_val: int):
    """Yield partitions with total <= max_total and bounded length/value."""

    yield ()
    if max_total <= 0 or max_parts <= 0 or max_val <= 0:
        return
    for total in range(1, max_total + 1):
        yield from gen_partitions(total, max_parts, max_val)


@lru_cache(maxsize=None)
def cached_partitions_upto(max_total: int, max_parts: int, max_val: int) -> tuple[tuple[int, ...], ...]:
    """Cached tuple form of gen_partitions_upto."""

    return tuple(gen_partitions_upto(max_total, max_parts, max_val))


def compute_defc_and_area(seq: list[int]) -> tuple[int, int]:
    """Compute defc=binom(n,2)-area-dinv and area=sum(seq)."""

    dinv = 0
    for i in range(len(seq)):
        for j in range(i + 1, len(seq)):
            if seq[i] == seq[j] or seq[i] == seq[j] + 1:
                dinv += 1
    area = sum(seq)
    return comb(len(seq), 2) - area - dinv, area


@lru_cache(maxsize=None)
def m_max_for_n(n_value: int) -> int:
    """Largest m satisfying C(m,2) <= floor(C(n,2)/2)."""

    half = comb(n_value, 2) // 2
    value = 0
    while comb(value + 1, 2) <= half:
        value += 1
    return value


@lru_cache(maxsize=None)
def first_n_with_m_allowed(m_value: int) -> int:
    """Smallest n>=8 for which m satisfies the prefix area bound."""

    n_value = 8
    while m_value > m_max_for_n(n_value):
        n_value += 1
    return n_value


def deficit_n_upper(
    coeff: int,
    m_value: int,
    int_defc: int,
    q0: int,
    n_limit: int,
) -> int:
    """Largest n that can survive the deficit lower bound with q'=0."""

    numerator = coeff * m_value + 8 * coeff - 8 - int_defc - q0
    if coeff == 2:
        return n_limit
    return min(n_limit, numerator // (coeff - 2))


def check_window_single(
    *,
    case_label: str,
    side_label: str,
    base_window: tuple[int, ...],
    child: tuple[int, ...],
    id_val: int,
    n_value: int,
    m_value: int,
    g_value: int,
    coeff: int,
    int_defc_q0: tuple[int, int],
    child_area: int,
) -> dict | None:
    """Return first counterexample for one child/n/m triple, if any."""

    target_defc = 2 * n_value - 8
    total_free = n_value - m_value - 8
    if total_free < 0:
        return None

    int_defc, q0 = int_defc_q0
    q_prime_max = target_defc - int_defc - q0 - coeff * total_free
    if q_prime_max < 0:
        return None

    max_part = max(0, m_value - 1)
    prefix = list(range(m_value + 1))
    prefix_area = comb(m_value + 1, 2)
    m_repeats = [m_value]
    window_list = list(child)
    m_choose = comb(n_value, 2)

    for repeat_count in range(total_free + 1):
        if m_value == 0 and repeat_count < total_free:
            continue

        middle_len = total_free - repeat_count
        base_area = prefix_area + repeat_count * m_value + child_area
        max_partition_sum = min(q_prime_max, middle_len * max_part)
        min_possible_area = base_area + middle_len * (m_value - 1) - max_partition_sum
        if 2 * min_possible_area > m_choose - 8:
            continue

        for partition in cached_partitions_upto(q_prime_max, middle_len, max_part):
            extended = list(partition) + [0] * (middle_len - len(partition))
            middle = [m_value - 1 - deficit for deficit in reversed(extended)]
            seq = prefix + m_repeats * repeat_count + middle + window_list
            defc, area = compute_defc_and_area(seq)

            if defc > target_defc:
                continue
            if 2 * area > m_choose - defc - 8:
                continue

            return {
                "case": case_label,
                "side": side_label,
                "base_window": base_window,
                "child": child,
                "id": id_val,
                "n": n_value,
                "m": m_value,
                "g": g_value,
                "coeff": coeff,
                "repeat_count": repeat_count,
                "middle_len": middle_len,
                "partition": partition,
                "prefix": prefix + m_repeats * repeat_count + middle,
                "seq": seq,
                "defc": defc,
                "area": area,
                "target_defc": target_defc,
            }

    return None


def compare_threshold_table(
    label: str,
    computed: dict[int, tuple[int | None, int | None]],
    expected: dict[int, tuple[int | None, int | None]],
) -> bool:
    """Print an exact threshold table comparison."""

    mismatches = []
    for id_val in sorted(expected):
        if computed.get(id_val) != expected[id_val]:
            mismatches.append((id_val, computed.get(id_val), expected[id_val]))

    if not mismatches:
        print(f"{label} threshold table comparison: MATCH")
        return True

    print(f"{label} threshold table comparison: MISMATCH")
    for id_val, got, want in mismatches:
        print(f"  id={id_val}: computed={got}, expected={want}")
    return False


def print_table(label: str, table: dict[int, tuple[int | None, int | None]]) -> None:
    """Print a threshold table."""

    print(label)
    print(f"{'id':>4} {'N':>8} {'K':>8}")
    for id_val in sorted(table):
        n_value, k_value = table[id_val]
        n_text = "--" if n_value is None else str(n_value)
        k_text = "--" if k_value is None else str(k_value)
        print(f"{id_val:>4} {n_text:>8} {k_text:>8}")
    print()


def build_threshold_table(case_num: int) -> dict[int, tuple[int | None, int | None]]:
    """Build the threshold table for one case."""

    if case_num == 1:
        return {id_val: compute_nk_case1(id_val) for id_val in range(10, 22)}
    return {id_val: compute_nk_case2(id_val) for id_val in range(0, 22)}


def verify_id_mid_bound(windows: dict[str, set[tuple[int, ...]]]) -> bool:
    """Verify id_mid(w)>=10 over EW union WW."""

    min_record = None
    distribution: dict[int, int] = {}
    for suffix_len, side_label, side_windows in (
        (3, "East", windows["East"]),
        (4, "West", windows["West"]),
    ):
        for window in side_windows:
            id_val, threshold = compute_id_mid(window, suffix_len)
            distribution[id_val] = distribution.get(id_val, 0) + 1
            if min_record is None or id_val < min_record[0]:
                min_record = (id_val, threshold, side_label, window)

    assert min_record is not None
    ok = min_record[0] >= 10
    print(
        "id_mid structural check over EW union WW: "
        f"{'PASS' if ok else 'FAIL'} (min id_mid={min_record[0]}, "
        f"threshold={min_record[1]}, side={min_record[2]}, window={min_record[3]})"
    )
    print(f"id_mid distribution: {dict(sorted(distribution.items()))}\n")
    return ok


def id_from_table(
    id_val: int,
    table: dict[int, tuple[int | None, int | None]],
    *,
    case_label: str,
    side_label: str,
    window: tuple[int, ...],
) -> tuple[int | None, int | None]:
    """Look up an id without clamping; reject unexpected values."""

    if id_val not in table:
        raise ValueError(
            f"Unexpected id in {case_label} {side_label}: id={id_val}, window={window}"
        )
    return table[id_val]


def run_case(
    *,
    case_num: int,
    side_label: str,
    windows: set[tuple[int, ...]],
    table: dict[int, tuple[int | None, int | None]],
) -> tuple[list[dict], dict[str, int]]:
    """Run one finite case."""

    case_label = f"Case {case_num}"
    problems = []
    suffix_len = 3 if side_label == "East" else 4
    windows_checked = 0
    children_generated = 0
    active_children = 0
    triples_checked = 0

    for base_window in sorted(windows):
        windows_checked += 1
        if case_num == 1:
            id_val, _ = compute_id_mid(base_window, suffix_len)
        else:
            id_val = compute_id_base(base_window, suffix_len)

        n_limit, k_limit = id_from_table(
            id_val,
            table,
            case_label=case_label,
            side_label=side_label,
            window=base_window,
        )
        if n_limit is None or k_limit is None:
            continue

        children = get_children_absolute(base_window, k_limit)
        children_generated += len(children)
        for child in children:
            child_has_checked_triple = False
            child_area = sum(child)
            fourth_largest = sorted(child, reverse=True)[3]
            if case_num == 1:
                m_start = max(0, child[0] - 1, child[6] - 1, fourth_largest)
                m_stop = m_max_for_n(n_limit)
            else:
                m_start = max(0, child[0] - 1, child[6] - 1)
                m_stop = min(m_max_for_n(n_limit), fourth_largest - 1)

            if m_start > m_stop:
                continue

            for m_value in range(m_start, m_stop + 1):
                g_value = sum(1 for value in child if value > m_value)
                if case_num == 1:
                    if g_value > 3:
                        continue
                    coeff = 3
                else:
                    if g_value < 4:
                        continue
                    coeff = g_value

                stats = window_stats(child, m_value, suffix_len)
                n_start = max(8, m_value + 8, first_n_with_m_allowed(m_value))
                n_stop = deficit_n_upper(coeff, m_value, stats[0], stats[1], n_limit)
                if n_start > n_stop:
                    continue

                for n_value in range(n_start, n_stop + 1):
                    triples_checked += 1
                    child_has_checked_triple = True
                    problem = check_window_single(
                        case_label=case_label,
                        side_label=side_label,
                        base_window=base_window,
                        child=child,
                        id_val=id_val,
                        n_value=n_value,
                        m_value=m_value,
                        g_value=g_value,
                        coeff=coeff,
                        int_defc_q0=stats,
                        child_area=child_area,
                    )
                    if problem is not None:
                        problems.append(problem)
                        print_first_failure(problem)
                        return problems, {
                            "windows": windows_checked,
                            "children": children_generated,
                            "active_children": active_children,
                            "triples": triples_checked,
                        }

            if child_has_checked_triple:
                active_children += 1

    counts = {
        "windows": windows_checked,
        "children": children_generated,
        "active_children": active_children,
        "triples": triples_checked,
    }
    print(
        f"{case_label} {side_label}: windows={windows_checked}, "
        f"children={children_generated}, active_children={active_children}, "
        f"triples={triples_checked}, problems={len(problems)}"
    )
    return problems, counts


def print_first_failure(problem: dict) -> None:
    """Print the first failed obligation."""

    print("FIRST FAILURE")
    for key in (
        "case",
        "side",
        "base_window",
        "child",
        "id",
        "n",
        "m",
        "g",
        "coeff",
        "repeat_count",
        "middle_len",
        "partition",
        "prefix",
        "seq",
        "defc",
        "area",
        "target_defc",
    ):
        print(f"  {key}: {problem[key]}")


def compare_counts(counts_by_case: dict[tuple[str, str], dict[str, float | int]]) -> bool:
    """Compare finite-search counts with the expected finite-check counts."""

    all_match = True
    print("\nExpected finite-count comparison:")
    for key, expected in EXPECTED_FINITE_COUNTS.items():
        got = counts_by_case[key]
        got_pair = {"children": int(got["children"]), "triples": int(got["triples"])}
        if got_pair == expected:
            print(f"  {key[0]} {key[1]}: MATCH {got_pair}")
        else:
            all_match = False
            print(f"  {key[0]} {key[1]}: MISMATCH got={got_pair}, expected={expected}")

    if not all_match:
        print(
            "  Count note: children are absolute generated children for finite "
            "table rows; triples are finite (child,n,m) checks after actual-g "
            "deficit pruning."
        )
    print()
    return all_match


def main() -> None:
    """Run the East7-West7 seven-window checker."""

    ew = get_ew()
    ww = get_ww(ew)
    ew_ww = ew | ww
    print(f"  |EW| = {len(ew)}, |WW| = {len(ww)}, |EW union WW| = {len(ew_ww)}\n")

    case1_table = build_threshold_table(case_num=1)
    case2_table = build_threshold_table(case_num=2)
    print_table("Case 1 threshold table", case1_table)
    print_table("Case 2 threshold table", case2_table)

    table_results = [
        compare_threshold_table("Case 1", case1_table, EXPECTED_CASE1_TABLE),
        compare_threshold_table("Case 2", case2_table, EXPECTED_CASE2_TABLE),
    ]
    print()

    id_mid_ok = verify_id_mid_bound({"East": ew, "West": ww})

    all_problems = []
    counts_by_case: dict[tuple[str, str], dict[str, float | int]] = {}

    for case_num, side_label, windows, table in (
        (1, "East", ew, case1_table),
        (1, "West", ww, case1_table),
        (2, "East", ew, case2_table),
        (2, "West", ww, case2_table),
    ):
        problems, counts = run_case(
            case_num=case_num,
            side_label=side_label,
            windows=windows,
            table=table,
        )
        all_problems.extend(problems)
        counts_by_case[(f"Case {case_num}", side_label)] = counts

    counts_match = compare_counts(counts_by_case)

    tables_ok = all(table_results)
    if tables_ok:
        print("Threshold-table checks: MATCH")
    else:
        print("Threshold-table checks: MISMATCH")

    if id_mid_ok:
        print("id_mid>=10 check: PASS")
    else:
        print("id_mid>=10 check: FAIL")

    if tables_ok and id_mid_ok and not all_problems:
        if not counts_match:
            print("Counts differ from expected finite counts; see comparison above.")
        print("SUCCESS: East7/West7 seven-window verification passed.")
        return

    print(f"FAILED: problems={len(all_problems)}, tables_ok={tables_ok}, id_mid_ok={id_mid_ok}")
    raise SystemExit(1)


if __name__ == "__main__":
    main()

\end{lstlisting}
A successful run prints the threshold tables, the structural
\(\operatorname{id}_{\mathrm{mid}}\ge10\) check, the four finite case counts, the
expected finite-count comparison, and the final success line.  The output
is:
\begin{lstlisting}
  |EW| = 7194, |WW| = 7194, |EW union WW| = 14388

Case 1 threshold table
  id        N        K
  10       33       23
  11       26       18
  12       16       11
  13        9        6
  14       --       --
  15       --       --
  16       --       --
  17       --       --
  18       --       --
  19       --       --
  20       --       --
  21       --       --

Case 2 threshold table
  id        N        K
   0       26       18
   1       23       16
   2       23       16
   3       20       14
   4       20       14
   5       19       13
   6       17       12
   7       16       11
   8       16       11
   9       13        9
  10       13        9
  11       12        8
  12       10        7
  13        9        6
  14        9        6
  15       --       --
  16       --       --
  17       --       --
  18       --       --
  19       --       --
  20       --       --
  21       --       --

Case 1 threshold table comparison: MATCH
Case 2 threshold table comparison: MATCH

id_mid structural check over EW union WW: PASS (min id_mid=10, threshold=1, side=East, window=(1, 2, 3, 4, 1, 1, 0))
id_mid distribution: {10: 6, 11: 24, 12: 157, 13: 359, 14: 838, 15: 1378, 16: 1875, 17: 2670, 18: 2854, 19: 2559, 20: 1392, 21: 276}

Case 1 East: windows=7194, children=2473, active_children=1087, triples=9919, problems=0
Case 1 West: windows=7194, children=2911, active_children=1225, triples=10311, problems=0
Case 2 East: windows=7194, children=3860, active_children=456, triples=715, problems=0
Case 2 West: windows=7194, children=4827, active_children=1183, triples=1756, problems=0

Expected finite-count comparison:
  Case 1 East: MATCH {'children': 2473, 'triples': 9919}
  Case 1 West: MATCH {'children': 2911, 'triples': 10311}
  Case 2 East: MATCH {'children': 3860, 'triples': 715}
  Case 2 West: MATCH {'children': 4827, 'triples': 1756}

Threshold-table checks: MATCH
id_mid>=10 check: PASS
SUCCESS: East7/West7 seven-window verification passed.
\end{lstlisting}


\section*{Acknowledgements}
The author thanks Kyungyong Lee, Li Li, and Nicholas A. Loehr for helpful
communications related to this work.

The author used OpenAI Codex and other AI tools for experimental code
production and self-contained checker code; for assistance with an earlier
version of the proof of
Corollary~\ref{cor:affine-dyck-schur-positivity}; for editing, formatting, and
graphics assistance in \LaTeX; and for proof verification and checking.

\end{document}